        \crefname{subsection}{Subsection}{Subsections}
\newcommand{\listlabel}[1]{%
  \item[\quad(#1)]\hypertarget{#1}{}\global\expandafter\def\csname #1\endcsname{\hyperlink{#1}{(#1)}}}
\newtheorem{theorem}{Theorem}[section]
\newtheorem*{theorem*}{Theorem}
\newtheorem{lemma}[theorem]{Lemma}
\newtheorem{proposition}[theorem]{Proposition}
\newtheorem*{proposition*}{Proposition}
\newtheorem{corollary}[theorem]{Corollary}
\newtheorem*{corollary*}{Corollary}
\newtheorem{cit}[theorem]{Citation}
\newtheorem*{conjecture*}{Conjecture}
\newtheorem*{claim*}{Claim}
\newtheorem{thmA}{Theorem}
\theoremstyle{definition}
\newtheorem{definition}[theorem]{Definition}
\newtheorem{remark}[theorem]{Remark}
\newtheorem{example}[theorem]{Example}
\newtheorem{notation}[theorem]{Notation}
\newtheorem{question}[theorem]{Question}
\newtheorem*{question*}{Question}
\newcommand{\N}{\mathbb{N}}
\newcommand{\Z}{\mathbb{Z}}
\newcommand{\Q}{\mathbb{Q}}
\newcommand{\R}{\mathbb{R}}
\DeclareMathOperator{\Ab}{Ab}
\DeclareMathOperator{\Aut}{Aut}
\DeclareMathOperator{\Homeo}{Homeo}
\DeclareMathOperator{\PSL}{PSL}
\DeclareMathOperator{\F}{F}
\DeclareMathOperator{\Supp}{Supp}
\DeclareMathOperator{\Sub}{Sub}
\DeclareMathOperator{\Stab}{Stab}
\DeclareMathOperator{\Fix}{Fix}
\DeclareMathOperator{\id}{id}
\DeclareMathOperator{\Hyp}{\mathcal{H}}
\DeclareMathOperator{\Cay}{Cay}
\DeclareMathOperator{\Conf}{Conf}
\DeclareMathOperator{\LL}{LL}
\newcommand{\X}{\mathcal{X}}
\numberwithin{equation}{section}
\begin{document}

\title[Hyperbolic actions of Thompson's group $F$ and generalizations]{Hyperbolic actions of Thompson's group $F$ \\ and generalizations}
\date{\today}
\subjclass[2020]
{Primary 20F65,   
20F60; 
                 Secondary 20E08, 
                 20E22, 
                 }

\keywords{Thompson group, group action, hyperbolic metric space, quasi-parabolic actions, poset of hyperbolic structures}

\author[S.~Balasubramanya]{Sahana Balasubramanya}
\address{Department of Mathematical Sciences, Lafayette College, Easton, PA, USA}
\email{hassanba@lafayette.edu}

\author[F.~Fournier-Facio]{Francesco Fournier-Facio}
\address{Department of Pure Mathematics and Mathematical Statistics, University of Cambridge, UK}
\email{ff373@cam.ac.uk}

\author[M.~C.~B.~Zaremsky]{Matthew C.~B.~Zaremsky}
\address{Department of Mathematics and Statistics, University at Albany (SUNY), Albany, NY, USA}
\email{mzaremsky@albany.edu}

\begin{abstract}
We study the poset of hyperbolic structures on Thompson's group $F$ and its generalizations $F_n$ for $n \geq 2$.
The global structure of this poset is as simple as one would expect, with the maximal non-elementary elements being two quasi-parabolic actions corresponding to well-known ascending HNN-extension expressions of $F_n$. However, the local structure turns out to be incredibly rich, in stark contrast with the situation for the $T$ and $V$ counterparts.
We show that the subposet of quasi-parabolic hyperbolic structures consists of two isomorphic posets, each of which contains uncountably many subposets of \emph{lamplike} structures, which can be described combinatorially in terms of certain hyperbolic structures on related lamplighter groups.
Moreover, each of these subposets, as well as intersections and complements thereof, is very large, in that it contains a copy of the power set of the natural numbers.
We also prove that these uncountably many uncountable subposets are not the entire picture, indeed there exists a copy of the power set of the natural numbers consisting entirely of non-lamplike structures.
We also prove that this entire vast array of hyperbolic structures on $F_n$ collapses as soon as one takes a natural semidirect product with $\Z/2\Z$. These results are all proved via a detailed analysis of confining subsets, and along the way we establish a number of fundamental results in the theory of confining subsets of groups.
\end{abstract}

\maketitle
\thispagestyle{empty}

\tableofcontents

\section*{Introduction}

A recent, fruitful line of inquiry in geometric group theory involves classifying all possible cobounded hyperbolic actions of a given group. An action of a group by isometries on a metric space is called \emph{hyperbolic} if the metric space is hyperbolic, in the sense of Gromov. In order to avoid pathologies, it is standard to only consider actions that are \emph{cobounded}, meaning that the space can be covered by $G$-translates of some fixed bounded subset. There is a natural notion of order on such actions, which induces an equivalence relation: the resulting quotient set inherits the structure of a poset, called the \emph{poset of hyperbolic structures}, denoted by $\Hyp(G)$.

This poset was introduced in \cite{ABO}, where its properties for general groups were studied, and it was shown to be typically unwieldy for groups exhibiting negative curvature, specifically acylindrically hyperbolic groups. On the other hand, for certain solvable groups, the poset of hyperbolic structures is more reasonable, and can be computed explicitly \cite{caprace15, sahana:wreath, AR:BS, confining, ABR2}. In another direction, the poset is \emph{trivial} for higher-rank lattices \cite{haettel} and many groups of dynamical origin acting on the circle or the Cantor set \cite{anthony, NL}.

\medskip

Our focus in this paper is another group of dynamical origin, which instead acts on the interval, and shares some of the algebraic properties of the solvable groups whose poset of hyperbolic structure has been previously studied: \emph{Thompson's group $F$}.
This is the group of all piecewise linear orientation preserving homeomorphisms of the interval $[0,1]$ with all slopes powers of 2 and breakpoints in $\Z[1/2]$. This seemingly ad hoc definition belies the rich structure and bizarre properties of $F$; for instance it was the first known torsion-free group to admit a classifying space with every skeleton finite, but not admit any finite classifying space \cite{brown84}. It is also an example of a concrete, straightforward-to-define group for which properties like amenability, $C^*$-simplicity, and automaticity remain open. Background material on $F$ can be found for example in \cite{belk04,cannon96}.

There is an obvious generalization of $F$ to an infinite family of groups $F_n$ ($n\ge 2$), with $F=F_2$, namely, $F_n$ is the group of all piecewise linear orientation preserving homeomorphisms of the interval $[0,1]$ with all slopes powers of $n$ and breakpoints in $\Z[1/n]$. These groups were first officially investigated by Brown in \cite{brown87}, and are the natural ``$F$-like'' version of the Higman--Thompson groups $V_n$ introduced by Higman in \cite{higman74}.

\medskip

Our goal in this paper is to understand the poset of hyperbolic structures on $F_n$. We analyze the structure of this poset from both a ``global'' and a ``local'' perspective, first breaking the poset into large subposets, revealing a global structure, and then inspecting these smaller pieces individually to obtain the local structure. Our first main result shows that the global structure is as simple as one might expect; see \cref{ssec:general_hyp_actions} for the relevant definitions.

\begin{thmA}[Global structure of $\Hyp(F_n)$]
\label{intro:thm:global}
The poset $\Hyp(F_n)$ has the following structure.
\begin{enumerate}
   \item The poset of general type structures $\Hyp_{gt}(F_n)$ is empty.
   \item The poset of quasi-parabolic structures $\Hyp_{qp}(F_n)$ consists of two incomparable isomorphic posets $\X_0$ and $\X_1$.
   \item The poset of lineal structures $\Hyp_{\ell}(F_n)$ coincides with the poset of oriented lineal structures $\Hyp_{\ell}^+(F_n)$, which is an uncountable antichain, with two distinguished elements $[\pm\chi_0]$ and $[\pm\chi_1]$ dominated by $\X_0$ and $\X_1$, respectively.
   \item The poset of elliptic structures $\Hyp_e(F_n)$ is reduced to the smallest element, which is dominated by every lineal structure.
\end{enumerate}
\end{thmA}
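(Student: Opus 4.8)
The plan is to compute, separately, the four pieces of the partition $\Hyp(F_n) = \Hyp_e(F_n) \sqcup \Hyp_\ell(F_n) \sqcup \Hyp_{qp}(F_n) \sqcup \Hyp_{gt}(F_n)$ into elliptic, lineal, quasi-parabolic and general type structures \cite{ABO}. Parts (i) and (iv) are immediate. A cobounded general type action has two loxodromic elements with disjoint pairs of fixed points, so by ping--pong their large powers generate a non-abelian free subgroup; since $F_n$ contains no non-abelian free subgroup (Brin--Squier), $\Hyp_{gt}(F_n) = \emptyset$. And $\Hyp_e(F_n)$ consists only of the trivial action on a point, which is the global minimum of $\Hyp(F_n)$ (every cobounded action admits a coarsely equivariant map to a point); in particular it lies below every lineal structure.

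For (iii) I would first show that every cobounded lineal action of $F_n$ is orientable. A non-orientable one yields an index-two subgroup $H \leq F_n$, a homomorphism $\chi \colon H \to \R$, and an element $g \in F_n \setminus H$ conjugating $\chi$ to $-\chi$. (That the Busemann quasicharacter is a genuine homomorphism uses that $F_n$ has no nontrivial homogeneous quasimorphisms beyond $\Hom(F_n,\R)$, a consequence of the perfectness of $[F_n,F_n]$ together with its ``commuting conjugates'' property.) Since $[F_n,F_n]$ is perfect and lies in $H$, it lies in $[H,H]$, so $\chi$ factors through $H/[F_n,F_n] \leq F_n^{\mathrm{ab}}$, on which conjugation by $g$ acts trivially; hence $\chi = -\chi$, so $\chi = 0$, a contradiction. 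Consequently $\Hyp_\ell(F_n) = \Hyp_\ell^+(F_n)$ is identified, via \cite{ABO}, with the set of nonzero homomorphisms $F_n \to \R$ up to positive rescaling, i.e.\ with $(\R^n \setminus \{0\})/\R_{>0}$, which is uncountable; it is an antichain because a domination between two lineal structures gives a coarsely Lipschitz equivariant self-map of the line intertwining the two characters, which is coarsely linear and hence forces them to be positive multiples of one another. The two distinguished elements are the classes $[\pm\chi_0]$, $[\pm\chi_1]$ of the homomorphisms $\chi_0, \chi_1 \colon F_n \to \Z$ recording the base-$n$ logarithm of the slope at $0$ and at $1$; each is dominated by every quasi-parabolic structure whose Busemann character is the corresponding $\chi_i$, i.e.\ by all of $\X_0$, resp.\ $\X_1$, since a focal action dominates the lineal structure of its Busemann character.

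The substance is (ii). Here I would invoke the dictionary (from \cite{confining} and the general results on confining subsets developed in the paper) between cobounded quasi-parabolic structures on a group and pairs consisting of a nonzero character and an equivalence class of confining subsets of its kernel for conjugation by a preimage of a generator; by the quasimorphism fact above, the character of $F_n$ is an honest homomorphism. The key step — which I expect to be the main obstacle — is to prove that the only characters of $F_n$ admitting a confining subset are the positive multiples of $\chi_0$ and of $\chi_1$ (thereby ruling out all dense characters and all other discrete ones), using the explicit realization of $F_n$ as an ascending HNN-extension in precisely these two ways; checking that these two decompositions do furnish confining subsets shows $\X_0, \X_1 \neq \emptyset$ and produces the two quasi-parabolic structures. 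Granting this, one sets $\X_i$ to be the poset of equivalence classes of confining subsets of $\ker\chi_i$ (the HNN base), so that $\Hyp_{qp}(F_n) = \X_0 \sqcup \X_1$; the isomorphism $\X_0 \cong \X_1$ is induced by the automorphism of $F_n$ coming from the flip $t \mapsto 1-t$ of $[0,1]$, which interchanges $\chi_0$ and $\chi_1$. Finally, $\X_0$ and $\X_1$ are mutually incomparable: a domination $X \succeq Y$ sends elliptic elements of $X$ to elliptic elements of $Y$, and in a cobounded quasi-parabolic action the non-elliptic elements are exactly the loxodromic ones, namely those on which the Busemann character is nonzero; so a domination between something in $\X_i$ and something in $\X_j$ with $i \neq j$ would force $\ker\chi_i \subseteq \ker\chi_j$, which is impossible since $\chi_0$ and $\chi_1$ are linearly independent. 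The rich internal order structure of $\X_0 \cong \X_1$ is not part of this statement and occupies the rest of the paper.
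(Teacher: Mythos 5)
Your overall architecture coincides with the paper's: the same four-way decomposition, the absence of free subgroups for (i), agreeability for (iii), and the reduction of (ii) to confining subsets via the sharp splitting $F_n=F_n'\rtimes\Z^n$. Your orientability argument (the flipping element acts by an inner automorphism of $F_n$, hence trivially on $F_n^{\mathrm{ab}}$, so the Busemann character cannot be negated) is a clean alternative to the paper's route through the non-existence of $D_\infty$ quotients (\cref{cor:nonoriented:f}). One small correction: $\Hyp_\ell^+(F_n)$ is the character sphere modulo the antipodal map --- the parametrization is $2$-to-$1$ with fibers $\{[\rho],[-\rho]\}$ (\cref{prop:lineal}) --- which of course does not affect uncountability.

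There is, however, a genuine gap at the heart of (ii): you never prove that the only characters of $F_n$ admitting strictly confining subsets are the positive multiples of $\chi_0$ and $\chi_1$, and the method you gesture at cannot do the job. The two ascending HNN-splittings show that $\X_0$ and $\X_1$ are non-empty, but they say nothing about excluding the remaining characters; a priori a quasi-parabolic action need not arise from a splitting over a subgroup at all (indeed most confining subsets of $F_n$ constructed later in the paper are not equivalent to confining subgroups). The exclusion is the main content of Propositions \ref{prop:usually_empty_conf}, \ref{prop:middle_empty_conf} and \ref{prop:last_empty_conf}, and it is dynamical: any $\rho$-confining $Q$ must contain the fixators of the supports of those one-bump generators $a_i$ with $\rho(a_i)\neq 0$ (\cref{prop:empty_conf}), and one then shows $F_n'$ is boundedly generated by these fixators together with finitely many displacing elements of $Q$, handled separately for two nonzero values, a single ``middle'' nonzero value, and $-\chi_0,-\chi_1$. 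A second, more local error: your incomparability argument rests on the claim that a cobounded quasi-parabolic action has no parabolic elements, which is false --- e.g.\ the focal action of $BS(1,2)$ on $\mathbb{H}^2$, and likewise the actions $F_n\curvearrowright\Cay(F_n,Q\cup Z_{\chi_0})$, have parabolic elements in the kernel of the Busemann character --- so ``non-elliptic in the dominating action'' does not yield ``loxodromic''. This is repairable: a loxodromic element of the dominated action is loxodromic (not merely non-elliptic) in the dominating one, since the linear lower bound on orbit growth pulls back through a coarsely Lipschitz equivariant map, which forces $\ker\chi_1\subseteq\ker\chi_0$ and gives the contradiction; alternatively one argues as in \cref{prop:linvsqp} via the lineal structures $[\pm\chi_i]$ sitting below each $\X_i$.
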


See \cref{intro:fig:global} for a picture.

\begin{figure}[htb]
\centering
\begin{tikzpicture}[line width=0.8pt]

\draw (0,2) -- (0,0) -- (4,-2) -- (8,0) -- (8,2);
\draw (2,0) -- (4,-2) -- (4,0)   (4,-2) -- (6,0);

\filldraw[white] (4,-2) circle (8pt);
\draw (4,-2) circle (8pt);
\node at (4,-2) {$*$};
\node at (-2.5,-2) {$\Hyp_e(F_n)$:};

\filldraw[white] (0,0) circle (16pt);
\draw (0,0) circle (16pt);
\node at (0,0) {$[\pm\chi_0]$};

\filldraw[white] (8,0) circle (16pt);
\draw (8,0) circle (16pt);
\node at (8,0) {$[\pm\chi_1]$};
\node at (-2.5,0) {$\Hyp_\ell(F_n)$:};

\filldraw[white] (0,2) circle (18pt);
\draw (0,2) circle (18pt);
\node at (0,2) {$\mathcal{X}_0$};

\filldraw[white] (8,2) circle (18pt);
\draw (8,2) circle (18pt);
\node at (8,2) {$\mathcal{X}_1$};
\node at (-2.5,2) {$\Hyp_{qp}(F_n)$:};

\filldraw[white] (1,1) -- (7,1) -- (7,-0.3) -- (1,-0.3);
\node at (2,0) {$\cdots$};
\node at (4,0) {$\cdots$};
\node at (6,0) {$\cdots$};

\end{tikzpicture}
\caption{The global structure of the poset $\Hyp(F_n)$, as described in \cref{intro:thm:global}.}
\label{intro:fig:global}
\end{figure}
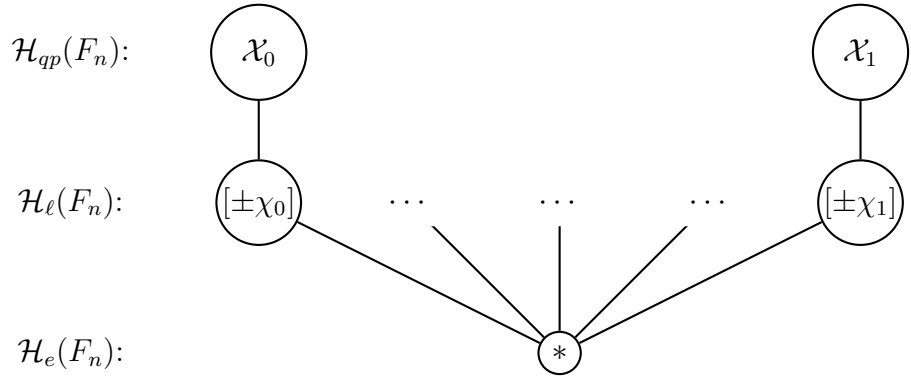

Note that the abelianization of $F_n$ has rank $n$, and in fact the main content of this theorem is the proof that most lineal structures, corresponding to characters of $F_n$, are maximal elements in $\Hyp(F_n)$. In particular there are uncountably many maximal elements.

\medskip

It is well-known that the posets $\X_0$ and $\X_1$ are non-empty. Indeed, $F_n$ can be written as an ascending HNN-extension whose base is the subgroup of elements supported on an interval $[t, 1], t > 0$, and the stable letter is an element $a \in F_n$ with slope $n$ at $0$ such that $t.a^{-k} \to 0$ as $k \to \infty$. The action of $F_n$ on the corresponding Bass--Serre tree is quasi-parabolic, and represents an element of $\X_0$. There is an analogous action belonging to $\X_1$, defined the same way but with an interval of the form $[0, t]$ and an element with slope $n$ at $1$, instead. These two structures have a distinguished place in $\Hyp(F_n)$:

\begin{thmA}
\label{intro:thm:largest}

The two actions of $F_n$ on the Bass--Serre tree for the ascending HNN-extension expressions described above represent the two non-elementary maximal hyperbolic structures on $F_n$.
\end{thmA}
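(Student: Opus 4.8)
The plan is to derive \cref{intro:thm:largest} from \cref{intro:thm:global} together with the description of quasi-parabolic structures via confining subsets. First I would set up the reduction. By \cref{intro:thm:global}, every non-elementary hyperbolic structure on $F_n$ lies in $\X_0\sqcup\X_1$, the two subposets are incomparable, and no elliptic or lineal structure dominates a member of either: the normal closure of $B_t$ in $F_n$ is exactly $\ker\chi_0$ (every conjugate of an element of $B_t$ has slope $1$ at $0$, and conjugating $B_t$ by maps pushing $t$ toward $0$ sweeps out all of $\ker\chi_0$), so a lineal structure in which $B_t$ is bounded must be $[\pm\chi_0]$, which \cref{intro:thm:global} places strictly below $\X_0$ — and symmetrically for $\X_1$. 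Hence a maximal non-elementary structure is precisely a maximal element of $\X_0$ or of $\X_1$; since $x\mapsto 1-x$ conjugates $F_n$ to itself and swaps $\chi_0$ with $\chi_1$, it carries $\X_0$ isomorphically onto $\X_1$. So it suffices to prove that $\X_0$ has a greatest element, represented by the action on the Bass--Serre tree $T_0$ of $F_n=\langle B_t,a\rangle$.

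Next I would invoke the confining-subset parametrization of $\X_0$: its elements are the structures $[X_Q]$ attached to confining subsets $Q$ of $N:=\ker\chi_0$ for the conjugation action of $\bar a$, with $X_Q=\Cay(F_n,Q\cup\{a^{\pm1}\})$, and $[X_{Q'}]\preceq[X_Q]$ exactly when $Q'\subseteq a^{-m}Qa^m$ for some $m\ge 0$ (up to bounded error); larger confining subsets give smaller structures. The subgroup $B_t$ is itself such a confining subset — $aB_ta^{-1}\subseteq B_t$, $\bigcup_k a^{-k}B_ta^k=N$, and $B_t\cdot B_t=B_t$ — and $\Cay(F_n,B_t\cup\{a^{\pm1}\})$ is $F_n$-equivariantly quasi-isometric to $T_0$, since collapsing the left cosets of $B_t$ recovers the Bass--Serre tree. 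Thus $[X_{B_t}]$ is the structure in question, and the task becomes to show $[X_{B_t}]$ dominates all of $\X_0$. That $[X_{B_t}]$ is at least \emph{maximal} is cheap: $B_t$ fixes a vertex of $T_0$, hence is $X_{B_t}$-bounded, so if $[X_Q]\succeq[X_{B_t}]$ then $B_t$ is $X_Q$-bounded, forcing $B_t\subseteq a^{-m}Qa^m$ and hence $[X_Q]\preceq[X_{B_t}]$. To upgrade this to \emph{greatest} — which is what gives uniqueness and therefore the statement — it suffices to prove:
\begin{center}
\emph{for every confining subset $Q$ of $N$ there is $m\ge 0$ with $B_t\subseteq a^{-m}Qa^m$;}
\end{center}
equivalently, $B_t$ is bounded in every quasi-parabolic structure in $\X_0$.

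This last statement is the technical heart, and the step I expect to be the main obstacle. Since $B_t\cong F_n$ is finitely generated, a fixed finite generating set of $B_t$ lies in a common $a^{-k^*}Qa^{k^*}$; replacing $Q$ by this set (still confining, same constant $k_0$) we may assume $B_t$ is generated by elements of $Q$. One must then bound, \emph{uniformly} over $g\in B_t$, how far up the ascending chain $\{a^{-k}Qa^k\}_k$ one must climb to absorb $g$: a naive iteration of $Q\cdot Q\subseteq a^{-k_0}Qa^{k_0}$ only bounds this logarithmically in the word length of $g$, which is unbounded, so the self-similarity of the ascending HNN decomposition and special features of $F_n$ — the finite generation of $B_t$ and suitable perfectness/commuting-conjugates properties of (the commutator subgroup of) $F_n$, letting elements of $B_t$ be rewritten across scales as boundedly many conjugates of the chosen generators — must be brought in to kill the blow-up. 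That finite generation is genuinely essential is visible from the lamplighter $\Z\wr\Z$, whose analogous ``base'' $\bigoplus_{i\ge0}\Z$ is infinitely generated and for which the claim fails and $\X_0$ has no greatest element. Granting the claim, $[X_Q]\preceq[X_{B_t}]$ for every confining $Q$, so $[X_{B_t}]$ is the top of $\X_0$, its mirror image is the top of $\X_1$, and by \cref{intro:thm:global} these two are exactly the maximal non-elementary hyperbolic structures on $F_n$.
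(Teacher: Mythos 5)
Your reduction is correct and matches the paper's: by \cref{intro:thm:global} the non-elementary maximal structures are the maximal elements of $\X_0$ and $\X_1$, the involution $x\mapsto 1-x$ swaps the two, the confining-subset parametrization of $\X_0$ is the right tool, the base of the HNN-extension is a confining subgroup whose Cayley graph is equivariantly quasi-isometric to the Bass--Serre tree (this is \cref{prop:subgroups_to_trees}), and the whole theorem reduces to showing that every $\chi_0$-confining subset contains a conjugate $B_t^{a^m}$ of the base. Your ``maximal is cheap'' observation is also fine.

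But the proof stops exactly at the theorem's actual content. You state the key claim, call it ``the technical heart,'' correctly observe that the naive strategy (put a finite generating set of $B_t$ into $Q$, then absorb products via \ProdConf) gives only a logarithmic-in-word-length bound and hence fails, gesture at ``perfectness/commuting-conjugates properties,'' and then write ``Granting the claim.'' That is a genuine gap, and the route you sketch for closing it is not the one that works. The missing idea (the paper's \cref{prop:contains}) is to get an \emph{entire subgroup} conjugate to the base into $Q$ in one stroke, with no word-length bookkeeping at all: since $\chi_0(a)\neq 0$, \cref{prop:empty_conf} forces $Q$ to contain the full centralizer $C_{F_n'}(a)$, which already contains $F_n'[r,1)$ where $(0,r)=\Supp(a)$ --- a conjugate of the (derived-subgroup version of the) base group. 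One then produces a single element $q\in Q$ moving some $x<r$ past $r$ (take any such element of $F_n'$ supported near $0$ and conjugate it into $Q$ by a power of $a$ via \cref{lem:conf:cyclic}), so that $F_n'[x,1)=q\,F_n'[x.q,1)\,q^{-1}\subseteq q\,F_n'[r,1)\,q^{-1}\subseteq Q^3$, and a single application of \cref{lem:products} conjugates $Q^3$ back into $Q$. No iteration over word length ever occurs, so the blow-up you worry about never arises. Without this (or some substitute) your argument establishes only that $[X_{B_t}]$ is \emph{a} maximal element of $\X_0$, not the largest one, and hence does not yet yield the uniqueness assertion of the theorem.
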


Let us remark that $F_n$ is in a sense very far from being negatively curved. For one, it has no non-abelian free subgroups \cite{brin85}, and it also contains infinite direct sums of $\Z$. Moreover, it has very few quotients, indeed all proper quotients factor through the abelianization \cite{brown87}. From the point of view of its large scale geometry, it has linear divergence \cite{divergence, divergencen}.

However, surprisingly, the actions in \cref{intro:thm:largest} are far from being the only quasi-parabolic structures on $F_n$. Our next goal is to describe the poset $\X$, which is the common isomorphism type of $\X_0$ and $\X_1$, and in particular give an idea of just how large it is. For the next statement, we recall the following terminology involving posets. In a poset $\mathcal{P}$ with partial order $\preccurlyeq$, if $x\preccurlyeq y$ we say that $x$ is \emph{dominated} by $y$. A subset $S$ of $\mathcal{P}$ is called \emph{upper} if every element of $\mathcal{P}$ dominating an element of $S$ lies in $S$. An element of $\mathcal{P}$ is the \emph{largest} element if it dominates every element, and the \emph{smallest} element if it is dominated by every element. If an element of $\mathcal{P}$ is not dominated by any other element it is \emph{maximal}, and if it does not dominate any other element it is \emph{minimal}.

\begin{thmA}[Local structure of $\Hyp(F_n)$]
\label{intro:thm:local}

The poset $\X$ has the following properties, with more precise statements given in the course of the paper.
\begin{enumerate}
	\item There exists a largest element.
	\item $\X$ contains an uncountable family of subposets $\LL_t$, indexed by $t$ from a real interval, called subposets of \emph{lamplike} hyperbolic structures.
	\item Each $\LL_t$ is upper (hence contains the largest element of $\X$), and has a smallest element, which is minimal in $\X$. These minimal elements are pairwise distinct, thus incomparable, for different values of $t$. 
	\item Each $\LL_t$ contains a copy of $\mathcal{P}(\N)$. In fact, for all $t\ne t'$, there are copies of $\mathcal{P}(\N)$ contained in each of $\LL_t \setminus \LL_{t'}$, $\LL_{t'}\setminus \LL_t$, and $\LL_t\cap \LL_{t'}$.
\end{enumerate}
\end{thmA}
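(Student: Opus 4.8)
The plan is to reduce every statement about $\X$ to a statement about confining subsets of $F_n$ (or of the base group $B$ of the relevant ascending HNN-extension), using the dictionary between hyperbolic structures dominated by the quasi-parabolic action and confining subsets that the paper sets up. For part (i), the largest element of $\X$ should be the quasi-parabolic structure from \cref{intro:thm:largest}; concretely, one exhibits the confining subset that is ``largest'' in the appropriate ordering (the subgroup $B$ itself, or a canonical cofinal confining subset), and checks that the associated action dominates every quasi-parabolic structure in $\X_0$. This is essentially \cref{intro:thm:largest} reinterpreted locally, so it is short.

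For parts (ii) and (iii), the idea is to build, for each real parameter $t$ in a suitable interval $(0,s)$, a family of confining subsets of $B$ all contained in the ``slice'' determined by $t$ — intuitively, elements supported on $[t,1]$, or on $[0,t]$ in the mirror case — and to show these assemble into a subposet $\LL_t$. The key points to verify are: (a) $\LL_t$ is \emph{upper}, i.e.\ any confining subset containing a member of $\LL_t$ is again lamplike with parameter $t$; this should follow from a monotonicity/closure property of confining subsets under enlargement, combined with the fact that the support constraint at the endpoint is preserved; (b) $\LL_t$ has a smallest element, obtained as the minimal confining subset compatible with parameter $t$ (e.g.\ a single well-chosen element together with the closure conditions forced on a confining subset), and this element is \emph{minimal in $\X$} because any strictly smaller confining subset would fail the confinement condition or would force the parameter to change; (c) the smallest elements for $t\ne t'$ are distinct and incomparable — here one reads off the parameter $t$ from the structure, e.g.\ as an asymptotic ``escape rate'' or as $\inf$ of supports, an invariant that is preserved under the domination order, so two structures with different parameters cannot be comparable. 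The lamplighter description alluded to in the abstract is the conceptual engine here: $\LL_t$ is modeled on the poset of certain hyperbolic structures on a lamplighter group $L$, and one transports known results (or the companion analysis of confining subsets of lamplighters) across this correspondence.

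For part (iv), the plan is to embed $\mathcal{P}(\N)$ into each $\LL_t$ by choosing a countable set $\{g_i\}_{i\in\N}$ of ``independent'' elements of $B$ (supported on disjoint subintervals of $[t,1]$, say, so that turning each lamp on or off is independent) and sending a subset $A\subseteq\N$ to the confining subset generated (under the closure operations) by $\{g_i : i\in A\}$. One checks this is an order embedding: $A\subseteq A'$ iff the corresponding confining subsets are nested, using disjointness of supports to see that $g_i$ for $i\notin A$ is not absorbed into the confining subset built from $\{g_j: j\in A\}$. To get the refined statement about $\LL_t\setminus\LL_{t'}$, $\LL_{t'}\setminus\LL_t$, and $\LL_t\cap\LL_{t'}$ each containing a copy of $\mathcal{P}(\N)$, one runs the same construction but with the independent elements placed so that their supports witness membership in the desired region: for $\LL_t\cap\LL_{t'}$ use elements supported on $[\max(t,t'),1]$ (compatible with both parameters); for $\LL_t\setminus\LL_{t'}$ use elements whose supports force parameter $t$ but are incompatible with $t'$, and symmetrically.

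The main obstacle I expect is part (iii): proving that the smallest element of $\LL_t$ is genuinely \emph{minimal in all of $\X$}, not merely minimal within $\LL_t$. This requires understanding exactly which confining subsets sit below a given lamplike one, and ruling out non-lamplike structures strictly dominated by it — i.e.\ one needs a classification, or at least a strong structural constraint, on confining subsets of $B$ below a fixed one, and a clean invariant (the parameter $t$) that is monotone under domination and takes distinct values off the lamplike locus. Establishing that invariant and its monotonicity — presumably via the fundamental results on confining subsets promised in the abstract — is the crux; once it is in hand, incomparability of the $\LL_t$'s minimal elements and the disjointness-of-supports embeddings of $\mathcal{P}(\N)$ are comparatively routine bookkeeping.
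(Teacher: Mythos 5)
Your high-level architecture (pass to confining subsets, identify the largest element, index lamplike subposets by a real parameter, embed $\mathcal{P}(\N)$ via disjointly supported elements) matches the paper's, and part (i) is essentially right. But there is a fundamental error in how you set up $\LL_t$, and it breaks parts (ii)--(iv). You define the ``slice'' at $t$ via elements \emph{supported on} $[t,1]$. Any confining subset $Q$ contained in the rigid stabilizer $F_n'[t,1)$ satisfies $F_n'[t,1)\preceq_\chi Q$, and $[F_n'[t,1)]$ is already the largest element of $\X$ (\cref{cor:largest}); so your slice contains exactly one equivalence class and cannot carry a copy of $\mathcal{P}(\N)$. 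The paper instead defines $\LL_t$ as the confining subsets admitting $t$ as a \emph{global fixed point} (\cref{def:lamplike:moving}): such subsets act nontrivially on both sides of $t$ but fix the whole backward orbit $t_k=t.a^k$, $k\le 0$, and the intervals $(t_k,t_{k+1})$ are the ``lamps'' shifted by conjugation by $a$ --- this is the entire source of the lamplighter structure and of the richness of $\LL_t$. Consequently the smallest element of $\LL_t$ is the fixator $F_n'[t_\bullet]$ of the whole backward orbit, not something generated by one well-chosen element, and its minimality in $\X$ (\cref{prop:LLt:minimal}) is not obtained from a monotone invariant: no ``parameter $t$'' is defined on all of $\X$ (non-lamplike structures exist, \cref{intro:thm:non_lamp}). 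The actual proof is dynamical: if $Q\supsetneq F_n'[t_\bullet]$ up to equivalence, then for each $k$ some element of $Q$ moves $t_k$, and composing these with elements of $F_n'[t_\bullet]$ pushes $t$ below every $t_k$, forcing $F_n'(t_k,1)\subseteq Q^3$ for all $k$ and hence $Q\sim F_n'$. You correctly identify this step as the crux, but the proposal does not supply an argument for it.

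The same support confusion sinks the refined part (iv). Elements supported on $[\max(t,t'),1]$ fix all of $[0,\max(t,t')]$, so (even setting aside the collapse to the largest element noted above) they would land in \emph{every} $\LL_s$, never in a complement $\LL_t\setminus\LL_{t'}$; and for the intersection they give nothing new. The paper's construction (\cref{prop:intersections}) places disjointly supported subgroups $F_n'(p_{i-1},p_i)$ on intervals accumulating at $t$ \emph{from below} inside $(x,t)$, propagates them along the backward orbit with a coordinate cutoff, and leaves the region $(t.a^{-1},x)$ unconstrained so that no point there is globally fixed. A further point you miss: because equivalence of confining subsets is containment up to conjugation by powers of $a$ (\cref{cor:conjugate}), ``$A\subseteq A'$ iff the confining subsets are nested'' is false at the level of equivalence classes; the construction only embeds $\mathcal{P}(\N)/\mathrm{Fin}$, and one needs the extra step (\cref{lem:PtildeN}) that $\mathcal{P}(\N)$ embeds into $\mathcal{P}(\N)/\mathrm{Fin}$. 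Finally, the basic statement that $\LL_t\supseteq\mathcal{P}(\N)$ is obtained in the paper by first proving the isomorphism $\LL_{(t)}\cong\Hyp(F_n^c\wr\Z)^{subst}$ and then using infinite direct sums in the lamp group $F_n^c$; your ``independent elements'' idea is the right germ, but they must live in a single lamp interval $(t_{-1},t)$ and be spread along the orbit, not in $[t,1]$.
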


See \cref{intro:fig:local} for a picture, and see Definition~\ref{def:lamplike:moving} for the official definition of ``lamplike''. Intuitively, lamplike hyperbolic structures on $F_n$ are inherited from certain hyperbolic structures on lamplighter groups, hence the name.

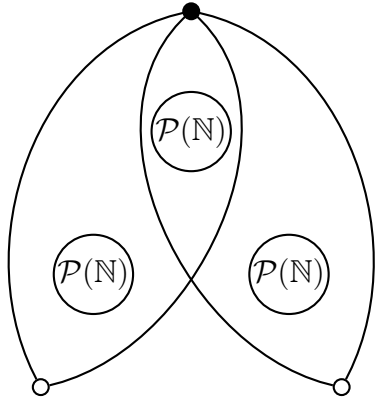
\begin{figure}[htb]
\centering
\begin{tikzpicture}[line width=0.8pt]

\draw (0,0) to[out=190,in=120] (-2,-5);
\draw (0,0) to[out=-40,in=10] (-2,-5);

\draw (0,0) to[out=-10,in=60] (2,-5);
\draw (0,0) to[out=-140,in=170] (2,-5);

\filldraw (0,0) circle (3pt);
\filldraw[white] (-2,-5) circle (3pt);
\draw (-2,-5) circle (3pt);
\filldraw[white] (2,-5) circle (3pt);
\draw (2,-5) circle (3pt);

\draw (-1.3,-3.5) circle (15pt);
\node at (-1.3,-3.5) {$\mathcal{P}(\N)$};
\draw (1.3,-3.5) circle (15pt);
\node at (1.3,-3.5) {$\mathcal{P}(\N)$};
\draw (0,-1.6) circle (15pt);
\node at (0,-1.6) {$\mathcal{P}(\N)$};

\end{tikzpicture}
\caption{The local structure of the poset $\X$, as described in \cref{intro:thm:local}. Two subposets of the form $\LL_t$ are exhibited. The largest element of $\X$, drawn as a black dot, is the largest element of each $\LL_t$. The $\LL_t$ have incomparable smallest elements, drawn as white dots, which are minimal in $\X$. The $\LL_t$ overlap in a way compatible with item (iv).}
\label{intro:fig:local}
\end{figure}

Here $\mathcal{P}(\N)$ denotes the power set of the natural numbers, with order given by inclusion. This is the prototypical example of a large poset of size at most $2^{\aleph_0}$; for instance, it contains chains and antichains of cardinality $2^{\aleph_0}$. Thus the theorem shows that $\X$ is very large and complex. Indeed, the posets $\LL_t$ are already very large, and spread out in some sense: they are upper, and their smallest elements are also minima of $\X$. Moreover, in the case of actions on trees, lamplike structures give a complete picture:

\begin{thmA}
\label{intro:thm:trees}
    Every cobounded quasi-parabolic action of $F_n$ on a simplicial tree corresponds to a lamplike hyperbolic structure, and is represented by the action on the Bass--Serre tree of a strictly ascending HNN-extension expression of $F_n$.
\end{thmA}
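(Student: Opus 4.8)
The plan is to start with an arbitrary cobounded quasi-parabolic action of $F_n$ on a simplicial tree $T$ and reverse-engineer an ascending HNN-extension structure on $F_n$ whose Bass--Serre tree recovers it. First, I would invoke the general structure theory of group actions on trees: since the action is quasi-parabolic, it fixes a unique point $\xi$ in $\partial T$ and has no other fixed point at infinity, so $F_n$ acts on $T$ with a Busemann (horofunction) quasi-character $\beta\colon F_n \to \R$ associated to $\xi$. Since the action is simplicial and cobounded, this homomorphism actually lands in (a scalar multiple of) $\Z$, so after rescaling we get a surjective homomorphism $\phi\colon F_n \to \Z$ recording ``translation length toward $\xi$.'' By \cref{intro:thm:global}, the associated lineal structure must be one of the finitely many (up to the poset) characters dominated by $\X_0$ or $\X_1$; the key point will be that $\phi$ is one of the two distinguished characters $\chi_0,\chi_1$ (or proportional to it), because these are exactly the characters whose kernel is finitely generated — equivalently, the ones arising from \emph{ascending} HNN-extensions via the Bieri--Neumann--Strebel invariant of $F_n$, which is well understood. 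I would cite the computation of $\Sigma^1(F_n)$ (the BNS invariant of $F_n$ has exactly two non-dominating points, corresponding to $[\chi_0]$ and $[\chi_1]$) to pin this down.

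Next, fix an edge $e$ of $T$ and let $H = \Stab_{F_n}(e)$ and examine the quotient graph $F_n \backslash T$: since $\phi$ is surjective onto $\Z$ and the action is cobounded with $\xi$ fixed, the quotient graph is a single loop (one orbit of vertices, one orbit of edges), so the action is the Bass--Serre tree of an HNN-extension $F_n = H \ast_{\psi}$ with stable letter $a$ satisfying $\phi(a) = 1$ and $\psi\colon H \hookrightarrow H$ the inclusion-conjugation coming from the edge identifications. That this HNN-extension is \emph{strictly ascending} — i.e.\ $\psi$ is injective but not surjective, so $H = \ker\phi$ is the ascending union $\bigcup_k a^{-k} H a^k$ — follows from quasi-parabolicity: if the extension were over an isomorphism we would get a lineal (not quasi-parabolic) action, and if it were not ascending at all the fixed point $\xi$ at infinity would not be well-defined. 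Concretely, $H = \ker\phi$ must be finitely generated (by the BNS discussion above), and $a$ conjugates $H$ properly into itself.

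Finally, I would identify this abstract HNN-extension with the concrete one described before \cref{intro:thm:largest}: for $\phi = \chi_0$, the kernel is the group of elements supported on $[t,1]$ for appropriate $t$, and a choice of stable letter is an element $a$ with slope $n$ at $0$ dragging $t$ toward $0$; an analogous statement holds for $\chi_1$. The point is that these are (up to isomorphism of the resulting splitting, hence up to equivalence of hyperbolic structures) \emph{the} strictly ascending HNN decompositions of $F_n$ with that kernel — any two choices of stable letter $a, a'$ with $\phi(a) = \phi(a') = 1$ differ by an element of $H$, which does not change the Bass--Serre tree up to $F_n$-equivariant isometry. To conclude that the structure is lamplike, I would appeal to the definition/characterization of lamplike structures given in the body of the paper (in terms of confining subsets of the lamplighter-type subgroup $\ker\phi$): the edge stabilizer $H$ together with the $\Z$-action by $a$ is precisely the data packaged as a confining subset of a lamplighter group over $\ker\phi$, so every simplicial tree action of this form lies in some $\LL_t$.

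The main obstacle I expect is the middle step: showing that the quotient graph of groups is genuinely a single ascending loop and not something with extra edges or vertices, and in particular controlling the edge and vertex stabilizers well enough to recognize them as the concrete subgroups supported on subintervals. This requires combining the coboundedness of the action (to bound the number of orbits) with a careful analysis of which subgroups of $F_n$ can arise as stabilizers of a quasi-parabolic simplicial action — essentially, that they are forced to be the finitely generated kernels appearing in the BNS invariant. The rest — the initial reduction to a character via Busemann functions, and the final identification of the HNN-extension with the standard one — should be routine given \cref{intro:thm:global} and the structure theory of $F_n$.
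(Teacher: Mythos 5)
Your overall strategy (pin down the character, extract an edge/vertex stabilizer, recognize a strictly ascending HNN-extension) is in the same spirit as the paper's proof, but there are two genuine gaps. The first and most serious is the lamplike conclusion. In the paper, ``lamplike'' is \emph{defined} by the existence of a global fixed point $t\in(0,r)$ for the confining subset (\cref{def:lamplike:moving}); the connection to lamplighters is a theorem proved later, not the definition. So after you have produced a confining subgroup $Q$ from the tree action (which the paper does carefully in \cref{prop:trees_to_subgroups}), you still must prove that $Q$ fixes a point of $(0,1)$. This is \cref{prop:contains:subgroup}, a genuinely dynamical statement: one shows that the infimum of $\{t : F_n'(t,1)\subseteq Q\}$ (which exists by \cref{prop:contains}) coincides with the supremum of the global fixed points of $Q$, using that $Q$ is a \emph{group} so that $F_n'(t,1)^q\subseteq Q^3=Q$. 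Your proposal asserts instead that ``the edge stabilizer together with the $\Z$-action is precisely the data packaged as a confining subset of a lamplighter group over $\ker\phi$,'' which simply restates the desired conclusion and skips the fixed-point argument entirely.

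The second gap is a structural confusion that would derail the HNN step. You repeatedly identify the base group $H$ of the splitting with $\ker\phi$: but in a strictly ascending HNN-extension the base is a \emph{proper} subgroup whose conjugates $a^{-k}Ha^k$ exhaust $\ker\phi$, and moreover $\ker\chi_0$ is \emph{not} finitely generated (for the kernel to be finitely generated one needs both $[\chi]$ and $[-\chi]$ in $\Sigma^1$, and $[\chi_0]\notin\Sigma^1(F_n)$). Relatedly, your closing claim that the standard decomposition with base $F_n[t,1]$ is ``the'' strictly ascending decomposition up to equivalence, and that changing the stable letter within a $\phi$-fiber does not change the tree, is false and contradicts the rest of the paper: there are uncountably many pairwise inequivalent quasi-parabolic tree actions of $F_n$ (for instance the minimal elements $[F_n'[t_\bullet]]$ of the various $\LL_t$, which are confining subgroups with discrete character and hence tree actions by \cref{prop:subgroups_to_trees}). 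The theorem only asserts that each such action is the Bass--Serre tree of \emph{some} strictly ascending HNN-expression, with base the stabilizer of a suitable vertex on the $\Z^n$-invariant line, not that they are all the standard one. Finally, the ``main obstacle'' you flag (controlling the quotient graph of groups) is real but is resolved in the paper not by showing the quotient is literally a single loop, but by choosing a vertex $w$ on the invariant line far enough toward $\xi$ that it is fixed by all stabilizers of a fundamental domain, and then checking that $\{z_0^{\pm1}\}\cup\Stab_G(w)$ is an equivalent generating set; this is the content of \cref{prop:trees_to_subgroups}.
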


Lamplike structures admit an explicit description, which essentially gets rid of all of the dynamics that is intrinsic to $F_n$. Recall that each lamplighter $\Gamma \wr \Z$ has a standard ascending HNN-extension expression, with base group $\bigoplus_{k \geq 0} \Gamma$, and endomorphism given by the shift. This induces an action on the associated Bass--Serre tree, and consequently a quasi-parabolic hyperbolic structure, which we call the \emph{standard hyperbolic structure on $\Gamma \wr \Z$}. The non-elliptic hyperbolic structures on $\Gamma \wr \Z$ that are dominated by the standard one are called \emph{substandard}, and we denote by $\Hyp(\Gamma \wr \Z)^{subst}$ the corresponding subposet. Note that the lineal action coming from the projection to $\Z$ is included in $\Hyp(\Gamma \wr \Z)^{subst}$. In the following statement, $F_n^c$ denotes the compactly supported subgroup of $F_n$, which coincides with the derived subgroup when $n = 2$, but it strictly larger otherwise.

\begin{thmA}[Posets of lamplike structures]
\label{intro:thm:lamplike}

The poset $\LL_t$ is described as follows.
\begin{enumerate}
	\item If $t$ is irrational, then $\LL_t$ is isomorphic to $\Hyp(F_n^c \wr \Z)^{subst}$. 
	\item If $t \in \Z[1/n]$, then $\LL_t$ is isomorphic to $\Hyp(F_n \wr \Z)^{subst}$.
	\item If $t \in \Q \setminus \Z[1/n]$, then there exist retractions $\LL_t \to \Hyp(\Z \wr \Z)^{subst}$ and $\LL_t \to \Hyp(F_n^c \wr \Z)^{subst}$.
\end{enumerate}
\end{thmA}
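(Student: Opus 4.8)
The plan is to run all three parts through the standard dictionary between quasi-parabolic hyperbolic structures and confining subsets. Recall that $\X_0$ is identified with the poset of equivalence classes of confining subsets of $\ker\chi_0$ under a stable letter $a$ with $\chi_0(a)=1$ (so $a$ has slope $n$ at $0$ and $a^k(t)\to 0$), and that $\LL_t$ consists of those classes admitting a $t$-lamplike representative, assembled from the subgroups $\Lambda_k:=\{f\in F_n:\Supp(f)\subseteq(q_{k+1},q_k)\}$ living in the open intervals between consecutive points $q_k:=a^k(t)$ of the $\langle a\rangle$-orbit of $t$, together with the fixed ``tail'' $B:=\{f\in F_n:\Supp(f)\subseteq[t,1]\}$, which represents the largest element $[B]$ of $\X_0$ by \cref{intro:thm:largest}. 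First I would record that $\Lambda_k=a^k\Lambda_0a^{-k}$, that these subgroups have pairwise disjoint supports, and that $a$ shifts their index, so that the \emph{lamplighter subgroup} $L_t:=\langle\Lambda_0,a\rangle\leq F_n$ is isomorphic to $\Lambda_0\wr\Z$. The decisive local computation is the isomorphism type of the base lamp $\Lambda_0$, the group of elements of $F_n$ supported in the open interval $(a(t),t)$: if $t\notin\Z[1/n]$ then neither endpoint of this interval lies in $\Z[1/n]$, so no element of $\Lambda_0$ can have a breakpoint at an endpoint, and since such an element has slope $1$ just outside $(a(t),t)$ it must have slope $1$ just inside as well, hence fixes neighborhoods of both endpoints; thus $\Lambda_0$ is the directed union of the copies of $F_n$ supported on the $n$-adic subintervals exhausting $(a(t),t)$, which is isomorphic to $F_n^c$. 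If instead $t\in\Z[1/n]$, then $a(t),t\in\Z[1/n]$, breakpoints at the endpoints are permitted, and $\Lambda_0$ is the full group of $n$-adic piecewise-linear homeomorphisms of $[a(t),t]$, whence $\Lambda_0\cong F_n$.

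For parts (i) and (ii) I would then prove that the inclusion $L_t\hookrightarrow F_n$, combined with the fixed tail $B$, induces a poset isomorphism $\Hyp(L_t)^{subst}\xrightarrow{\ \sim\ }\LL_t$. On the lamplighter side, confining subsets of $L_t=\Lambda_0\wr\Z$ (relative to the projection to $\Z$ and the shift) are encoded by decreasing sequences of subsets of $\Lambda_0$ satisfying a bounded-drift condition, the standard structure being the constant sequence; on the $F_n$ side a $t$-lamplike confining subset is, by construction, built from $B$ together with precisely such a sequence of ``states'' in the lamps $\Lambda_k$. One checks that the resulting subset of $F_n$ is confining --- the conjugation and drift conditions survive because the lamps $\Lambda_k$ with $k\geq 0$ have support disjoint from, and those with $k\leq -1$ support contained in, that of $B$, while the generation condition is automatic since $\langle B,a\rangle=F_n$ is the ambient ascending HNN-extension --- that the assignment is order-preserving and matches the standard structure of $L_t$ with $[B]$ and the lineal structure of $L_t$ with the least element of $\LL_t$, and that it is injective on equivalence classes by restriction to $L_t$. \emph{The crux, which I expect to be the main obstacle, is surjectivity}: one must show that every $t$-lamplike confining subset of $F_n$ is equivalent to one arising in this way, i.e.\ prove a normal-form result to the effect that, up to the equivalence relation, a lamplike confining subset splits as a lamplighter part supported on the $\Lambda_k$ times the fixed tail $B$. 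This is where I would appeal to the general structural results on confining subsets established earlier in the paper.

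For part (iii), where $t\in\Q\setminus\Z[1/n]$, the lamp $\Lambda_0$ is again isomorphic to $F_n^c$ exactly as in (i), but now the orbit points $q_k$ are rational and non-$n$-adic, so $F_n$ contains \emph{germ elements} $h_k$ fixing $q_k$ with slope $n^d$ on both sides --- where $d$ is the multiplicative order of $n$ modulo the prime-to-$n$ part of the denominator of $t$, and only such slopes occur since there can be no breakpoint at $q_k$. These elements lie outside $L_t$ but do take part in $t$-lamplike structures: the relevant lamplighter-like subgroup is now $M_t:=\langle\Lambda_0,h_0,a\rangle$, whose ``lamp'' is an extension $1\to F_n^c\to\Lambda_0'\to\Z\to 1$ recording the germ. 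I would obtain the two retractions as follows. The homomorphism $M_t\to\Z\wr\Z$ that kills every $F_n^c$-lamp and sends $h_k$ to the $k$-th generator is well-defined (the relations involving the $\Lambda_k$ all become trivial) and split, with section the inclusion $\langle h_0,a\rangle\cong\Z\wr\Z$; pushing confining subsets forward along it and pulling back along the section yields a retraction $\LL_t\to\Hyp(\Z\wr\Z)^{subst}$. In the other direction, the inclusion $F_n^c\wr\Z=\langle\Lambda_0,a\rangle\hookrightarrow F_n$, together with the tail $B$, realizes $\Hyp(F_n^c\wr\Z)^{subst}$ as the sub-poset of $\LL_t$ of structures ``not involving the germs,'' and discarding the germ contributions from a lamplike confining subset is an order-preserving retraction onto it. As in part (ii), the real work is checking well-definedness on equivalence classes and that these maps are genuine order-retractions; the new subtlety is that, since the extension $1\to F_n^c\to\Lambda_0'\to\Z\to 1$ does not split compatibly with the lamp structure, the second retraction cannot be read off from a group homomorphism $M_t\to F_n^c\wr\Z$ and must be constructed directly at the level of confining subsets.
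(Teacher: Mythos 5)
Your proposal follows essentially the same route as the paper: the same decomposition of a lamplike confining subset into lamps living in the intervals between consecutive $\langle a\rangle$-orbit points of $t$ plus the fixed tail $F_n'[t,1)$, the same arithmetic trichotomy for the isomorphism type of the lamp (elements supported in the open interval must fix neighborhoods of non-$n$-ary endpoints, giving $F_n^c$; for $n$-ary endpoints one gets $F_n$ via the Bieri--Strebel conjugation), and in the rational non-$n$-ary case the same two retractions, one induced by the germ/slope homomorphisms with the section given by commuting germ elements $h_0^{a^k}$, the other given by intersecting with the germ-free subgroup. The ``normal-form'' step you correctly single out as the crux is exactly the paper's split-representative lemmas (\cref{lem:splitreps:thompson}, \cref{lem:splitreps:isolated}): every $Q\in\LL_t$ is equivalent to $Q\cdot F_n'[t,1)$, proved precisely with the general tools you cite (\cref{prop:contains}, \cref{cor:conjugate}, \cref{lem:products}), so your deferral is justified. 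One small ingredient you omit: to get surjectivity onto $\Hyp(F_n^c\wr\Z)^{subst}$ you must realize an arbitrary prescribed lamp configuration by an element of the \emph{derived} subgroup (or of $\ker\chi_0\cap\ker\chi_1$), which requires a correction term supported in a spare interval; the paper does this via proximality in \cref{lem:Fnt:isolated:iso}.
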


This theorem allows us to understand $\LL_t$ in terms of a specific subposet of hyperbolic structures on a lamplighter, depending on arithmetic properties of $t$. One should not necessarily view \cref{intro:thm:lamplike} as a complete description of $\LL_t$, but rather as a bridge from the complicated dynamical world of Thompson groups to the more concrete and combinatorial setting of lamplighters. The fact that only the substandard hyperbolic structures play a role here is also interesting, since this indicates that Thompson groups are actually better behaved than lamplighters when it comes to hyperbolic structures. Indeed, while for finite cyclic bases the hyperbolic structures on lamplighters are very limited and can be entirely classified \cite{sahana:wreath}, when the base is infinite the poset of hyperbolic structures outside the subposet of substandard ones can be wild, as we will see in \cref{sec:lamplighters}.

\medskip

This connection to lamplighters is a special feature of the lamplike structures; it turns out that there are other, non-lamplike structures that are intrinsically dynamical:

\begin{thmA}
\label{intro:thm:non_lamp}
There exists a copy of $\mathcal{P}(\N)$ in $\X$ consisting entirely of non-lamplike structures.
\end{thmA}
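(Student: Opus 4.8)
The plan is to phrase everything inside $\X_0$ (which realizes the isomorphism type $\X$) via the dictionary, developed in the earlier sections, between quasi-parabolic structures on $F_n$ and equivalence classes of confining subsets: such a structure is represented by a subset $Q$ of a suitable base group $B\le F_n$ (homeomorphisms supported away from the origin) that is confining under the automorphism $\phi$ induced by the stable letter $a$, with the order $\preccurlyeq$ corresponding to reverse containment up to equivalence, so that \emph{enlarging} a confining subset moves one \emph{down} in $\X$. The key preliminary observation is that \emph{non-lamplikeness is monotone under enlargement}. Indeed, recall that a structure is lamplike precisely when it lies in one of the subposets $\LL_t$; by \cref{intro:thm:local} each $\LL_t$ is upper and has a smallest element $m_t$, hence $\LL_t=\{x\in\X : m_t\preccurlyeq x\}$, so the lamplike structures are exactly those dominating some $m_t$, and therefore the non-lamplike structures form a down-set of $\X$. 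In confining-subset terms: if $Q\subseteq Q'$ up to equivalence and $Q$ is non-lamplike, then so is $Q'$.

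Given this, it suffices to carry out two steps: (a) exhibit a single confining subset $Q_\ast$ whose structure is quasi-parabolic and \emph{not} lamplike; and (b) build, on top of $Q_\ast$, a $\mathcal{P}(\N)$-indexed family of confining subsets each containing $Q_\ast$ and pairwise inequivalent, organized so that they form a copy of $\mathcal{P}(\N)$ in $\X$. By (a) together with the monotonicity observation, every member of the family produced in (b) is automatically non-lamplike, which is exactly the conclusion of the theorem.

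Step (a) is the genuinely new, ``intrinsically dynamical'' part, and I expect it to be the main obstacle. The point is to make $Q_\ast$ \emph{delocalized}. By \cref{intro:thm:lamplike}, each minimal lamplike structure $m_t$ is the standard hyperbolic structure on a lamplighter ``based at the single point $t$'', and the maximal confining subset $M_t$ representing $m_t$ records, up to equivalence, only the $\phi$-dynamics of $F_n$ near $t$. Thus a confining subset that genuinely exploits the homeomorphism dynamics of $F_n$ near two (or more) points of $(0,1)$ whose $\phi$-orbits cannot be conflated will not be contained, up to equivalence, in any single $M_t$, and hence will dominate no $m_t$, i.e.\ will be non-lamplike. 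Concretely I would choose two points $p\ne q$ in $(0,1)$ in suitably independent positions relative to $\phi$, take two families of homeomorphisms supported near $p$ and near $q$ respectively, and let $Q_\ast$ be the confining subset they generate; then I would verify that $Q_\ast$ is confining and proper (so that its structure is quasi-parabolic, not lineal or elliptic), and — the crux — that $Q_\ast\not\subseteq M_t$ up to equivalence for every $t$. This last point requires a precise enough description of the $M_t$ to say what it means to be ``supported, up to $\phi$-iteration, near $t$'', together with a choice of $p$ and $q$ (in different arithmetic classes, or simply generic) for which the relevant orbits are genuinely independent; this is where the dynamics of $F_n$ on the interval does the real work.

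For step (b) I would fix an independent family of gadgets $g_0,g_1,\dots\in B$, supported near a sequence of points distinct from $p,q$ and at $\phi$-staggered scales, chosen so that the $\phi$-orbit of $g_i$ cannot be absorbed, up to equivalence, into the confining subset generated by $Q_\ast\cup\{g_j : j\ne i\}$. For $S\subseteq\N$, let $Q_S$ be the confining subset generated by $Q_\ast\cup\{g_i : i\in S\}$; routine closure-and-exhaustion bookkeeping shows each $Q_S$ is confining and proper, hence quasi-parabolic. Then $S\subseteq S'$ forces $Q_S\subseteq Q_{S'}$ and so $[Q_{S'}]\preccurlyeq[Q_S]$, while the independence of the gadgets yields the converse; thus $S\mapsto[Q_S]$ is an order-reversing injection of $\mathcal{P}(\N)$ into $\X$, and since $\mathcal{P}(\N)$ is anti-isomorphic to itself the image is a copy of $\mathcal{P}(\N)$. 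As each $Q_S$ contains $Q_\ast$, every $[Q_S]$ is non-lamplike by the monotonicity of the first step, finishing the proof. This independence argument is of the same flavour as the embeddings of $\mathcal{P}(\N)$ into the $\LL_t$ already carried out in the paper, the only extra care being to keep the gadget supports clear of $p$ and $q$ so that the presence of $Q_\ast$ does not interfere; so the weight of the proof genuinely rests on constructing and analysing the delocalized seed $Q_\ast$ in Step (a).
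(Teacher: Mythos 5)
Your structural reduction is correct and worth noting: since each $\LL_t$ is upper (\cref{lem:LLt:upper}) with smallest element $[F_n'[t_\bullet]]$ (\cref{lem:LLt:largestsmallest}), the lamplike classes form an upper set and the non-lamplike ones a down-set, so it would indeed suffice to exhibit one non-lamplike strictly confining seed $Q_\ast$ and a $\mathcal{P}(\N)$-family of enlargements of it. But the construction you defer to Step (a) is not merely the hard part --- as sketched, it cannot work, for two concrete reasons. First, ``the confining subset generated by'' a collection of gadgets has no meaning here other than the subgroup they generate (closed under the relevant conjugations), and that route is closed off by \cref{prop:contains:subgroup}: every strictly $\chi$-confining \emph{subgroup} has a global fixed point, i.e.\ is lamplike. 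Worse, any confining subgroup containing a non-lamplike confining subset has no global fixed point and hence equals $F_n'$, so the same objection kills Step (b): the sets $\langle Q_\ast\cup\{g_i:i\in S\}\rangle$ would all collapse to $F_n'$. A non-lamplike confining subset must therefore be cut out by \emph{constraints} rather than generated, and then verifying \StayInConf\ and especially \ProdConf\ for a non-subgroup set is exactly where the work lies. Second, non-lamplike means \emph{no} global fixed point whatsoever, not merely that two designated points $p,q$ move. Gadgets supported near $p$ and near $q$ (even together with their backward $a$-orbits and the obligatory $F_n'[s,1)$) leave almost every point of $(0,s)$ fixed by everything, so your $Q_\ast$ would be lamplike after all. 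The seed must collectively displace every point of the interval while still being proper, and reconciling these two demands is the actual content.

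The paper's construction resolves this tension as follows (\cref{def:nonlamplike}): it fixes a discrete sequence $\tau\subseteq(0,r)$ accumulating at $0$ and compatible with the $a$-dynamics ($\tau_j.a^{-i}=\tau_{2^ij}$), and takes $Q_S$ to be all of $F_n'$ \emph{except} those elements displacing some constraint point $\tau_s$ past its neighbors. Properness comes from the displacement bound; the absence of global fixed points comes from discreteness of $\tau$ (near any $t$ there is at most one constraint point, so $Q_S\supseteq F_n'(x,y)$ for a neighborhood of $t$); \ProdConf\ requires the index set to be pre-closed under the operations of \cref{def:Stilde}; and distinguishing $Q_S$ from $Q_R$ (\cref{prop:nonlamplike:compare}) needs the indices drawn from a sparse set of odd numbers whose dyadic rescalings cannot be conflated (\cref{lem:goododdnumbers}). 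None of these mechanisms is anticipated in your proposal, and your ``independence of gadgets'' in Step (b) would face the same difficulties. So while the down-set observation is a genuine (and in the paper unused) simplification of the non-lamplikeness checks, the proof's weight rests entirely on a construction that your sketch points away from rather than toward.
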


\medskip

All in all, the structure of $\Hyp(F_n)$ is extremely intricate. This is more reminiscent of the situation for acylindrically hyperbolic groups \cite{ABO}, and is in stark contrast with other groups which one might expect to be analogous to $F_n$: on the one hand with certain finitely presented groups without free subgroups, whose poset of hyperbolic structures is much smaller \cite{sahana:wreath, AR:BS, confining, ABR2}, and on the other hand with the circle and Cantor counterparts of $F_n$ such as Thompson's groups $T$ and $V$ \cite{NL}, which are still finitely presented but have free subgroups.
Even the derived subgroup $F_n'$, which is not finitely generated but is very close to $F_n$ dynamically, has trivial poset of hyperbolic structures \cite{NL}. One might also wonder about the braided analogs of Thompson's groups, as in \cite{brin07,dehornoy06,brady08}, but these are substantially different, by virtue of admitting general-type hyperbolic actions \cite{fff_lodha_zar}.

\medskip

Another group that is very closely related to $F_n$ is a group that we denote by $F_n^{\pm}$. This is defined the same way as $F_n$, but it also allows orientation-reversing homeomorphisms. It splits as a semidirect product $F_n \rtimes \Z/2\Z$. However, its poset of hyperbolic structures is completely different:

\begin{thmA}[Hyperbolic structures on $F_n^{\pm}$]
\label{intro:thm:reverse}
The poset $\Hyp(F_n^{\pm})$ has the following structure.
\begin{enumerate}
   \item The poset of general type structures $\Hyp_{gt}(F_n^{\pm})$ is empty.
   \item The poset of quasi-parabolic structures $\Hyp_{qp}(F_n^{\pm})$ is empty.
   \item The poset of lineal structures $\Hyp_{\ell}(F_n^{\pm})$ is an antichain. The subposet of oriented lineal structures $\Hyp_{\ell}^+(F_n^{\pm})$ has size $1$ for $n = 2$, and is uncountable for $n > 2$. Its complement has size $1$ for $n = 2,3$, and is uncountable for $n > 3$;
   \item The poset of elliptic structures $\Hyp_e(F_n)$ is reduced to the smallest element, which is dominated by every lineal structure.
\end{enumerate}
\end{thmA}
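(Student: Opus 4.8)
The plan is to treat the four items in increasing order of difficulty; the heart of the matter is item (iii), and inside it a computation in the homology of $F_n$. Item (iv) is formal and holds for any group: a cobounded action with bounded orbits is an action on a bounded space, so $\Hyp_e$ is a single point, and the trivial action is dominated by every action --- exactly as for $F_n$. For item (i), recall that $F_n$ is a normal subgroup of index $2$ in $F_n^{\pm}$. If $F_n^{\pm}$ acted coboundedly and of general type on a hyperbolic space $X$, then $F_n$ would act on $X$ coboundedly as well (a finite-index subgroup of a coboundedly acting group acts coboundedly); its limit set would be a nonempty closed $F_n^{\pm}$-invariant subset of the limit set of $F_n^{\pm}$, hence equal to it by minimality of a general-type action; and a short argument (a finite-index subgroup fixing a boundary point forces the ambient group to fix a finite subset at infinity) would show the restricted action is again of general type, contradicting \cref{intro:thm:global}(i). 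Hence $\Hyp_{gt}(F_n^{\pm})=\emptyset$.

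The main novelty is item (ii). Suppose $F_n^{\pm}$ acts coboundedly and quasi-parabolically on a hyperbolic space $X$, fixing the unique point $\xi\in\partial X$. Restricting to $F_n$ gives a cobounded action fixing $\xi$, which by \cref{intro:thm:global} is not of general type and not elliptic (otherwise $X$ would be bounded, contradicting quasi-parabolicity of the $F_n^{\pm}$-action), hence is lineal or quasi-parabolic. Now the flip $\sigma\colon x\mapsto 1-x$, viewed simultaneously as the order-two automorphism $c_\sigma$ of $F_n$ and as an isometry of $X$, makes the restricted $F_n$-action $F_n$-equivariantly isometric (via $\sigma$ itself) to its precomposition with $c_\sigma$; hence the poset automorphism $c_\sigma^{*}$ of $\Hyp(F_n)$ fixes the class of the restricted action. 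But $\sigma$ interchanges the two endpoints of $[0,1]$, hence the two ascending HNN-extension expressions of $F_n$, hence the two incomparable posets $\X_0$ and $\X_1$; so $c_\sigma^{*}$ fixes no quasi-parabolic structure on $F_n$. Therefore the restricted action is lineal: $F_n$ then contains a loxodromic element, which fixes $\xi$ (as all of $F_n$ does) and exactly the two points of the limit set, so the limit set is $\{\xi,\eta\}$ for some $\eta$. Since $F_n\trianglelefteq F_n^{\pm}$ and $\sigma$ is an isometry, $\{\xi,\eta\}$ is $\sigma$-invariant; as $\sigma$ fixes $\xi$ it fixes $\eta$, so $F_n^{\pm}$ fixes both $\xi$ and $\eta$, contradicting quasi-parabolicity. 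Hence $\Hyp_{qp}(F_n^{\pm})=\emptyset$.

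For item (iii): by (i) and (ii) every non-elliptic cobounded hyperbolic action of $F_n^{\pm}$ is lineal, and $\Hyp_\ell$ is an antichain for any group (distinct lineal structures are never comparable: within the oriented, resp.\ unoriented, class the associated quasimorphisms must be proportional, and across the two classes a domination would carry a reflection, which acts elliptically, to a loxodromic element), so the lineal structures are exactly the maximal elements of $\Hyp(F_n^{\pm})$. I would count them via the standard description of lineal structures. For the oriented ones I would use that every homogeneous quasimorphism of $F_n^{\pm}$ is a homomorphism: this follows from the analogous fact for $F_n$ (established in the course of proving \cref{intro:thm:global}), because a homogeneous quasimorphism of $F_n^{\pm}$ restricts to a homogeneous quasimorphism of $F_n$, hence to a homomorphism, and a homogeneous quasimorphism that is a homomorphism on the index-two subgroup $F_n$ is a homomorphism (a short direct argument). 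So the oriented lineal structures are parametrized by $\mathbb{P}(\Hom(F_n^{\pm},\R))$. For the unoriented ones, the standard description gives pairs $(H,[q])$ with $H\le F_n^{\pm}$ of index $2$ and $q$ a nonzero homogeneous quasimorphism on $H$, taken up to nonzero scalar, satisfying $q(ghg^{-1})=-q(h)$ for $g\notin H$; if $H\neq F_n$ then some element of $F_n$ acts orientation-reversingly on the line, so the restricted $F_n$-action is cobounded unoriented lineal, contradicting \cref{intro:thm:global}(iii). Hence $H=F_n$ and $q$ is a character $\chi$ with $\chi\circ c_\sigma=-\chi$. Writing $V:=H_1(F_n;\R)\cong\R^n$, the oriented structures are then identified with $\mathbb{P}(V^{\sigma})$ and the unoriented ones with $\mathbb{P}(V^{-\sigma})$, where $V^{\pm\sigma}$ denote the $\pm1$-eigenspaces of $\sigma$ on $V$, so $\dim V^{\sigma}+\dim V^{-\sigma}=n$. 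The whole statement then reduces to the claim that $\dim V^{\sigma}=\lceil n/2\rceil$ and $\dim V^{-\sigma}=\lfloor n/2\rfloor$: indeed $\mathbb{P}(V^{\sigma})$ is a single point for $n=2$ and uncountable for $n\ge 3$, while $\mathbb{P}(V^{-\sigma})$ is a single point for $n\in\{2,3\}$ and uncountable for $n\ge 4$, which is precisely (iii).

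Proving this eigenspace claim is what I expect to be the main obstacle. I would start from the computation of $H_1(F_n)$ (due to Brown): it is free of rank $n$, and the surjection onto the germs at the endpoints splits it as $\Z^2\oplus\Z^{n-2}$, where $\sigma$ swaps the two germ coordinates --- contributing one $+1$ and one $-1$ eigenvalue --- and preserves the ``internal'' summand, which is the image of the compactly supported subgroup $F_n^c$. The crux is then to determine the action of $\sigma$ on $H_1(F_n^c)\otimes\R\cong\R^{n-2}$ and to show that its fixed subspace has dimension $\lceil (n-2)/2\rceil$, so that $\dim V^{\sigma}=1+\lceil (n-2)/2\rceil=\lceil n/2\rceil$. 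Concretely this amounts to exhibiting $n$ explicit characters of $F_n$ whose classes form a basis of $H^1(F_n;\R)$ and which $\sigma^{*}$ permutes (up to sign) by an involution with $\lceil n/2\rceil$ fixed ``symmetric'' combinations --- most naturally an order-reversing permutation $\chi_i\leftrightarrow\chi_{n-1-i}$ for a suitable labeling --- and then dualizing back to $H_1$. Once the eigenspace dimensions are established, items (i)--(iv) assemble into \cref{intro:thm:reverse}.
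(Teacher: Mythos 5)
Your overall architecture matches the paper's: (iv) is formal, (i) reduces to the absence of general type actions of $F_n$ (the paper gets this in one line from the absence of free subgroups, whereas you restrict the action and compare limit sets -- more work but correct), (ii) reduces via the index-$2$ restriction to the fact that conjugation by $x\mapsto 1-x$ swaps $\X_0$ and $\X_1$ (the paper phrases this as $p|_{F_n}$ lying in the $+1$ eigenspace of $\alpha$ on $H^1(F_n;\R)$ while being equivalent to $\chi_0$ or $\chi_1$; your version at the level of hyperbolic structures is equivalent), and (iii) reduces to computing the dimensions of the $\pm 1$ eigenspaces of the flip on $H^1(F_n;\R)$. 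One remark on (iii): the parametrization of non-oriented lineal structures by pairs $(\varepsilon,[p])$, and in particular the uniqueness needed for the ``size $1$'' claims, is not taken off the shelf in the paper but proved there (\cref{prop:nonoriented:classification}); you should at least flag that this uniqueness requires an argument.

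The genuine gap is the eigenspace computation itself, which you correctly identify as the crux and then defer. The point is not just that it is unproven, but that the form you guess for the involution is wrong in a way that changes the answer. The paper works with characters $\chi_0,\chi_1,\psi_0,\dots,\psi_{n-2}$ subject to $\psi_0+\cdots+\psi_{n-2}=\chi_0-\chi_1$, and the flip acts by $\chi_0\leftrightarrow\chi_1$ together with $\psi_i\mapsto -\psi_{(n-2)-i}$ (\cref{lem:eigenspaces}): there is a sign on the internal characters. Your sketch proposes ``an order-reversing permutation $\chi_i\leftrightarrow\chi_{n-1-i}$'' with the \emph{symmetric} combinations fixed, i.e., an unsigned permutation. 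For $n$ even the two conventions happen to give the same eigenspace dimensions, but for $n$ odd they do not: with an unsigned order-reversing involution on the $\psi_i$ (and the relation, which is then $\alpha$-invariant rather than anti-invariant, killing a dimension of the $+1$ eigenspace), one computes $\dim V^\sigma=\lfloor n/2\rfloor$ and $\dim V^{-\sigma}=\lceil n/2\rceil$ -- exactly the transpose of the truth. Concretely, for $n=3$ your guessed involution would predict a unique oriented lineal structure and uncountably many non-oriented ones, whereas the theorem asserts the opposite. So the sign in $\psi_i\mapsto-\psi_{(n-2)-i}$, which ultimately traces back to the fact that the flip reverses orientation and hence negates the ``area-type'' characters of $F_n^c$, is not a detail one can leave to be filled in by symmetry considerations; it is the one input that makes the counts in (iii) come out as stated, and your proposal as written points toward the wrong value of it.
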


\begin{figure}[htb]
\centering
\begin{tikzpicture}[line width=0.8pt]

\draw (0,0) -- (4,-2) -- (8,0);

\filldraw[white] (4,-2) circle (8pt);
\draw (4,-2) circle (8pt);
\node at (4,-2) {$*$};
\node at (-2.5,-2) {$\Hyp_e(F^{\pm})$:};

\filldraw[white] (0,0) circle (16pt);
\draw (0,0) circle (16pt);
\node at (0,1) {$[\pm(\chi_0 + \chi_1)]$};
\node at (0, 0) {$+$};

\filldraw[white] (8,0) circle (16pt);
\draw (8,0) circle (16pt);
\node at (8, 1) {$[\pm(\chi_0 - \chi_1)]$};
\node at (8, 0) {$-$};
\node at (-2.5,0) {$\Hyp_\ell(F^{\pm})$:};
\end{tikzpicture}

\caption{The poset $\Hyp(F^{\pm})$, as described in \cref{intro:thm:reverse}.}
\label{intro:fig:reverse}
\end{figure}

At several points in the paper, the proofs will become more complicated in case $n > 2$, however this is the only result where the structure itself is more complicated for larger $n$. The most interesting case is the one where $n = 2$, pictured in \cref{intro:fig:reverse}: taking a semidirect product by an order-$2$ automorphism, it is possible to make an extremely rich poset of hyperbolic structures collapse to just three elements. It is a major open problem whether this type of behavior can also occur in acylindrically hyperbolic groups \cite[Question 2]{erratum}.

\medskip

In order to prove our results, we first use some general algebraic arguments akin to those of \cite{ABO, confining, NL} to reduce the understanding of $\Hyp(F_n)$ to the understanding of \emph{confining subsets} relative to certain characters. This framework was introduced in \cite{caprace15} for groups of the form $H \rtimes \Z$, and extended in \cite{confining} to groups of the form $H \rtimes \Z^n$, which is our setting: $F_n \cong F_n' \rtimes \Z^n$. Given a character $\rho \colon \Z^n \to \R$, seen as a character of $F_n$, a $\rho$-confining subset is a set $Q \subseteq F_n'$ satisfying some conditions similar to those of the base of an ascending HNN-extension; see Definition~\ref{def:confining} for the precise definition.

In the course of proving our results about $\Hyp(F_n)$, we prove some new, general results about posets of hyperbolic structures and confining subsets that seem to be fundamental to the theory. Let us emphasize some of these here, deferring explanation of the notation and terminology for later. Let $G = H \rtimes \Z^n$ and let $\rho \colon \Z^n \to \R$ be a non-trivial character.
\begin{itemize}
    \item (\cref{cor:conjugate}) Let $Q_1,Q_2\subseteq H$ be $\rho$-confining subsets, and let $a\in \Z^n$ such that $\rho(a)>0$. Then $Q_1\preceq_\rho Q_2$ if and only if $Q_2^{a^k}\subseteq Q_1$ for some $k\ge 0$. (This establishes a simple algebraic description of the poset of confining subsets.)
    \item (\cref{prop:join}) The poset $\Conf^\ast_\rho(G)$ of strictly $\rho$-confining subsets is a join semilattice.
    \item (Propositions \ref{prop:subgroups_to_trees} and \ref{prop:trees_to_subgroups}). A $\rho$-confining subset $Q \subseteq H$ corresponds to a simplicial action on a simplicial tree if and only if $Q$ is equivalent to a subgroup and $\rho$ is discrete. Moreover, up to equivalence, this is the Bass--Serre tree of an ascending HNN-extension expression of $G$.
\end{itemize}

\subsection*{Outline} This paper is organized as follows. In \cref{sec:actions} we recall some background on hyperbolic actions and the poset $\Hyp(G)$ of hyperbolic structures on a group $G$. In \cref{sec:confining} we recall the notion of confining subsets, and also prove some new, general results. In \cref{sec:thompson} we discuss the groups $F_n$, and pin down some group theoretic and dynamical properties that will be useful for understanding confining subsets. We prove our first main result, \cref{intro:thm:global}, in \cref{sec:global}, with the key step being that there are no strictly $\rho$-confining subsets for $\rho \neq \chi_0, \chi_1$, two distinguished characters of $F_n$, corresponding to the slopes at $0$ and $1$ respectively. 

After two detours, first in \cref{sec:reversing} getting a full computation of $\Hyp(F_n^\pm)$, specifically proving \cref{intro:thm:reverse}, and second in \cref{sec:lamplighters} establishing results about confining subsets in lamplighter groups, the rest of the paper consists of a description of $\chi_0$-confining subsets of $F_n$ (and analogously for $\chi_1$). In \cref{sec:largest} we prove \cref{intro:thm:largest} on the two maximal elements of $\Hyp(F_n)$, introduce lamplike structures, and prove \cref{intro:thm:trees}. In \cref{sec:lamplike} and \cref{sec:lamplike:plural} we analyze the lamplike subposets $\LL_t$ of $\Hyp(F_n)$, consisting of those confining subsets that admit $t$ as a global fixed point, and prove \cref{intro:thm:local} and \cref{intro:thm:lamplike} connecting them to lamplighters. In \cref{sec:moving} we prove \cref{intro:thm:non_lamp} about non-lamplike confining subsets. Finally, we conclude in \cref{sec:questions} with some natural questions that emerge.

\subsection*{Acknowledgments} The authors are indebted to Matt Brin for pointing out references, and to Simon Machado for illuminating discussions about confining subsets in lamplighters. They thank Matt Brin, Jack Button, Anthony Genevois, and Henry Wilton for useful comments on a preliminary version. FFF is supported by the Herchel Smith Postdoctoral Fellowship Fund. MZ is supported by grant \#635763 from the Simons Foundation. FFF and MZ thank the organizers of the conference \emph{Topological and Homological Methods in Group Theory} held in Bielefeld in March 2024, where part of this work was done.

\section{Hyperbolic actions}\label{sec:actions}

In this section we recall some background on hyperbolic actions, the poset of hyperbolic structures, and Busemann pseudocharacters.

\subsection{General hyperbolic actions}\label{ssec:general_hyp_actions}

Let us recall the general classification of hyperbolic actions, where by a \emph{hyperbolic action} we mean an action of a group $G$ by isometries on a hyperbolic metric space $X$. An element $g\in G$ is called \begin{enumerate}
    \item \emph{elliptic} if $\langle g\rangle$ has bounded orbits, \item \emph{loxodromic} if the map $n\mapsto g^n x$ is a quasi-isometric embedding from $\Z$ to $X$ for some (equivalently every) $x\in X$, and 
    \item \emph{parabolic} if it is neither elliptic nor loxodromic. 
    \end{enumerate}
    A loxodromic element $g$ fixes precisely two points in the (Gromov) boundary $\partial X$ of $X$, namely the limits at $+\infty$ and $-\infty$ of the sequence $g^n x$ for some (any) $x\in X$. A parabolic element has exactly one fixed point in $\partial X$. 

Let $\Lambda(G)$ denote the set of limit points of $G$ in $\partial X$. More precisely, $\Lambda(G)$ is the intersection of $\partial X$ with the closure in $X\cup\partial X$ of some (equivalently any) $G$-orbit $G.x$. Now hyperbolic actions are classified as follows; this is essentially due to Gromov \cite[Section~8.2]{gromov87}, with the level of generality due to Hamann \cite{hamann17}, and we will use the phrasing from \cite[Theorem~2.7]{confining}.

\begin{cit}
Let $G$ be a group acting by isometries on a hyperbolic space $X$. Then precisely one of the following holds:
\begin{enumerate}
    \item $|\Lambda(G)|=0$, in which case the action is called \emph{elliptic}.
    \item $|\Lambda(G)|=1$, in which case the action is called \emph{parabolic} (or \emph{horocyclic}).
    \item $|\Lambda(G)|=2$, in which case the action is called \emph{lineal}.
    \item $|\Lambda(G)|=\infty$, in which case the action is called \emph{non-elementary}.
\end{enumerate}
\end{cit}

Let us collect some facts about such actions. An action being elliptic is equivalent to having bounded orbits. A parabolic action cannot be cobounded. An action being lineal is equivalent to saying that at least one loxodromic element exists and all loxodromic elements share the same pair of limit points in $\partial X$. We call a lineal action \emph{oriented} if the two limit points are fixed, and \emph{non-oriented} if some element of the group can swap them. Non-elementary actions further split into the following two cases:

\begin{definition}[Quasi-parabolic, general type]
Let $G$ be a group acting non-elementarily on a hyperbolic space $X$. If $G$ fixes a (necessarily unique) point of $\partial X$, or equivalently if all loxodromic elements have one fixed point in $\partial X$ in common, then we call the action \emph{quasi-parabolic} (or \emph{focal}). If $G$ does not fix any points in $\partial X$ then we call the action \emph{of general type}.
\end{definition}

Note that if a group admits a general type action on a hyperbolic space, then a standard ping-pong argument shows that the group contains non-abelian free subgroups.

\subsection{The poset of cobounded hyperbolic actions}\label{ssec:poset}

Given a group, one can inspect all of its cobounded hyperbolic actions, and ask whether there is some structure relating them. In \cite{ABO}, the first author together with Abbott and Osin introduced the poset $\Hyp(G)$ of (equivalence classes of) hyperbolic actions of $G$. Let us recall the construction of $\Hyp(G)$ now.

We will denote an action of $G$ on a set $X$ using the notation $G \curvearrowright X$. Given two cobounded hyperbolic actions $G\curvearrowright X$ and $G\curvearrowright Y$, we say that $G\curvearrowright X$ is \emph{dominated by} $G\curvearrowright Y$, denoted $G\curvearrowright X \preceq G\curvearrowright Y$, if there exists a coarsely Lipschitz map $f\colon Y\to X$ that is coarsely $G$-equivariant with respect to the two actions. Here $f$ is \emph{coarsely Lipschitz} if it distorts distances at most linearly, and \emph{coarsely $G$-equivariant} if for all $x\in X$ there is a uniform bound on the distance from $f(gx)$ to $gf(x)$ for all $g\in G$.

The relation $\preceq$ is clearly reflexive and transitive, so it is a preorder. We thus get an induced equivalence relation on cobounded hyperbolic actions of $G$, with two actions being equivalent if they dominate each other. We denote by $[G\curvearrowright X]$ the equivalence class of $G\curvearrowright X$, and write $\preccurlyeq$ for the partial order induced by $\preceq$ on the set of equivalence classes.

\begin{definition}[The poset $\Hyp(G)$]
We denote by $\Hyp(G)$ the poset of all equivalence classes of cobounded hyperbolic actions of $G$, with the partial order $\preccurlyeq$.
\end{definition}

It turns out that, up to equivalence, every element of $\Hyp(G)$ is represented by an action of $G$ on a (hyperbolic) Cayley graph of $G$, usually with respect to an infinite generating set. Let us set things up in the language of Cayley graphs, and then tie things together. Given any two generating sets $A$ and $B$ for $G$, say that $A$ is \emph{dominated by} $B$, denoted $A\preceq B$, if there is a uniform bound on the word length in $A$ of elements of $B$. Declare that $A$ and $B$ are \emph{equivalent} if they dominate each other. For example, if $G$ is finitely generated then all its finite generating sets are equivalent to each other. Also note that $G$ itself is dominated by all generating sets. Consider equivalence classes $[A]$ of generating sets $A$ of $G$ using this equivalence relation, such that the Cayley graph $\Cay(G,A)$ is hyperbolic. Call such an $[A]$ a \emph{hyperbolic structure on $G$}. The hyperbolic structures on $G$ form a poset, with $[A]\preccurlyeq [B]$ whenever $A\preceq B$. For example, if $G$ is a hyperbolic group then the equivalence class of finite generating sets is the largest element of this poset, and for any $G$ the equivalence class $[G]$ is the smallest element. (We should point out that if $A$ and $B$ are generating sets with $A\subseteq B$ then $[B]\preccurlyeq [A]$, so the order is reversed in some sense.)

The following is immediate from Proposition~3.12 of \cite{ABO} and the discussion after this proposition.

\begin{cit}
\label{cit:cayley:representative}

The poset of hyperbolic structures on $G$ is isomorphic to $\Hyp(G)$, via the poset isomorphism $[A]\mapsto [G\curvearrowright \Cay(G,A)]$.
\end{cit}

We may now view $\Hyp(G)$ either as the poset of equivalence classes of cobounded hyperbolic actions of $G$, or as the poset of hyperbolic structures on $G$, with the two viewpoints being useful for different purposes. We may also refer to a hyperbolic structure as being \emph{represented by} a certain action, with this isomorphism being implicitly understood. 

For a generating set $A$ of $G$ with $\Cay(G,A)$ hyperbolic, since the action of $G$ on $\Cay(G,A)$ is a cobounded hyperbolic action, we know it is either elliptic, lineal, quasi-parabolic, or general type. Also, if $[A]=[B]$ then the actions of $G$ on $\Cay(G,A)$ and $\Cay(G,B)$ have the same type, so $\Hyp(G)$ decomposes as a disjoint union
\[
\Hyp(G) = \Hyp_e(G) \sqcup \Hyp_\ell(G) \sqcup \Hyp_{qp}(G) \sqcup \Hyp_{gt}(G)\text{,}
\]
where these subsets denote the sets of (equivalence classes of) actions that are elliptic, lineal, quasi-parabolic, and general type, respectively. Let us also write $\Hyp_\ell^+(G)$ for the subposet of elements of $\Hyp_\ell(G)$ represented by oriented lineal actions. Clearly $[A]\in\Hyp_e(G)$ if and only if $\Cay(G,A)$ is bounded, and all such $[A]$ equal $[G]$, so this case is uninteresting: we just have $\Hyp_e(G)=\{[G]\}$. Thus when describing $\Hyp(G)$ for a given group, we just need to understand $\Hyp_\ell(G)$, $\Hyp_{qp}(G)$, and $\Hyp_{gt}(G)$.

\subsection{Pseudocharacters}\label{ssec:pseudochars}

In this subsection we recall some background on Busemann pseudocharacters of certain hyperbolic actions.

\begin{definition}[Quasimorphism, pseudocharacter, homogenization]
Let $G$ be a group and $q\colon G\to\R$ a function. We call $q$ a \emph{quasimorphism} if there exists a constant $D$ such that
\[
|q(gh)-q(g)-q(h)|\le D
\]
for all $g,h\in G$. We say that $q$ has \emph{defect} at most $D$. Note that a quasimorphism with defect $0$ is just a homomorphism to $\R$. If a quasimorphism becomes a homomorphism upon restriction to each cyclic subgroup, we call it a \emph{pseudocharacter}. Given any quasimorphism $q$, the function $p(g)\coloneqq \displaystyle \lim_{n\to\infty}\frac{q(g^n)}{n}$ is a pseudocharacter, called the \emph{homogenization} of $q$.
\end{definition}

\begin{remark}
We will only be interested in a specific pseudocharacter, called the Busemann pseudocharacter, defined below. This is one of the few cases where the terminology ``pseudocharacter'' has stuck; in most other contexts, the more common terminology is ``homogeneous quasimorphism.''
\end{remark}

The special case of defect $0$ is important:

\begin{definition}[Character, character sphere]
A quasimorphism with defect $0$, i.e., a homomorphism $G \to \R$, is called a \emph{character} of $G$. Two characters are \emph{equivalent} if they are positive scalar multiples of each other. The set of equivalence classes of non-trivial characters of $G$ forms the \emph{character sphere} $\Sigma(G)$ of $G$. A character is \emph{discrete} if its image is cyclic.
\end{definition}

Most of the groups we will care about here have the following property:

\begin{definition}
We will call a group \emph{agreeable} if every pseudocharacter is a character.
\end{definition}

In fact, a group is agreeable if and only if every element in the commutator subgroup has vanishing stable commutator length, thanks to Bavard duality \cite{bavard91}. One of the most prominent examples of agreeable groups is the following:

\begin{cit}[{\cite[Proposition~2.65]{scl}}]\label{amenable:agreeable}
Amenable groups are agreeable. In particular, abelian groups are agreeable.
\end{cit}

A further example, which is the most relevant for our purposes, is groups of piecewise linear homeomorphisms of the interval \cite{calegari:PL}; we will return to this point later on. The pseudocharacters we will be interested in -- the so called Busemann pseudocharacters -- arise from hyperbolic actions fixing a point in the boundary, as we now describe.

\begin{definition}[Busemann pseudocharacter]
Let $G$ be a group acting on a hyperbolic space $X$. Suppose the action fixes a point $\xi\in\partial X$ in the boundary. Let $(x_i)_{i\in\N}$ be a sequence of points in $X$ converging to $\xi$, and let $x\in X$. The function
\[
g\mapsto \displaystyle \limsup_{n\to\infty} \hspace{3pt} (d(x,x_n) - d(gx,x_n))
\]
is a quasimorphism, and we denote by $p\colon G\to\R$ its homogenization. It turns out that $p$ is independent of the choices of the $x_i$ and $x$, and only depends on $\xi$. We call $p$ the \emph{Busemann pseudocharacter} corresponding to this action of $G$ and the fixed point $\xi$.
\end{definition}

\begin{remark}\label{rem:left_right}
Our definition of the Busemann pseudocharacter here is the negative of the definition in \cite{confining}. The reason is that, throughout the present paper we use right conjugation, whereas in \cite{confining} they use left conjugation, so this is the convention that will make future citations to \cite{confining} accurate. For the record, in \cite{caprace15} the Busemann pseudocharacter is defined in the same way we define it here. Also note that with our convention, $\xi$ is an attracting (rather than repelling) fixed point for elements with positive $p$ value.
\end{remark}

The most important property of the Busemann pseudocharacter is the following:

\begin{cit}[{\cite[Section 4.1]{manning}}]
\label{cit:busemann:loxodromic}
    Let $G$ be a group acting on a hyperbolic space $X$ fixing $\xi \in \partial X$, and let $p$ be the Busemann pseudocharacter. Then $p(g) \neq 0$ if and only if $g$ is loxodromic. Therefore, $p$ is unbounded as soon as $G$ contains a loxodromic element.
\end{cit}

If the action is quasi-parabolic then there is a unique fixed point $\xi$, so we can simply refer to ``the Busemann pseudocharacter'' of the action. If the action is oriented lineal then there are two fixed points, but the resulting Busemann functions are negatives of each other, so again we will just say ``the Busemann pseudocharacter'', with an implicit sign ambiguity.

\subsection{Non-oriented lineal actions}

Let $G$ be a group with a non-oriented lineal action on a hyperbolic space $X$. Since the action on limit points is non-trivial, we cannot define the Busemann pseudocharacter. However, we can still define a meaningful object as follows.

\begin{definition}[Quasicocycle, defect, cocycle]
    Let $G$ be a group, $\varepsilon \colon G \to (\{ \pm 1 \}, \times)$ a homomorphism, and $\varphi \colon G \to \R$ a function. We call $\varphi$ an \emph{$\varepsilon$-quasicocycle} if there exists a constant $D$ such that
    \[ |\varphi(gh) - \varphi(g) - \varepsilon(g)\varphi(h)| \leq D\]
    for all $g, h \in G$. We say that $\varphi$ has \emph{defect} at most $D$. Note that $\varphi$ has defect $0$ if and only if the map $(\varphi, \varepsilon) \colon G \to \R \rtimes \{ \pm 1 \}$ is a homomorphism, in which case we call $\varphi$ an \emph{$\varepsilon$-cocycle}. Note also that if $\varepsilon$ is the trivial homomorphism, then we recover the definition of quasimorphism.
\end{definition}

\begin{cit}[{\cite[Proposition 2.10]{NL}}]
\label{cit:quasicocycle}
    Let $G$ be a group with a non-oriented lineal action on a hyperbolic space $X$ with limit points $\xi_{\pm}$. Let $\varepsilon(g)$ be the sign of the permutation that $G$ induces on $\{ \xi_+, \xi_- \}$. Let $K$ be the kernel of $\varepsilon$. Then the Busemann pseudocharacter $p \colon K \to \R$ extends to an $\varepsilon$-quasicocycle $\varphi \colon G \to \R$ of defect at most twice the defect of $p$. In particular $\varphi$ is an $\varepsilon$-cocycle if and only if $p$ is a character.
\end{cit}

Note that, while $\varepsilon$, $K$, and $p$ (up to sign) are canonically associated to the action, the construction of $\varphi$ requires a choice; this is due to the fact that the splitting $\mathrm{Isom}(\R) \cong \R \rtimes \{ \pm 1 \}$ is not canonical. We can now deduce the following criterion for excluding non-oriented lineal actions.

\begin{corollary}\label{cor:nonoriented}
Let $G$ be a finitely generated agreeable group such that every index-$2$ subgroup of $G$ is agreeable. If $G$ does not surject onto $D_\infty$, then every lineal action of $G$ is oriented.
\end{corollary}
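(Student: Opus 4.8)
The plan is to argue by contrapositive: assume $G$ admits a non-oriented lineal action on a hyperbolic space $X$ and produce a surjection $G \twoheadrightarrow D_\infty$. Let $\xi_\pm$ be the two limit points, let $\varepsilon \colon G \to \{\pm 1\}$ record the permutation action on $\{\xi_+,\xi_-\}$, and let $K = \ker\varepsilon$, an index-$2$ subgroup (index exactly $2$ since the action is genuinely non-oriented). By \cref{cit:busemann:loxodromic} the Busemann pseudocharacter $p \colon K \to \R$ is unbounded, since the lineal action has a loxodromic element and that element lies in $K$ (a loxodromic element cannot swap its own fixed points). By hypothesis $K$ is agreeable, so $p$ is in fact a character $K \to \R$; being a nonzero homomorphism to $\R$ it has infinite cyclic image after rescaling, or at least nontrivial torsion-free image, so $K$ surjects onto $\Z$.

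Next I would upgrade this to a surjection onto $D_\infty$. By \cref{cit:quasicocycle}, since $p$ is a character, the extension $\varphi \colon G \to \R$ is an honest $\varepsilon$-cocycle, i.e.\ $(\varphi,\varepsilon) \colon G \to \R \rtimes \{\pm 1\} = \mathrm{Isom}(\R)$ is a homomorphism, and its restriction to $K$ is $(p, 1)$, which is nontrivial. So the image of $(\varphi,\varepsilon)$ in $\mathrm{Isom}(\R)$ is a subgroup that surjects onto $\{\pm 1\}$ (because $\varepsilon$ is onto, the action being non-oriented) and meets the translation subgroup $\R$ nontrivially (because $p \ne 0$). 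Such a subgroup of $\mathrm{Isom}(\R)$ contains a nontrivial translation and an orientation-reversing element, hence contains an infinite dihedral subgroup; one then checks that $(\varphi,\varepsilon)$ followed by a suitable quotient $\mathrm{Isom}(\R) \to \mathrm{Isom}(\R)/(\text{subgroup of translations})$ lands onto $D_\infty$. Concretely: the image group $\Gamma = (\varphi,\varepsilon)(G) \le \R \rtimes \{\pm 1\}$ has $\Gamma \cap \R$ a nonzero, hence dense-or-discrete, subgroup $A$ of $\R$ normalized by an orientation-reversing element $r$ with $r^2 \in A$; quotienting $\R$ by any index-$\infty$... — more cleanly, pick any element $t \in K$ with $p(t) \ne 0$ and any $g \in G \setminus K$; then $\langle p(t)\rangle \cong \Z$ is normalized by the reflection $\varepsilon(g) = -1$, and $G \to \Gamma \to \Gamma/\langle 2\,p(t)\,(\text{or appropriate sublattice})\rangle$ realizes a surjection onto $\Z \rtimes \Z/2 = D_\infty$ once one checks $\varphi(g)$ can be absorbed; finite generation of $G$ ensures the image of $\varphi$ is finitely generated, hence this bookkeeping is finite.

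The main obstacle I expect is precisely this last bookkeeping step: producing the surjection onto $D_\infty$ rather than merely onto some infinite subgroup of $\mathrm{Isom}(\R)$. The subtlety is that $\varphi(g)$ for $g \notin K$ need not be zero or half-integral relative to $p(K)$, and the group $\Gamma \le \mathrm{Isom}(\R)$ need not be discrete. The resolution is that finite generation of $G$ forces $\varphi(G) \subseteq \R$ to be a finitely generated subgroup, hence $\Gamma$ is finitely generated; a finitely generated subgroup of $\mathrm{Isom}(\R)$ that surjects onto $\{\pm 1\}$ and has nontrivial translation part always admits a quotient isomorphic to $D_\infty$ — one finds a reflection $r \in \Gamma$ and a translation $\tau \in \Gamma$, notes $\langle \tau, r\rangle$ is either $D_\infty$ or a dihedral-type group mapping onto $D_\infty$ via rescaling the translation lattice, and then extends the resulting homomorphism $\langle \tau, r \rangle \to D_\infty$ over $\Gamma$ by killing the (finite-rank) remaining translational directions. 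Everything else — unboundedness of $p$, agreeability turning $p$ into a character, $\varphi$ becoming a cocycle — is immediate from the cited results, so the proof is short modulo this elementary-but-fiddly lemma about subgroups of $\mathrm{Isom}(\R)$.
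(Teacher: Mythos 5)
Your proposal is correct and follows the same route as the paper: pass to $K=\ker\varepsilon$, use agreeability to turn the Busemann pseudocharacter into a character, get the homomorphism $(\varphi,\varepsilon)\colon G\to\R\rtimes\{\pm 1\}$ with infinite image via \cref{cit:quasicocycle} and \cref{cit:busemann:loxodromic}, and use finite generation to extract a surjection onto $D_\infty$. The final ``bookkeeping'' step you spell out (the image is $\Z^k\rtimes_{-1}\Z/2\Z$ since orientation-reversing isometries of $\R$ are involutions inverting translations, and any surjection $\Z^k\to\Z$ is automatically equivariant) is exactly what the paper outsources to the citation of \cite[Proposition 3.1]{NL}, and your version of it is sound.
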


\begin{proof}
Suppose that $G$ admits a non-oriented lineal action on the hyperbolic space $X$. Let $\varepsilon, K, p, \varphi$ be as in \cref{cit:quasicocycle}. Since $K$ is agreeable, $p$ is a character, and therefore $\varphi$ is an $\varepsilon$-cocycle, which gives us a homomorphism $(\varphi, \varepsilon) \ G \to \R \rtimes \{ \pm 1 \}$. Moreover, $K$ contains loxodromics, so $p$ is unbounded by \cref{cit:busemann:loxodromic}, and therefore $(\varphi, \varepsilon)$ has infinite image. Since $G$ is finitely generated, this produces a surjective homomorphism $G \to D_\infty$ (see the proof of \cite[Proposition 3.1]{NL}).
\end{proof}

For studying actions of $F_n^{\pm}$, we will need the following more precise statement.

\begin{proposition}
\label{prop:nonoriented:classification}

Let $G$ be a group, and let $X$ be a hyperbolic space with a non-oriented lineal action. Let $\varepsilon \colon G \to \{ \pm 1 \}$ and $p \colon \ker(\varepsilon) \to \R$ be as in \cref{cit:quasicocycle}. Then $\varepsilon$ and $p$ uniquely determine the equivalence class of $G \curvearrowright X$.
\end{proposition}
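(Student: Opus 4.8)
The statement to prove is that for a non-oriented lineal action $G \curvearrowright X$, the pair $(\varepsilon, p)$ determines the equivalence class of the action in $\Hyp(G)$. The plan is to pass to Cayley graphs and show that the data $(\varepsilon, p)$ lets us reconstruct, up to equivalence, a specific generating set.

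First I would recall that the restriction of a non-oriented lineal action to $K = \ker(\varepsilon)$ is an oriented lineal action (since $K$ fixes both limit points), and that for oriented lineal actions the Busemann pseudocharacter determines the equivalence class: two oriented lineal actions of $K$ with the same (unsigned) Busemann pseudocharacter are equivalent, because one can compare both with the action of $K$ on the line $\R$ via $p$ (i.e., on $\Cay(K, A)$ where $A = \{k \in K : |p(k)| \le 1\}$), and coboundedness forces the comparison maps to be coarse equivalences. This is the lineal analogue of the well-known fact and is presumably recorded earlier or in \cite{NL}; I would cite it. So on $K$, $p$ already pins down the equivalence class of the restricted action.

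Next, I would build an explicit model for $G \curvearrowright X$ out of $(\varepsilon, p)$. Since the action is non-oriented lineal, $X$ is quasi-isometric to $\R$ with $G$ acting by isometries through $\mathrm{Isom}(\R) \cong \R \rtimes \{\pm 1\}$, coarsely: more precisely, orbit maps give a coarsely $G$-equivariant quasi-isometry $X \to \R$ where $G$ acts via $(\varphi, \varepsilon)$ for the chosen $\varepsilon$-quasicocycle $\varphi$ of \cref{cit:quasicocycle}. On $K$ this action is, up to bounded error, translation by $p$; and $\varepsilon$ records which elements act by an orientation-reversing isometry of $\R$. Thus the generating set $A = \{g \in G : |\varphi(g)| \le C\}$ for a suitable constant $C$ (depending on the defect of $\varphi$ and the coboundedness constant) gives a Cayley graph quasi-isometric to $\R$, and $G \curvearrowright \Cay(G,A)$ is equivalent to $G \curvearrowright X$. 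The point is then that the equivalence class of $[A]$ depends only on $(\varepsilon, p)$: if $\varphi'$ is another choice of $\varepsilon$-quasicocycle extending the same $p$, then $\varphi - \varphi'$ restricted to $K$ is bounded, and on a transversal it differs by a bounded amount too (any two $\varepsilon$-cocycle-like extensions of a fixed homomorphism on an index-$2$ subgroup differ by a bounded function, since the ambiguity lives in choosing the image of a single coset representative and $H^1$ with the relevant coefficients is controlled), so the sublevel sets of $\varphi$ and $\varphi'$ are equivalent generating sets.

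I expect the main obstacle to be the last point: showing that the construction of the model action is genuinely canonical in $(\varepsilon, p)$, i.e., controlling the non-canonical choice of $\varphi$. The subtlety is exactly the one flagged in the paragraph after \cref{cit:quasicocycle} — the splitting $\mathrm{Isom}(\R) \cong \R \rtimes \{\pm 1\}$ is not canonical, so $\varphi$ depends on a choice. The clean way to handle this is to argue directly with the limit points rather than with $\varphi$: the two fixed horofunctions (Busemann functions) $b_{\xi_+}, b_{\xi_-}$ at $\xi_{\pm}$ are canonical up to additive and bounded error, $K$ preserves each $b_{\xi_\pm}$ up to bounded error and acts on it by translation by $\pm p$, and the elements outside $K$ swap them; the generating set $A = \{g : |b_{\xi_+}(gx_0) - b_{\xi_+}(x_0)| \le C\}$ is then manifestly determined by $p$ and $\varepsilon$ up to equivalence, and one checks coboundedness makes $\Cay(G,A) \to X$ a quasi-isometry. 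With this reformulation the remaining verifications — coarse Lipschitzness and coarse equivariance of the comparison maps, and the bound on the ambiguity — are routine, so I would state them and sketch rather than grind through them.
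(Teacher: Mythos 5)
Your proposal is correct and follows essentially the same route as the paper: restrict to $K=\ker(\varepsilon)$, invoke the classification of oriented lineal structures by the Busemann pseudocharacter, and then use a Svarc--Milnor argument to pin down the generating set for all of $G$. The only cosmetic difference is that the paper sidesteps the non-canonicity of $\varphi$ by taking the generating set $A\cup\{s\}$ for a single coset representative $s$ (any two choices trivially give equivalent generating sets), whereas you work with sublevel sets of $\varphi$ and verify directly -- correctly -- that two $\varepsilon$-quasicocycles extending the same $p$ differ by a bounded function.
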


\begin{proof}
By the classification of oriented lineal hyperbolic structures in terms of pseudocharacters \cite[Section 2.3]{ABO}, the restricted action of $K \coloneqq \ker(\varepsilon)$ on $X$ is equivalent to the action on $\Cay(K, A)$, where
\[A \coloneqq \{ g \in K \mid |p(g)| \leq C \}\]
for $C > 0$ a large enough constant. By the Svarc--Milnor Lemma for cobounded actions \cite[Lemma 3.11]{ABO}, the action $K \curvearrowright X$ corresponds to the generating set
\(A' = \{ g \in K \mid d_X(x, gx) \leq C' \},\)
where $C' > 0$ is another large enough constant, and $x$ is a point of $X$. Therefore the $A$-length of elements in $A'$ is uniformly bounded, and vice-versa.

Let $s \in G$ be such that $\varepsilon(s) = -1$. Now, using Svarc--Milnor again, the action $K \curvearrowright X$ corresponds to the generating set
\[B' = \{ g \in G \mid d_X(x, gx) \leq C' \};\]
up to taking a large enough constant $C'$, we can assume that $s \in B'$. Since every element of $G$ may be written uniquely as $ks^i$ for $k \in K, i \in \{0, 1\}$, we have
\[A' \cup \{ s \} \subseteq B' \subseteq A' \cup (A')^k s \]
for some large enough $k$. In other words the generating set $B'$ gives the same hyperbolic structure as the generating set $A' \cup \{ s \}$. Moreover, since $A$ and $A'$ are equivalent generating sets of $K$, also $A' \cup \{ s \}$ and $B \coloneqq A \cup \{ s \}$ are equivalent as generating sets of $G$. Since $B$ only depends on $\varepsilon$ and $p$, we conclude.
\end{proof}

\section{Confining subsets}\label{sec:confining}

In \cite{caprace15}, Caprace, Cornulier, Monod, and Tessera introduced the notion of a confining subset in a group of the form $G=H\rtimes \Z$. This is a subset $Q\subseteq H$ such that the conjugation action of $\Z$ on $H$ satisfies certain properties with respect to $Q$, similar to the behavior of an ascending HNN-extension. In \cite{confining}, the first author together with Abbott and Rasmussen generalized this to groups of the form $G=H\rtimes\Z^n$, now with a dependence on a choice of character $\rho\colon \Z^n\to\R$. These confining subsets for groups of the form $G=H\rtimes\Z^n$ turn out to be the key to producing lineal and quasi-parabolic actions of $G$. As a remark, here we will always view semidirect products as internal, so for example in $H\rtimes\Z^n$ the action of $\Z^n$ on $H$ is just the implicitly understood conjugation action. In \cite{caprace15,confining}, semidirect products tend to be external, written $H\rtimes_\gamma\Z^n$ for some $\gamma\colon \Z^n\to\Aut(H)$. We will use the standard notation $x^y\coloneqq y^{-1}xy$ for conjugation; see \cref{rem:left_right}.

As a remark, we will always write semidirect products with the normal subgroup on the left, as is standard, though keep in mind that we tend to use right conjugation actions. Since we always view semidirect products internally, this will never matter.

\subsection{Definitions and properties}\label{ssec:confining_defs}

In this subsection we recall the definition of confining subset, and inspect some important properties.

\begin{definition}[Confining subset]\label{def:confining}
Let $G=H\rtimes \Z^n$ and let $\rho\colon \Z^n \to\R$ be a character. Call a subset $Q\subseteq H$ \emph{confining with respect to $\rho$}, or \emph{$\rho$-confining} if it is symmetric (i.e., $Q = Q^{-1}$) and the following properties hold:
\begin{description}
    \listlabel{StayInConf} For all $z\in\Z^n$ with $\rho(z)\ge 0$ we have $Q^z\subseteq Q$.
    \listlabel{GetInConf} For all $h\in H$ there exists $z\in \Z^n$ such that $h^z\in Q$.
    \listlabel{ProdConf} There exists $z_0\in \Z^n$ such that $(Q\cdot Q)^{z_0}\subseteq Q$.
\end{description}
We call $Q$ \emph{confining} if it is confining with respect to some $\rho$. We say $Q$ is a ``confining subset of'' either $H$ or $G$, depending on context.
\end{definition}

\begin{remark}[First consequences]
\label{rem:conf:consequences}
We note some direct consequences of the definition.
\begin{enumerate}
\item Since $Q$ is symmetric, \ProdConf\ implies that $1\in Q$.
\item If $Q$ is confining, then $G$ is generated by $Q$ together with $\Z^n$, thanks to \GetInConf.
\end{enumerate}
\end{remark}

Call $Q$ \emph{strictly confining} if $Q^z\subsetneq Q$ for some $z\in\Z^n$. Note that if $Q$ is confining but not strictly confining, then necessarily $Q=H$, and indeed $H$ is confining with respect to all $\rho$. If $Q$ is a confining subset that is a subgroup of $H$ we call it a \emph{confining subgroup}; in this case note that \ProdConf\ holds trivially.

\begin{remark}
    \emph{Confining subgroups} should not be confused with the better-studied notion of \emph{confined subgroup} \cite{confined:subgroup}.
\end{remark}

A very useful fact in all that follows is that we actually have quite a bit of control over the conjugating elements in \GetInConf\ and \ProdConf. More precisely, the elements can be drawn from certain cyclic subgroups, as this next result shows.

\begin{lemma}
\label{lem:conf:cyclic}

Let $\rho \colon \Z^n \to \R$ be a character, and let $Q \subseteq H$ be a $\rho$-confining subset. Let $a \in \Z^n$ be any element such that $\rho(a) > 0$. Then:
\begin{enumerate}
\item For all $h \in H$ there exists $k \geq 0$ such that $h^{a^k} \in Q$;
\item There exists $k \geq 0$ such that $(Q \cdot Q)^{a^k} \subseteq Q$.
\end{enumerate}
\end{lemma}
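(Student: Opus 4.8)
The plan is to leverage the fact that $\Z^n$ is finitely generated and that $\rho$ is linear, so that any element $z \in \Z^n$ with $\rho(z) \geq 0$ can be ``pushed into'' a large positive multiple of $a$ by adding a suitable non-negative-$\rho$ element, and conversely any element we care about has bounded ``$\rho$-deficit'' relative to the half-space $\{\rho \geq 0\}$. Concretely, fix $a$ with $\rho(a) > 0$ and pick a finite generating set $e_1, \dots, e_m$ of $\Z^n$. For part (i), given $h \in H$, property \GetInConf\ yields some $z \in \Z^n$ with $h^z \in Q$. I would then find $k \geq 0$ and $z' \in \Z^n$ with $\rho(z') \geq 0$ such that $a^k = z + z'$: this is possible because $\rho(a) > 0$ means $\rho(a^k) \to +\infty$, so for $k$ large enough $\rho(a^k - z) = \rho(a^k) - \rho(z) \geq 0$, and we set $z' \coloneqq a^k - z$. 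Then $h^{a^k} = (h^z)^{z'}$, and since $h^z \in Q$ and $\rho(z') \geq 0$, property \StayInConf\ gives $h^{a^k} = (h^z)^{z'} \in Q$. Note that $\Z^n$ is abelian, so $a^k = z + z'$ really does give $(h^z)^{z'} = h^{z + z'} = h^{a^k}$ with no ordering subtlety.

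For part (ii), the same trick applies to the single element $z_0$ furnished by \ProdConf: choose $k \geq 0$ large enough that $\rho(a^k - z_0) \geq 0$, set $z' \coloneqq a^k - z_0$, and compute $(Q \cdot Q)^{a^k} = ((Q\cdot Q)^{z_0})^{z'} \subseteq Q^{z'} \subseteq Q$, using \ProdConf\ for the first inclusion and \StayInConf\ (applicable since $\rho(z') \geq 0$) for the second. This is even more immediate than part (i) since there is only one conjugating element to absorb.

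The only genuine content is the arithmetic observation that for any fixed $z \in \Z^n$ there exists $k \geq 0$ with $\rho(a^k - z) \geq 0$, which follows at once from $\rho(a^k) = k\,\rho(a) \to +\infty$ and $\rho$ being a homomorphism (so $\rho(a^k - z) = k\,\rho(a) - \rho(z)$). There is no real obstacle here; the lemma is essentially a bookkeeping statement that converts the abstract conjugating elements in \GetInConf\ and \ProdConf\ into powers of a single chosen element of positive $\rho$-value, and the abelianness of $\Z^n$ together with linearity of $\rho$ does all the work. The one point to state carefully is that $k$ in part (i) depends on $h$ (through $\rho(z)$), which is fine since the lemma only asserts existence of such a $k$ for each $h$ separately.
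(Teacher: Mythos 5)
Your argument is correct and is essentially identical to the paper's proof: both obtain $z$ from \GetInConf\ (resp.\ $z_0$ from \ProdConf), choose $k$ with $\rho(a^k)\ge\rho(z)$, and absorb the difference $a^k z^{-1}$ using \StayInConf. No issues.
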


\begin{proof}
For the first item, \GetInConf\ gives an element $z \in \Z^n$ such that $h^z \in Q$. Let $k \geq 0$ be large enough that $\rho(a^k) \geq \rho(z)$. Then $h^{a^k} = (h^z)^{a^k z^{-1}} \in Q^{a^k z^{-1}} \subseteq Q$, where in the last step we used \StayInConf. The second item is proved similarly.
\end{proof}

Another useful consequence of the definitions is the following strengthening of \ProdConf.

\begin{lemma}
\label{lem:products}

Let $\rho \colon \Z^n \to \R$ be a character, and let $Q \subseteq H$ be a $\rho$-confining subset. Let $z_0 \in \Z^n$ be given by \ProdConf, and suppose that $\rho(z_0) \geq 0$. Then $(Q^k)^{z_0^{k-1}} \subseteq Q$.
\end{lemma}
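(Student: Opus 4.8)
The plan is to prove the statement by induction on $k\ge 1$, where $Q^k$ denotes the $k$-fold product set $Q\cdot Q\cdots Q$. The base cases are immediate: for $k=1$ the claim reads $(Q^1)^{z_0^0}=Q\subseteq Q$, and for $k=2$ it reads $(Q\cdot Q)^{z_0}\subseteq Q$, which is exactly \ProdConf. So assume $k\ge 2$ and that $(Q^k)^{z_0^{k-1}}\subseteq Q$; we must show $(Q^{k+1})^{z_0^k}\subseteq Q$.

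The key step is to split off one factor, writing $Q^{k+1}=Q\cdot Q^k$, so that
\[
(Q^{k+1})^{z_0^k}=Q^{z_0^k}\cdot (Q^k)^{z_0^k}.
\]
For the second factor, conjugating the inductive hypothesis by $z_0$ gives $(Q^k)^{z_0^k}=\bigl((Q^k)^{z_0^{k-1}}\bigr)^{z_0}\subseteq Q^{z_0}$. For the first factor, note $z_0^k=z_0^{k-1}\cdot z_0$, so $Q^{z_0^k}=\bigl(Q^{z_0^{k-1}}\bigr)^{z_0}$; since $\rho(z_0^{k-1})=(k-1)\rho(z_0)\ge 0$, property \StayInConf\ gives $Q^{z_0^{k-1}}\subseteq Q$, hence $Q^{z_0^k}\subseteq Q^{z_0}$ after conjugating by $z_0$. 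Combining the two inclusions,
\[
(Q^{k+1})^{z_0^k}\subseteq Q^{z_0}\cdot Q^{z_0}=(Q\cdot Q)^{z_0}\subseteq Q,
\]
using in the last step that conjugation distributes over products of subsets, together with \ProdConf. This closes the induction.

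The only thing requiring care is the bookkeeping of the exponents of $z_0$: one must make sure that at each stage the conjugating element has non-negative $\rho$-value so that \StayInConf\ applies (this is where the hypothesis $\rho(z_0)\ge 0$ is used), and that the single application of \ProdConf\ is available because both remaining factors have been pushed inside $Q^{z_0}$ rather than merely inside $Q$. There is no substantial obstacle beyond this routine tracking of indices.
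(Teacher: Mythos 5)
Your proof is correct and follows essentially the same route as the paper's: an induction that peels one factor off the $k$-fold product, uses the inductive hypothesis together with \StayInConf\ to place both pieces inside a single conjugate of $Q$, and finishes with one application of \ProdConf. The only differences are cosmetic (which side the single factor is split off from, and the index offset in the induction).
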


Note that by \cref{lem:conf:cyclic}, this shows that the conjugating element taking $Q^k$ into $Q$ can always be chosen to be a power of a given $a \in \Z^n$ such that $\rho(a) > 0$.

\begin{proof}
We proceed by induction on $k$, the base case $k=1$ being trivially true. For $k\ge 2$ we have $(Q^k)^{z_0^{k-1}} = ((Q^{k-1})^{z_0^{k-2}} \cdot Q^{z_0^{k-2}})^{z_0}$. By induction $(Q^{k-1})^{z_0^{k-2}}\subseteq Q$, and by \StayInConf\ $Q^{z_0^{k-2}}\subseteq Q$. Thus, $(Q^k)^{z_0^{k-1}}\subseteq (Q \cdot Q)^{z_0}$, which is contained in $Q$ by \ProdConf.
\end{proof}

\medskip

Next we would like a way to compare two confining subsets. Given a character $\rho\colon \Z^n\to\R$, we define a subset $Z_\rho$ of $\Z^n$ as follows. Fix a generating set $\{a_1,\dots,a_n\}$ for $\Z^n$, and a constant $C_\rho>0$ such that $|\rho(a_i)| \leq C_\rho$ for all $1\le i\le n$ and such that there exists $z_{top}\in \Z^n$ with $\rho(z_{top})=C_\rho$. Now set
\[
Z_\rho \coloneqq \{z\in\Z^n \mid |\rho(z)|\le C_\rho\}\text{,}
\]
so in particular $Z_\rho$ generates $\Z^n$. (Note that for $n\ge 2$ this is an infinite generating set.) As a remark, the Cayley graph of $\Z^n$ with respect to the generating set $Z_\rho$ is quasi-isometric to a line, and the action of $\Z^n$ on this hyperbolic space is oriented lineal. We also have that for $G=H\rtimes \Z^n$, if $Q\subseteq H$ is $\rho$-confining for some $\rho\colon \Z^n\to\R$, then $Q\cup Z_\rho$ is a symmetric generating set for $G$. Choosing a different finite generating set for $\Z^n$ and/or a different constant $C_\rho$ can change the generating set $Q\cup Z_\rho$ for $G$, but cannot change it up to the equivalence relation on generating sets of $G$, so up to equivalence we are always free to choose the $a_i$ and $C_\rho$ as we see fit. Also note that we can always assume $Z_{-\rho}=Z_\rho$.

Consider the following relation on confining subsets. Given $G=H\rtimes \Z^n$ and $\rho\colon\Z^n\to\R$ as before, if $Q_1,Q_2\subseteq H$ are $\rho$-confining subsets, we write
\[
Q_1\preceq_\rho Q_2
\]
if $Q_2$ is bounded in the alphabet $Q_1\cup Z_\rho$. Note that $Q_1\preceq_\rho Q_2$ for $\rho$-confining subsets is clearly equivalent to $Q_1\cup Z_\rho\preceq Q_2\cup Z_\rho$ for generating sets. Since $\preceq_\rho$ is a preorder, we get an equivalence relation $\sim_\rho$ on $\rho$-confining subsets and an induced partial order $\preccurlyeq_\rho$ on equivalence classes $[Q]$ of $\rho$-confining subsets $Q$.

\begin{definition}[The posets $\Conf_\rho(G)$ and $\Conf^\ast_\rho(G)$]\label{def:conf_poset}
The poset of equivalence classes of $\rho$-confining subsets with the partial order $\preccurlyeq_\rho$ is denoted by $\Conf_\rho(G)$. The subposet of \emph{strictly} $\rho$-confining subsets is denoted by $\Conf^\ast_\rho(G)$. Note that $\Conf_\rho(G)$ is a cone over $\Conf^\ast_\rho(G)$, with cone point the equivalence class of $H$.
\end{definition}

If $Q_1 \subseteq Q_2$ then clearly $Q_2 \preceq_\rho Q_1$. More generally, it turns out that if $Q_1^z \subseteq Q_2$ for some $z\in \Z^n$ then $Q_2 \preceq_\rho Q_1$, and, perhaps surprisingly, the converse of this is true. This will be an easy consequence of the following characterization of boundedness in the alphabet $Q\cup Z_\rho$ for confining $Q$.

\begin{proposition}\label{prop:conj_is_all}
Let $Q\subseteq H$ be a $\rho$-confining subset. Then a subset $B\subseteq H$ is bounded in the alphabet $Q\cup Z_\rho$ if and only if $B \subseteq Q^z$ for some $z \in \Z^n$.
\end{proposition}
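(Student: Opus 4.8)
The plan is to prove both directions separately, with the forward direction (boundedness implies containment in a single conjugate) being the substantive one. The reverse direction is essentially immediate: if $B \subseteq Q^z$ for some $z \in \Z^n$, then writing $z$ as a word in the generators $a_1,\dots,a_n$ of $\Z^n$, each of which lies in $Z_\rho$, we see that every element of $Q^z = z^{-1}Qz$ has word length in $Q \cup Z_\rho$ at most $2\|z\| + 1$, where $\|z\|$ is the word length of $z$ in the fixed finite generating set. Hence $B$ is bounded. Note this does not even use the confining hypothesis.

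For the forward direction, suppose $B$ is bounded in the alphabet $Q \cup Z_\rho$, say every element of $B$ has length at most $N$. The first step is to rewrite words. A word of length $\le N$ in $Q \cup Z_\rho$ representing an element $h \in H$ can be shuffled: using the relations $q \cdot w = w \cdot q^{w}$ for $w \in \Z^n$, we can push all the $Z_\rho$-letters to the right. Since $h \in H$ and $H$ is normal, the product of the $Z_\rho$-letters must be trivial in $\Z^n$; so after shuffling we get $h = q_1^{w_1} q_2^{w_2} \cdots q_m^{w_m}$ where $m \le N$, each $q_i \in Q$, and each $w_i \in \Z^n$ is a subword of length $\le N$ in the generators, hence $w_i \in Z_\rho'$ for a fixed finite set $Z_\rho' \subseteq Z_\rho$ depending only on $N$ (here I use that $Z_\rho$ contains all generators $a_i$, and a product of $\le N$ of them has bounded $\rho$-value, so lands in $Z_\rho$ after possibly enlarging the constant — actually I just need them to be products of generators, which is automatic). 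The key point: there are only finitely many possible tuples $(w_1, \dots, w_m)$.

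The second, crucial step is to absorb the conjugating elements. Fix $a \in \Z^n$ with $\rho(a) > 0$. For each $w_i$, by \StayInConf\ and \cref{lem:conf:cyclic}(i) applied appropriately — more directly, choose $k$ large enough (depending only on the finite set of possible $w_i$ and on $\rho$) that $\rho(a^k) \ge \rho(z_{top})$ exceeds $\rho(w_i)$ for all relevant $i$ with a uniform margin; then $q_i^{w_i} = (q_i^{w_i a^{-k}})^{a^k}$, and since $\rho(a^k w_i^{-1})$... hmm, I need $q_i^{w_i} \in Q^{a^{-k_i}}$, i.e. $q_i^{w_i a^{k_i}} \in Q$, which holds by \StayInConf\ once $\rho(w_i a^{k_i}) \ge 0$, so take $k_i$ with $\rho(a^{k_i}) \ge -\rho(w_i)$; bound all $k_i$ by a single $k$. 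So each $q_i^{w_i} \in Q^{a^{-k}}$, meaning $h \in (Q^{a^{-k}})^m \subseteq (Q^{a^{-k}})^N$. Now apply \cref{lem:products} to the $\rho$-confining set $Q^{a^{-k}}$ (it is $\rho$-confining since $Q$ is): there is $z_0$ with $((Q^{a^{-k}})^N)^{z_0^{N-1}} \subseteq Q^{a^{-k}}$, and by the remark after \cref{lem:conf:cyclic} we may take $z_0$ to be a power $a^\ell$ with $\ell \ge 0$. Therefore $h^{a^{\ell(N-1)}} \in Q^{a^{-k}}$, i.e. $h \in Q^{a^{-k - \ell(N-1)}}$. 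Since $k, \ell, N$ are all independent of the particular $h \in B$, setting $z \coloneqq a^{-k-\ell(N-1)}$ gives $B \subseteq Q^z$, as desired.

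The main obstacle is the bookkeeping in the forward direction: one must be careful that all the constants ($N$, the finite set of conjugators $w_i$, the powers $k$ and $\ell$) are chosen uniformly over $B$ and depend only on the bound $N$ and on $\rho$, not on individual elements — this is what makes a single $z$ work for all of $B$. The shuffling argument (pushing $\Z^n$-letters to the right and observing they cancel) and the repeated use of \StayInConf\ to replace $Q^{w}$ by $Q$ after conjugating by a suitable power of $a$ are routine, but assembling them so that \cref{lem:products} can be invoked with a single exponent requires attention.
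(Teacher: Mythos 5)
Your proof is correct and follows essentially the same route as the paper's: shuffle the $\Z^n$-letters to one side of the word (the paper pushes them to the left, you push them to the right), observe that their product is trivial since the element lies in $H$, absorb the resulting conjugators into a single conjugate of $Q$ using \StayInConf, and finish with \cref{lem:products} to collapse the product $Q^m$ into $Q$ after one more conjugation. The uniformity bookkeeping you flag as the main obstacle is exactly what the paper handles via the element $z_{top}$.

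One intermediate claim is false as stated, though it does not break the argument: you assert that each $w_i$ lies in a \emph{finite} subset of $Z_\rho$ and that "there are only finitely many possible tuples $(w_1,\dots,w_m)$." For $n \geq 2$ the set $Z_\rho$ is infinite, the letters of the word are arbitrary elements of $Z_\rho$ (not just the finite generators $a_i$), and a product of up to $N$ of them can be any of infinitely many elements of $\Z^n$, generally not lying in $Z_\rho$ at all since $|\rho(w_i)|$ can reach $NC_\rho > C_\rho$. What actually makes the uniform choice of $k$ possible --- and what your subsequent computation genuinely uses --- is only the bound $|\rho(w_i)| \leq N C_\rho$, which holds because each letter satisfies $|\rho(z_j)| \leq C_\rho$. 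With the justification corrected in this way, the rest of your argument (conjugating each $q_i^{w_i}$ into $Q^{a^{-k}}$ via \StayInConf, noting that $Q^{a^{-k}}$ is again $\rho$-confining, and applying \cref{lem:products} with a power of $a$) goes through and yields a single $z$ working for all of $B$.
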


\begin{proof}
First suppose $B \subseteq Q^z$. Let $m$ be the length of $z$ in the alphabet $Q\cup Z_\rho$. Now observe that every element of $B$ can be written as $z^{-1}qz$ for some $q\in Q$, and this has length $2m+1$ in the alphabet $Q\cup Z_\rho$.

Now suppose that $B$ is bounded in the alphabet $Q\cup Z_\rho$. Say every element of $B$ can be written as a product of at most $m$ elements of $Q\cup Z_\rho$. In order to only have to deal with a single form, note that this implies every element $b$ of $B$ can be written in the form $b=q_1 z_1\cdots q_m z_m$ for some $q_i\in Q$, $z_i\in Z_\rho$ (with several $q_i$ and/or $z_i$ possibly being the identity).
Now fix $z_{top}\in Z_\rho$ with $\rho(z_{top})=C_\rho$, as in the construction of $Z_\rho$. For any $z\in Z_\rho$, since $|\rho(z)|\le C_\rho$ we have $\rho(z_{top} z)\ge 0$. In particular, $Q^{z_{top} z}\subseteq Q$ for all $z\in Z_\rho$ by \StayInConf, so $Q^z\subseteq Q^{z_{top}^{-1}}$.
More generally, for any $k \in \Z$ we have $(Q^{z_{top}^{-k}})^z\subseteq Q^{z_{top}^{-k-1}}$ for all $z\in Z_\rho$, as can be seen by conjugating both sides of this expression by $z_{top}^{k+1}$ by \StayInConf.

Now we return to our arbitrary $b\in B$ with $b=q_1 z_1\cdots q_m z_m$. Since each $z_i$ is in $Z_\rho$, the above allows us to sequentially move all the $z_i$ to the left, yielding $b=z_1\cdots z_m q_1'\cdots q_m'$, with $q_m'\in Q^{z_{top}^{-1}}$, $q_{m-1}'\in Q^{z_{top}^{-2}}$, and so forth up to $q_1'\in Q^{z_{top}^{-m}}$. Since $b\in H$, we must have $z_1\cdots z_m=1$, and since $Q^{z_{top}^{-k}}\subseteq Q^{z_{top}^{-m}}$ for all $0\le k\le m$ by \StayInConf, we get $b\in (Q^m)^{z_{top}^{-m}}$. Since $b$ was arbitrary, this implies that $B\subseteq (Q^m)^{z_{top}^{-m}}$. By \cref{lem:products} we can choose $z' \in \Z^n$ such that $(Q^m)^{z'}\subseteq Q$, and so we conclude that $B\subseteq Q^{z^{-1}}$ where $z =  z_{top}^mz' \in \Z^n$.
\end{proof}

\begin{corollary}[Characterization of $\preceq_\rho$]\label{cor:conjugate}
Let $Q_1,Q_2\subseteq H$ be $\rho$-confining subsets, and fix $a \in \Z^n$ such that $\rho(a) > 0$. Then $Q_1 \preceq_\rho Q_2$ if and only if $Q_2^{a^k} \subseteq Q_1$ for some $k \geq 0$.
\end{corollary}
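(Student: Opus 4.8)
The plan is to deduce this directly from \cref{prop:conj_is_all}, which already does the heavy lifting: it identifies boundedness in the alphabet $Q_1\cup Z_\rho$ with containment in a single $\Z^n$-conjugate of $Q_1$. The only remaining work is to upgrade an arbitrary conjugating element $z\in\Z^n$ to a nonnegative power of the prescribed element $a$, and this is precisely where \StayInConf\ comes in. So I expect no real obstacle; the content is all in \cref{prop:conj_is_all}, and this corollary is a short bookkeeping argument.

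First I would unwind the definition: by construction $Q_1\preceq_\rho Q_2$ means exactly that $Q_2$ is bounded in the alphabet $Q_1\cup Z_\rho$. Applying \cref{prop:conj_is_all} with the $\rho$-confining subset $Q_1$ and with $B=Q_2$, this holds if and only if $Q_2\subseteq Q_1^z$ for some $z\in\Z^n$, equivalently $Q_2^{z^{-1}}\subseteq Q_1$.

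For the forward direction, starting from $Q_2^{z^{-1}}\subseteq Q_1$, I would conjugate both sides by $w\coloneqq za^k$. Using right conjugation, $\bigl(Q_2^{z^{-1}}\bigr)^{w}=Q_2^{z^{-1}w}=Q_2^{a^k}$, while $Q_1^{w}\subseteq Q_1$ by \StayInConf\ as soon as $\rho(w)=\rho(z)+k\rho(a)\ge 0$, which holds for all sufficiently large $k$ since $\rho(a)>0$. Hence $Q_2^{a^k}\subseteq Q_1^{w}\subseteq Q_1$ for $k$ large enough (in particular $k\ge 0$), as required. Conversely, if $Q_2^{a^k}\subseteq Q_1$ for some $k\ge 0$, then $Q_2\subseteq Q_1^{a^{-k}}$, so $Q_2$ is bounded in the alphabet $Q_1\cup Z_\rho$ by \cref{prop:conj_is_all}, i.e.\ $Q_1\preceq_\rho Q_2$.

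The only subtleties worth flagging are cosmetic: one must take $k$ large rather than merely exhibiting some integer (automatic here), and the element $a$ with $\rho(a)>0$ exists because $\rho$ is implicitly nontrivial, which is the only case of interest in what follows.
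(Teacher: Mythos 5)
Your proposal is correct and follows essentially the same route as the paper: both directions reduce to \cref{prop:conj_is_all}, and the forward direction upgrades the arbitrary conjugator $z\in\Z^n$ to a power of $a$ by conjugating once more by an element of nonnegative $\rho$-value and invoking \StayInConf. The only difference from the paper's write-up is the cosmetic choice of which side of the containment carries the conjugator.
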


\begin{proof}
By \cref{prop:conj_is_all}, we know that $Q_1 \preceq_\rho Q_2$ if and only if $Q_2^z \subseteq Q_1$ for some $z \in \Z^n$. This gives one direction; for the other one, let $k \geq 0$ be large enough that $\rho(a^k) \geq \rho(z)$. Then
\[Q_2^{a^k} = (Q_2^z)^{a^k z^{-1}} \subseteq Q_1^{a^k z^{-1}} \subseteq Q_1,\]
thanks to \StayInConf, since $\rho(a^k z^{-1}) \geq 0$.
\end{proof}

To the best of our knowledge, \cref{cor:conjugate} has not appeared before in the literature, but it encapsulates a very natural and straightforward characterization of the equivalence relation on confining subsets, and the partial order on their equivalence classes. In particular, this characterization facilitates the proof of a structural property of the poset $\Conf^\ast_\rho(G)$. Recall that the \emph{join} of two elements in a poset is their least upper bound, and a poset is a \emph{join semilattice} if every two elements admit a (necessarily unique) join.

\begin{proposition}
\label{prop:join}

Let $G=H\rtimes \Z^n$ and let $\rho\colon \Z^n \to\R$ be a character. The poset $\Conf^\ast_\rho(G)$ is a join semilattice. Given $\rho$-confining subsets $Q_1, Q_2 \subseteq H$, the join of $[Q_1]$ and $[Q_2]$ is $[Q_1 \cap Q_2]$.
\end{proposition}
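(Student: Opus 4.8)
The plan is to first verify that $Q_1 \cap Q_2$ is again a $\rho$-confining subset, and then to check that $[Q_1\cap Q_2]$ is the least upper bound of $[Q_1]$ and $[Q_2]$ in $\Conf_\rho^\ast(G)$, using the algebraic characterization of $\preceq_\rho$ from \cref{cor:conjugate}. The strictness (i.e.\ that $[Q_1\cap Q_2]\in\Conf_\rho^\ast(G)$ rather than just $\Conf_\rho(G)$) will need a small separate argument, and this is where I expect the only real subtlety to lie.

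\textbf{Step 1: $Q_1\cap Q_2$ is $\rho$-confining.} Symmetry is immediate since $Q_1,Q_2$ are symmetric. Property \StayInConf\ is clear: if $\rho(z)\ge 0$ then $(Q_1\cap Q_2)^z = Q_1^z\cap Q_2^z\subseteq Q_1\cap Q_2$. Property \GetInConf\ is the one place intersections could fail in general, but here it holds because of the control afforded by \cref{lem:conf:cyclic}: fix $a\in\Z^n$ with $\rho(a)>0$; given $h\in H$, \cref{lem:conf:cyclic}(i) applied to $Q_1$ and to $Q_2$ gives $k_1,k_2\ge 0$ with $h^{a^{k_1}}\in Q_1$ and $h^{a^{k_2}}\in Q_2$, and then taking $k=\max(k_1,k_2)$ and applying \StayInConf\ to push the smaller conjugate forward, we get $h^{a^k}\in Q_1\cap Q_2$. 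For \ProdConf, again use \cref{lem:conf:cyclic}(ii) to get $k_1,k_2\ge 0$ with $(Q_i\cdot Q_i)^{a^{k_i}}\subseteq Q_i$; with $k=\max(k_1,k_2)$ we get, using \StayInConf, that $((Q_1\cap Q_2)(Q_1\cap Q_2))^{a^k}\subseteq (Q_1 Q_1)^{a^k}\cap (Q_2 Q_2)^{a^k}\subseteq Q_1\cap Q_2$.

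\textbf{Step 2: strictness.} Since $Q_1, Q_2$ are strictly $\rho$-confining, there are $z_1, z_2\in\Z^n$ with $Q_i^{z_i}\subsetneq Q_i$; by \cref{lem:conf:cyclic}-type reasoning (conjugating by a suitable power of $a$) we may assume $z_1=z_2=a^m$ for a common $m\ge 1$ with $\rho(a^m)>0$ and $Q_i^{a^m}\subsetneq Q_i$ for $i=1,2$. I need to see that $(Q_1\cap Q_2)^{a^N}\subsetneq Q_1\cap Q_2$ for some $N$. Suppose not, so $(Q_1\cap Q_2)^{a^{m}}= Q_1\cap Q_2$, hence $(Q_1\cap Q_2)^{a^{km}}=Q_1\cap Q_2$ for all $k\ge 0$. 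Pick $q\in Q_1\setminus Q_1^{a^m}$; by \GetInConf\ for $Q_2$ there is $k\ge 0$ with $q^{a^{km}}\in Q_2$, and $q^{a^{km}}\in Q_1$ by \StayInConf, so $q^{a^{km}}\in Q_1\cap Q_2 = (Q_1\cap Q_2)^{a^{km}}$, forcing $q\in Q_1\cap Q_2\subseteq Q_1^{?}$... this does not immediately contradict $q\notin Q_1^{a^m}$, so I will instead argue as follows: if $Q_1\cap Q_2$ were not strictly confining then by the remark after \cref{def:confining} we would have $Q_1\cap Q_2 = H$, hence $Q_1=Q_2=H$, contradicting strictness of $Q_1$. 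This is the clean argument and avoids the delicate point entirely. (The key observation is precisely that remark: a confining subset is either $=H$ or strictly confining.)

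\textbf{Step 3: join property.} First, $Q_1\cap Q_2\supseteq$ neither contains... rather, $Q_i\supseteq Q_1\cap Q_2$ so trivially $Q_1\cap Q_2\subseteq Q_i$ gives $[Q_i]\preccurlyeq_\rho[Q_1\cap Q_2]$; thus $[Q_1\cap Q_2]$ is an upper bound for both. For minimality among upper bounds, suppose $[R]$ is $\rho$-confining with $[Q_1]\preccurlyeq_\rho[R]$ and $[Q_2]\preccurlyeq_\rho[R]$. By \cref{cor:conjugate} there are $k_1,k_2\ge 0$ with $R^{a^{k_1}}\subseteq Q_1$ and $R^{a^{k_2}}\subseteq Q_2$; taking $k=\max(k_1,k_2)$ and using \StayInConf\ for $Q_1$ and $Q_2$ we get $R^{a^k}\subseteq Q_1\cap Q_2$, so by \cref{cor:conjugate} again $[Q_1\cap Q_2]\preccurlyeq_\rho[R]$. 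Hence $[Q_1\cap Q_2]$ is the least upper bound, i.e.\ the join. Finally, since $Q_1,Q_2$ are strictly $\rho$-confining and $[Q_1\cap Q_2]$ has been shown strictly confining in Step~2, the join lives in $\Conf_\rho^\ast(G)$, completing the proof that $\Conf_\rho^\ast(G)$ is a join semilattice.

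\textbf{Expected main obstacle.} The only genuinely non-formal point is \GetInConf\ for the intersection, which really does use \cref{lem:conf:cyclic} to align the conjugating elements into a common cyclic subgroup — without that, an intersection of confining subsets need not satisfy \GetInConf. Everything else is a direct application of \cref{cor:conjugate} together with \StayInConf, plus the dichotomy ``confining $\Rightarrow$ ($=H$ or strictly confining)'' for the strictness bookkeeping.
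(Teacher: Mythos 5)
Your proof is correct and follows essentially the same route as the paper's: verify that $Q_1\cap Q_2$ is $\rho$-confining by aligning the conjugating elements into a common cyclic subgroup via \cref{lem:conf:cyclic}, then use \cref{cor:conjugate} to show $[Q_1\cap Q_2]$ is the least upper bound. The only addition is your explicit strictness check via the dichotomy ``confining but not strictly confining $\Rightarrow$ equal to $H$,'' which the paper leaves implicit and which is handled correctly; the abandoned first attempt at the start of Step~2 should simply be deleted.
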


\begin{proof}
First we show that $Q_1 \cap Q_2$ is also a $\rho$-confining subset. For $z \in \Z^n$ with $\rho(z) \geq 0$ we have $(Q_1 \cap Q_2)^z = Q_1^z \cap Q_2^z \subseteq Q_1 \cap Q_2$, which proves \StayInConf.
Let $h \in H$ and let $z_i$ ($i=1,2$) be such that $h^{z_i} \in Q_i$. By \cref{lem:conf:cyclic} we may choose $z \in \Z^n$ that works as both $z_1$ and $z_2$. Then $h^z \in Q_1 \cap Q_2$, which proves \GetInConf.
Finally, let $z_i$ ($i=1,2$) be such that $(Q_i \cdot Q_i)^{z_i} \in Q_i$. Again, we may choose $z \in \Z^n$ that works as both $z_1$ and $z_2$. Then
\[
((Q_1 \cap Q_2) \cdot (Q_1 \cap Q_2))^{z} \subseteq ((Q_1 \cdot Q_1) \cap (Q_2 \cdot Q_2))^{z} \subseteq Q_1 \cap Q_2,
\]
which proves \ProdConf.

Clearly $[Q_1 \cap Q_2]$ is a common upper bound of $[Q_1]$ and $[Q_2]$. Let $R$ be a $\rho$-confining subset such that $[R]$ is another common upper bound. By \cref{cor:conjugate} there exist $z_1, z_2 \in \Z^n$ such that $R \subseteq Q_1^{z_1} \cap Q_2^{z_2}$. Suppose without loss of generality that $\rho(z_1) \geq \rho(z_2)$. Then $\rho(z_1 z_2^{-1}) \geq 0$ and so \[Q_1^{z_1} = (Q_1^{z_1 z_2^{-1}})^{z_2} \subseteq Q_1^{z_2}.\]
This shows that $R \subseteq (Q_1 \cap Q_2)^{z_2}$ and so by \cref{cor:conjugate} again, $[Q_1 \cap Q_2] \preccurlyeq_\rho [R]$.
\end{proof}

\begin{remark}
In some cases that have been considered before, $\Conf^\ast_\rho(G)$ is also a meet semi-lattice, and thus a lattice \cite{sahana:wreath, AR:BS, ABR2}. However this is not true in general, and we will see that the groups $F_n$ provide an example (\cref{rem:meet}).
\end{remark}

In practice, the confining subsets one tends to encounter in the literature are always equivalent to a confining subgroup, see for example \cite{caprace15, sahana:wreath, AR:BS, ABR2}. This can be characterized as follows:

\begin{proposition}\label{prop:equiv_to_subgroup}
Let $G=H\rtimes \Z^n$ and let $\rho\colon \Z^n \to\R$ be a character. Let $Q\subseteq H$ be a $\rho$-confining subset. Then the following are equivalent:
\begin{enumerate}
    \item $Q$ is equivalent to a $\rho$-confining subgroup.
    \item $Q$ is equivalent to $\langle Q\rangle$.
    \item There exists $z\in \Z^n$ such that $\langle Q\rangle^z\subseteq Q$.
\end{enumerate}
\end{proposition}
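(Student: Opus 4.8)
The plan is to prove the cycle (i) $\Rightarrow$ (ii) $\Rightarrow$ (iii) $\Rightarrow$ (i), with the genuinely substantive implications being (ii) $\Rightarrow$ (iii) and (iii) $\Rightarrow$ (i). The implication (i) $\Rightarrow$ (ii) is nearly immediate: if $Q \sim_\rho N$ for a $\rho$-confining subgroup $N$, then since $Q$ and $N$ are equivalent we have $Q \subseteq N^z$ for some $z$ by \cref{prop:conj_is_all}, hence $\langle Q\rangle \subseteq N^z$; conversely $N^w \subseteq Q$ for some $w$ by \cref{cor:conjugate}, so $N^w \subseteq \langle Q\rangle \subseteq N^z$, which sandwiches $\langle Q\rangle$ between two conjugates of $N$ and forces $\langle Q\rangle \sim_\rho N \sim_\rho Q$. (One should check $\langle Q\rangle$ is itself $\rho$-confining, which follows because it is symmetric, satisfies \StayInConf\ since $\langle Q\rangle^z = \langle Q^z\rangle \subseteq \langle Q\rangle$ for $\rho(z) \geq 0$, satisfies \GetInConf\ trivially as $Q \subseteq \langle Q\rangle$, and satisfies \ProdConf\ trivially as a subgroup.)

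For (ii) $\Rightarrow$ (iii): if $Q \sim_\rho \langle Q\rangle$, then in particular $\langle Q\rangle \preceq_\rho Q$, which by \cref{prop:conj_is_all} (applied to the confining subset $Q$ and the bounded subset $\langle Q\rangle$, noting $\langle Q\rangle$ is bounded in the alphabet $Q \cup Z_\rho$ precisely because $Q \sim_\rho \langle Q\rangle$) gives $\langle Q\rangle \subseteq Q^z$ for some $z \in \Z^n$; conjugating, $\langle Q\rangle^{z^{-1}} \subseteq Q$, so (iii) holds with $z^{-1}$ in place of $z$.

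For (iii) $\Rightarrow$ (i): assume $\langle Q\rangle^z \subseteq Q$ for some $z \in \Z^n$. The idea is to build a $\rho$-confining subgroup equivalent to $Q$ by taking an appropriate union of conjugates. If $\rho(z) \geq 0$, then iterating \StayInConf\ we get $\langle Q\rangle^{z a^k} \subseteq \langle Q\rangle^z \subseteq Q$ for any $a$ with $\rho(a) > 0$ and $k \geq 0$, so actually $\langle Q\rangle^w \subseteq Q$ for a cofinal set of $w$; the subtlety is that we want genuine containment of a \emph{subgroup}. Set $N \coloneqq \bigcup_{k \geq 0} \langle Q\rangle^{a^{-k}}$ where $\rho(a) > 0$; this is an increasing union of subgroups (increasing because $\langle Q\rangle^{a^{-k}} \subseteq \langle Q\rangle^{a^{-k-1}}$, which holds after conjugating $\langle Q\rangle \subseteq \langle Q\rangle^{a^{-1}}$ — and this latter containment follows from \StayInConf\ applied to $Q$, since $\langle Q\rangle^a = \langle Q^a\rangle \subseteq \langle Q\rangle$), hence $N$ is a subgroup. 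It is symmetric and satisfies \GetInConf\ since $Q \subseteq \langle Q\rangle \subseteq N$. For \StayInConf: if $\rho(b) \geq 0$ then $N^b = \bigcup_k \langle Q\rangle^{a^{-k}b}$, and since $\rho(a^{-k}b)$ can be made $\geq \rho(a^{-k})$... here I need to be careful, as conjugating $\langle Q\rangle^{a^{-k}}$ by $b$ with $\rho(b) \geq 0$ need not land back inside unless I absorb it into a larger power of $a^{-1}$; the clean statement is that for each $k$ there is $\ell \geq k$ with $\langle Q\rangle^{a^{-k}b} \subseteq \langle Q\rangle^{a^{-\ell}}$, because $\langle Q\rangle^{a^{\ell}a^{-k}b} = \langle Q\rangle^{a^{\ell-k}b}$ and $\rho(a^{\ell-k}b) \geq 0$ for $\ell$ large, so by \StayInConf\ (on $Q$, hence on $\langle Q\rangle$) this is contained in $\langle Q\rangle$. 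Finally $N \sim_\rho Q$: clearly $N \preceq_\rho Q$ as $Q \subseteq N$; and $Q \preceq_\rho N$ because $N^{za} \subseteq Q$ — indeed $N^{za} = \bigcup_k \langle Q\rangle^{a^{-k}za} = \bigcup_k \langle Q\rangle^{z a^{1-k}}$ (using that $a^{-k}$ and $z$ commute in $\Z^n$) $\subseteq \langle Q\rangle^z \subseteq Q$ by \StayInConf\ since $\rho(a^{1-k}) \geq 0$ forces... wait, $1-k \leq 0$, so one should instead write $N^{z} = \bigcup_k \langle Q \rangle^{a^{-k} z} \subseteq \bigcup_k \langle Q\rangle^{a^{-k}}$, hmm — the point is simply that $N^z = \bigcup_k \langle Q\rangle^{z a^{-k}}$ and $\langle Q\rangle^{z a^{-k}} \subseteq \langle Q\rangle^z \subseteq Q$ requires $\rho(a^{-k}) \geq 0$, false. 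So instead: $N$ is generated by $\langle Q\rangle$ together with conjugates $\langle Q\rangle^{a^{-k}}$, and one uses \cref{cor:conjugate} after checking $N^{a^m} \subseteq Q$ for suitable $m$, namely $N^{a^m} = \bigcup_k \langle Q\rangle^{a^{m-k}} = \bigcup_{j \leq m} \langle Q\rangle^{a^{j}}$, and for $j \leq 0$ this is $\subseteq \langle Q\rangle \subseteq$? no. The correct observation: $N^{a^m} \subseteq \langle Q\rangle$ for $m$ large is false, but $N^{a^m} \cap$ (fixed bounded piece)... Let me instead argue $Q \preceq_\rho N$ directly: since $N = \bigcup_k \langle Q\rangle^{a^{-k}}$ is a union of conjugates each bounded in $Q \cup Z_\rho$, we need the union to be bounded, which is \emph{not} automatic — so this is exactly where I expect the main obstacle. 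The resolution should come from (iii) itself: the hypothesis $\langle Q\rangle^z \subseteq Q$ (not merely $\preceq_\rho$) must be used to show the union stabilizes up to conjugacy, i.e. that $N$ is itself equivalent to $\langle Q\rangle$, equivalently that $\langle Q\rangle$ is already "large enough."

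The main obstacle, then, is \emph{(iii) $\Rightarrow$ (i)}, specifically producing an honest $\rho$-confining \emph{subgroup} equivalent to $Q$ and verifying the equivalence $Q \sim_\rho N$ rather than just $Q \preceq_\rho N$. I expect the fix is to observe that from $\langle Q\rangle^z \subseteq Q$ one gets, after writing $z = a^{-r} z'$ with $\rho(z') \geq 0$ and $\rho(a) > 0$, that $\langle Q\rangle^{a^{-r}} \subseteq \langle Q\rangle^{a^{-r}z'(z')^{-1}} = (\langle Q\rangle^{z})^{(z')^{-1}} \subseteq Q^{(z')^{-1}}$, and since $\rho((z')^{-1}) \leq 0$, apply \StayInConf\ in reverse... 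Rather, choose $N \coloneqq \langle Q\rangle^{a^{-r}}$ directly when $\rho(z) < 0$ (and $N \coloneqq \langle Q \rangle$ when $\rho(z) \geq 0$): this $N$ is a $\rho$-confining subgroup by the \StayInConf\ argument above, and it is squeezed as $N^{a^s} \subseteq Q \subseteq \langle Q\rangle \subseteq N^{a^{-t}}$ for suitable $s, t \geq 0$ — the left inclusion from (iii) and \cref{cor:conjugate}, the right from the definition of $N$ and \StayInConf\ — so $N \sim_\rho Q$ by \cref{cor:conjugate}. This closes the cycle.
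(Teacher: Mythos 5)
Your proof is correct, but it routes the difficulty to the wrong place and ends up doing the hard work where none is needed. The decisive structural fact --- which you verify in passing inside your (i)$\Rightarrow$(ii) argument but then never exploit --- is that $\langle Q\rangle$ is \emph{always} a $\rho$-confining subgroup. Once that is recorded, (ii)$\Rightarrow$(i) is a tautology, and (ii)$\Leftrightarrow$(iii) is immediate from \cref{prop:conj_is_all} and \cref{cor:conjugate}: the containment $\langle Q\rangle^z\subseteq Q$ for some $z$ is exactly the statement that $\langle Q\rangle$ is bounded in the alphabet $Q\cup Z_\rho$, i.e.\ that $Q\preceq_\rho\langle Q\rangle$, while the reverse domination is automatic from $Q\subseteq\langle Q\rangle$. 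So the only implication requiring an argument is (i)$\Rightarrow$(iii), which the paper dispatches in three lines: if $Q\sim_\rho R$ with $R$ a confining subgroup, then $Q^z\subseteq R$ and $R^{z'}\subseteq Q$ for some $z,z'$, and since $R$ is a subgroup $\langle Q\rangle^z\le R$, whence $\langle Q\rangle^{zz'}\subseteq Q$. Your cycle instead makes (iii)$\Rightarrow$(i) the substantive step; the first attempt there (the increasing union $N=\bigcup_{k\ge 0}\langle Q\rangle^{a^{-k}}$) indeed fails to give $Q\preceq_\rho N$, as you noticed, and your eventual fix --- squeezing a single conjugate of $\langle Q\rangle$ as $N^{a^s}\subseteq Q\subseteq N^{a^{-t}}$ --- does work, but the case split on the sign of $\rho(z)$ is unnecessary: $N=\langle Q\rangle$ works in every case, since the computation in the proof of \cref{cor:conjugate} turns (iii) into $\langle Q\rangle^{a^s}\subseteq Q\subseteq\langle Q\rangle$ for any $s$ with $\rho(a^s)\ge\rho(z)$. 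One small notational point: in (ii)$\Rightarrow$(iii) you write ``$\langle Q\rangle\preceq_\rho Q$'' for the direction you use, but under the paper's convention the relation you need, and correctly describe in your parenthetical, is $Q\preceq_\rho\langle Q\rangle$.
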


\begin{proof}
It follows easily from the definitions that if $Q$ is a confining subset, then $\langle Q \rangle$ is a confining subgroup. Clearly (ii) implies (i), and \cref{cor:conjugate} shows that (ii) and (iii) are equivalent, so it suffices to prove that (i) implies (iii). If $Q$ is equivalent to a $\rho$-confining subgroup $R$, then by \cref{cor:conjugate} we have $Q^z\subseteq R$ and $R^{z'}\subseteq Q$ for some $z,z'\in\Z^n$. Since $R$ is a subgroup we have $\langle Q\rangle^z\le R$, so $\langle Q\rangle^{zz'}\subseteq Q$ and we are done.
\end{proof}

We will see however that lamplighters with infinite base group admit (many) confining subsets that are not confining subgroups (\cref{ex:balls}, \cref{ex:machado}), and we will see more examples of this among non-lamplike confining subsets of $F_n$ (\cref{sec:moving}).

\subsection{Confining subsets and hyperbolic actions}\label{ssec:conf_hyp}

We first make the following observation about pseudocharacters of groups of the form $H \rtimes \Z^n$:

\begin{lemma}
\label{lem:factor}
Let $p \colon H \rtimes \Z^n \to \R$ be a pseudocharacter. If $p(H) = 0$, then $p$ factors as $H \rtimes \Z^n \to \Z^n \xrightarrow{\rho} \R$ for some character $\rho$ of $\Z^n$.
\end{lemma}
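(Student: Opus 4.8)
The plan is to exploit the fact that a pseudocharacter is, by definition, a homomorphism when restricted to any cyclic subgroup, together with the hypothesis $p(H)=0$, to show that $p$ is in fact a genuine homomorphism on all of $G = H \rtimes \Z^n$; then it automatically factors through the abelianization, and since $p$ kills $H$ it factors through $G/H \cong \Z^n$, giving the desired $\rho$. So the real content is upgrading ``quasimorphism vanishing on $H$ and linear on cyclic subgroups'' to ``homomorphism.''

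First I would fix the defect $D$ of $p$ as a quasimorphism. The key observation is that every element $g \in G$ can be written as $g = h z$ with $h \in H$ and $z \in \Z^n$, and conjugates of $h$ by powers of $z$ stay inside $H$, hence are killed by $p$. I would compute $p(g^k) = p((hz)^k)$ and rewrite $(hz)^k = h \cdot h^{z^{-1}} \cdot h^{z^{-2}} \cdots h^{z^{-(k-1)}} \cdot z^k$ (collecting the $\Z^n$-part to the right, using that $\Z^n$ is abelian). The leftmost factor here is an element of $H$, call it $h_k \in H$, so $g^k = h_k z^k$. Applying the quasimorphism inequality repeatedly, $|p(g^k) - p(h_k) - p(z^k)| \le (\text{bounded, since it's a product of two pieces})\,D$; but $p(h_k) = 0$ since $h_k \in H$, so $|p(g^k) - p(z^k)| \le D$ uniformly in $k$. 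Dividing by $k$ and letting $k \to \infty$, homogeneity of $p$ gives $p(g) = \lim p(g^k)/k = \lim p(z^k)/k = p(z)$, because $p$ restricted to the cyclic group $\langle z \rangle$ is a homomorphism (so $p(z^k) = k\, p(z)$). Thus $p(hz) = p(z)$: the value of $p$ only depends on the $\Z^n$-coordinate.

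It then remains to check that the induced map $\bar p \colon \Z^n \to \R$, $\bar p(z) := p(hz)$ (well-defined by the above), is a homomorphism. Since $\Z^n$ is finitely generated abelian, it suffices to note that $\bar p$ is a pseudocharacter on $\Z^n$ (it is the restriction of $p$ to a section, hence still a quasimorphism, and still linear on cyclic subgroups), and pseudocharacters on abelian groups are characters by \cref{amenable:agreeable}. Setting $\rho := \bar p$ yields the factorization $H \rtimes \Z^n \to \Z^n \xrightarrow{\rho} \R$.

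The main obstacle, or at least the step requiring the most care, is the bookkeeping in the first paragraph: verifying that when one expands $(hz)^k$ and collects the $\Z^n$-part to the right, the remaining $H$-part is genuinely a single element of $H$ to which one can apply $p(h_k)=0$, and that the accumulated quasimorphism error in passing from $p(g^k)$ to $p(h_k)+p(z^k)$ is bounded \emph{independently of $k$} (it is, because $g^k = h_k \cdot z^k$ is a product of just two elements, so only one application of the defect inequality is needed). Everything else is a routine appeal to homogeneity and to the fact that abelian groups are agreeable.
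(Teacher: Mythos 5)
Your proof is correct, but it takes a different route from the paper. The paper disposes of the first step in one line by citing the left exactness of the pseudocharacter functor \cite[Remark 2.90]{scl}: a pseudocharacter vanishing on a normal subgroup descends to a pseudocharacter on the quotient. You instead reprove the relevant special case by hand, exploiting the semidirect product structure: the identity $(hz)^k = h_k z^k$ with $h_k \in H$ reduces everything to a single application of the defect inequality, uniformly in $k$, and homogeneity (indeed, $p(g^k) = k\,p(g)$ exactly, so you do not even need the limit) then forces $p(hz) = p(z)$. Both arguments conclude identically by invoking agreeability of $\Z^n$ (\cref{amenable:agreeable}) to upgrade the induced pseudocharacter on $\Z^n$ to a character. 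What your approach buys is self-containedness and the use of the explicit section $\Z^n \to G$, which makes the descended map concretely the restriction of $p$ to that section; what the paper's approach buys is brevity and the fact that it works for arbitrary extensions, not just split ones. Your bookkeeping in the first paragraph is sound: $h_k$ is genuinely a single element of $H$ because $H$ is normal, and only one defect inequality is needed since $g^k$ is a product of exactly two factors.
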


\begin{proof}
Since $p(H) = 0$, the left exactness of the pseudocharacter functor \cite[Remark 2.90]{scl} implies that $p$ factors through a pseudocharacter $\rho \colon \Z^n \to \R$. But $\Z^n$ is agreeable (\cref{amenable:agreeable}), so $\rho$ is actually a character.
\end{proof}

Now we spell out the connection between confining subsets and hyperbolic actions.

\begin{cit}[Confining subsets to hyperbolic actions {\cite[Theorem~1.1]{confining}}]\label{cit:confining_to_action}
Let $G=H\rtimes \Z^n$, and let $\rho\colon \Z^n\to \R$ be a non-trivial character. If $Q\subseteq H$ is confining with respect to $\rho$, then $\Cay(G,Q\cup Z_\rho)$ is hyperbolic. Moreover, if $Q$ is strictly confining then $[Q\cup Z_\rho]\in \Hyp_{qp}(G)$, and if $Q$ is not strictly confining then $[Q\cup Z_\rho]\in \Hyp_\ell^+(G)$. Finally, the Busemann pseudocharacter for $G\curvearrowright \Cay(G,Q\cup Z_\rho)$ sends $H$ to $0$, so induces a character $\Z^n\to\R$, which is equivalent to $\rho$ in the quasi-parabolic case and to $\pm\rho$ in the oriented lineal case.
\end{cit}

In \cite[Theorem~1.1]{confining} the last claim in the quasi-parabolic case is that the character is ``proportional'' to $\rho$, but the proof of \cite[Lemma~1.1]{confining} makes clear that it is in fact equivalent to $\rho$, meaning the constant of proportionality is positive. Also note that the Busemann pseudocharacter for $G\curvearrowright \Cay(G,Q\cup Z_\rho)$ is in fact a character.

\begin{cit}[Hyperbolic actions to confining subsets {\cite[Theorem~3.17]{confining}}]
\label{cit:action_to_confining}
Let $G=H\rtimes \Z^n$, and suppose $G$ acts isometrically on a hyperbolic space $X$. Assume the action is cobounded, and either oriented lineal or quasi-parabolic. Let $p\colon G\to\R$ be the Busemann pseudocharacter associated to this action, and assume that $p(H)=0$, so $p$ induces a (non-trivial) character $\rho\colon \Z^n\to\R$. Then there exists a subset $Q\subseteq H$ that is confining with respect to $\rho$, such that $X$ is $G$-equivariantly quasi-isometric to the Cayley graph $\Cay(G,Q\cup Z_\rho)$. If the action is oriented lineal then $Q=H$, and if the action is quasi-parabolic then $Q$ is strictly confining.
\end{cit}

Note that we have specified that the characters $\rho$ used above are non-trivial. If $\rho=0$ then $Z_0=\Z^n$, and $Q=H$ is the only confining subset with respect to $\rho=0$, so $[Q\cup Z_0]=[Q\cup \Z^n]=[G]$. In other words, $\rho=0$ corresponds precisely to $\Hyp_e(G)=\{[G]\}$, and this is the end of the story for the case when $\rho$ is trivial.

\medskip

Before we begin imposing conditions on our groups, let us prove two more general results, which are specializations of \cref{cit:confining_to_action} and \cref{cit:action_to_confining} that establish a relationship between confining subgroups and actions on trees (\emph{all trees are assumed to be simplicial}). For the next two proofs, we assume some familiarity with Bass--Serre theory \cite{serre}.

\begin{proposition}[Confining subgroups yield trees]\label{prop:subgroups_to_trees}
Let $G=H\rtimes \Z^n$, and let $\rho\colon \Z^n\to \R$ be a (non-trivial) discrete character. Let $Q$ be a $\rho$-confining subset that is a subgroup. Then the hyperbolic structure $[Q\cup Z_\rho]$ arising from \cref{cit:confining_to_action} is represented by a simplicial action on a tree.

More precisely, it is represented by the action on the Bass--Serre tree of an ascending HNN-extension expression of $G$, with base group $Q \ker(\rho)$, and stable letter an element $a \in \Z^n$ such that $\rho(a)$ is positive and minimal.
\end{proposition}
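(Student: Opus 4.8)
The plan is to write down the HNN-extension expression explicitly, identify its Bass--Serre tree, and then check that the resulting cobounded action represents exactly the hyperbolic structure $[Q\cup Z_\rho]$. First I would set things up. Since $\rho$ is discrete, its image is $\lambda\Z$ for some $\lambda>0$, and there is $a\in\Z^n$ with $\rho(a)=\lambda$ (so $\rho(a)$ is positive and minimal among positive values). Since $\langle a\rangle\cap\ker(\rho)=1$ and $\rho(a)$ generates the image, $\Z^n$ decomposes as $\ker(\rho)\times\langle a\rangle$, so $G=H\rtimes\Z^n\cong\bigl(H\ker(\rho)\bigr)\rtimes\langle a\rangle$. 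Next, $B\coloneqq Q\ker(\rho)$ is a subgroup: $\ker(\rho)$ normalizes $Q$ because $\rho(z)=0$ forces $Q^z=Q$ by \StayInConf\ (applied to $z$ and to $-z$), so $B=Q\rtimes\ker(\rho)$. Moreover conjugation by $a$ maps $B$ into $B$, since $Q^a\subseteq Q$ by \StayInConf\ and $\ker(\rho)$ is normal in $\Z^n$; thus it restricts to an injective endomorphism $\phi\colon B\to B$, which is an automorphism precisely when $Q^a=Q$, i.e.\ (as $Q$ is a confining subgroup) precisely when $Q=H$ is not strictly confining.

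I would then identify $G$ with the ascending HNN-extension $B\ast_\phi=\langle B,a\mid a^{-1}ba=\phi(b)\rangle$, with stable letter $a$. The canonical homomorphism $B\ast_\phi\to G$ (identity on $B$, stable letter to $a$) is well defined since the defining relations hold in $G$, and it is surjective: every $h\in H$ lies in $\langle Q,a\rangle$ by \cref{lem:conf:cyclic}(i), and $\Z^n=\langle\ker(\rho),a\rangle$, so $G=\langle B,a\rangle$. For injectivity I would use that an ascending HNN-extension is the semidirect product $N\rtimes\langle a\rangle$, where $N=\bigcup_{k\ge0}a^kBa^{-k}$ is the normal closure of $B$. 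On the $G$ side, $a^kBa^{-k}=Q^{a^{-k}}\ker(\rho)$ is an increasing union, and $\bigcup_{k\ge0}Q^{a^{-k}}=H$, again by \cref{lem:conf:cyclic}(i); so the map restricts to a bijection $N\to H\ker(\rho)$ compatible with the $\langle a\rangle$-actions on both sides, and since $G=\bigl(H\ker(\rho)\bigr)\rtimes\langle a\rangle$, this forces $B\ast_\phi\cong G$. (When $\phi$ is an automorphism, i.e.\ $Q=H$, this is the degenerate, ``non-strictly ascending'' case $G\cong B\rtimes_\phi\Z$, which the statement's phrasing allows.)

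Finally I would analyze the action on the Bass--Serre tree $T$, with base vertex $v_0$ and $\Stab(v_0)=B$. It is cobounded because $G$ acts transitively on vertices, and $G$ fixes the end $\xi$ determined by the ray $v_0,av_0,a^2v_0,\dots$: indeed $B=\Stab(v_0)\le\Stab(a^kv_0)$ for all $k\ge0$ since $\phi^k(B)\le B$, so $B$ fixes this ray, and $a$ translates along it toward $\xi$. Hence the action is quasi-parabolic when $\phi(B)\subsetneq B$ (every vertex then has at least three neighbours, so $T$ has infinitely many ends) and oriented lineal with $T$ a line when $Q=H$ — matching the type predicted by \cref{cit:confining_to_action}. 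To see $[G\curvearrowright T]=[Q\cup Z_\rho]$, I would note that after choosing $C_\rho\ge\lambda$ we may assume $a\in Z_\rho$, so every element of $B=Q\ker(\rho)$ has bounded length in the alphabet $Q\cup Z_\rho$ (it is one element of $Q$ times one of $\ker(\rho)\subseteq Z_\rho$). Then the orbit map $g\mapsto gv_0$ from $\Cay(G,Q\cup Z_\rho)$ to $T$ is $G$-equivariant and coarsely Lipschitz ($Q$ fixes $v_0$, and each $z\in Z_\rho$ moves it a bounded distance), while any set-theoretic section $G/B\to G$ gives a map back that is coarsely $G$-equivariant (two lifts of a coset differ by an element of $B$) and coarsely Lipschitz (adjacent vertices $xB,yB$ of $T$ satisfy $x^{-1}y\in BaB\cup Ba^{-1}B$, which has bounded $Q\cup Z_\rho$-length). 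Mutual domination gives equality of hyperbolic structures.

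I expect the main obstacle to be the injectivity of $B\ast_\phi\to G$: surjectivity is immediate, but injectivity requires matching the internal ascending-union structures on the two sides, and this is exactly where \cref{lem:conf:cyclic}(i) — that the conjugators in \GetInConf\ may be taken from $\langle a\rangle$, so that $\bigcup_{k\ge0}Q^{a^{-k}}=H$ — does the real work. By contrast, the identification $[G\curvearrowright T]=[Q\cup Z_\rho]$ is comparatively soft, resting only on the fact that $Q\cup Z_\rho$ contains both $Q$ and $\ker(\rho)$, so that the base group $B$ and the edge double cosets have bounded diameter in this alphabet.
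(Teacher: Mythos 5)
Your proposal is correct and follows essentially the same route as the paper: decompose $G$ as an ascending HNN-extension with base $Q\ker(\rho)$ and stable letter $a$, then match the Bass--Serre tree action with the generating set $Q\cup Z_\rho$. The only differences are presentational — the paper verifies the HNN-extension decomposition by checking the three conditions of a cited lemma rather than proving the isomorphism with $B\ast_\phi$ directly from the ascending-union structure as you do, and it invokes the Svarc--Milnor lemma for cobounded actions in place of your hand-built pair of coarsely Lipschitz equivariant maps.
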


\begin{proof}
Since the character is discrete, we may assume that it is a surjection $\rho \colon \Z^n \to \Z$. Choose a basis $\{a_0,\dots,a_{n-1}\}$ for $\Z^n$ such that $\rho(a_0)=1$ and $\rho(a_i)=0$ for all $1\le i\le n-1$. We are free to choose $C_\rho=1$, so that $Z_\rho=\{a_0^{k_0}\cdots a_{n-1}^{k_{n-1}}\mid k_0\in\{-1,0,1\}\}$. Let $K\coloneqq Q\langle a_1,\dots,a_{n-1}\rangle$; since $\rho(a_i)=0$ for all $1\le i\le n-1$, each such $a_i$ normalizes $Q$ by \StayInConf, and since we are assuming that $Q$ is a subgroup of $G$, we get that $K$ is a subgroup of $G$.

We claim that $G$ decomposes as an ascending HNN-extension with base $K$ and stable letter $a_0$. For this we need to check the three conditions from \cite[Lemma 3.1]{ascending}. First, $K^{a_0}\le K$ since $Q^{a_0}\le Q$ by \StayInConf. Second, $G$ is generated by $K\cup\{a_0\}$, since $Q\le K$, $Z_\rho\subseteq \{1,a_0,a_0^{-1}\}K$, and $G$ is generated by $Q\cup Z_\rho$. Finally, $\langle a_0\rangle\cap K=\{1\}$ since $\rho(a_0)=1$ and $K$ lies in the kernel of the composition of $\rho$ with the projection $G\to\Z^n$. This concludes the proof of the claim.

Let $X$ be the Bass--Serre tree of this ascending HNN-extension expression of $G$, and let $v \in V(X)$ be the vertex whose stabilizer is $K$. The action of $G$ on $X$ has a fundamental domain of diameter $1$, so by the Svarc--Milnor Lemma for cobounded actions \cite[Lemma 3.11]{ABO}, this action corresponds to the hyperbolic structure with generating set
\(B = \{ g \in G \mid d(v, gv) \leq 3 \}.\)
We note that $d_T(v, gv) \leq 1$ for every $g \in A = Q \cup Z_\rho$, so $A \subseteq B$. By Bass--Serre theory, every element of $B$ can be written as a product of powers of $a_0$ and elements in $K$, and the length of this product is bounded uniformly because the distance from $v$ to $gv$ for $g\in B$ is bounded uniformly by $3$. It follows that $A$ and $B$ are generating sets representing the same hyperbolic structure, which is the one coming from the action of $G$ on this Bass--Serre tree.
\end{proof}

\begin{proposition}[Trees yield confining subgroups] \label{prop:trees_to_subgroups}
Let $G=H\rtimes \Z^n$, and suppose $G$ acts simplicially on a tree $X$. Assume the action is cobounded, and either oriented lineal or quasi-parabolic. Assume the Busemann pseudocharacter associated to this action sends $H$ to $0$, and so induces a (non-trivial) character $\rho\colon \Z^n\to\R$. Then $\rho$ is discrete and the $\rho$-confining subset arising from \cref{cit:action_to_confining} can be chosen to be a subgroup.
\end{proposition}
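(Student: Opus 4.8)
\emph{Plan of proof.}
The idea is to first read discreteness of $\rho$ off the tree structure, and then to locate a vertex $v_0$ of $X$ whose $G$-stabilizer serves as the base of an ascending HNN-decomposition of $G$ with stable letter in $\Z^n$; the subgroup $Q\coloneqq\Stab_G(v_0)\cap H$ will then be a $\rho$-confining subgroup equivalent to the confining subset produced by \cref{cit:action_to_confining}. For discreteness: on a simplicial tree the quantity $d(x,x_n)-d(gx,x_n)$ stabilizes once $x_n$ passes the point where the rays $[x,\xi)$ and $[gx,\xi)$ merge, and equals $b(x)-b(gx)$ for the integer-valued Busemann function $b$; since $g$ shifts $b$ by a constant, $g\mapsto b(x)-b(gx)$ is already a homomorphism $G\to\Z$, so the Busemann pseudocharacter $p$ coincides with it and $p(G)\le\Z$. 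As $p$ factors as $\rho\circ\pi$ with $\pi\colon G\to\Z^n$ the projection, $\rho(\Z^n)=p(G)$ is cyclic, so $\rho$ is discrete; rescaling, I assume $\rho$ surjects onto $\Z$ and fix $a\in\Z^n$ with $\rho(a)=1$.

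\emph{Choosing the vertex.} By \cref{cit:busemann:loxodromic}, $a$ is loxodromic, so it has an axis $L$ with $\xi$ as one endpoint, and comparing $b$ with translation along $L$ shows $a$ translates $L$ by exactly one edge; thus $V(L)=\{a^kv_0\}_{k\in\Z}$ for any vertex $v_0\in L$, with $[v_0,\xi)=(a^kv_0)_{k\ge0}$. Now $K\coloneqq\ker(\rho|_{\Z^n})\cong\Z^{n-1}$ centralizes $a$ and, by \cref{cit:busemann:loxodromic} again, contains no loxodromic element, so every element of $K$ is elliptic (possibly an edge inversion). A Serre--Helly argument --- pairwise-intersecting fixed-point sets of finitely many elliptic tree isometries share a common point, while two disjoint such sets would make a product loxodromic --- shows $\Fix(K)$ is a non-empty $a$-invariant subtree, and since $a$ is loxodromic on it, uniqueness of axes forces $L\subseteq\Fix(K)$. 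I then fix a vertex $v_0\in V(L)$, so that $K\le B\coloneqq\Stab_G(v_0)$.

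\emph{The HNN-decomposition and the confining subgroup.} I would check that $G=\langle B,a\rangle$ is an ascending HNN-extension with base $B$ and stable letter $a$: $\langle a\rangle\cap B=1$ since $a$ acts freely on $L$; $B^a=\Stab_G(a^{-1}v_0)\subseteq B$ since $[a^{-1}v_0,\xi)$ passes through $v_0$ and $G$ fixes $\xi$; and $G=\langle B,a\rangle$ because every $g$ with $p(g)=0$ fixes some $a^jv_0$ (apply $g^{-1}$, which fixes $\xi$, to a vertex $a^jv_0$ on $[gv_0,\xi)$ with $j$ large and match Busemann values), hence lies in $a^jBa^{-j}$, while $G=\langle\ker p,a\rangle$. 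Setting $Q\coloneqq B\cap H=\Stab_H(v_0)$, the inclusion $K\le B$ gives $B=QK$, so $Q$ matches the base-group description of \cref{prop:subgroups_to_trees}; and $Q$ is $\rho$-confining: \ProdConf\ holds trivially since $Q$ is a subgroup, \StayInConf\ holds because any $z\in\Z^n$ with $\rho(z)\ge0$ can be written $z=ca^m$ with $c\in K$ and $m\ge0$, whence $Q^z=(Q^c)^{a^m}=Q^{a^m}=B^{a^m}\cap H\subseteq Q$, and \GetInConf\ is exactly the assertion that each $h\in H$ (having $p(h)=0$) fixes some $a^jv_0$, so $h^{a^j}\in Q$. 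Finally I would identify $[Q\cup Z_\rho]$ with $[G\curvearrowright X]$: by \cref{prop:subgroups_to_trees} it is the action on the Bass--Serre tree $T$ of this HNN-extension, and the natural $G$-map $T\to X$, $gB\mapsto gv_0$, is injective on vertices (the stabilizers agree) and on edges (a further Busemann comparison excludes the swapped case), realizing $T$ as a convex --- hence isometrically embedded --- subtree of $X$, which is coarsely dense by coboundedness; so the map is a quasi-isometry, the two actions are equivalent, and comparing with the confining subset $Q_1$ of \cref{cit:action_to_confining} (whose associated generating set now gives the same hyperbolic structure) one concludes $Q\sim_\rho Q_1$. The oriented lineal case needs no work: there $Q_1=H$ is already a subgroup, and discreteness of $\rho$ follows from the first step applied to the $G$-invariant axis.

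\emph{Main obstacle.} I expect the delicate point to be the choice of $v_0$: it must lie on the axis of $a$ (to control conjugation by powers of $a$, and to make $Z_\rho$ act with bounded displacement at $v_0$) and simultaneously be fixed by all of $K=\ker(\rho|_{\Z^n})$ (so that $\Stab_H(v_0)$ satisfies \StayInConf). The inclusion $L\subseteq\Fix(K)$, obtained by combining the Serre--Helly fixed-point lemma with uniqueness of axes of loxodromic tree isometries, is what makes these two requirements compatible; everything else is careful book-keeping in the HNN normal form.
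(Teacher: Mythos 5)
Your construction of the confining subgroup is correct and takes a genuinely different route from the paper's: you locate the base vertex $v_0$ by showing $\Fix(K)\supseteq L$ via a Serre--Helly argument and uniqueness of axes, whereas the paper picks a vertex $w$ on the $\Z^n$-invariant line lying on $[v,\xi)$ for every $v$ in a bounded fundamental domain; your discreteness argument via the integer-valued Busemann function is also cleaner than the paper's (which goes through minimal translation lengths on the invariant line). The verification that $Q=\Stab_H(v_0)$ is $\rho$-confining is sound. (One small point to spell out: if some element of $K$ inverted an edge its fixed set would be a single midpoint, so you should phrase the Helly argument in the geometric realization and then observe that $\Fix(K)\supseteq L$ forces $K$ to fix the vertices of $L$.)

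The genuine gap is in the final identification of $[Q\cup Z_\rho]$ with $[G\curvearrowright X]$. Your claim that ``$a$ translates $L$ by exactly one edge'' is false in general: the translation length of $a$ equals the positive generator $m$ of the image of the (unrescaled) integer-valued Busemann character, and nothing forces $m=1$ --- barycentrically subdividing $X$ doubles all translation lengths without changing the hypotheses. When $m>1$ the map $T\to X$, $gB\mapsto gv_0$, is not simplicial, and it is not an isometric embedding onto a convex subtree: two distinct edges of $T$ descending into the vertex $hB$ map to segments $[hb_1a^{-1}v_0,hv_0]$ and $[hb_2a^{-1}v_0,hv_0]$ of length $m$ which may overlap in more than the endpoint, so $d_X$ can be strictly smaller than $m\cdot d_T$. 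The conclusion survives, because the map is still a coarsely surjective quasi-isometric embedding, but that requires an argument you have not given (e.g.\ a direct estimate $d_T(B,gB)\le \frac{1}{m}d_X(v_0,gv_0)+2$ obtained by comparing the meet of $[v_0,\xi)$ and $[gv_0,\xi)$ with the meet of the corresponding rays in $T$). Alternatively, do what the paper does: bypass $T$ entirely and compare the generating set $Q\cup Z_\rho$ with the \u{S}varc--Milnor generating set $\{g\in G\mid d(v_0,gv_0)\le 3d\}$ of the action on $X$, using the HNN normal form to bound word lengths in both directions.
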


\begin{proof}
Since the Busemann pseudocharacter sends $H$ to $0$, every element of $H$ is elliptic, as trees do not admit parabolic isometries. The restriction to the $\Z^n$ retract is an action of an abelian group on a tree that admits a loxodromic element, so it must be oriented lineal (see e.g. \cite[Proposition 3.2]{NL}). Since the action is on a tree, there exists a $\Z^n$-invariant line, which we denote by $L$, and the image of $\rho$ is equal to $\langle \ell \rangle$, where $\ell \in \N$ is the minimal positive translation length of an element of $\Z^n$. Because the action is simplicial, $\rho$ is discrete. 
If the action is oriented lineal then our confining subset is $H$, which is a subgroup and so we are done.

Now assume the action is quasi-parabolic. Let $\xi$ be the unique $G$-fixed point in the boundary of $X$, and note that the $\Z^n$-invariant line $L$ has $\xi$ as one of its endpoints. By the definition of the Busemann pseudocharacter, if $z \in \Z^n$ satisfies $\rho(z) > 0$, then $z^n x \xrightarrow{n \to \infty} \xi$ for all $x \in X$ (Remark \ref{rem:left_right}). Choose $z_0\in\Z^n$ such that $\rho(z_0) = \ell > 0$. Since the action of $G$ on $X$ is cobounded, we can choose a bounded fundamental domain for the action, say with vertex set $D$, and without loss of generality we can assume that $v_0$ lies in $L$ for some $v_0\in D$. For any $v$ in $X$, write $[v,\xi)$ for the unique ray from $v$ to $\xi$.

Since $D$ is bounded, the geodesic projection of $D$ onto $L$ is also bounded, and so we can choose a vertex $w$ of $L$ such that $w$ lies in $[v,\xi)$ for all $v\in D$. Since $\xi$ is fixed by all of $G$, we see that $w$ is fixed by $\Stab_G(v)$ for all $v\in D$. Set $Q=\Stab_H(w)$, and we claim that $Q$ is a $\rho$-confining subgroup. Being a subgroup, $Q$ is symmetric and satisfies \ProdConf. To verify \StayInConf, note that for $z\in \Z^n$ with $\rho(z)\ge 0$ and $q\in Q$, we have that $zw$ lies in $[w,\xi)$, and so $q(zw)=zw$, thus confirming that $z^{-1}qz\in Q$. For \GetInConf, note that for any $h\in H$ the lines $L$ and $h(L)$ must contain a common ray to $\xi$, which is fixed pointwise by $h$ because $\rho(h) = 0$; in particular $h$ fixes some vertex $w'$ of $L$ lying in $[w,\xi)$. Up to choosing $w'$ closer to $\xi$, there exists $k \geq 0$ such that $z_0^k w = w'$, and hence $z_0^{-k}hz_0^k\in Q$.

It remains to prove that $Q$ corresponds to $G\curvearrowright X$ under the correspondence from \cref{cit:action_to_confining}. For this, we need to show that the generating set $Q \cup Z_\rho$, where $Z_\rho = \{ z \in \Z^n \mid |\rho(z)| \leq \ell \}$, is equivalent to the generating set given by the Svarc--Milnor Lemma for cobounded actions \cite[Lemma 3.11]{ABO}, namely $B \coloneqq \{ g \in G \mid d(v_0, gv_0) \leq 3d \}$, where $d$ is the diameter of our bounded fundamental domain with vertex set $D$. Since all values of $\rho(G)$ are attained by $\langle z_0 \rangle$, by Bass--Serre theory the generating set $B$ is equivalent to the generating set
\[A \coloneqq \{ z_0^{\pm 1} \} \cup \bigcup_{v\in D} \Stab_G(v).\]
Next, note that $\Stab_G(w) = \Stab_H(w) \Stab_{\Z^n}(w) = Q \ker(\rho)$. Since $\ker(\rho) \subseteq Z_\rho$, we see that the generating set $Q \cup Z_\rho$ is equivalent to the generating set
\[A' \coloneqq \{z_0^{\pm 1}\}\cup \Stab_G(w).\]

We are left to prove that $[A] = [A']$. On the one hand, $\Stab_G(v) \leq \Stab_G(w)$ for all $v\in D$, and so $A \subseteq A'$. In the other direction, choose $k \geq 0$ such that $w' \coloneqq z_0^k v_0$ lies in $[w,\xi)$; note that this is possible since $v_0$ was assumed to be in $L$. Then
\[z_0^{-k} \Stab_G(w) z_0^k \subseteq z_0^{-k} \Stab_G(w') z_0^k = \Stab_G(v_0),\]
which shows that $\Stab_G(w) \subseteq A^{2k+1}$, and therefore $A' \subseteq A^{2k+1}$; this concludes the proof.
\end{proof}

Thus, for discrete characters, the correspondence between confining subsets and hyperbolic actions restricts to a correspondence between confining subgroups and actions on trees.

\begin{remark}
\label{rem:simplicial}
    In \cref{prop:trees_to_subgroups}, we assumed that the action on the tree $X$ is simplicial, and not merely by isometries. This is necessary to ensure that the character $\rho$ is discrete: a counterexample is the action defined via a dense embedding of $\Z^2$ into $\R$, with its standard simplicial structure.

    However, this assumption is void in the case of quasi-parabolic actions, which is what we will be most interested in. Indeed, suppose that $G$ admits a quasi-parabolic action by isometries on a simplicial tree $X$ and let $g \in G$. Choose a vertex $v$ of valency at least $3$, so $gv$ must have the same valency and therefore be a vertex. Let $w$ be any other vertex, so $d(gv, gw) = d(v, w)$ is an integer, and hence $gw$ is also a vertex. This shows that every quasi-parabolic action by isometries on a simplicial tree is automatically simplicial.

    In particular, the second conclusion of \cref{prop:trees_to_subgroups} holds for all actions by isometries on simplicial trees. If the action is oriented lineal, then the confining subset arising from \cref{cit:action_to_confining} is the whole of $H$, and if the action is quasi-parabolic, then it is automatically simplicial by the above, so we can apply \cref{prop:trees_to_subgroups} as stated.
\end{remark}

\subsection{Oriented lineal and quasi-parabolic structures for agreeable, $\Z^n$-split groups}\label{ssec:lin_qp}

The above results allow us to give a more explicit description of lineal and quasi-parabolic structures in the following cases:

\begin{definition}[$\Z^n$-split]
A group $G$ is \emph{$\Z^n$-split} if it admits a surjective map $G\to\Z^n$ that splits, so that $G=H\rtimes \Z^n$ for $H$ the kernel of this map. Call an expression $G=H\rtimes \Z^n$ a \emph{$\Z^n$-splitting} of a $\Z^n$-split group.
\end{definition}

Different choices of sections $\Z^n\to G$ will yield different $\Z^n$-splittings, which is why we are being careful to say ``a'' $\Z^n$-splitting. A special type of $\Z^n$-splitting will be most relevant for us:

\begin{definition}[Sharply $\Z^n$-split]
    A $\Z^n$-split $G = H \rtimes \Z^n$ is called \emph{sharp} if $H$ equals the kernel of the torsion-free abelianization of $G$. A group is \emph{sharply $\Z^n$-split} if it admits a sharp $\Z^n$-splitting.
\end{definition}

The relevance of sharp $\Z^n$-splittings lies in the following, which is just a consequence of the definitions.

\begin{lemma}\label{lem:sharp}
Let $G = H \rtimes \Z^n$ be a sharp $\Z^n$-splitting. Then every character of $G$ vanishes on $H$. If moreover $G$ is agreeable, then every pseudocharacter of $G$ vanishes on $H$. \qed
\end{lemma}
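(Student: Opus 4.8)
The plan is to unwind the definitions of ``sharp'' and ``agreeable'' and use the fact that homomorphisms to $\R$ factor through the torsion-free abelianization. Write $G^{\mathrm{ab}}_{\mathrm{fr}}$ for the torsion-free abelianization of $G$, that is, the quotient of the abelianization of $G$ by its torsion subgroup, and let $\pi \colon G \to G^{\mathrm{ab}}_{\mathrm{fr}}$ be the canonical projection. By the definition of a sharp $\Z^n$-splitting, $H = \ker(\pi)$ (and in particular $G^{\mathrm{ab}}_{\mathrm{fr}} \cong \Z^n$, compatibly with the given splitting $G = H \rtimes \Z^n$).

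First I would treat characters. Let $\chi \colon G \to \R$ be a character, i.e.\ a homomorphism. Since $\R$ is abelian and torsion-free, $\chi$ kills the commutator subgroup of $G$ and all torsion in the abelianization, hence factors as $\chi = \bar\chi \circ \pi$ for some homomorphism $\bar\chi \colon G^{\mathrm{ab}}_{\mathrm{fr}} \to \R$. In particular $\chi$ vanishes on $\ker(\pi) = H$, which is the first claim.

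For the second claim, suppose in addition that $G$ is agreeable. By the definition of agreeable, every pseudocharacter $p \colon G \to \R$ is a character, so the first part applies verbatim: $p$ vanishes on $H$.

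I do not expect any genuine obstacle here; the only point requiring a moment's care is the identification $H = \ker(\pi)$, which is exactly the content of ``sharp,'' and the elementary observation that any group homomorphism into $\R$ factors through the torsion-free abelianization. Everything else is immediate from the definitions, which is why the statement is flagged as a consequence of the definitions.
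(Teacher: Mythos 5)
Your proof is correct and is exactly the argument the paper intends: the paper states this lemma without proof as an immediate consequence of the definitions, and your unwinding of ``sharp'' (so that $H$ is the kernel of the torsion-free abelianization, through which any homomorphism to $\R$ factors) and ``agreeable'' (pseudocharacters are characters) is the standard route. No issues.
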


In particular, the hypothesis of \cref{cit:action_to_confining} is always satisfied for sharp $\Z^n$-splittings.

\medskip

Our next goal is to characterize lineal and quasi-parabolic actions of agreeable, sharply $\Z^n$-split groups. First let us discuss the (much easier) lineal case.

\begin{proposition}\label{prop:lineal}
Let $G$ be an agreeable, sharply $\Z^n$-split group. Let $G=H\rtimes\Z^n$ be a sharp $\Z^n$-splitting. Then the poset $\Hyp_\ell^+(G)$ is an antichain, and the function $\varphi \colon \Sigma(\Z^n)\to \Hyp_\ell^+(G)$ sending $[\rho]$ to $[H\cup Z_\rho]$ is a well defined 2-to-1 surjection with each fiber of the form $\{[\rho],[-\rho]\}$.
\end{proposition}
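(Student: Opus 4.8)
The plan is to analyze the map $\varphi \colon \Sigma(\Z^n) \to \Hyp_\ell^+(G)$ by combining the general classification of oriented lineal structures via Busemann pseudocharacters with the confining subsets dictionary (\cref{cit:confining_to_action} and \cref{cit:action_to_confining}) in the special case $Q = H$.

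\emph{Well-definedness and image landing in $\Hyp_\ell^+$.} First I would check that $H$ is $\rho$-confining for every character $\rho$ of $\Z^n$: symmetry and \GetInConf\ are immediate (take $z = 1$), \StayInConf\ holds since $H$ is normal, and \ProdConf\ holds since $H \cdot H = H$. It is not strictly confining, so \cref{cit:confining_to_action} gives $[H \cup Z_\rho] \in \Hyp_\ell^+(G)$; this also shows the generating set $H \cup Z_\rho$ is hyperbolic. To see the map is well-defined on $\Sigma(\Z^n)$, note that if $\rho' = \lambda\rho$ with $\lambda > 0$ then one can take $Z_{\rho'} = Z_\rho$ up to the equivalence relation on generating sets of $G$ (the constant $C_\rho$ scales, and we are free to rescale it), so $[H \cup Z_{\rho'}] = [H \cup Z_\rho]$. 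Also $Z_{-\rho} = Z_\rho$, so $\varphi([\rho]) = \varphi([-\rho])$, giving at least a $2$-to-$1$ collapse on each pair $\{[\rho],[-\rho]\}$.

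\emph{Surjectivity.} Let $[G \curvearrowright X] \in \Hyp_\ell^+(G)$. Since $G$ is agreeable and sharply $\Z^n$-split, \cref{lem:sharp} tells us the Busemann pseudocharacter $p$ of this oriented lineal action is a character of $G$ vanishing on $H$, hence factors through a non-trivial character $\rho \colon \Z^n \to \R$. Now \cref{cit:action_to_confining} applies: since the action is oriented lineal, the confining subset it produces is $Q = H$, and $X$ is $G$-equivariantly quasi-isometric to $\Cay(G, H \cup Z_\rho)$. Thus $[G \curvearrowright X] = \varphi([\rho])$, proving surjectivity.

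\emph{Exactly $2$-to-$1$ and antichain.} For the fiber computation, suppose $\varphi([\rho_1]) = \varphi([\rho_2])$. The Busemann pseudocharacter is an invariant of the equivalence class of the action, and by the last sentence of \cref{cit:confining_to_action} the Busemann pseudocharacter of $G \curvearrowright \Cay(G, H \cup Z_{\rho_i})$ induces $\pm\rho_i$ on $\Z^n$; equality of the structures forces $\rho_2 \in \{\lambda\rho_1, -\lambda\rho_1\}$ for some $\lambda > 0$, i.e.\ $[\rho_2] \in \{[\rho_1],[-\rho_1]\}$. Combined with $\varphi([\rho]) = \varphi([-\rho])$ from above, each fiber is exactly $\{[\rho],[-\rho]\}$ and has size $2$ (noting $[\rho] \neq [-\rho]$ in $\Sigma(\Z^n)$). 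Finally, to see $\Hyp_\ell^+(G)$ is an antichain: if $\varphi([\rho_1]) \preccurlyeq \varphi([\rho_2])$, then the dominating generating set $H \cup Z_{\rho_2}$ being hyperbolic-lineal with its own Busemann character forces the two Busemann characters to be proportional (domination of lineal actions preserves the limit-point data up to sign, hence the induced characters up to positive scaling), so $[\rho_1] = [\rho_2]$ in $\Sigma(\Z^n)/\pm$ and the two structures coincide. Hence no two distinct elements are comparable.

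The step I expect to be the main obstacle is the \emph{exactly $2$-to-$1$} claim together with the antichain property: both rest on extracting from domination (or equality) of two oriented lineal hyperbolic structures the conclusion that their Busemann pseudocharacters are positive scalar multiples of one another. This requires care about signs and about how the Busemann pseudocharacter behaves under coarsely Lipschitz coarsely equivariant maps — essentially reproving, in the $\Z^n$-split setting, the classification of oriented lineal structures by pseudocharacters up to positive scaling (cf.\ \cite[Section 2.3]{ABO}). Once that is in hand, everything else is a direct application of the two citation boxes with $Q = H$.
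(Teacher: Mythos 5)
Your well-definedness and surjectivity steps are exactly the paper's: $H$ is $\rho$-confining for every $\rho$, \cref{cit:confining_to_action} puts $[H\cup Z_\rho]$ in $\Hyp_\ell^+(G)$, and \cref{lem:sharp} plus \cref{cit:action_to_confining} give surjectivity. Where you diverge is the fiber computation and the antichain property, and this is also where your argument is softest. The paper gets the antichain for free by citing \cite[Theorem~4.22(a)]{ABO} (it holds for \emph{every} group, with no Busemann analysis), and then distinguishes $\varphi([\rho])$ from $\varphi([\rho'])$ for $[\rho]\neq[\pm\rho']$ by a short elementary argument: the kernels differ, so one picks $z\in\ker(\rho')\setminus\ker(\rho)$ and observes that $\langle z\rangle\subseteq Z_{\rho'}$ is unbounded in the word metric of $H\cup Z_\rho$ (since $\rho$ is bounded on $H\cup Z_\rho$ but unbounded on $\langle z\rangle$), so neither structure dominates the other. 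Your route instead extracts proportionality of the induced characters from the invariance of the Busemann pseudocharacter under equivalence/domination. For \emph{equality} of structures this can be made to work (a coarsely equivariant quasi-isometry gives $\lambda^{-1}|\rho|\le|\rho'|\le\lambda|\rho|$ on $\Z^n$, which forces equal kernel hyperplanes in $\R^n$ and hence proportionality), but for mere \emph{domination} the claim that a coarsely Lipschitz coarsely equivariant map ``preserves the limit-point data'' is not automatic — such maps need not extend to the boundary or be quasi-isometries — and this is precisely the content of the ABO antichain theorem, which you should cite rather than re-derive. So: correct in outline, but your self-identified ``main obstacle'' is resolved in the paper either by a direct citation (antichain) or by the much more elementary kernel/unboundedness argument (fibers), with no appeal to Busemann invariance at all.
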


\begin{proof}
The proof is inspired by the work in Subsection~4.2 of \cite{confining}, which covers a special case. First note that $\Hyp_\ell^+(G)$ is an antichain simply because this is always the case \cite[Theorem~4.22(a)]{ABO}. Now we claim $\varphi$ is well defined. This is because if $\rho$ and $\rho'$ are equivalent, then $H\cup Z_\rho$ and $H\cup Z_{\rho'}$ dominate each other, and the output $[H\cup Z_\rho]$ really lies in $\Hyp_\ell^+(G)$ by \cref{cit:confining_to_action}. Since every pseudocharacter of $G$ vanishes on $H$ by \cref{lem:sharp}, \cref{cit:action_to_confining} applies, and gives us surjectivity. Finally, we need to show that $\varphi$ is 2-to-1 with the claimed fibers. Recall that we always assume $Z_{\rho} = Z_{-\rho}$, so $\varphi([\rho])=\varphi([-\rho])$. Now suppose $[\rho]\ne[\pm\rho']$, and we want to prove that $[H\cup Z_\rho]\ne[H\cup Z_{\rho'}]$. Since $[\rho]\ne[\pm\rho']$ we have $\ker(\rho)\ne \ker(\rho')$. Without loss of generality let us say that $\ker(\rho')\not\le \ker(\rho)$, and choose some $z\in \ker(\rho')\setminus \ker(\rho)$. Then $\langle z\rangle$ is a subgroup of $\ker(\rho')$, hence a subset of $Z_{\rho'}$, that is unbounded with respect to the word length coming from $H\cup Z_\rho$. We conclude that $H\cup Z_\rho\not\preceq H\cup Z_{\rho'}$, so $[H\cup Z_\rho]\ne[H\cup Z_{\rho'}]$.
\end{proof}

Now we focus on the quasi-parabolic case.

If $G = H \rtimes \Z^n$, and $\rho\colon\Z^n\to\R$ is a (non-trivial) character of $\Z^n$, write $\Hyp_{qp}^\rho(G)$ for the subposet of equivalence classes of those quasi-parabolic actions whose Busemann pseudocharacter induces a character $\Z^n\to\R$ equivalent to $\rho$.

\begin{proposition}
\label{prop:qp}

Let $G$ be an agreeable, sharply $\Z^n$-split group. Let $\rho \colon \Z^n \to \R$ be a non-trivial character. Then the posets $\Hyp_{qp}^\rho(G)$ and $\Conf^\ast_\rho(G)$ are isomorphic.
\end{proposition}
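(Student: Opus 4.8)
The plan is to show that the maps between these two posets coming from \cref{cit:confining_to_action} and \cref{cit:action_to_confining} are mutually inverse poset isomorphisms. Since $G$ is agreeable and sharply $\Z^n$-split, \cref{lem:sharp} guarantees that every pseudocharacter of $G$ (in particular every Busemann pseudocharacter) vanishes on $H$, so \cref{cit:action_to_confining} always applies: given a cobounded quasi-parabolic action with Busemann character equivalent to $\rho$, it produces a strictly $\rho$-confining $Q \subseteq H$ with $X$ being $G$-equivariantly quasi-isometric to $\Cay(G, Q \cup Z_\rho)$. Conversely, \cref{cit:confining_to_action} sends a strictly $\rho$-confining $Q$ to $[Q \cup Z_\rho] \in \Hyp_{qp}(G)$, and the last sentence of that citation tells us its Busemann character is equivalent to $\rho$, so it lands in $\Hyp^\rho_{qp}(G)$. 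So define $\Phi \colon \Conf^\ast_\rho(G) \to \Hyp^\rho_{qp}(G)$ by $[Q] \mapsto [G \curvearrowright \Cay(G, Q \cup Z_\rho)]$ and $\Psi$ in the other direction via \cref{cit:action_to_confining}.

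\textbf{Well-definedness of $\Phi$.} I need that $Q_1 \sim_\rho Q_2$ implies $[Q_1 \cup Z_\rho] = [Q_2 \cup Z_\rho]$ as hyperbolic structures; but by construction $Q_1 \preceq_\rho Q_2$ is \emph{defined} to mean $Q_1 \cup Z_\rho \preceq Q_2 \cup Z_\rho$ as generating sets, so this is immediate, and likewise $\Phi$ is order-preserving essentially by definition. The composite $\Psi \circ \Phi$ is the identity: starting from $Q$, the action $G \curvearrowright \Cay(G, Q \cup Z_\rho)$ fed into \cref{cit:action_to_confining} returns a strictly $\rho$-confining subset $Q'$ with $\Cay(G, Q' \cup Z_\rho)$ $G$-equivariantly quasi-isometric to $\Cay(G, Q \cup Z_\rho)$, which forces $[Q' \cup Z_\rho] = [Q \cup Z_\rho]$ as hyperbolic structures, i.e. $Q' \cup Z_\rho$ and $Q \cup Z_\rho$ are equivalent generating sets, i.e. $Q \sim_\rho Q'$. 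For the other composite $\Phi \circ \Psi$, starting from an action $G \curvearrowright X$ one gets $Q = \Psi([G \curvearrowright X])$ with $X$ $G$-equivariantly quasi-isometric to $\Cay(G, Q \cup Z_\rho) = $ (a representative of) $\Phi([Q])$, so $\Phi(\Psi([G \curvearrowright X])) = [G \curvearrowright X]$.

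\textbf{$\Psi$ is order-preserving.} This is the only point requiring a little care beyond bookkeeping, and I expect it to be the main (mild) obstacle. One option is to check it directly: if $G \curvearrowright Y \preceq G \curvearrowright X$ via a coarsely Lipschitz coarsely equivariant $f \colon X \to Y$, then composing with the quasi-isometries to the respective Cayley graphs shows $\Cay(G, Q_Y \cup Z_\rho) \preceq \Cay(G, Q_X \cup Z_\rho)$, hence $[Q_Y] \preccurlyeq_\rho [Q_X]$. Alternatively, and more cleanly, since we have already shown $\Phi$ and $\Psi$ are mutually inverse bijections and $\Phi$ is order-preserving, it suffices to note that $\Phi$ being an order-\emph{reflecting} bijection makes $\Psi$ order-preserving automatically; and $\Phi$ reflects order because $[Q_1 \cup Z_\rho] \preccurlyeq [Q_2 \cup Z_\rho]$ unwinds to $Q_1 \preceq_\rho Q_2$ by the very definition of $\preceq_\rho$ in terms of boundedness of generating sets. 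Combining, $\Phi$ is an order-preserving, order-reflecting bijection, hence a poset isomorphism, completing the proof. I should also remark that one must verify the two citation-based maps genuinely take values in the claimed \emph{strictly} confining / \emph{quasi-parabolic} pieces rather than the lineal ones — but this is exactly the content of the ``strictly confining $\Leftrightarrow$ quasi-parabolic'' dichotomy spelled out in both \cref{cit:confining_to_action} and \cref{cit:action_to_confining}, so no extra work is needed.
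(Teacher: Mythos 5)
Your proposal is correct and follows essentially the same route as the paper: the paper also takes $[Q]\mapsto[Q\cup Z_\rho]$, gets well-definedness and order-preservation from the definition of $\preceq_\rho$ in terms of generating sets, gets injectivity from the reverse implication (your ``order-reflecting'' step), and gets surjectivity from agreeability plus \cref{cit:action_to_confining}. Your version merely makes the inverse map $\Psi$ explicit rather than deducing bijectivity abstractly, which is a presentational difference only.
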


\begin{proof}
We claim that $[Q]\mapsto [Q\cup Z_\rho]$ is a well defined poset isomorphism from $\Conf^\ast_\rho(G)$ to $\Hyp_{qp}^\rho(G)$. Since $Q_1\preceq_\rho Q_2$ implies $Q_1\cup Z_\rho\preceq Q_2\cup Z_\rho$, it is a well defined poset map. Since $Q_1\cup Z_\rho\preceq Q_2\cup Z_\rho$ implies $Q_1\preceq_\rho Q_2$, it is injective. It just remains to prove that this map is surjective. Since $G$ is agreeable, \cref{cit:action_to_confining} says that every element of $\Hyp_{qp}^\rho(G)$ is of the form $[Q\cup Z_\rho]$ for some strictly confining $Q$, so we are done.
\end{proof}

Now we can tie together $\Hyp_\ell^+(G)$ and $\Hyp_{qp}^\rho(G)$.

\begin{proposition}\label{prop:linvsqp}
Let $G$ be an agreeable, sharply $\Z^n$-split group. Let $\rho \colon \Z^n \to \R$ be a non-trivial character. The element $[H\cup Z_\rho]$ of $\Hyp_\ell^+(G)$ is a lower bound of every element of $\Hyp_{qp}^\rho(G)$, and is not related to any elements of any $\Hyp_{qp}^{\rho'}(G)$ for $[\rho']\ne[\pm\rho]$.
\end{proposition}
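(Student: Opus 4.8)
The statement has two halves, and the plan is to dispatch the first quickly and spend the real effort on the second.

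\emph{For the first half}, recall that by \cref{prop:qp} every element of $\Hyp_{qp}^\rho(G)$ has the form $[Q\cup Z_\rho]$ with $Q\subseteq H$ strictly $\rho$-confining. Since $Q\subseteq H$ we have $Q\cup Z_\rho\subseteq H\cup Z_\rho$ as generating sets of $G$, so every element of $Q\cup Z_\rho$ has length $1$ in $H\cup Z_\rho$, giving $H\cup Z_\rho\preceq Q\cup Z_\rho$ and hence $[H\cup Z_\rho]\preccurlyeq[Q\cup Z_\rho]$. Thus $[H\cup Z_\rho]$ is a lower bound for $\Hyp_{qp}^\rho(G)$.

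\emph{For the second half}, I would fix $[\rho']\ne[\pm\rho]$, assume $\Hyp_{qp}^{\rho'}(G)$ is non-empty (otherwise there is nothing to prove), and use \cref{prop:qp} to write its elements as $[Q'\cup Z_{\rho'}]$ with $Q'$ strictly $\rho'$-confining. The key tool will be a Lipschitz estimate: for any character $\sigma$ of $\Z^n$ and any $\sigma$-confining $Q\subseteq H$, the map $\Cay(G,Q\cup Z_\sigma)\to\R$ sending $g$ to $\sigma(\pi(g))$, where $\pi\colon G\to\Z^n$ is the projection, moves the endpoints of each edge by at most $C_\sigma$ (it collapses $Q$-edges entirely, since $Q\subseteq\ker\pi$, and moves along $Z_\sigma$-edges by at most $C_\sigma$ by definition of $Z_\sigma$). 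This yields $|\sigma(z)|\le C_\sigma\,|z|_{Q\cup Z_\sigma}$ for all $z\in\Z^n$, so $\sigma$ is bounded on every subset of $\Z^n$ of bounded length in $Q\cup Z_\sigma$.

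With this in hand I would show that $[H\cup Z_\rho]$ and $[Q'\cup Z_{\rho'}]$ cannot be comparable. If $[H\cup Z_\rho]\preccurlyeq[Q'\cup Z_{\rho'}]$, then $Z_{\rho'}$ is bounded in $H\cup Z_\rho$, so (taking $\sigma=\rho$, $Q=H$) $\rho$ is bounded on $Z_{\rho'}=\{z\in\Z^n\mid|\rho'(z)|\le C_{\rho'}\}$; if $[Q'\cup Z_{\rho'}]\preccurlyeq[H\cup Z_\rho]$, then symmetrically (taking $\sigma=\rho'$, $Q=Q'$) $\rho'$ is bounded on $Z_\rho$. Both are contradicted by the following Diophantine observation, which is precisely where the hypothesis $[\rho']\ne[\pm\rho]$ (i.e.\ $\rho'$ is not a nonzero real scalar multiple of $\rho$) enters: if $\sigma,\tau\colon\Z^n\to\R$ are non-trivial and neither is a real scalar multiple of the other, then $\sigma$ is unbounded on $\{z\in\Z^n\mid|\tau(z)|\le C\}$ once $C$ is large enough. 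Indeed, extending $\sigma,\tau$ to linear functionals on $\R^n$, non-proportionality gives $v\in\R^n$ with $\tau(v)=0$ and $\sigma(v)=1$; rounding the coordinates of $tv$ to nearest integers produces $m_t\in\Z^n$ with $\|m_t-tv\|_\infty\le\tfrac12$, so $|\tau(m_t)|=|\tau(m_t-tv)|$ stays bounded by a constant depending only on $\tau$ and $n$, while $|\sigma(m_t)|\ge t-O(1)\to\infty$. Since enlarging $C_{\rho'}$ (or $C_\rho$) does not change the equivalence class of the generating set, I may assume it exceeds that constant, and the contradiction follows; hence $[H\cup Z_\rho]$ is incomparable with every element of $\Hyp_{qp}^{\rho'}(G)$.

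The only genuinely delicate point is the Diophantine estimate and the recognition that the relevant slab $Z_{\rho'}$ (resp.\ $Z_\rho$) contains integer points on which $\rho$ (resp.\ $\rho'$) blows up exactly when real-proportionality fails; the rest is bookkeeping with $\pi$, the definition of $Z_\rho$, and the preorder $\preceq$, with the main care needed in tracking constants so that ``bounded length'' translates into boundedness of the functional on the slab.
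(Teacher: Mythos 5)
Your proof is correct. The first half is exactly the paper's argument (containment of generating sets reverses the order), and the core of your second half --- detecting non-domination by observing that a character composed with the projection $\pi$ is coarsely Lipschitz on the relevant Cayley graph, and then producing integer points in one slab on which the other character blows up --- is the same geometric idea the paper uses to rule out $[H\cup Z_\rho]\preccurlyeq[Q'\cup Z_{\rho'}]$ (the paper phrases it as points of $Z_{\rho'}$ escaping every neighborhood of $\ker\rho$). The one place you diverge is the reverse direction: the paper disposes of $[Q'\cup Z_{\rho'}]\preccurlyeq[H\cup Z_\rho]$ in one line by transitivity through $[H\cup Z_{\rho'}]\preccurlyeq[Q'\cup Z_{\rho'}]$ and the antichain property of $\Hyp_\ell^+(G)$ from \cref{prop:lineal}, whereas you rerun the Lipschitz estimate directly on $\Cay(G,Q'\cup Z_{\rho'})$, using that $Q'\subseteq H=\ker\pi$ so that $\rho'\circ\pi$ still collapses the $Q'$-edges. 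Your route is slightly longer but more symmetric and self-contained; the paper's is shorter because \cref{prop:lineal} has already done that work. Your explicit care with the constants $C_\rho$, $C_{\rho'}$ (enlarging them without changing the equivalence class, as the paper's setup permits) is appropriate and handles the only delicate point in the Diophantine step.
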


\begin{proof}
The first claim is immediate since $[H\cup Z_\rho]\preccurlyeq [Q\cup Z_\rho]$ for all $\rho$-confining $Q$. We need to show the second claim. Suppose $[\rho']\ne [\pm\rho]$, so $\ker(\rho)\ne\ker(\rho')$. Extending $\rho$ and $\rho'$ from non-zero homomorphisms $\Z^n\to\R$ to non-zero linear transformations $\R^n\to\R$, the kernels $\ker(\rho)$ and $\ker(\rho')$ become distinct codimension-$1$ subspaces of $\R^n$. We can therefore find a sequence of points in $\ker(\rho')$ that gets arbitrarily far away from $\ker(\rho)$. Taking the $C_{\rho'}$-neighborhood and intersecting with $\Z^n$, we can choose a sequence of points in $Z_{\rho'}$ that gets arbitrarily far away from $Z_\rho$. This shows that for any $\rho'$-confining subset $Q$, the set $Q\cup Z_{\rho'}$ is unbounded in the alphabet $H\cup Z_{\rho}$, and so $[H\cup Z_\rho]\not\preccurlyeq[Q\cup Z_{\rho'}]$. Finally, it is obvious that $[Q\cup Z_{\rho'}]\not\preccurlyeq[H\cup Z_\rho]$ since \cref{prop:lineal} says that $[H\cup Z_{\rho'}]\not\preccurlyeq [H\cup Z_\rho]$. We conclude that $[H\cup Z_\rho]$ is not related to any elements of any $\Hyp_{qp}^{\rho'}(G)$.
\end{proof}

Finally, let us prove one useful criterion that can be used to show that $\Conf^\ast_\rho(G)$ is empty, for certain $\Z^n$-splittings and certain $\rho$. Here a group is \emph{boundedly generated} by a collection of subgroups if it is generated by their union $U$, and is bounded with respect to the word length coming from the generating set $U$. For an element $g \in G$ we denote by $C_H(g) \coloneqq \{ h \in H \mid [g, h] = 1 \}$ its centralizer in $H$.

\begin{proposition}
\label{prop:empty_conf}
Let $G = H \rtimes \Z^n$ be a $\Z^n$-splitting. Let $\{a_1,\dots,a_n\}$ be a basis for $\Z^n$ and $\rho\colon \Z^n \to \R$ a character. If $\rho(a_i) \neq 0$, then every $\rho$-confining subset contains $C_H(a_i)$. In particular, if $H$ is boundedly generated by those $C_H(a_i)$ with $\rho(a_i)\ne 0$, then $\Conf^\ast_\rho(G)= \emptyset$.
\end{proposition}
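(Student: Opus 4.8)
The plan is to establish the two assertions in turn, the first serving as the engine for the second.

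For the first assertion, fix a $\rho$-confining subset $Q$ and an index $i$ with $\rho(a_i)\neq 0$. Replacing $a_i$ by $a_i^{-1}$ if necessary — which changes neither $C_H(a_i)$, since $C_H(a_i)=C_H(a_i^{-1})$, nor the hypothesis $\rho(a_i) \ne 0$ — I may assume $\rho(a_i)>0$. Given $h\in C_H(a_i)$, I would apply the first item of \cref{lem:conf:cyclic} with $a=a_i$ to obtain $k\ge 0$ with $h^{a_i^k}\in Q$. But $h$ commutes with $a_i$, hence with $a_i^k$, so $h^{a_i^k}=h$; thus $h\in Q$. As $h$ was arbitrary, this shows $C_H(a_i)\subseteq Q$.

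For the second assertion, suppose $H$ is boundedly generated by the subgroups $C_H(a_i)$ ranging over those $i$ with $\rho(a_i)\neq 0$, and write $U$ for the union of these finitely many subgroups. Then $U$ is symmetric and contains $1$, and bounded generation gives an $M\ge 1$ with $H=U^M$, i.e. every element of $H$ is a product of at most $M$ elements of $U$. Now let $Q$ be any $\rho$-confining subset. By the first assertion $U\subseteq Q$, and since $1\in Q$ this yields $H=U^M\subseteq Q^M$. Next pick $a\in\Z^n$ with $\rho(a)>0$; by the second item of \cref{lem:conf:cyclic} there is $k\ge 0$ with $(Q\cdot Q)^{a^k}\subseteq Q$, so $z_0\coloneqq a^k$ satisfies $\rho(z_0)\ge 0$ and may serve as the element of \ProdConf. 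Then \cref{lem:products} gives $(Q^M)^{z_0^{M-1}}\subseteq Q$. Since $H$ is normal in $G$, conjugation by $z_0^{M-1}$ fixes $H$ setwise, so
\[
H = H^{z_0^{M-1}} \subseteq (Q^M)^{z_0^{M-1}} \subseteq Q.
\]
Hence $Q=H$, so $Q$ is not strictly confining; as $Q$ was arbitrary, $\Conf^\ast_\rho(G)=\emptyset$.

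The argument is essentially routine, and the only point requiring care is the step from $U\subseteq Q$ to $H\subseteq Q$: this is not automatic because $Q$ need not be a subgroup. It is exactly here that bounded generation (to place $H$ inside $Q^M$) must be combined with \ProdConf via \cref{lem:products} (to collapse $Q^M$ back into $Q$ after conjugating by an element of $\Z^n$, which normalizes $H$).
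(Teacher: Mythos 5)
Your proposal is correct and follows essentially the same route as the paper: the first assertion via \cref{lem:conf:cyclic} (conjugation by a power of $a_i$ fixes elements of $C_H(a_i)$, forcing them into $Q$), and the second via bounded generation to get $H\subseteq Q^M$ followed by \cref{lem:products} and normality of $H$ to collapse back into $Q$. You merely spell out details (the sign of $\rho(a_i)$, the choice of $z_0$, the normality step) that the paper's terser proof leaves implicit.
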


\begin{proof}
Suppose that $\rho(a_i) \neq 0$, and let $c\in C_H(a_i)$. By \cref{lem:conf:cyclic} there exists $k \in \Z$ such that $c^{a_i^k} \in Q$. But $c^{a_i^k} = c$ and so $C_H(a_i) \subseteq Q$.

If $H$ is boundedly generated by such $C_H(a_i)$, then $H=Q^\ell$ for some $\ell\in\N$. Now \cref{lem:products} tells us that $Q$ is equivalent to $H$.
\end{proof}

\section{Brown--Higman--Thompson groups}\label{sec:thompson}

In this section we recall some background on the Brown--Higman--Thompson groups $F_n$. We recall the following standard dynamical definitions, which we will repeatedly use in all that follows without further mention. For simplicity, we state them in our setting of one-dimensional actions:

\begin{definition}
\label{def:dynamical}

Let $G \leq \Homeo([0, 1])$. We will always view homeomorphisms as acting on $[0,1]$ from the right, so $x.g$ denotes the image of $x\in [0,1]$ under $g\in\Homeo([0,1])$.
\begin{enumerate}
\item An element $g \in G$ is \emph{orientation preserving} if $x < y$ implies $x.g < y.g$. If every element of $G$ is orientation preserving, we say that $G$ is \emph{orientation preserving}, and write $G \leq \Homeo^+([0, 1])$.
\item For an element $g \in G$, its \emph{support} is the set $\Supp(g) = \{ x \in [0, 1] \mid x.g \neq x \}$. For a set $S \subseteq G$, we denote $\Supp(S) = \cup_{g \in S} \Supp(g)$. We say that $S$ is \emph{supported on $I \subseteq [0, 1]$} if $\Supp(S) \subseteq I$.
\item The group $G$ acts \emph{proximally} if for every $x, y, z \in (0, 1)$ there exists a sequence $g_i : i \in \N$ such that $x.g_i \to z$ and $y.g_i \to z$.
\end{enumerate}
\end{definition}

A crucial fact, that will be used throughout the paper, is that elements with disjoint support commute.

\begin{remark}
\label{rem:proximal}

Suppose that the diagonal action of $G$ on $(0, 1) \times (0, 1)$ has a dense orbit $O$. Then $G$ acts proximally on $(0, 1)$. Indeed, let $x, y, z \in (0, 1)$, without loss of generality assume $x < y$. Because $O$ is dense, there exists $(x', y') \in O$ such that $x' < x < y < y'$. Using density again, there exists a sequence $g_i$ ($i \in \N$) such that $(x', y').g_i \to (z, z)$. Then $x.g_i \to z$ and $y.g_i \to z$.
\end{remark}

Recall that, for $n\ge 2$, $F_n$ is the group of all piecewise linear orientation preserving homeomorphisms of $[0,1]$ with all slopes powers of $n$ and breakpoints in $\Z[1/n]$. It turns out that $F_n$ admits a standard infinite presentation
\[
F_n=\langle x_0,x_1,\dots\mid x_j^{x_i} = x_{j+(n-1)} \text{ for all } i<j\rangle \text{.}
\]
See for example Proposition~4.8 of \cite{brown87} (following Propositions~4.1 and 4.4 of \cite{brown87}, which confirm that we are talking about the same group). Although this presentation (and a related finite presentation) is useful in many contexts, in our case the dynamical definition will be more important.

We introduce the following notation for piecewise linear homeomorphisms:

\begin{definition}
For a piecewise linear element $g \in \Homeo([0, 1])$ and an element $x \in [0, 1]$, we denote by $g'_+(x)$ the right slope at $x$, and by $g'_-(x)$ the left slope at $x$. If the left and right slopes coincide, or if $x \in \{0, 1\}$ and so only one is defined, then we just write $g'(x)$.
\end{definition}

\subsection{Transitivity properties}

The action of $F_n$ on $(0,1)$ stabilizes $(0,1)\cap \Z[1/n]$. However, the action of $F_n$ on $(0,1)\cap \Z[1/n]$ is not transitive when $n > 2$. We refer the reader to \cite[Section 3]{hydelodha} for a characterization of the orbits; all that we will need is the following:

\begin{cit}[{\cite[Section 3]{hydelodha}}]
\label{cit:proximal}

There exists a dense subset $O \subseteq (0, 1) \cap \Z[1/n]$ that is $F_n$-invariant and on which the action of the commutator subgroup $F_n'$ is highly transitive. That is to say, for any $k\in\N$ and any two tuples $0 < x_1 < \cdots < x_k < 1$ and $0 < y_1 < \cdots < y_k < 1$ with $x_i, y_i \in O$, there exists $g \in F_n'$ such that $x_i.g = y_i$ for all $i$.

In particular, every subgroup $F_n' \leq H \leq F_n$ is proximal on $(0, 1)$.
\end{cit}

The ``in particular'' follows from \cref{rem:proximal}.

\subsection{Abelianization}

It is clear from the presentation that the abelianization $\Ab(F_n)$ is isomorphic to $\Z^n$. In this section, we will describe this abelianization dynamically. Two characters play a special role:

\begin{definition}
\label{def:chi01}
    Let $\chi_0 \colon F_n \to \Z$ be 	defined as $\chi_0(g) \coloneqq \log_n g'(0)$. Similarly, $\chi_1(g) \coloneqq \log_n g'(1)$. We denote $F_n^c \coloneqq \ker(\chi_0) \cap \ker(\chi_1)$.
\end{definition}

We call $F_n^c$ the \emph{compactly supported subgroup} of $F_n$, since it consists of those homeomorphisms whose support is contained in a compact subset of $(0, 1)$. The quotient $F_n / F_n^c$ is isomorphic to $\Z^2$. This is the end of the story in the important case of Thompson's group $F$, when $n  = 2$. (For $n > 2$, since the abelianization is $\Z^n$, one can find an additional $n-2$ linearly independent characters.)

We will see later that $F_n$ is sharply $\Z^n$-split, i.e., the abelianization map $F_n\to\Z^n$ splits. The following is the key result in this direction. Here an element of $F_n$ is called a \emph{one-bump function} if its support equals a single open interval.

\begin{proposition}\label{prop:modelsplitting}
There exist elements $a_0, \ldots, a_{n-1} \in F_n$ such that:
\begin{enumerate}
\item Each $a_i$ is a one-bump function with support $I_i = (\ell_i, r_i)$ for some $\ell_i,r_i$. Moreover, $\ell_0 = 0$ and $\chi_0(a_0)=1$, and $r_{n-1} = 1$ and $\chi_1(a_{n-1})=1$.
\item The $I_i$ are pairwise disjoint, in fact $r_{i-1} < \ell_i$ for $i = 1, \ldots, (n-1)$.
\item The $a_i$ generate the abelianization of $F_n$.
\end{enumerate}
\end{proposition}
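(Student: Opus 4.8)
The plan is to construct the $a_i$ explicitly as piecewise linear homeomorphisms with the prescribed support intervals, and then to verify that their images in $\Ab(F_n) \cong \Z^n$ form a basis.

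First I would fix the combinatorial skeleton. Choose $2n$ points $0 = \ell_0 < r_0 < \ell_1 < r_1 < \cdots < \ell_{n-1} < r_{n-1} = 1$ all lying in $\Z[1/n]$; this is possible since $\Z[1/n]$ is dense in $[0,1]$. Set $I_i = (\ell_i, r_i)$; by construction these are pairwise disjoint with $r_{i-1} < \ell_i$, giving items (i) (the interval part) and (ii). For each $i$ with $1 \le i \le n-2$ I would let $a_i$ be any one-bump element of $F_n$ with $\Supp(a_i) = I_i$ — for instance a single ``basic move'' pushing points toward $r_i$, realized as a piecewise linear map with slopes powers of $n$ and breakpoints in $\Z[1/n]$, which exists because $F_n$ acts transitively on ``$n$-adic subdivisions'' of any $n$-adic subinterval. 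For $a_0$ I would additionally demand that its right slope at $0$ equal $n$, i.e. near $0$ it looks like $x \mapsto x/n$ on the left-hand portion of $I_0$ (so $\chi_0(a_0) = \log_n(n) = 1$); such an element exists in $F_n$ restricted to $[\ell_0, r_0] = [0, r_0]$ since $r_0 \in \Z[1/n]$. Symmetrically I would require the left slope of $a_{n-1}$ at $1$ to be $n$, giving $\chi_1(a_{n-1}) = 1$. This yields item (i) in full. Note that since $a_i$ is compactly supported in $(0,1)$ for $1 \le i \le n-2$, we automatically have $\chi_0(a_i) = \chi_1(a_i) = 0$ for those $i$, and likewise $\chi_1(a_0) = \chi_0(a_{n-1}) = 0$.

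The remaining task, item (iii), is to show the $n$ classes $[a_0], \dots, [a_{n-1}]$ generate $\Ab(F_n) \cong \Z^n$. Here I would invoke the known description of the abelianization map. Writing $\overline{g}$ for the image of $g$ in $\Ab(F_n) \cong \Z^n$, there is an explicit isomorphism (see e.g. \cite{brown87}, or it can be read off the presentation): two of the coordinates are $\chi_0$ and $\chi_1$, and the remaining $n-2$ coordinates $\psi_1, \dots, \psi_{n-2}$ are ``local'' invariants detecting the behavior of $g$ at the intermediate $n$-adic orbits — concretely, each $\psi_j$ can be taken to be of the form $g \mapsto \log_n g'_+(p_j) $ for a suitable point $p_j \in O$, or more robustly the logarithmic slope jump across an orbit; the key point is just that $\chi_0, \chi_1, \psi_1, \dots, \psi_{n-2}$ together form a basis of $\Hom(F_n, \Z)$ dual to a basis of $\Ab(F_n)$. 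Choosing the intermediate intervals $I_i$ so that $I_i$ meets the orbit relevant to $\psi_{i-1}$ but the ``slope jump'' contributing to each other $\psi_j$ vanishes, and arranging the one-bump maps $a_i$ ($1 \le i \le n-2$) to have $\psi_{i}(a_{i}) = \pm 1$, one gets that the matrix expressing $(\chi_0, \chi_1, \psi_1, \dots, \psi_{n-2})$ against $([a_0], [a_{n-1}], [a_1], \dots, [a_{n-2}])$ is (up to reordering) upper triangular with $\pm 1$ on the diagonal: $\chi_0(a_0) = 1$, $\chi_1(a_{n-1}) = 1$, $\psi_i(a_i) = \pm 1$, while $\chi_0$ and $\chi_1$ vanish on all the compactly supported $a_i$, and each $\psi_j$ vanishes on $a_i$ for $i \notin \{j\}$ by the disjointness of supports and the choice of evaluation points. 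Such a matrix is invertible over $\Z$, so the $[a_i]$ form a basis, giving (iii).

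The main obstacle I anticipate is bookkeeping the intermediate characters $\psi_1, \dots, \psi_{n-2}$ precisely: one needs a clean, citable description of $\Ab(F_n)$ in dynamical terms so that it is transparent that the chosen $a_i$ have the claimed images, and one must choose the points $\ell_i, r_i$ and the orbit-representatives carefully so that each $\psi_j$ is supported, so to speak, on a single $I_i$. This is purely a matter of organizing the $n > 2$ case; for $n = 2$ there are no $\psi_j$ at all and the argument collapses to producing $a_0$ with $\chi_0(a_0) = 1, \chi_1(a_0) = 0$ and $a_1$ with $\chi_0(a_1) = 0, \chi_1(a_1) = 1$ on disjoint intervals, which makes (iii) immediate. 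An alternative that sidesteps fine knowledge of the $\psi_j$: build the $a_i$ first with disjoint supports and $\chi_0, \chi_1$ as above, then argue abstractly that the quotient of $\Ab(F_n)$ by the span of the $[a_i]$ is finite (by a rank count, since $\chi_0, \chi_1$ already pin down a $\Z^2$ summand and the $n-2$ compactly supported $a_i$ can be chosen independent in $\Ab(F_n^c) \to \Ab(F_n)$) and then torsion-free (since $\Ab(F_n) \cong \Z^n$ and the $[a_i]$ can be taken primitive and independent), forcing the span to be everything; but I would expect to need the explicit character description anyway to verify independence, so the triangular-matrix route is cleanest to write down.
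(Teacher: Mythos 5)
Your construction handles items (i) and (ii) without difficulty, and for $n=2$ the argument does collapse to something immediate. But for $n>2$ the verification of (iii) has a genuine gap: the entire argument rests on the existence of characters $\psi_1,\dots,\psi_{n-2}$ that are ``localized'' in the sense that $\psi_j(a_i)=0$ for $i\ne j$ by disjointness of supports together with a choice of evaluation points, and $\psi_i(a_i)=\pm 1$ is achievable by a one-bump function supported on a prescribed small interval. Neither claim is established, and the proposed mechanism is doubtful. A map $g\mapsto \log_n g'_+(p_j)$ is not a homomorphism on $F_n$ unless $p_j$ is a global fixed point (chain rule), so the $\psi_j$ cannot be single-point slope characters; the more robust ``total logarithmic slope jump over an orbit'' is indeed a character, but the relevant $F_n$-orbits in $\Z[1/n]\cap(0,1)$ are dense, so a one-bump function supported on $I_i$ will in general have breakpoints meeting every orbit, and disjointness of the $I_i$ gives you no control over $\psi_j(a_i)$ for $j\ne i$. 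Your fallback argument is also flawed: knowing that the $[a_i]$ are independent and primitive in $\Z^n$ does not make the quotient by their span torsion-free (e.g.\ $\langle (1,0),(1,2)\rangle\le\Z^2$ has index $2$ with both generators primitive), so ``finite and torsion-free, hence trivial'' does not go through as stated.

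The paper sidesteps all of this with a different order of operations that you may want to adopt. It starts from the standard generators $x_0,\dots,x_{n-1}$ of $F_n$ (realized as homeomorphisms via \cite{burillo01}), observes that $y_i\coloneqq x_ix_{i+1}^{-1}$ for $0\le i\le n-2$ and $y_{n-1}\coloneqq x_{n-1}^{-1}$ are one-bump functions with $\chi_0(y_0)=\chi_1(y_{n-1})=1$, and notes that $\{y_0,\dots,y_{n-1}\}$ still generates all of $F_n$ --- so these $n$ elements generate the rank-$n$ abelianization for free, with no need to identify the remaining $n-2$ characters. Only then does one conjugate each $y_i$ (using proximality of the action) to disjointify the supports; conjugation does not change images in the abelianization, so (iii) survives. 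In short: produce a one-bump generating set of the whole group first, then separate supports, rather than fixing the supports first and trying to certify generation afterwards via an explicit dual basis of characters.
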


\begin{proof}
In \cite{burillo01}, the generators $x_i$ in the standard presentation for $F_n$ are described in terms of the so called tree pair model for elements of $F_n$, and from this it is a straightforward but tedious exercise to view the $x_i$ as piecewise linear homeomorphisms of $[0,1]$. Setting $y_i=x_ix_{i+1}^{-1}$ for each $0\le i\le n-2$, and $y_{n-1}=x_{n-1}^{-1}$, it is easy to see that $y_0,\dots,y_{n-1}$ are one-bump functions, $\chi_0(y_0)=\chi_1(y_{n-1})=1$, and $\{y_0,\dots,y_{n-1}\}$ generates $F_n$. Up to conjugating each $y_i$, and recalling that $F_n$ acts proximally on $(0,1)$ (\cref{cit:proximal}), we may replace each $y_i$ with an element $a_i$ with support $I_i = (\ell_i, r_i)$ such that $r_0 < \ell_1 < r_1 < \ell_2 < r_2 < \cdots < \ell_{n-2} < r_{n-2} < \ell_{n-1}$. Now the elements $a_0,\dots,a_{n-1}$ satisfy all the desired conditions.
\end{proof}

See \cref{fig:bumps} for a visualization of the elements $a_i$ constructed in \cref{prop:modelsplitting}.

\begin{figure}[htb]
\centering
\begin{tikzpicture}[line width=0.8pt,rotate=45]

\draw[dashed] (0,0) -- (4,0);
\draw (0,0) to[out=40,in=140] (1,0);
\draw (1.5,0) to[out=40,in=140] (2.5,0);
\draw (3,0) to[out=-40,in=-140] (4,0);

\node at (0.5,0.5) {$a_0$};
\node at (2,0.5) {$a_1$};
\node at (3.5,-0.5) {$a_2$};

\end{tikzpicture}
\caption{A graph describing $\{ a_0, a_1, a_2 \}$ in $F_3$, as in \cref{prop:modelsplitting}. Note that in order to have $\chi_0(a_0)=\chi_1(a_2)=1$, we need $a_0$ to lie above the $y=x$ line, and $a_2$ to lie below it.}
\label{fig:bumps}
\end{figure}
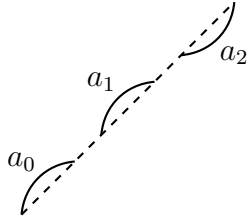

\subsection{Studying $\Hyp(F_n)$ via confining subsets}

In this section, we establish that $F_n$ meets the criteria for the results of \cref{sec:confining} to hold. We start with:

\begin{cit}[{\cite{calegari:PL, commutingconjugates}}]
\label{cit:F_agreeable}
Every subgroup of $F_n$ is agreeable.
\end{cit}

This allows us to exclude non-oriented lineal actions.

\begin{corollary}\label{cor:nonoriented:f}
Every lineal action of $F_n$ is oriented.
\end{corollary}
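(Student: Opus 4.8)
The plan is to deduce this from \cref{cor:nonoriented}, whose hypotheses I need to verify for $G = F_n$. There are three things to check: that $F_n$ is finitely generated, that $F_n$ and all of its index-$2$ subgroups are agreeable, and that $F_n$ does not surject onto the infinite dihedral group $D_\infty$. The first is standard (indeed $F_n$ is finitely presented, by \cite{brown87}). The second is immediate from \cref{cit:F_agreeable}, since \emph{every} subgroup of $F_n$ is agreeable, in particular every index-$2$ subgroup.

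The main point is therefore to rule out a surjection $F_n \twoheadrightarrow D_\infty$. The key input here is the fact recalled in the introduction that every proper quotient of $F_n$ factors through the abelianization $\Ab(F_n) \cong \Z^n$ \cite{brown87}. So a surjection $F_n \to D_\infty$ would have to factor through $\Z^n$, which is impossible since $D_\infty$ is non-abelian and any homomorphic image of the abelian group $\Z^n$ is abelian (and $D_\infty$ is certainly not a quotient of $F_n$ itself, being a proper quotient). Hence no such surjection exists.

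With these three facts in hand, \cref{cor:nonoriented} applies directly and yields that every lineal action of $F_n$ is oriented. I would write this up as: $F_n$ is finitely generated; by \cref{cit:F_agreeable} it and all its subgroups are agreeable, in particular the index-$2$ ones; and $F_n$ admits no surjection onto $D_\infty$ since every proper quotient of $F_n$ is abelian (being a quotient of $\Z^n$) by \cite{brown87}, while $D_\infty$ is not. Then \cref{cor:nonoriented} finishes the proof.

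I do not anticipate a genuine obstacle here; the only thing to be slightly careful about is the citation that proper quotients of $F_n$ factor through the abelianization — this is the substantive ingredient, and it is exactly what makes the "no $D_\infty$ quotient" step work. Everything else is a routine verification of hypotheses.

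\begin{proof}
We verify the hypotheses of \cref{cor:nonoriented}. The group $F_n$ is finitely generated (indeed finitely presented \cite{brown87}). By \cref{cit:F_agreeable}, every subgroup of $F_n$ is agreeable; in particular $F_n$ itself, and every index-$2$ subgroup of $F_n$, is agreeable. Finally, $F_n$ does not surject onto $D_\infty$: by \cite{brown87} every proper quotient of $F_n$ factors through the abelianization $\Ab(F_n) \cong \Z^n$, and hence is abelian, whereas $D_\infty$ is non-abelian (and $D_\infty$ is not isomorphic to $F_n$, being an infinite dihedral group). Thus \cref{cor:nonoriented} applies and every lineal action of $F_n$ is oriented.
\end{proof}
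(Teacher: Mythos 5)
Your proof is correct and follows essentially the same route as the paper: both reduce to \cref{cor:nonoriented} via \cref{cit:F_agreeable} and rule out a surjection onto $D_\infty$ using the fact that every proper quotient of $F_n$ is abelian \cite[Theorem~4.13]{brown87}. Your extra remark that $D_\infty$ is not isomorphic to $F_n$ itself correctly handles the (trivial) case of an injective surjection, which the paper leaves implicit.
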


\begin{proof}
By \cref{cor:nonoriented} and \cref{cit:F_agreeable}, it suffices to show that $F_n$ does not surject onto $D_\infty$, and this is immediate since every proper quotient of $F_n$ is abelian \cite[Theorem~4.13]{brown87}.
\end{proof}

We can also exclude general type actions, as a consequence of the following and the well-known Ping-Pong Lemma:

\begin{cit}[\cite{brin85}]
\label{cit:nofree}
    $F_n$ does not contain non-abelian free subgroups. Therefore, $F_n$ does not admit general type actions.
\end{cit}

Finally, we note:

\begin{corollary}
\label{cor:Fsplit}
$F_n = F_n' \rtimes \Z^n$ is sharply $\Z^n$-split.
\end{corollary}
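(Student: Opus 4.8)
The plan is to produce a concrete splitting $F_n \to \Z^n$ of the abelianization map whose kernel is exactly $F_n'$, and then verify that this splitting is sharp. The elements $a_0, \dots, a_{n-1}$ from \cref{prop:modelsplitting} are precisely what is needed: they are one-bump functions with pairwise disjoint supports (by item (ii)), hence pairwise commuting, so the map $\Z^n \to F_n$ sending the standard basis vector $e_i$ to $a_i$ is a well-defined homomorphism. First I would note that, since the $a_i$ generate $\Ab(F_n) \cong \Z^n$ (item (iii) of \cref{prop:modelsplitting}), the composition $\Z^n \xrightarrow{e_i \mapsto a_i} F_n \to \Ab(F_n)$ sends the generators of $\Z^n$ to a generating set of $\Z^n$; since $\Z^n$ is Hopfian this composition is an isomorphism, so the map $\Z^n \to F_n$ is injective and splits the abelianization. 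Writing $A \coloneqq \langle a_0, \dots, a_{n-1}\rangle \cong \Z^n$ and $H \coloneqq \ker(F_n \to \Ab(F_n)) = F_n'$, we get $F_n = F_n' \rtimes \Z^n$.

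It remains to check that this $\Z^n$-splitting is \emph{sharp}, i.e. that $F_n'$ equals the kernel of the torsion-free abelianization of $F_n$. But $\Ab(F_n) \cong \Z^n$ is already torsion-free, so the torsion-free abelianization coincides with the abelianization, whose kernel is by definition $F_n'$. Hence the splitting is sharp, and combined with \cref{cit:F_agreeable} (which gives that $F_n$, being a subgroup of itself, is agreeable), $F_n$ is an agreeable, sharply $\Z^n$-split group, as required for the results of \cref{ssec:lin_qp}.

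I do not expect any serious obstacle here; the statement is essentially a repackaging of \cref{prop:modelsplitting} together with the elementary observation that disjoint-support homeomorphisms commute (noted just after \cref{rem:proximal}) and that $\Ab(F_n) \cong \Z^n$ is torsion-free. The only point requiring a word of care is the Hopfian/rank argument showing that $A$ maps isomorphically onto the abelianization rather than just surjectively, but this is immediate since a surjective endomorphism of $\Z^n$ is an isomorphism.
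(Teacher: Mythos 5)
Your proposal is correct and follows exactly the paper's argument: the $a_i$ from \cref{prop:modelsplitting} commute because they are disjointly supported, and since they generate the (torsion-free) abelianization $\Z^n$, they define a section of the abelianization map, whose kernel is $F_n'$. The extra care you take with the Hopfian argument and with checking sharpness is fine but not needed beyond what the paper records.
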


\begin{proof}
The elements $a_0, \ldots, a_{n-1}$ from \cref{prop:modelsplitting} are disjointly supported, and so they commute. This defines a section $\Z^n \to F_n$ of the abelianization map.
\end{proof}

At this point we can write $F_n = F_n' \rtimes \Z^n$, where $\Z^n = \langle a_0, \ldots, a_{n-1} \rangle$ as in \cref{prop:modelsplitting}. We fix this $\Z^n$-splitting for the rest of the paper. In particular, note that the restriction of $\chi_0$, respectively $\chi_1$, to this copy of $\Z^n$ maps the product $a_0^{i_0}\cdots a_{n-1}^{i_{n-1}}$ to $i_0$, respectively $i_{n-1}$.

\subsection{Stabilizers in $F_n$ and its subgroups}

Later on, we will need some properties of point and interval stabilizers in $F_n$. We first fix the following notation:

\begin{definition}[Fixators, rigid stabilizers]
For an interval $I \subseteq [0, 1]$ (open, closed, or half-open on either side) and a group $G \leq \Homeo([0, 1])$, the \emph{fixator} of $I$ in $G$ is the subgroup
\[
\Fix_G(I) \coloneqq \{ g \in G \mid x.g = x \text{ for all } x \in I \}.
\]
When $I = \{ x \}$ is a degenerate interval, we simply write $\Fix_G(x)$ for $\Fix_G(\{x\})$. The \emph{rigid stabilizer} of $I$ in $G$ is the subgroup $G(I) \coloneqq \left\{ g \in G \mid \overline{\Supp(g)} \subseteq I \right\}$.
\end{definition}

When the interval is written explicitly, we avoid redundancy of parentheses, so for example we write $G[x, y]$ instead of $G([x, y])$. Note that with this notation, we have for example $F_n^c = F_n(0, 1)$.

\begin{definition}[$F_n$-like homeomorphisms]
We say that a homeomorphism between open intervals $I \to J$ is \emph{$F_n$-like} if it is orientation preserving, piecewise linear, with a set of breakpoints that is countable discrete and contained in $\Z[1/n]$, and all slopes powers of $n$.
\end{definition}

Note that when $I = J = (0, 1)$, there are more $F_n$-like homeomorphisms than elements of $F_n$, since in the first case we allow for breakpoints to accumulate at either $0$ or $1$.

\begin{cit}[{\cite[Theorem E16.7]{bieristrebel}}]
\label{cit:bieristrebel}

Let $0 < x < y < 1$ be such that $x, y \in \Z[1/n]$. Then there exists an $F_n$-like homeomorphism $\Phi_{x, y} \colon (x, y) \to (0, 1)$ that conjugates $F_n[x, y]$ to $F_n$.
\end{cit}

Note that the conjugating element really needs to have (countably) infinitely many breakpoints, not merely finitely many; otherwise, it could only intertwine $F_n[x, y]$ and $F_n[0, 1]$ if the orbits were compatible \cite{hydelodha}.

In the case of rigid stabilizers of open intervals, the isomorphism works even when $x, y$ are not $n$-ary (and moreover, it does not require the previous theorem), as we now show.

\begin{proposition}[Rigid stabilizers of open intervals]
\label{prop:rstab:open}

Let $0 < x < y < 1$. Then there exists an $F_n$-like homeomorphism $\Phi_{x, y} \colon (x, y) \to (0, 1)$ that conjugates $F_n(x, y)$ to $F_n^c$.
\end{proposition}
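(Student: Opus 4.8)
The plan is to construct the conjugating homeomorphism $\Phi_{x,y}$ explicitly by hand, using the fact that for rigid stabilizers of \emph{open} intervals we do not need the breakpoints of the interval endpoints to be $n$-ary. First I would reduce to a normalization step: pick a strictly increasing sequence $\dots < p_{-1} < p_0 < p_1 < \dots$ in $(x,y)$ with $p_k \to x$ as $k\to-\infty$ and $p_k\to y$ as $k\to+\infty$, and similarly a bi-infinite sequence $\dots < q_{-1} < q_0 < q_1 < \dots$ in $(0,1)$ with $q_k\to 0$ and $q_k\to 1$ at the two ends. The key point is that we are completely free to choose the $q_k$'s to lie in $\Z[1/n]$, and in fact to choose them so that consecutive ratios of gaps $(q_{k+1}-q_k)/(q_k - q_{k-1})$ are powers of $n$ — for instance $q_k$ can be taken to be a geometric-type sequence converging to $0$ and $1$ with $n$-ary values. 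Then I would define $\Phi_{x,y}$ to be the piecewise linear map sending each $[p_k,p_{k+1}]$ affinely onto $[q_k,q_{k+1}]$.

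The next step is to check that conjugation by this $\Phi_{x,y}$ carries $F_n(x,y)$ into $F_n^c$ and is surjective onto it. Given $g\in F_n(x,y)$, its support is contained in a compact subinterval $[p_j,p_m]$ of $(x,y)$, and $\Phi_{x,y}^{-1} g \Phi_{x,y}$ is then supported in $[q_j,q_m]\subseteq(0,1)$, hence compactly supported. To see it lands in $F_n$, I would verify that the conjugate is piecewise linear with $n$-ary breakpoints and slopes that are powers of $n$: on each piece, the conjugate is a composition of an affine map with $n$-ary slope and $n$-ary data (from $\Phi_{x,y}$ restricted to a single $[p_k,p_{k+1}]$), the element $g$ (which has $n$-ary breakpoints and power-of-$n$ slopes \emph{relative to the $p_k$ coordinates} — here one has to be slightly careful, since $g\in F_n(x,y)$ means $g$ is the restriction of an element of $F_n$, so its breakpoints are in $\Z[1/n]$ and slopes powers of $n$ in the ambient $[0,1]$ coordinates), and then $\Phi_{x,y}^{-1}$. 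The cleanest way is probably to choose the $p_k$ themselves in $\Z[1/n]$, with $n$-ary gap ratios too, so that $\Phi_{x,y}$ restricted to each $[p_k,p_{k+1}]$ is "$F_n$-like in pieces" and conjugation manifestly preserves the $n$-ary/power-of-$n$ conditions. Surjectivity is the reverse computation: any $h\in F_n^c$ is supported in some $[q_j,q_m]$ and $\Phi_{x,y} h \Phi_{x,y}^{-1}$ is then a compactly-supported piecewise linear homeomorphism of $(x,y)$ with the right slope and breakpoint conditions, i.e. an element of $F_n(x,y)$.

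The main obstacle, and the thing to be careful about, is the arithmetic bookkeeping: one needs the sequences $(p_k)$ and $(q_k)$ chosen so that both $\Phi_{x,y}$ and $\Phi_{x,y}^{-1}$, when restricted to any single linear piece, have slope a power of $n$, and so that all the endpoints involved lie in $\Z[1/n]$; only then does conjugation stay inside $F_n$ in both directions. The slopes of $\Phi_{x,y}$ on consecutive pieces will not all be the same power of $n$, but that is fine — what matters is that on the support of any \emph{fixed} group element (which touches only finitely many pieces) everything is $n$-ary and power-of-$n$, which is automatic once the gap ratios $(q_{k+1}-q_k)/(p_{k+1}-p_k)$ are arranged to be powers of $n$ for all $k$. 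Since $x,y$ need not be $n$-ary we cannot demand the $p_k$ lie in $\Z[1/n]$ globally relative to $[0,1]$; instead I would observe that $F_n(x,y)$ as a subgroup of $\Homeo([0,1])$ consists precisely of the homeomorphisms that are the identity near $x$ and $y$ and $F_n$-like in between, and that this property is intrinsic and unaffected by translating to a different affine chart, so one may as well first apply an affine change of coordinates to assume $x,y\in\Z[1/n]$ — but this affine map may not have $n$-ary slope, so the cleaner route is to note that $F_n(x,y)$ only depends on the germ structure and to build $\Phi_{x,y}$ directly as above, choosing the $p_k$ so that all differences $p_{k+1}-p_k$ are \emph{rational multiples of a common value by powers of $n$}, which suffices for the conjugation to preserve the power-of-$n$ slope condition piecewise. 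I expect this to be a short argument once the right sequences are written down, with the only real content being the verification that the two-sided conjugation preserves the defining conditions of $F_n$.

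\begin{proof}
Choose strictly increasing bi-infinite sequences $(p_k)_{k\in\Z}$ in $(x,y)$ and $(q_k)_{k\in\Z}$ in $(0,1)\cap\Z[1/n]$ with $p_k\to x,\ q_k\to 0$ as $k\to-\infty$ and $p_k\to y,\ q_k\to 1$ as $k\to+\infty$, arranged so that for every $k$ the ratio $(q_{k+1}-q_k)/(p_{k+1}-p_k)$ is a power of $n$. Let $\Phi_{x,y}\colon(x,y)\to(0,1)$ be the homeomorphism that is affine from $[p_k,p_{k+1}]$ onto $[q_k,q_{k+1}]$ for each $k$; it is $F_n$-like, with breakpoints $\{q_k\}\subseteq\Z[1/n]$ discrete in $(0,1)$ and slopes powers of $n$.

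Let $g\in F_n(x,y)$. Then $\overline{\Supp(g)}\subseteq[p_j,p_m]$ for some $j<m$, so $h\coloneqq \Phi_{x,y}^{-1}g\,\Phi_{x,y}$ is supported in $[q_j,q_m]\Subset(0,1)$. On each subinterval of $[q_j,q_m]$, the map $h$ is a composition of affine maps with power-of-$n$ slopes (restrictions of $\Phi_{x,y}$ and $\Phi_{x,y}^{-1}$) with the piecewise linear map $g$ whose slopes are powers of $n$; hence all slopes of $h$ are powers of $n$. Moreover all breakpoints of $h$ lie in $\Z[1/n]$: they are images under $\Phi_{x,y}^{-1}$, restricted to some $[q_k,q_{k+1}]$, of breakpoints of $g$ in $\Z[1/n]$ together with the points $q_k$, and each such restriction of $\Phi_{x,y}^{-1}$ is affine with slope a power of $n$ sending $q_k\in\Z[1/n]$ into $p_k$; since the overall conjugate $h$ is again the restriction of an element of $\Homeo([0,1])$ which is the identity on $[0,q_j]\cup[q_m,1]$, and is $F_n$-like on $(q_j,q_m)$ with $n$-ary breakpoints and power-of-$n$ slopes, we conclude $h\in F_n(0,1)=F_n^c$. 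Thus conjugation by $\Phi_{x,y}$ carries $F_n(x,y)$ into $F_n^c$.

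Conversely, let $h\in F_n^c$, so $\overline{\Supp(h)}\subseteq[q_j,q_m]$ for some $j<m$. The same computation applied to $\Phi_{x,y}h\,\Phi_{x,y}^{-1}$ shows it is a piecewise linear homeomorphism of $[0,1]$, supported in $[p_j,p_m]\Subset(x,y)$, with slopes powers of $n$ and breakpoints in $\Z[1/n]$; hence it lies in $F_n(x,y)$. Therefore conjugation by $\Phi_{x,y}$ is a bijection $F_n(x,y)\to F_n^c$, i.e.\ it conjugates $F_n(x,y)$ to $F_n^c$.
\end{proof}
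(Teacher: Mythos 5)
Your route is genuinely different from the paper's: the paper uses the high transitivity of $F_n'$ on a dense $n$-ary orbit (\cref{cit:proximal}) to conjugate the nested rigid stabilizers $F_n(x_i,y_i)$ to $F_n(p_i,q_i)$ by honest elements of $F_n$, and then takes a direct limit; membership in $F_n$ is preserved automatically because the conjugators are group elements. Your plan — build $\Phi_{x,y}$ by hand as an infinite PL map — can be made to work, but as written it has a genuine gap in exactly the place you flagged and then talked yourself out of. In your final proof the $p_k$ are only required to lie in $(x,y)$, not in $\Z[1/n]$. This breaks the breakpoint condition: on $[p_k,p_{k+1}]$ your map is $u\mapsto q_k+n^{m_k}(u-p_k)$, so a breakpoint $b\in\Z[1/n]$ of $g$ is sent to $q_k+n^{m_k}(b-p_k)$, which lies in $\Z[1/n]$ if and only if $p_k$ does. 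If $p_0\notin\Z[1/n]$ the conjugate of a generic $g\in F_n(x,y)$ acquires non-$n$-ary breakpoints and is not in $F_n$; likewise $\Phi_{x,y}$ itself is then not $F_n$-like, since \emph{its} breakpoints are the $p_k$ (not the $q_k$, as you wrote — and similarly, breakpoints of the conjugate come from breakpoints of $g$ pushed forward by $\Phi_{x,y}$, not $\Phi_{x,y}^{-1}$). Your stated reason for dropping the requirement — ``since $x,y$ need not be $n$-ary we cannot demand the $p_k$ lie in $\Z[1/n]$'' — is false: the $p_k$ live in the open interval $(x,y)$, where $\Z[1/n]$ is dense, so they can always be chosen $n$-ary; only the endpoints themselves are constrained.

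Even after insisting $p_k\in\Z[1/n]$, there is a second, smaller gap: you assert without proof that one can choose $(p_k)\subseteq\Z[1/n]\cap(x,y)$ and $(q_k)\subseteq\Z[1/n]\cap(0,1)$ with the correct limits at both ends \emph{and} with every ratio $(q_{k+1}-q_k)/(p_{k+1}-p_k)$ a power of $n$. This couples the two sequences and needs an argument — e.g., alternate steps in which you first pick the increment $q_{k+1}-q_k$ as an $n$-ary rational making progress toward $1$ and write it as $n^{m_k}\delta$ with $\delta$ an arbitrarily small positive $n$-ary rational (possible since every positive element of $\Z[1/n]$ has this form), and then steps in which you do the same with the roles of $p$ and $q$ reversed. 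Alternatively, you can decouple the sequences entirely by not insisting that $\Phi_{x,y}$ be a single affine map on each $[p_k,p_{k+1}]$: between any two intervals with endpoints in $\Z[1/n]$ there is a PL homeomorphism with $n$-ary breakpoints and power-of-$n$ slopes, and gluing these works just as well. With those repairs your argument is correct and more elementary than the paper's, at the cost of the arithmetic bookkeeping that the paper's conjugation-by-group-elements trick avoids.
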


\begin{proof}
Recall from \cref{cit:proximal} that $F_n$ acts transitively on ordered tuples of elements in some dense orbit $O\subseteq (0,1)\cap \Z[1/n]$. Choose a sequence of elements $x < x_i < y_i < y$ such that $x_i, y_i \in O$, the sequence $(x_i)_i$ is strictly decreasing and converges to $x$, and the sequence $(y_i)_i$ is strictly increasing and converges to $y$. Moreover, choose a sequence of elements $0 < p_i < p_{i-1} < q_{i-1} < q_i < 1$ such that $p_i, q_i \in O$ and analogously $p_i \to 0, q_i \to 1$.

Since $F_n$ is transitive on ordered pairs in $O$, $F_n$ can take each $x_i<y_i$ to $p_i<q_i$. In this way, we get $F_n$-like homeomorphisms $\Phi_i \colon (x_i, y_i) \to (p_i, q_i)$ that conjugate $F_n(x_i, y_i)$ to $F_n(p_i, q_i)$ simply by conjugating by an appropriate element of $F_n$. Moreover, by defining the elements of $F_n$ inductively and by pieces, we may ensure that each $\Phi_i$ extends $\Phi_{i-1}$.

This defines an orientation preserving homeomorphism $\Phi \colon (x, y) \to (0, 1)$ that conjugates the directed union of the groups $F_n(x_i, y_i)$, namely $F_n(x, y)$, to the directed union of the $F_n(p_i, q_i)$, namely $F_n(0, 1) = F_n^c$. In fact, each of the $\Phi_i$ is constructed via conjugation in $F_n$, and so has finitely many breakpoints. It follows that the set of breakpoints of $\Phi$ is countable and discrete in $(x, y)$, and so $\Phi$ is $F_n$-like.
\end{proof}

\cref{prop:rstab:open} shows that the behavior of rigid stabilizers of open intervals is independent of the nature of the endpoints. However, the behavior of stabilizers of points does depend on the nature of the points, with different behavior when a fixed point is $n$-ary, rational but not $n$-ary, or irrational. We will now describe the behavior in each of the three cases; when $n = 2$ this was already worked out in \cite{golansapir}.

The $n$-ary case is immediate, because of the piecewise nature of $F_n$:

\begin{lemma}[$n$-ary fixed points]
\label{lem:fix:nary}

Let $t \in \Z[1/n] \cap (0, 1)$, Then $\Fix_{F_n}(t)$ decomposes as $F_n[0, t] \times F_n[t, 1]$.\qed
\end{lemma}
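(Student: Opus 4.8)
The statement is \cref{lem:fix:nary}: for $t \in \Z[1/n] \cap (0,1)$, $\Fix_{F_n}(t) = F_n[0,t] \times F_n[t,1]$.

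The proof plan is as follows. Given $g \in \Fix_{F_n}(t)$, I want to produce a factorization $g = g_0 g_1$ where $g_0 \in F_n[0,t]$ (supported on $[0,t]$, i.e.\ fixing everything in $[t,1]$) and $g_1 \in F_n[t,1]$. Define $g_0$ to agree with $g$ on $[0,t]$ and to be the identity on $[t,1]$, and $g_1$ to be the identity on $[0,t]$ and to agree with $g$ on $[t,1]$. Since $g$ fixes $t$, both $g_0$ and $g_1$ are well-defined homeomorphisms of $[0,1]$: they are continuous at $t$ (both one-sided limits equal $t$), bijective, and orientation-preserving. The key point — this is where $t \in \Z[1/n]$ is used — is that $g_0$ and $g_1$ are again elements of $F_n$: being piecewise linear with slopes powers of $n$ is inherited from $g$ on each subinterval, and the only new breakpoint that could be introduced is at $t$ itself, which lies in $\Z[1/n]$ as required; all other breakpoints are breakpoints of $g$, hence already in $\Z[1/n]$. (If $t$ were irrational or rational but not $n$-ary, $g_0$ would fail to be in $F_n$ because it could have a breakpoint at the non-$n$-ary point $t$ — this is exactly why the lemma is special to the $n$-ary case.)

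Next I record that $g_0$ and $g_1$ commute, since they have disjoint supports (contained in $[0,t]$ and $[t,1]$ respectively, which meet only at the fixed point $t$); this is the standard fact recalled just before \cref{rem:proximal}. And clearly $g = g_0 g_1$ by checking on $[0,t]$ and on $[t,1]$ separately. This shows $\Fix_{F_n}(t) = F_n[0,t] \cdot F_n[t,1]$, and since the two factors commute and intersect trivially (an element in both is the identity on all of $[0,1]$), the product is internal direct: $\Fix_{F_n}(t) \cong F_n[0,t] \times F_n[t,1]$. Conversely, any product $g_0 g_1$ with $g_0 \in F_n[0,t]$, $g_1 \in F_n[t,1]$ fixes $t$, giving the reverse inclusion.

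There is essentially no obstacle here — this is why the paper marks it \verb|\qed| with no proof. The only thing to be slightly careful about is the well-definedness and $F_n$-membership of the two ``halves,'' which hinges precisely on $t$ being an $n$-ary rational so that cutting $g$ at $t$ and gluing in the identity does not leave the group; everything else is routine. If a written proof is desired, the above three paragraphs compress to a few lines.

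\begin{proof}
Write $I_0 = [0,t]$ and $I_1 = [t,1]$. Given $g \in \Fix_{F_n}(t)$, define $g_0$ to equal $g$ on $I_0$ and the identity on $I_1$, and $g_1$ to equal the identity on $I_0$ and $g$ on $I_1$. Since $t.g = t$, these are well-defined homeomorphisms of $[0,1]$, orientation preserving and piecewise linear with all slopes powers of $n$. Their breakpoints are among the breakpoints of $g$ together with possibly $t$; since $g$ has breakpoints in $\Z[1/n]$ and $t \in \Z[1/n]$, we conclude $g_0, g_1 \in F_n$, with $\overline{\Supp(g_0)} \subseteq I_0$ and $\overline{\Supp(g_1)} \subseteq I_1$, i.e.\ $g_0 \in F_n[0,t]$ and $g_1 \in F_n[t,1]$. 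Checking on $I_0$ and on $I_1$ shows $g = g_0 g_1$. As $F_n[0,t]$ and $F_n[t,1]$ consist of elements with disjoint supports, they commute elementwise, and their intersection is trivial (an element of both fixes all of $[0,1]$). Hence $\Fix_{F_n}(t) = F_n[0,t] \times F_n[t,1]$ as an internal direct product. The reverse inclusion is clear, since any element of $F_n[0,t] \cdot F_n[t,1]$ fixes $t$.
\end{proof}
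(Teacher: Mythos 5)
Your proof is correct and is exactly the argument the paper has in mind when it declares this lemma immediate: cut $g$ at the fixed $n$-ary point $t$, observe that both halves stay in $F_n$ because the only possible new breakpoint is $t \in \Z[1/n]$, and conclude via disjoint supports. No discrepancy with the paper's (omitted) proof.
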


Note however that this product decomposition is not always stable under replacing $F_n$ in the notation with a subgroup. For example using $F_n'$, under this decomposition $\Fix_{F_n'}(t)$ corresponds to those elements $(g, h)$ of $F_n[0, t] \times F_n[t, 1]$ such that $\Ab(g) = - \Ab(h)$, and it is easy to see that $\Fix_{F_n'}(t)$ is identified with a subgroup of $F_n(0, t] \times F_n[t, 1)$ whose projection onto each factor is surjective. In particular, $F_n'(t)$ does not decompose as $F_n'[0,t]\times F_n'[t,1]$.

\medskip

The irrational case is also relatively straightforward:

\begin{lemma}[Irrational fixed points]
\label{lem:fix:irrational}

Let $t \in (0, 1) \setminus \Q$. Then every element $g \in F_n$ that fixes $t$ also fixes a neighborhood of $t$. Thus $\Fix_{F_n}(t)$ decomposes as $F_n[0, t] \times F_n[t, 1] = F_n[0, t) \times F_n(t, 1]$.
\end{lemma}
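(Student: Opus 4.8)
The plan is to exploit the arithmetic rigidity of the linear pieces of elements of $F_n$. Recall that if $g \in F_n$ has consecutive breakpoints $p < q$ (both in $\Z[1/n]$), then on $[p,q]$ we have $g(x) = n^j x + c$ where $j \in \Z$ and $c = g(p) - n^j p \in \Z[1/n]$, since $g$ preserves $\Z[1/n]$. So every linear piece of $g$ has intercept in $\Z[1/n]$.

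First I would prove the ``fixing a neighborhood'' claim, which is the only step with any content (and even it is short). Since $t$ is irrational, it does not lie in $\Z[1/n] \subseteq \Q$, so $t$ is not a breakpoint of $g$; hence $t$ is interior to a single linear piece $x \mapsto n^j x + c$ of $g$, with $c \in \Z[1/n]$. The equation $t.g = t$ gives $c = (1 - n^j)t$. If $j \neq 0$ then $1 - n^j$ is a nonzero rational, so $c$ is irrational, contradicting $c \in \Z[1/n]$. Therefore $j = 0$ and $c = 0$, i.e. $g$ is the identity on an open neighborhood of $t$.

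With this in hand the decomposition is formal. Given $g \in \Fix_{F_n}(t)$, pick $\epsilon > 0$ with $g$ the identity on $(t-\epsilon, t+\epsilon)$, and define $g_-$ to agree with $g$ on $[0,t]$ and to be the identity on $[t,1]$, and $g_+$ symmetrically; then $g = g_- g_+$. Each $g_\pm$ is piecewise linear with slopes powers of $n$ and breakpoints in $\Z[1/n]$ — crucially, no breakpoint is created at the splice point $t$ because $g$ is already the identity there — so $g_- \in F_n[0,t]$ and $g_+ \in F_n[t,1]$. Since these rigid stabilizers have supports meeting only in the $g$-fixed point $t$, they commute and intersect trivially, so $\Fix_{F_n}(t) = F_n[0,t] \times F_n[t,1]$ as an internal direct product.

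Finally, for $F_n[0,t] = F_n[0,t)$ and $F_n[t,1] = F_n(t,1]$: an element of $F_n[0,t]$ fixes $t$, hence by the first step fixes a neighborhood of $t$, so its support closure (which is compact) lies in some $[0, t - \delta] \subseteq [0,t)$; the reverse inclusions are trivial. I do not expect a genuine obstacle here — the whole lemma is routine once the arithmetic observation in the second paragraph is made; the only place meriting a moment's care is verifying that the spliced maps $g_\pm$ really lie in $F_n$, and the neighborhood-fixing property disposes of that immediately. This is in contrast to the $n$-ary case of \cref{lem:fix:nary}, where fixing $t$ does \emph{not} force fixing a neighborhood, which is exactly why the half-open refinement fails there.
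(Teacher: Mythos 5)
Your proof is correct and takes essentially the same route as the paper's: since $t$ is irrational it cannot be a breakpoint, the affine piece $x \mapsto n^j x + c$ through $t$ has $c \in \Z[1/n]$, and $t = n^j t + c$ forces $j=0$, $c=0$, after which the splitting is immediate. Your write-up merely fills in the routine details (the splice argument and the half-open refinement) that the paper dismisses as "immediate."
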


\begin{proof}
Suppose that, at $t$, the element $g$ takes the form $x \mapsto n^k x + y$, for $k \in \Z$ and $y \in \Z[1/n]$. If $k \neq 0$, then $t = t.g$ gives $t = y/(1-n^k) \in \Q$. Since $t$ is irrational, we conclude that $k=0$. Moreover, $t$ cannot be a break point, so the slope of $g$ has to be identically $1$ in a neighborhood of $t$, i.e., $g$ must fix a neighborhood of $t$. The decomposition into a direct product is now immediate.
\end{proof}

Finally there is the case when the fixed point is rational but not $n$-ary, which is more delicate:

\begin{lemma}[Non $n$-ary rational fixed points]
\label{lem:fix:rational}

Let $t \in (0, 1) \cap \Q \setminus \Z[1/n]$. Then the map $\Fix_{F_n}(t) \to \Z$ given by $g \mapsto \log_n g'(t)$ is a well defined non-trivial homomorphism. The image only depends on the $F_n$-orbit of $t$. Thus $\Fix_{F_n}(t)$ fits into a short exact sequence
\[1 \to F_n[0, t) \times F_n(t, 1] \to \Fix_{F_n}(t) \to \Z \to 1;\]
where the element of $\Fix_{F_n}(t)$ mapping to the generator of the image in $\Z$ may be chosen to belong to $F_n'$ and to be supported on an arbitrarily small open neighborhood of $t$.
\end{lemma}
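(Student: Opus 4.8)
The plan is to establish, in order: that $\phi\colon g\mapsto\log_n g'(t)$ is a well-defined homomorphism; that $\ker\phi=F_n[0,t)\times F_n(t,1]$; that $\operatorname{im}\phi$ depends only on the $F_n$-orbit of $t$; and that $\operatorname{im}\phi$ is nontrivial with a generator realized by an element of $F_n'$ supported near $t$. The first three are routine. Since $t\in\Q\setminus\Z[1/n]$ it is not $n$-adic, hence never a breakpoint of an element of $F_n$, so each $g\in\Fix_{F_n}(t)$ is affine of slope a power of $n$ near $t$, and $\phi$ is a homomorphism because slopes multiply under composition while $t.g=t$. An element lies in $\ker\phi$ precisely when it is affine of slope $1$ near $t$ while fixing $t$, i.e. is the identity on a neighborhood of $t$; such a $g$ is the product of its restrictions to $[0,t]$ and $[t,1]$ (extended by the identity), which are disjointly supported elements of $F_n[0,t)$ and $F_n(t,1]$, and conversely any such product kills $\phi$; this yields the exact sequence, the third term being the image (infinite cyclic, once we know it is nonzero). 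For orbit-independence, if $t'=t.h$ then $\Fix_{F_n}(t')=h^{-1}\Fix_{F_n}(t)h$, and the chain rule gives $(h^{-1}gh)'(t')=g'(t)$, so conjugation intertwines $\phi$ with the analogous map at $t'$ and the images coincide.

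For the hard part I would first record an elementary fact: for $n$-adic $0\le x_1<y_1\le1$ and $0\le z_1<w_1\le1$, an $F_n$-like homeomorphism $[x_1,y_1]\to[z_1,w_1]$ exists if and only if $\nu(y_1-x_1)=\nu(w_1-z_1)$, where $\nu\colon(\Z[1/n],+)\to\Z/(n-1)\Z$ is the homomorphism $m/n^j\mapsto m\bmod(n-1)$. This holds because such a homeomorphism amounts to a length-respecting bijection between $n$-adic subdivisions of the two intervals having equally many pieces, and a subdivision of an interval of length $\ell$ has a number of pieces $\equiv\nu(\ell)\pmod{n-1}$, increasable by any multiple of $n-1$ by splitting a piece into $n$. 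Next, write $t=a/b$ in lowest terms and factor $b=b'b''$ with $b''$ collecting the primes of $b$ dividing $n$, so $\gcd(b',n)=1$ and $b'\ge2$ since $t\notin\Z[1/n]$; put $k:=\mathrm{ord}_{b'(n-1)}(n)\ge1$ (well defined since $\gcd(n,b'(n-1))=1$). Then $b'\mid n^k-1$, so $c:=(1-n^k)t=\tfrac{1-n^k}{b'}\cdot\tfrac{a}{b''}\in\Z[1/n]$, and $\nu(c)=0$ because $(n-1)\mid\tfrac{n^k-1}{b'}$.

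The affine map $\alpha(x)=n^kx+c$ fixes $t$ with $\alpha'(t)=n^k>1$. Choosing $n$-adic points with $u'<\alpha(u)<u<t<v<\alpha(v)<v'$ and $u',v'$ as close to $t$ as desired (possible since $\alpha(u),\alpha(v)\in\Z[1/n]$, $\alpha(u)<u$, $\alpha(v)>v$, and $\alpha(u),\alpha(v)\to t$ as $u,v\to t$), define $g$ to be the identity off $(u',v')$, equal to $\alpha$ on $[u,v]$, and equal on $[u',u]$ (resp. $[v,v']$) to an $F_n$-like homeomorphism onto $[u',\alpha(u)]$ (resp. $[\alpha(v),v']$); these fillers exist by the elementary fact together with $\nu(c)=0$ and $\nu(n^kx)=\nu(x)$. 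Then $g\in F_n$ is supported on the arbitrarily small neighborhood $(u',v')$ of $t$, fixes $t$, and has $g'(t)=n^k$, so $\phi(g)=k\ne0$ and $\operatorname{im}\phi=m_t\Z$ for some $m_t\ge1$.

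Finally, for the refinement, any $g_0$ with $\phi(g_0)=m_t$ is, near $t$, the affine map $x\mapsto n^{m_t}x+c_0$ with $c_0=(1-n^{m_t})t$; the affine piece of $g_0$ directly to the left of this one has an $n$-adic, nonzero left endpoint $p$, and applying the elementary fact to $g_0|_{[0,p]}$ forces $\nu(c_0)=0$, so the grafting construction above runs verbatim with $k$ replaced by $m_t$, giving $g$ with $\phi(g)=m_t$, supported near $t$, hence in $F_n^c$. For $n>2$ one adjustment remains: $F_n^c/F_n'\cong\Z^{n-2}$ is generated by the classes of the middle bumps $a_1,\dots,a_{n-2}$ of \cref{prop:modelsplitting}, and by proximality (\cref{cit:proximal}) each $a_i$ is conjugate in $F_n$ to an element $b_i$ supported in $(t-\delta,t)$ for any $\delta>0$; conjugation preserves the abelianization class, so $[b_i]=[a_i]$ with $b_i(t)=t$, $b_i'(t)=1$, and if $[g]=\sum_i c_i[a_i]$ then $g\prod_i b_i^{-c_i}\in F_n'$ has the required properties. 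The main obstacle is exactly this last step: identifying which affine germs $x\mapsto n^kx+(1-n^k)t$ extend to elements of $F_n$---the interplay between the divisibility $b'\mid n^k-1$ (needed for the germ itself) and the mod-$(n-1)$ invariant governing the filler homeomorphisms---and then the separate bookkeeping, only needed when $n>2$, to pass from $F_n^c$ to $F_n'$.
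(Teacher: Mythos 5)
Your proof is correct, and it follows the same overall skeleton as the paper's: produce an affine germ $x\mapsto n^kx+c$ fixing $t$ with $c=(1-n^k)t\in\Z[1/n]$, extend it to an element of $F_n$ supported on a small neighborhood of $t$, and then correct into $F_n'$ by multiplying by conjugates of the middle bump generators from \cref{prop:modelsplitting}. The genuine difference lies in the extension step, which the paper simply asserts (``this can then be completed to an element of $F_n$'') after choosing the exponent $o$ to be the order of $n$ modulo $q'$, the prime-to-$n$ part of the denominator of $t$. You instead isolate the actual obstruction to extending such a germ --- the additive invariant $\nu\colon\Z[1/n]\to\Z/(n-1)\Z$ governing which pairs of intervals with $n$-adic endpoints are related by finite-breakpoint PL maps with slopes powers of $n$ --- and you enlarge the exponent to $k=\mathrm{ord}_{b'(n-1)}(n)$ precisely so that $\nu(c)=0$ and the two filler homeomorphisms exist. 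This extra care is not superfluous: for $n=3$ and $t=1/2$ the paper's exponent is $o=1$, the germ is $x\mapsto 3x-1$, and your invariant applied to $g|_{[0,p]}$ shows that no element of $F_3$ fixing $1/2$ has slope $3$ there; the image of the slope homomorphism is $2\Z$, realized by your $k=\mathrm{ord}_4(3)=2$. So your argument fills a gap in the paper's proof (and shows that the paper's subsequent remark identifying the generator with $\mathrm{ord}_{q'}(n)$ is not correct in general), at the cost of the subdivision-counting lemma. Your handling of the refinement --- deducing $\nu(c_0)=0$ for the actual generator $m_t$ from the necessity direction of the invariant, then regrafting at exponent $m_t$ --- is likewise correct and has no counterpart in the paper, whose construction only directly yields \emph{some} nonzero value.

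Two small points to tighten. First, the paper's notion of ``$F_n$-like'' allows countably many breakpoints accumulating at the endpoints, under which the necessity direction of your elementary fact fails (this is exactly why \cref{cit:bieristrebel} needs infinitely many breakpoints); state your fact for maps with finitely many breakpoints, which is what both of your applications use. Second, in the final step choose the conjugates $b_i=a_i^h$ so that the closure of their support lies strictly inside $(t-\delta,t)$; then they fix a neighborhood of $t$, so multiplying by them changes neither the slope at $t$ nor the small support. Proximality (\cref{cit:proximal}) provides such conjugates.
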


\begin{proof}
Since $t \notin \Z[1/n]$, it cannot be a breakpoint, so $g'(t)$ is well defined for all $g \in F_n$, and it is easily seen that we get the homomorphism in the statement, by applications of the chain rule and log rules.
To see that this homomorphism is non-trivial, write $t = p/q$, where $p$ and $q$ are coprime, and so by hypothesis $q$ is not a power of $n$. We multiply the numerator and denominator suitably to obtain an expression $t = p'/q' n^\ell$, where $q'$ is coprime to $n$, and greater than $1$. Let $o\ge 1$ be the order of $n$ in $(\Z/q'\Z)^\times$; so $q'$ divides $(n^o - 1)$. By multiplying the numerator and denominator again, we obtain an expression
\[
t = \frac{m}{(n^o - 1) n^\ell} = \frac{1}{n^o-1} \frac{m}{n^\ell}\text{.}
\]
Setting $y = m/n^\ell \in \Z[1/n]$, we obtain the expression $t = n^o t - y$.

Let $I$ be an open interval containing $t$ such that the map $x \mapsto n^o x - y$, which fixes $t$, maps $I$ inside a compact subset of $(0, 1)$. This can then be completed to an element of $F_n$, and even to an element of $F_n'$, up to multiplying by suitable conjugates of the elements $a_i$ from \cref{prop:modelsplitting} supported away from $t$. Since $o \ge 1$, this shows that the homomorphism in the statement is non-trivial, with image generated by an element of $F_n'$, and the kernel is easily identified.

Finally, let $g \in \Fix_{F_n}(t)$ and let $c \in F_n$. Then $g^c \in \Fix_{F_n}(t.c)$, and moreover $t.c \in \Q \setminus \Z[1/n]$ and the slope of $g$ at $t$ coincides with the slope of $g^c$ at $t.c$. This shows that the image of $\Fix_{F_n}(t) \to \Z$ given by $g \mapsto \log_n g'(t)$ does not change if we replace $t$ by a rational in the same orbit, and concludes the proof.
\end{proof}

\begin{remark}
The proof actually shows that the generator of the image of $\log_n g'(t)$ coincides with the order of $n$ modulo $q'$, where $q'$ is the largest factor of the denominator of $t$ that is coprime to $n$. The invariance over $F_n$-orbits is also reflected in this property \cite{hydelodha}.
\end{remark}

\section{Global structure of $\Hyp(F_n)$}\label{sec:global}

In this section we analyze confining subsets in $F_n$, and find that for ``almost all'' $\rho$ there are no quasi-parabolic actions of $F_n$ with Busemann pseudocharacter equivalent to $\rho$. More precisely, this happens for any $\rho\ne \chi_0,\chi_1$. Since $F_n$ is agreeable (\cref{cit:F_agreeable}) and sharply $\Z^n$-split (\cref{cor:Fsplit}), we can use the results from \cref{sec:confining}, and so really what we are proving is that for any $\rho\ne \chi_0,\chi_1$, the set $\Conf^\ast_\rho(F_n)$ is empty. Here $\Conf^\ast_\rho(F_n)$ is as in \cref{def:conf_poset}. Throughout this, we fix the sharp $\Z^n$-splitting $F_n = F_n' \rtimes \mathbb{Z}^n$ as in \cref{prop:modelsplitting}, where $\Z^n$ is the subgroup generated by $\{a_0,\dots,a_{n-1}\}$. In particular, each $a_i$ is a one-bump function with support $I_i = (\ell_i, r_i)$, these intervals are pairwise disjoint, and we have $\ell_0 = 0, r_{n-1} = 1$.

We should briefly remark that there is a connection between hyperbolic actions and Bieri--Neumann--Strebel (BNS) invariants of groups, as in \cite{bieri87}; see for example \cite[Subsection~4.3]{confining}. The BNS-invariants of the $F_n$ have been completely determined \cite[Theorem~8.1]{bieri87}, and thanks to an equivalent characterization of the BNS-invariants due to Brown \cite[Corollary~7.4]{brown87trees}, it is clear that there are no $\rho$-confining subsets for $\rho \neq \chi_0, \chi_1$ whose corresponding hyperbolic actions are on $\R$-trees. This does not immediately help us here though, because $\R$-trees are just a special case of hyperbolic spaces, so the connection to BNS-invariants is more just ``in spirit''.

\begin{proposition}\label{prop:usually_empty_conf}
Let $\rho$ be a character of $\Z^n$ such that there exist $i\ne j$ with $\rho(a_i)\ne 0$ and $\rho(a_j)\ne 0$. Then $\Conf^\ast_\rho(F_n)=\emptyset$.
\end{proposition}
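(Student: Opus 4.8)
The plan is to apply the bounded-generation criterion of \cref{prop:empty_conf}. By that proposition, once we know $\rho(a_i) \neq 0$ and $\rho(a_j) \neq 0$ for some $i \neq j$, it suffices to prove that $F_n'$ is boundedly generated by the two centralizers $C_{F_n'}(a_i)$ and $C_{F_n'}(a_j)$; this forces $\Conf^\ast_\rho(F_n) = \emptyset$. So the whole task reduces to a bounded-generation statement about $F_n'$ using centralizers of two one-bump functions with disjoint supports.

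The key structural observation is that if $a$ is a one-bump function supported on an open interval $I = (\ell, r)$, then $C_{F_n'}(a)$ contains, in particular, the rigid stabilizer $F_n'(J)$ for any open interval $J$ disjoint from $\overline{I}$ (elements supported off $\overline{I}$ commute with $a$, and if they lie in $F_n'$ we are done), and also contains elements of $F_n'$ supported near $\ell$ and near $r$ that ``interact'' with $a$ only up to scaling. Concretely: recall $a_i$ is supported on $I_i = (\ell_i, r_i)$ and $a_j$ on $I_j = (\ell_j, r_j)$, with $\overline{I_i} \cap \overline{I_j} = \emptyset$; say WLOG $r_i < \ell_j$. Then $C_{F_n'}(a_i)$ contains $F_n'(\,\cdot\,, r_i)$-type rigid stabilizers near $0$ together with $F_n'(\ell_j, 1)$ (everything supported to the right of $\overline{I_i}$), while $C_{F_n'}(a_j)$ contains $F_n'(0, \ell_j)$ together with rigid stabilizers near $1$. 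The first step is to make this precise: produce, for a single one-bump $a$ with support $(\ell,r)$, explicit subgroups of $C_{F_n'}(a)$ whose union is large — ideally, show $C_{F_n'}(a) \supseteq F_n'(\,\text{a nbhd of }[0,\ell]\,) \cup F_n'(\,\text{a nbhd of }[r,1]\,)$, or rather the appropriate fixator subgroups $\Fix_{F_n'}$ of closed subintervals touching the endpoints of $[0,1]$.

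The second step is the bounded generation of $F_n'$ itself: given any $g \in F_n'$, write it as a product of a uniformly bounded number of elements, each supported in one of the regions covered by $C_{F_n'}(a_i)$ or $C_{F_n'}(a_j)$. Here one uses the standard fragmentation fact for Thompson-type groups: any $g \in F_n'$ supported on a compact subinterval of $(0,1)$ can be written as a product of a bounded number of elements each supported on an interval on which $g$ acts by a single affine map, or — more efficiently — one uses that $F_n' = [F_n, F_n]$ together with \cref{cit:proximal} (high transitivity of $F_n'$ on the dense orbit $O$) to conjugate supports into the desired regions; a clean route is to show that $F_n'$ is generated, with bounded word length, by $\Fix_{F_n'}(x) \cup \Fix_{F_n'}(y)$ for suitable $x < y$ in $O$ chosen so that $[0,x]$ lies left of $\overline{I_j}$ and $[y,1]$ lies right of $\overline{I_i}$, and then observe $\Fix_{F_n'}(y) \subseteq C_{F_n'}(a_i) \cdot C_{F_n'}(a_i)$ (splitting an element fixing $y$ into its part left of $\overline{I_j}$, handled by $C_{F_n'}(a_j)$, and its part supported in a region commuting with $a_i$) and symmetrically for $\Fix_{F_n'}(x)$. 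Chaining these bounded-length decompositions gives $F_n' = (C_{F_n'}(a_i) \cup C_{F_n'}(a_j))^N$ for a uniform $N$.

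The main obstacle I expect is the bounded-length bookkeeping: it is easy to write $F_n'$ as an (unbounded) product of pieces supported in the good regions, but to get a \emph{uniform} bound $N$ one must carefully exploit that $F_n$ acts proximally — indeed highly transitively on $O$ via $F_n'$ — so that a single conjugation pushes the support of an arbitrary element entirely to one side of a chosen point, reducing the number of factors to a constant independent of $g$. A secondary subtlety is handling the commutator (torsion-free-abelianization) constraint: an element fixing a point $t$ and supported on $[0,t]$ need not itself lie in $F_n'$, so one must correct by a bounded product of conjugates of the $a_k$'s (supported away from the relevant region, hence again inside the centralizers) to land back in $F_n'$, exactly as in the proofs of \cref{lem:fix:rational} and \cref{prop:modelsplitting}. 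Once these two points are dispatched, \cref{prop:empty_conf} closes the argument immediately.
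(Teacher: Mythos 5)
Your proposal is correct and follows essentially the same route as the paper: reduce via \cref{prop:empty_conf} to bounded generation of $F_n'$ by the two centralizers, and get the uniform bound from a single conjugation that pushes the compact support of an arbitrary $f\in F_n'$ entirely off $\overline{I_i}$. The paper's version is shorter because it works directly with the interval-fixators $\Fix_{F_n'}(I_i)\subseteq C_{F_n'}(a_i)$ and $\Fix_{F_n'}(I_j)\subseteq C_{F_n'}(a_j)$ and conjugates the \emph{whole} element $f$ (which stays in $F_n'$ by normality), so the splitting-at-a-point step and the abelianization correction you flag as a ``secondary subtlety'' never arise: one gets $f\in \Fix_{F_n'}(I_j)\,\Fix_{F_n'}(I_i)\,\Fix_{F_n'}(I_j)$ in one line.
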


\begin{proof}
For each $k$, note that the centralizer $C_{F_n'}(a_k)$ contains the subgroup $\Fix_{F_n'}(I_k)$ of all elements of $F_n'$ fixing $I_k$. By \cref{prop:empty_conf}, it therefore suffices to prove that $F_n'$ is boundedly generated by $\Fix_{F_n'}(I_i)\cup \Fix_{F_n'}(I_j)$. We claim that every element of $F_n'$ is a product of at most three elements of this union.
Let $f\in F_n'$, so in particular $f \in F_n^c$ and so the support of $f$ is contained in $(\varepsilon,\delta)$ for some $0<\varepsilon<\delta<1$. Since $i\ne j$ we know that $I_i$ and $I_j$ are disjoint. Say without loss of generality that $i<j$. If $\varepsilon>r_i$ then $f\in \Fix_{F_n'}(I_i)$ and we are done, so suppose $\varepsilon\le r_i$. Since $I_i$ and $I_j$ are disjoint, we can choose some element $f'\in \Fix_{F_n'}(I_j)$ such that $\varepsilon.f'>r_i$. Now the support of $f^{f'}$ is contained in $(r_i,1)$ and hence $f^{f'}\in\Fix_{F_n'}(I_i)$. We conclude that
\[
f\in f'\Fix_{F_n'}(I_i)(f')^{-1} \subseteq \Fix_{F_n'}(I_j)\Fix_{F_n'}(I_i)\Fix_{F_n'}(I_j)\text{,}
\] and we are done.
\end{proof}

At this point we are free to assume that $\rho(a_i)\ne 0$ for just a single $i$. Our next goal is to prove that in fact $\Conf^\ast_\rho(F_n)=\emptyset$ also holds if this $i$ is anything other than $0$ or $n-1$.

\begin{proposition}\label{prop:middle_empty_conf}
Let $\rho$ be a character of $\Z^n$ such that $\rho(a_i)\ne 0$ for some $0<i<n-1$, and $\rho(a_j)=0$ for all $j\ne i$. Then $\Conf^\ast_\rho(F_n)=\emptyset$.
\end{proposition}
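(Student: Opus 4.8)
The plan is to argue by contradiction on the geometric side, using \cref{cit:confining_to_action} rather than manipulating confining subsets directly. Suppose $\Conf^\ast_\rho(F_n)\ne\emptyset$. Since $F_n$ is agreeable (\cref{cit:F_agreeable}) and sharply $\Z^n$-split (\cref{cor:Fsplit}), \cref{cit:confining_to_action} produces a cobounded \emph{quasi-parabolic} action $F_n\curvearrowright X$ whose Busemann pseudocharacter $p$ vanishes on $F_n'$ and induces a character of $\Z^n$ equivalent to $\rho$; in particular $p(a_i)$ is a nonzero multiple of $\rho(a_i)$. Let $\xi\in\partial X$ be the unique $F_n$-fixed point. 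Since $p(a_i)\ne 0$, \cref{cit:busemann:loxodromic} shows $a_i$ is loxodromic; write $\xi^-\ne\xi$ for its other boundary fixed point, so $\Fix_{\partial X}(a_i)=\{\xi,\xi^-\}$. The whole proof reduces to showing that all of $F_n$ also fixes $\xi^-$, which contradicts the uniqueness of the fixed point of a quasi-parabolic action.

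The mechanism is the elementary observation that if $h\in F_n$ commutes with $a_i$, then $h$ permutes $\Fix_{\partial X}(a_i)=\{\xi,\xi^-\}$ and fixes $\xi$, hence fixes $\xi^-$. Because $a_i$ is disjointly supported from each other generator $a_j$ (hence commutes with it), this already forces $\Z^n=\langle a_0,\dots,a_{n-1}\rangle$ to fix $\xi^-$, so it only remains to see that $S\coloneqq\Stab_{F_n'}(\xi^-)$ equals $F_n'$. First, every element of $F_n'$ supported off $\overline{I_i}$ commutes with $a_i$ and so lies in $S$; thus $\Fix_{F_n'}(\overline{I_i})\le S$. Second, if $g\in F_n'$ satisfies $\overline{I_i}.g\cap\overline{I_i}=\emptyset$, then $a_i^{g}$ is disjointly supported from $a_i$ and hence commutes with it, so (applying the observation to the commuting pair $a_i,a_i^{g}$) $\Fix_{\partial X}(a_i^{g})=\{\xi,\xi^-\}$; but also $\Fix_{\partial X}(a_i^{g})=\Fix_{\partial X}(a_i).g=\{\xi,\xi^-.g\}$, so $\xi^-.g=\xi^-$ and $g\in S$.

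Finally one propagates this using the ``middle'' hypothesis $0<i<n-1$, which gives $0<\ell_i<r_i<1$. For any $\delta\in(r_i,1)$, minimality — indeed proximality (\cref{cit:proximal}) — of the $F_n'$-action on $(0,1)$ yields $g\in F_n'$ with $\ell_i.g>\delta$, so $\overline{I_i}.g\subseteq(\delta,1)$ is disjoint from $\overline{I_i}$; by the previous step $g\in S$, and conjugating $\Fix_{F_n'}(\overline{I_i})\le S$ by $g$ gives $\Fix_{F_n'}(\overline{I_i}.g)=\Fix_{F_n'}(\overline{I_i})^{g}\le S$. Hence $S$ contains every element of $F_n'$ supported in $[0,\delta]$. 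Letting $\delta\to 1$ and recalling that every element of $F_n'$ is compactly supported, we get $S=F_n'$, so $F_n=F_n'\rtimes\Z^n$ fixes $\xi^-$ — the desired contradiction.

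The step to watch is the last one: it is precisely where $i\notin\{0,n-1\}$ enters, since elements of $F_n'$ fix both $0$ and $1$ and so cannot move $\overline{I_i}$ off itself once $\overline{I_i}$ abuts an endpoint of $[0,1]$ — in keeping with the fact that $\chi_0$ and $\chi_1$ \emph{do} admit strictly confining subsets. The remaining points are routine boundary-dynamics bookkeeping: that an isometry commuted by $a_i$ must preserve $\Fix_{\partial X}(a_i)$, that $\Fix_{\partial X}(a_i^{g})=\Fix_{\partial X}(a_i).g$, and that $\Fix_{F_n'}(\overline{I_i})^{g}=\Fix_{F_n'}(\overline{I_i}.g)$.
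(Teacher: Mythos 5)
Your proof is correct, but it runs along a genuinely different track from the paper's. The paper stays entirely on the combinatorial side: it shows directly that any $\rho$-confining $Q$ must contain $\Fix_{F_n'}(I_i)$ (via \cref{prop:empty_conf}), manufactures elements $q_\ell,q_r\in Q$ that push points of $I_i$ past each endpoint (via \cref{lem:conf:cyclic}), and then exhibits every $f\in F_n'$ as a product of at most $7$ elements of $Q$, so that $Q$ is equivalent to $F_n'$ and hence not strictly confining. You instead pass through \cref{cit:confining_to_action} to the associated quasi-parabolic action and produce a second global fixed point $\xi^-$ at infinity, using that centralizing $a_i$ forces preservation of $\Fix_{\partial X}(a_i)=\{\xi,\xi^-\}$ and that $\overline{I_i}$, being compactly contained in $(0,1)$, can be translated off itself by $F_n'$; fixing two boundary points forces all loxodromics to share the limit pair $\{\xi,\xi^-\}$, making the action lineal rather than quasi-parabolic. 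All the steps check out: the identity $\Fix_{\partial X}(a_i^{g})=\Fix_{\partial X}(a_i).g$, the loxodromicity of $a_i$ and its conjugates (from $p(a_i)\ne 0$ and \cref{cit:busemann:loxodromic}), the exhaustion of $F_n'$ by fixators of translates of $\overline{I_i}$ using proximality, and the correct identification of where $0<i<n-1$ enters. The underlying dynamical input is the same in both arguments (the centralizer of $a_i$ is large, and $\overline{I_i}$ can be moved off itself), so the difference is mostly one of bookkeeping: the paper's version yields the quantitatively stronger fact that $F_n'=Q^7$ and lives entirely inside the confining-subset calculus used in the rest of the paper, while yours is more conceptual, avoids word-length estimates, and in principle rules out the relevant quasi-parabolic actions without ever invoking \cref{cit:action_to_confining} or agreeability.
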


\begin{proof}
For notational ease, let us write $a$ for $a_i$, $I$ for the interval $I_i$, and $\ell$ and $r$ for the endpoints $\ell_i$ and $r_i$ of $I$. Up to replacing $a$ by its inverse, let us assume that $\rho(a) > 0$. Let $Q\subseteq F_n'$ be a $\rho$-confining subset. We need to prove that $Q=F_n'$. Since $0<i<n-1$, we know that $(r_{i-1},\ell)\ne\emptyset$ and $(r,\ell_{i+1})\ne\emptyset$. By \cref{prop:empty_conf}, we know that $C_{F_n'}(a) \subseteq Q$, and so $\Fix_{F_n'}(I)\subseteq Q$. 

We claim that there exists $q_\ell\in Q$ such that $x_\ell.q_\ell<\ell$ for some $x_\ell\in (\ell,r)$. First let $q_\ell'\in F_n'$ be any element supported on $(r_{i-1},r)$, such that $\ell.q_\ell'<\ell$, so then we also get $x_\ell'.q_\ell'<\ell$ for some $x_\ell'\in (\ell,r)$. By \cref{lem:conf:cyclic}, we can choose $k \geq 0$ such that $q_\ell\coloneqq (q_\ell')^{a^k}\in Q$. Let $x_\ell\coloneqq x_\ell'.a^k$. Then $x_\ell\in (\ell,r)$ and
\[
x_\ell.q_\ell = (x_\ell'.a^k).(q_\ell')^{a^k} = (x_\ell'.q_\ell').a^k = x_\ell'.q_\ell'<\ell
\]
since $a^k$ fixes $[0,\ell]$. A symmetric argument produces an element $q_r\in Q$ and $x_r\in(\ell,r)$ such that $x_r.q_r>r$.

By now we know that $Q$ contains $\Fix_{F_n'}(I)\subseteq Q$ and contains the elements $q_\ell$ and $q_r$ constructed above. Now we are ready to begin proving that $Q=F_n'$, for which it suffices to show that $F_n'$ is bounded in the generating set $Q$. Let us assume that $x.a>x$ for all $x$ in the support $I$ of $a$; since $a$ is a one-bump function, the alternative is that $x.a<x$ for all $x\in I$, and this case works analogously. Let $f\in F_n'$, so in particular $f \in F_n^c$ and so $f$ is supported on $(\varepsilon,\delta)$ for some $0<\varepsilon<\delta<1$. If $\delta\le \ell$ then $f\in \Fix_{F_n'}(I)$, so $f\in Q$ and we are done. Now suppose $\ell < \delta < r$. Since $a$ is a one-bump function satisfying $x.a>x$ for all $x$ in its support $(\ell,r)$, we can choose $k\ge 0$ such that $\delta.a^{-k}<x_\ell$, and so $f^{a^{-k}q_\ell}\in \Fix_{F_n'}(I) \subseteq Q$. This implies that $f \in Q^{q_\ell^{-1} a^k} \subseteq (Q^3)^{a^k} \subseteq Q^3$, where the last inclusion is given by \StayInConf. Finally, suppose $r \le \delta$. Choose $f'\in \Fix_{F_n'}(I)\subseteq Q$ such that $\delta.f'<x_r.q_r$, so $\delta.f'q_r^{-1}<r$. Like in the previous case, we can now choose $k\ge 0$ such that $\delta.f'q_r^{-1} a^{-k}<x_\ell$, so $f^{f'q_r^{-1} a^{-k}q_\ell}\in  \Fix_{F_n'}(I) \subseteq Q$, and so $f\in Q^{q_\ell^{-1} a^k q_r (f')^{-1}} \subseteq Q^7$. We conclude that in all cases $f\in Q^7$, which concludes the proof.
\end{proof}

This leaves us with only four characters of $\Z^n$ to consider up to equivalence, since precisely one of $a_0$ or $a_{n-1}$ can have a positive or negative character value, and the other must be sent to zero. In fact by \cref{prop:modelsplitting} these four characters of $\Z^n$ are equivalent to the four characters $\pm\chi_0$ and $\pm\chi_1$ of $F_n$, restricted to $\Z^n$. Our next result is that $\Conf^\ast_{-\chi_0}(F_n)$ and $\Conf^\ast_{-\chi_1}(F_n)$ are also empty, leaving $\chi_0$ and $\chi_1$ as the only candidates for yielding quasi-parabolic actions. (Then later we will show that these actually yield an incredible number of quasi-parabolic actions.)

\begin{proposition}\label{prop:last_empty_conf}
The sets $\Conf^\ast_{-\chi_0}(F_n)$ and $\Conf^\ast_{-\chi_1}(F_n)$ are empty.
\end{proposition}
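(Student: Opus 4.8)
The plan is to show that any $(-\chi_0)$-confining subset $Q \subseteq F_n'$ must equal $F_n'$, and symmetrically for $-\chi_1$; by symmetry (reflecting the interval $[0,1]$) it suffices to treat $-\chi_0$. The character $-\chi_0$, restricted to our fixed $\Z^n = \langle a_0,\dots,a_{n-1}\rangle$, sends $a_0 \mapsto -1$ and $a_i \mapsto 0$ for $i \geq 1$; so the element $a \coloneqq a_0^{-1}$ satisfies $\rho(a) > 0$ where $\rho = -\chi_0$, and $a$ is a one-bump function supported on $I_0 = (0, r_0)$ with $\chi_0(a) = -1$, meaning $a$ has slope $n^{-1}$ at $0$ and so $x.a < x$ for all $x \in (0, r_0)$ (after possibly adjusting which direction $a_0$ bumps; the key point is that conjugating by powers of $a$ \emph{pushes points toward $0$}, which is the ``wrong'' direction for an ascending-HNN-type structure based near $0$). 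The strategy is exactly parallel to the proof of \cref{prop:middle_empty_conf}: first use \cref{prop:empty_conf} to get $C_{F_n'}(a) \supseteq \Fix_{F_n'}(I_0)$ inside $Q$; then produce an element of $Q$ that moves some interior point of $I_0$ \emph{to the right past $r_0$} (this is the analogue of $q_r$, and it is the crucial one — there is no analogue of $q_\ell$ needed since there is nothing to the left of $0$); and finally use these generators, together with high powers of $a$ (which shrink the support of any $f \in F_n^c$ down toward $0$), to write an arbitrary $f \in F_n'$ as a bounded-length product of elements of $Q$.

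In more detail: Let $Q$ be $(-\chi_0)$-confining, and write $a = a_0^{-1}$, $I = I_0 = (0,r)$, so $x.a < x$ on $I$. By \cref{prop:empty_conf} (with the basis element $a_0$, on which $\rho \neq 0$) we have $\Fix_{F_n'}(I) = \Fix_{F_n'}(I_0) \subseteq C_{F_n'}(a_0) \subseteq Q$. Next I produce $q_r \in Q$ and $x_r \in (0, r)$ with $x_r.q_r > r$: take any $q_r' \in F_n'$ supported on $(0, \ell_1)$ with $r.q_r' > r$ (possible since $r < \ell_1$ because $r_0 < \ell_1$; here I may need to arrange $q_r'$ to push a point of $(0,r)$ past $r$ rather than fix $0$'s slope, but an element supported in $(0,\ell_1)$ with a large enough slope somewhere works, and lies in $F_n^c \subseteq F_n'$ after correcting abelianization by conjugates of the $a_k$ for $k \geq 1$, all supported away from $\overline{(0,\ell_1)}$... actually since $q_r'$ is supported in the open interval $(0,\ell_1)$ it already lies in $F_n^c$, but $F_n^c \not\subseteq F_n'$ for $n > 2$, so I multiply by conjugates of $a_1,\dots,a_{n-1}$ supported in $I_1,\dots,I_{n-1}$ to land in $F_n'$). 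Then by \cref{lem:conf:cyclic} choose $k \geq 0$ with $q_r \coloneqq (q_r')^{a^k} \in Q$; since $a$ fixes $[r, 1] \supseteq [\ell_1, 1]$ pointwise, and $q_r'$ is supported in $(0, \ell_1)$... wait — $a$ is supported on $(0,r)$, not $(0,\ell_1)$, so $a$ fixes $[r,1]$ but \emph{not} all of $(0,\ell_1)$; however $q_r'$ can be chosen supported on $(\epsilon_0, \ell_1)$ for $\epsilon_0$ small, and then the relevant point $x_r \coloneqq (\text{a preimage})$ transforms correctly as in \cref{prop:middle_empty_conf}. The cleanest fix: choose $x_r' \in (0,r)$ with $x_r'.q_r' > r$ and with $x_r'$ bounded away from $0$, set $x_r = x_r'.a^k$, note $x_r \in (0, r)$ and $x_r.q_r = x_r'.q_r' > r$ provided $q_r'$ and its relevant dynamics sit in the region fixed by $a^k$ — this requires a little care but is routine once one tracks supports, exactly as in the middle-character proof.

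Finally, for the boundedness argument: given $f \in F_n'$, it lies in $F_n^c$ so $\Supp(f) \subseteq (\varepsilon, \delta)$ for some $0 < \varepsilon < \delta < 1$. If $\delta \leq r$ I claim $f$ is conjugate by a bounded word in $Q$ into $\Fix_{F_n'}(I)$: since $x.a < x$ on $(0,r)$ pushes everything toward $0$, the ``wrong'' direction, I instead push things \emph{out of $(0,r)$ to the right} using $q_r$ and then past $r$, combined with negative powers of $a$ on the far side — concretely, mimic the ``$r \leq \delta$'' and middle cases of \cref{prop:middle_empty_conf} with the roles of left/right swapped. If $\delta > r$, handle it in one more step by first conjugating by an element of $\Fix_{F_n'}(I) \subseteq Q$ to shrink the support appropriately. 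In every case $f$ lands in $Q^N$ for a uniform $N$ (one can get $N = 7$ as before, or a similarly small constant), so by \cref{lem:products} $Q \sim_{\rho} F_n'$, i.e.\ $Q$ is not strictly confining. Hence $\Conf^\ast_{-\chi_0}(F_n) = \emptyset$, and the reflection $x \mapsto 1-x$ conjugates $F_n$ to itself carrying $\chi_0$ to $\chi_1$, so $\Conf^\ast_{-\chi_1}(F_n) = \emptyset$ as well.

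\medskip

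\noindent\emph{Main obstacle.} The conceptual content is identical to \cref{prop:middle_empty_conf}: the character $-\chi_0$ corresponds to conjugation by $a$ \emph{contracting} the bump interval $I_0$ toward the endpoint $0$, which is the opposite of what a genuine quasi-parabolic (ascending HNN) structure at $0$ requires. The only genuinely new bookkeeping is that $I_0$ is a \emph{boundary} bump — one side of it is the endpoint $0$ with nothing beyond — so there is no ``$q_\ell$''; the entire escape of a compactly-supported $f$ must happen to the \emph{right}, out past $r_0$ and into the region $[\ell_1, 1]$ that is fixed by $a$. Keeping track of exactly which intervals are fixed by the conjugating powers of $a$ (note $a$ fixes $[r_0,1]$ but moves points in $(0, r_0)$, whereas in the middle case both $(r_{i-1},\ell_i)$ and $(r_i,\ell_{i+1})$ were available as ``escape corridors'') is the one place requiring care, but it is a routine adaptation and not a real difficulty.
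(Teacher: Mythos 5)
Your plan follows essentially the same route as the paper's proof: apply \cref{prop:empty_conf} to get $\Fix_{F_n'}(I_0)=F_n'[r_0,1)\subseteq Q$, produce $q\in Q$ with $x.q>r_0$ for some $x\in(0,r_0)$ exactly as in \cref{prop:middle_empty_conf}, and then for arbitrary $f\in F_n'$ supported on $(\varepsilon,\delta)$ conjugate by $a_0^k q$ with $\varepsilon.a_0^k>x$ to land in $F_n'[r_0,1)\subseteq Q$, finally un-conjugating by $a_0^{-k}$, which has non-negative $(-\chi_0)$-value so that \StayInConf\ gives $f\in Q^3$. The paper does this in one uniform step with no case division on $\delta$ versus $r_0$; your extra case ``$\delta>r$'' is unnecessary (and the proposed conjugation by an element of $\Fix_{F_n'}(I_0)$, which fixes $(0,r_0)$ pointwise, could not relocate the support anyway), but this does not affect the correctness of the core argument.
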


\begin{proof}
We will prove $\Conf^\ast_{-\chi_0}(F_n)=\emptyset$, and the other statement is analogous. Let $Q\subseteq F_n'$ be $(-\chi_0)$-confining. By \cref{prop:empty_conf}, we know that $C_{F_n'}(a_0) \subseteq Q$, and so $\Fix_{F_n'}(I_0)=F_n'[r_0,1)\subseteq Q$. By the same argument as in the proof of \cref{prop:middle_empty_conf}, $Q$ also contains an element $q$ such that $x.q>r_0$ for some $x\in (0,r_0)$. Now we are ready to show that $Q=F_n'$, for which it suffices to show that $F_n' \subseteq Q^3$. Let $f\in F_n'$, say supported on $(\varepsilon,\delta)$ for some $0<\varepsilon<\delta<1$. Choose $k\ge 0$ such that $\varepsilon.a_0^k>x$, so $\varepsilon.a_0^k q > r_0$. Now $f^{a_0^k q}\in F_n'[r_0,1)\subseteq Q$, so $f\in (Q^3)^{a_0^{-k}}$. Since $(-\chi_0)(a_i^{-k})=k\ge 0$, we conclude that $f\in Q^3$ by \StayInConf, which concludes the proof.
\end{proof}

We now have everything in place to give a global description of $\Hyp(F_n)$. We refer the reader to \cref{intro:fig:global} in the introduction for a picture.

\begin{theorem}[\cref{intro:thm:global}]\label{thm:global}
The poset $\Hyp(F_n)$ has the following structure.
\begin{enumerate}
   \item $\Hyp_{gt}(F_n) = \emptyset$.
   \item $\Hyp_{qp}(F_n)$ consists of two incomparable isomorphic join semilattices $\X_0$ and $\X_1$.
   \item $\Hyp_{\ell}(F_n) = \Hyp_{\ell}^+(F_n)$ is an uncountable antichain, with two distinguished elements $[\pm\chi_0]$ and $[\pm\chi_1]$ sitting under $\X_0$ and $\X_1$, respectively.
   \item $\Hyp_e(F_n)$ consists of the unique smallest element.
\end{enumerate}
\end{theorem}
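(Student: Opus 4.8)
The plan is to derive all four items by assembling results already in hand; the proof is essentially bookkeeping, the only point needing a fresh (and short) argument being the incomparability of $\X_0$ and $\X_1$ in item~(ii). For item~(i), since $F_n$ has no non-abelian free subgroup (\cref{cit:nofree}) and a general-type action would produce one by the Ping-Pong Lemma, we get $\Hyp_{gt}(F_n)=\emptyset$. For item~(iv), this is the general fact recalled in \cref{ssec:poset}: $\Hyp_e(G)=\{[G]\}$ for every $G$, and $[G]$ is the smallest element of $\Hyp(G)$ since $G$ is dominated by all its generating sets; applying this to $F_n$ is the claim.

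For item~(iii), I would first invoke \cref{cor:nonoriented:f} to get $\Hyp_\ell(F_n)=\Hyp_\ell^+(F_n)$. As $F_n$ is agreeable (\cref{cit:F_agreeable}) and sharply $\Z^n$-split (\cref{cor:Fsplit}), \cref{prop:lineal} applies: $\Hyp_\ell^+(F_n)$ is an antichain and carries a $2$-to-$1$ surjection from the character sphere $\Sigma(\Z^n)$, which is the quotient of $\R^n\setminus\{0\}$ by positive scaling, hence uncountable for $n\ge 2$; therefore $\Hyp_\ell^+(F_n)$ is uncountable. The distinguished elements are $[\pm\chi_i]:=[F_n'\cup Z_{\chi_i}]$, the images of $[\chi_i]$ under the surjection of \cref{prop:lineal}, and \cref{prop:linvsqp} shows that $[\pm\chi_i]$ is a lower bound of every element of $\Hyp_{qp}^{\chi_i}(F_n)$, i.e., it sits under $\X_i$; the same proposition records that $[\pm\chi_i]$ is unrelated to everything in $\X_{1-i}$, matching \cref{intro:fig:global}.

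The substance is item~(ii). Because $F_n$ is agreeable and sharply $\Z^n$-split, \cref{lem:sharp} gives that every pseudocharacter of $F_n$ vanishes on $F_n'$, so by \cref{lem:factor} the Busemann pseudocharacter of any cobounded quasi-parabolic action of $F_n$ descends to a character $\rho$ of $\Z^n$, which is non-trivial since such actions have loxodromic elements (\cref{cit:busemann:loxodromic}). This decomposes $\Hyp_{qp}(F_n)$ as $\bigsqcup_{[\rho]\in\Sigma(\Z^n)}\Hyp_{qp}^\rho(F_n)$, and by \cref{prop:qp} each summand is isomorphic to $\Conf^\ast_\rho(F_n)$. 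Now \cref{prop:usually_empty_conf,prop:middle_empty_conf,prop:last_empty_conf} show that $\Conf^\ast_\rho(F_n)=\emptyset$ unless $\rho$ is a positive multiple of $\chi_0$ or of $\chi_1$, so setting $\X_i:=\Hyp_{qp}^{\chi_i}(F_n)\cong\Conf^\ast_{\chi_i}(F_n)$ yields $\Hyp_{qp}(F_n)=\X_0\sqcup\X_1$. Each $\X_i$ is a join semilattice by \cref{prop:join}, and $\X_0\cong\X_1$ because conjugation by the flip $x\mapsto 1-x$ is an order-$2$ automorphism of $F_n$ interchanging $\chi_0$ and $\chi_1$, hence inducing a poset isomorphism of $\Hyp(F_n)$ that swaps the two pieces. (Non-emptiness of $\X_0$ and $\X_1$ is not needed here; it follows from the standard ascending HNN-extension description recalled in the introduction.)

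The one genuinely new step is that $\X_0$ and $\X_1$ are incomparable, which I would check directly with generating sets. A class in $\X_0$ is represented by $A=Q_0\cup Z_{\chi_0}$ with $Q_0\subseteq F_n'$ a $\chi_0$-confining subset, and a class in $\X_1$ by $B=Q_1\cup Z_{\chi_1}$. Since $Q_0\subseteq F_n'=\ker\chi_0\cap\ker\chi_1$, every letter of $A$ has $|\chi_0(\cdot)|\le C_{\chi_0}$, so any word of length $m$ over $A$ represents an element $g$ with $|\chi_0(g)|\le mC_{\chi_0}$; but $\langle a_0\rangle\subseteq Z_{\chi_1}\subseteq B$ and $\chi_0(a_0^k)=k$, so $B$ contains elements of unbounded $A$-length, giving $[A]\not\preccurlyeq[B]$. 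The symmetric argument with $a_{n-1}$ and $\chi_1$ gives $[B]\not\preccurlyeq[A]$. (Alternatively, $[A]\preccurlyeq[B]$ would force every $A$-loxodromic element to be $B$-loxodromic, i.e. $\chi_0(g)\ne 0\Rightarrow\chi_1(g)\ne 0$ by \cref{cit:busemann:loxodromic}, which fails for $g=a_0$.) I do not anticipate any real obstacle in this theorem itself: all the hard work was already carried out in the empty-confining-set propositions of this section.
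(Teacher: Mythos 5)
Your proposal is correct and follows essentially the same route as the paper: assemble items (i), (iii), (iv) from \cref{cit:nofree}, \cref{cor:nonoriented:f}, \cref{prop:lineal}, and \cref{prop:linvsqp}, and get item (ii) from the emptiness propositions of this section together with \cref{prop:join} and the flip $x\mapsto 1-x$; your direct word-length argument for incomparability is fine (the paper instead deduces it from \cref{prop:linvsqp} by transitivity through $[\pm\chi_0]$). One small slip: $\ker\chi_0\cap\ker\chi_1$ is $F_n^c$, which strictly contains $F_n'$ for $n>2$, but your argument only uses $Q_0\subseteq F_n'\subseteq\ker\chi_0$, so nothing breaks.
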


\begin{proof}
The last item is always true. The fact that $\Hyp_{gt}(F_n) = \emptyset$ is the content of \cref{cit:nofree}. The fact that $\Hyp_{\ell}(F_n) = \Hyp_{\ell}^+(F_n)$ is the content of \cref{cor:nonoriented:f}. This is an uncountable antichain by \cref{prop:lineal}, with two distinguished elements corresponding to $[\pm\chi_0]$ and $[\pm\chi_1]$.

We define $\X_0 \coloneqq \Hyp_{qp}^{\chi_0}(F_n)$, and similarly $\X_1 \coloneqq \Hyp_{qp}^{\chi_1}(F_n)$. They are join semilattices by \cref{prop:join}. They are isomorphic because the automorphism $\Homeo[0, 1] \to \Homeo[0, 1]$ given by conjugation by $x \mapsto (1 - x)$ preserves $F_n$ and conjugates $\chi_0$ to $\chi_1$. By construction, they sit above $[\pm\chi_0]$ and $[\pm\chi_1]$, respectively, and above no other lineal actions by \cref{prop:linvsqp}. Finally, that $\Hyp_{qp}(F_n)$ consists only of $\X_0$ and $\X_1$ follows from the propositions proved above in this section that show no other character admits strictly confining subsets.
\end{proof}

The results of this section will also be enough to give a complete description of the poset of hyperbolic structures on the group $F_n^\pm$, which will be the subject of the next section. After that, we will make a detour on lamplighters and their hyperbolic structures, and then the rest of the paper will be devoted to analyzing the rich structure of the poset $\X_0 \cong \X_1$, which up to isomorphism is $\Conf^\ast_{\chi_0}(F_n)$.

\section{Hyperbolic structures on $F_n^\pm$}\label{sec:reversing}

Recall that $F_n^\pm$ is the group of homeomorphisms of $[0, 1]$ defined the same way as $F_n$ but where the orientation is not necessarily preserved. Let $\alpha \in F_n^\pm$ be the homeomorphism $x \mapsto (1-x)$. Then $F_n^\pm \cong F_n \rtimes_{\alpha} \Z/2\Z$. From this we easily obtain:

\begin{lemma}
\label{lem:reversing:agreeable}

The group $F_n^\pm$ is agreeable.
\end{lemma}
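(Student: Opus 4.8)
The plan is to deduce agreeability of $F_n^\pm$ from the already-established fact that every subgroup of $F_n$ is agreeable (\cref{cit:F_agreeable}), using the index-$2$ inclusion $F_n \leq F_n^\pm$. Concretely, let $p \colon F_n^\pm \to \R$ be a pseudocharacter; I must show $p$ is a homomorphism, equivalently (since a pseudocharacter is already a homogeneous quasimorphism) that $p$ has defect $0$. First I would restrict $p$ to the index-$2$ subgroup $F_n$. The restriction $p|_{F_n}$ is again a pseudocharacter, hence by \cref{cit:F_agreeable} it is a character, i.e. a homomorphism $F_n \to \R$. It then remains to upgrade this to the conclusion that $p$ itself is a homomorphism on all of $F_n^\pm$.

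The key step is the standard fact that a homogeneous quasimorphism is invariant under conjugation and vanishes on commutators up to its defect; more precisely, homogeneous quasimorphisms are class functions, so $p(g h g^{-1}) = p(h)$ for all $g, h$. Writing $F_n^\pm = F_n \rtimes \langle \alpha \rangle$ with $\alpha^2 = 1$, any element of $F_n^\pm$ is either in $F_n$ or of the form $f\alpha$ with $f \in F_n$. Since $p$ is homogeneous, $p(\alpha) = \frac12 p(\alpha^2) = \frac12 p(1) = 0$, and for $f\alpha$ we have $p((f\alpha)^2) = 2 p(f\alpha)$, while $(f\alpha)^2 = f \cdot \alpha f \alpha^{-1} = f \cdot f^\alpha \in F_n$, so $p(f\alpha) = \frac12 p(f f^\alpha) = \frac12 (p(f) + p(f^\alpha))$ because $p|_{F_n}$ is already a homomorphism. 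By conjugation-invariance of $p$ (now applied with the conjugating element $\alpha \in F_n^\pm$), $p(f^\alpha) = p(f)$, so $p(f\alpha) = p(f)$. Thus $p$ is completely determined by $p|_{F_n}$ via $p(f\alpha^i) = p(f)$, and one checks directly from the multiplication in the semidirect product — $(f\alpha^i)(g\alpha^j) = f g^{\alpha^i} \alpha^{i+j}$ — that this formula is a homomorphism, using again that $p|_{F_n}$ is a homomorphism and $\alpha$-invariant. Hence $p$ has defect $0$, so $F_n^\pm$ is agreeable.

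The only mild subtlety — and the step I would be most careful about — is justifying that $p|_{F_n}$ is $\alpha$-invariant, i.e. $p(f^\alpha) = p(f)$ for all $f \in F_n$; but this is immediate from homogeneity of $p$ on $F_n^\pm$, since homogeneous quasimorphisms on any group are invariant under the full conjugation action of that group, and $\alpha \in F_n^\pm$. Everything else is a routine computation in the semidirect product. Alternatively, and perhaps even more cleanly, one can invoke Bavard duality directly: by the characterization recalled after the definition of agreeable, it suffices to show every element of the commutator subgroup of $F_n^\pm$ has vanishing stable commutator length; since $[F_n^\pm, F_n^\pm] \leq F_n$ (as $F_n^\pm/F_n \cong \Z/2\Z$ is abelian) and stable commutator length can only decrease when passing to an overgroup, this follows from the corresponding vanishing statement for $F_n$, which holds because $F_n$ is agreeable. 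I would include this second argument as the main line of proof, since it is shortest, and relegate the explicit pseudocharacter computation to a remark if desired.
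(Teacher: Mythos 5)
Your first argument --- restricting $p$ to $F_n$, invoking \cref{cit:F_agreeable} to get a character, and then using homogeneity to deduce $p(\alpha)=0$ and $p(f^\alpha)=p(f)$ so that $p$ respects the semidirect product multiplication --- is correct and is exactly the paper's proof, just with the routine verifications written out.

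However, the Bavard-duality argument you propose to promote to the main line has a genuine gap. You need every $g \in [F_n^\pm, F_n^\pm]$ to have vanishing $\mathrm{scl}$ in $F_n^\pm$, and you want to deduce this from monotonicity of $\mathrm{scl}$ under the inclusion $F_n \leq F_n^\pm$ together with agreeability of $F_n$. But agreeability of $F_n$ only gives vanishing of $\mathrm{scl}_{F_n}$ on $[F_n, F_n]$, and $[F_n^\pm, F_n^\pm]$ is \emph{not} contained in $[F_n, F_n]$: for instance $[f, \alpha] = f^{-1} f^\alpha$ satisfies $\chi_0([f,\alpha]) = \chi_1(f) - \chi_0(f)$, which is nonzero for suitable $f$. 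Since $\Ab(F_n) \cong \Z^n$ is torsion-free, no power of such an element lies in $[F_n, F_n]$ either, so $\mathrm{scl}_{F_n}$ of it is undefined (infinite), and the monotonicity inequality $\mathrm{scl}_{F_n^\pm}(g) \leq \mathrm{scl}_{F_n}(g)$ tells you nothing. (This is precisely the phenomenon quantified by \cref{lem:eigenspaces}: the abelianization rank drops from $n$ to $\lceil n/2 \rceil$ when passing to $F_n^\pm$, so $[F_n^\pm, F_n^\pm] \cap F_n$ is strictly larger than $F_n'$.) Keep the explicit pseudocharacter computation as the proof and drop, or substantially repair, the $\mathrm{scl}$ alternative.
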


\begin{proof}
Every pseudocharacter $\rho \colon F_n^{\pm} \to \R$ restricts to a character on $F_n$, because $F_n$ is agreeable. Moreover, by homogeneity we have $\rho(\alpha) = 0$, and $\rho(g^\alpha) = \rho(g)$ for all $g \in F_n$. Thus $\rho$ preserves all relations of $F_n^\pm$, so it is a homomorphism.
\end{proof}

We will need the following more algebraic description of the abelianization of $F_n$, see e.g., \cite{zaremsky17F_n}. There exist characters $\psi_0, \ldots, \psi_{n-2} \colon F_n \to \Z$ that satisfy the relation
\begin{equation}
\label{eq:relation}
\psi_0 + \cdots + \psi_{n-2} = \chi_0 - \chi_1.
\end{equation}
We have $(n+1)$ characters and one relation, so this encodes the whole rank-$n$ abelianization of $F_n$. (We will not need to know any more details about the $\psi_i$ for our purposes, and so just refer the reader to \cite{zaremsky17F_n} for more precise information.) Conjugation by $\alpha$ induces an order-$2$ automorphism on $F_n$ that permutes these characters as follows (this is easy to see from the description of these characters in \cite{zaremsky17F_n}):
\[
\chi_0 \longleftrightarrow \chi_1; \qquad \psi_i \longleftrightarrow -\psi_{(n-2)-i} \text{ for } i = 0, \ldots, (n-2).
\]
The following is immediate from this description:

\begin{lemma}
\label{lem:eigenspaces}
Consider $H^1(F_n; \R)$ as the real vector space with $\{ \chi_0, \chi_1, \psi_0, \ldots, \psi_{n-2} \}$ as the generating set and one relation given by \cref{eq:relation}. Consider the action of $\alpha$ on $H^1(F_n; \R)$, and let $H^1(F_n; \R)^{\alpha}_+$ denote the $+1$ eigenspace and $H^1(F_n; \R)^{\alpha}_-$ the $-1$ eigenspace. Then
\begin{enumerate}
\item $H^1(F_n; \R)^{\alpha}_+ = \langle \chi_0 + \chi_1, \psi_i - \psi_{(n-2)-i} \mid i = 0, \ldots, \lfloor \frac{n}{2} \rfloor-1 \rangle$. This has $\lfloor \frac{n}{2} \rfloor + 1$ generators, of which the last one is $0$ if $n$ is even, and no other relation, and thus has dimension $\lceil \frac{n}{2} \rceil$.
\item $H^1(F_n; \R)^{\alpha}_- = \langle \chi_0 - \chi_1, \psi_i + \psi_{n-i} \mid i = 0, \ldots, \lfloor \frac{n}{2} \rfloor-1 \rangle$. This has $\lfloor \frac{n}{2} \rfloor + 1$ generators, and one relation coming from \cref{eq:relation}, and thus has dimension $\lfloor \frac{n}{2} \rfloor$. \qed
\end{enumerate}
\end{lemma}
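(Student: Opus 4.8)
The plan is to treat this as linear algebra on the $n$-dimensional real vector space $H^1(F_n;\R)$, using only that $\alpha$ is an involution. Since we work over $\R$, the operator $\alpha$ is diagonalizable with eigenvalues $\pm 1$, so $H^1(F_n;\R) = H^1(F_n;\R)^\alpha_+ \oplus H^1(F_n;\R)^\alpha_-$, and the two summands are exactly the images of the complementary projections $P_\pm = \frac12(\id \pm \alpha)$.

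First I would apply $P_+$ and $P_-$ to the given spanning set $\{\chi_0,\chi_1,\psi_0,\dots,\psi_{n-2}\}$ of $H^1(F_n;\R)$. Using the permutation rule $\chi_0 \leftrightarrow \chi_1$ and $\psi_i \leftrightarrow -\psi_{(n-2)-i}$, one computes $P_+\chi_0 = P_+\chi_1 = \frac12(\chi_0+\chi_1)$ and $P_+\psi_i = \frac12(\psi_i - \psi_{(n-2)-i})$, and symmetrically $P_-\chi_0 = -P_-\chi_1 = \frac12(\chi_0 - \chi_1)$ and $P_-\psi_i = \frac12(\psi_i + \psi_{(n-2)-i})$. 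Since $P_\pm$ surjects onto the corresponding eigenspace, this already produces the claimed spanning sets: $\chi_0+\chi_1$ together with the $\psi_i - \psi_{(n-2)-i}$ for $H^1(F_n;\R)^\alpha_+$, and $\chi_0-\chi_1$ together with the $\psi_i + \psi_{(n-2)-i}$ for $H^1(F_n;\R)^\alpha_-$. The index $i$ may be restricted to $0 \le i \le \lfloor n/2\rfloor - 1$, because swapping $i$ with $(n-2)-i$ only changes $\psi_i - \psi_{(n-2)-i}$ by a sign and leaves $\psi_i + \psi_{(n-2)-i}$ unchanged; when $n$ is even the self-paired index $i = n/2-1$ contributes the zero vector on the $+$ side and $2\psi_{n/2-1}$ on the $-$ side. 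This parity bookkeeping is the only real subtlety, and I would double-check it on $n=2,3$ to be sure the floor/ceiling claims and the ``last generator is $0$ iff $n$ even'' assertion are phrased correctly.

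Next I would pin down the dimensions by a counting argument. Pairing $\psi_i$ with $\psi_{(n-2)-i}$ rewrites the relation \eqref{eq:relation} as an expression of $\chi_0-\chi_1$ as a linear combination of the vectors $\psi_i + \psi_{(n-2)-i}$ (with the middle term $\psi_{n/2-1}$ halved when $n$ is even, which is harmless over $\R$); hence \eqref{eq:relation} is a genuine dependence among the generators of $H^1(F_n;\R)^\alpha_-$, giving $\dim H^1(F_n;\R)^\alpha_- \le \lfloor n/2\rfloor$. On the other side, after discarding the zero vector when $n$ is even, $H^1(F_n;\R)^\alpha_+$ is spanned by $\lceil n/2\rceil$ vectors, so $\dim H^1(F_n;\R)^\alpha_+ \le \lceil n/2\rceil$. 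Since the two dimensions sum to $n$ and $\lceil n/2\rceil + \lfloor n/2\rfloor = n$, both inequalities are equalities. It follows that the $\lceil n/2\rceil$ nonzero generators form a basis of $H^1(F_n;\R)^\alpha_+$ (no further relation), and that \eqref{eq:relation} is the unique relation among the $\lfloor n/2\rfloor+1$ generators of $H^1(F_n;\R)^\alpha_-$ — which is exactly the statement.

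So there is no genuinely hard step; the content is entirely elementary. The one place to be careful is the parity analysis of which index pairs $\{i,(n-2)-i\}$ collapse, so that the generator counts come out as $\lfloor n/2\rfloor+1$ in both cases and the dimensions as $\lceil n/2\rceil$ and $\lfloor n/2\rfloor$; the dimension bookkeeping via the splitting $H^1(F_n;\R) = H^1(F_n;\R)^\alpha_+ \oplus H^1(F_n;\R)^\alpha_-$ then does the rest automatically.
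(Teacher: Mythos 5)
Your proof is correct and is exactly the routine eigenprojection/dimension-count verification that the paper leaves unwritten (the lemma is stated as ``immediate from this description'' with no proof), so there is nothing to add beyond noting that you have correctly read $\psi_i + \psi_{n-i}$ in part (ii) as the evident typo for $\psi_i + \psi_{(n-2)-i}$. The parity bookkeeping of the self-paired index $i = n/2 - 1$ and the rewriting of \cref{eq:relation} as the unique dependence on the $-1$ eigenspace are both handled correctly.
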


This, together with \cref{intro:thm:global}, classifies quasi-parabolic structures on $F_n^{\pm}$, namely, there are none:

\begin{proposition}
\label{prop:reversing:qp}

The group $F_n^{\pm}$ does not admit any cobounded quasi-parabolic action on a hyperbolic space.
\end{proposition}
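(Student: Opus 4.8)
The plan is to reduce a quasi-parabolic action of $F_n^{\pm}$ to a quasi-parabolic action of the index-$2$ subgroup $F_n$, use \cref{thm:global} to pin down the only two candidates for such an action on $F_n$ (the structures $\X_0$ and $\X_1$, with Busemann characters $\chi_0$ and $\chi_1$), and then derive a contradiction from the fact that $\alpha$-conjugation swaps $\chi_0$ and $\chi_1$. First I would suppose for contradiction that $F_n^{\pm}$ acts coboundedly and quasi-parabolically on a hyperbolic space $X$, fixing the unique boundary point $\xi \in \partial X$. Since $F_n^{\pm}$ is agreeable (\cref{lem:reversing:agreeable}), the Busemann pseudocharacter $p \colon F_n^{\pm} \to \R$ is a character. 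Because it is a character, it cannot vanish identically on $F_n$: if it did, then every element of $F_n$ would be elliptic by \cref{cit:busemann:loxodromic}, and since $\alpha$ has order $2$ it is also elliptic, so $F_n^{\pm}$ would have no loxodromic elements, contradicting non-elementarity. So $p$ restricts to a nonzero character of $F_n$, which by \cref{lem:sharp} (using that $F_n$ is sharply $\Z^n$-split, \cref{cor:Fsplit}) descends to a nonzero character $\rho$ of $\Z^n = \Ab(F_n)$ up to torsion.

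Next I would observe that the restricted action $F_n \curvearrowright X$ is non-elementary: $F_n$ is an index-$2$ subgroup, so its limit set in $\partial X$ agrees with that of $F_n^{\pm}$ up to at most doubling, hence is infinite. It fixes $\xi$, so it is quasi-parabolic, and its Busemann character is the restriction of $p$, inducing $\rho$ on the abelianization. By \cref{thm:global}, the only quasi-parabolic structures on $F_n$ have Busemann character equivalent to $\chi_0$ or to $\chi_1$; thus (up to equivalence, and after identifying characters of $F_n$ with characters of $\Z^n$ via the splitting) we have $\rho \in \{[\chi_0], [\chi_1]\}$.

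The contradiction then comes from equivariance under $\alpha$. Since $\alpha$ fixes the unique boundary point $\xi$ (uniqueness forces $\alpha.\xi = \xi$), for any $g \in F_n$ the element $g^{\alpha}$ has Busemann value $p(g^{\alpha}) = p(g)$: indeed $p$ is a character of $F_n^{\pm}$, so $p(g^{\alpha}) = p(\alpha^{-1} g \alpha) = p(g)$. On the other hand, the restriction of $p$ to $F_n$, viewed through the abelianization, is a representative of $[\chi_0]$ (say), so for $g$ with $\chi_0(g) \ne 0$ and $\chi_1(g) = 0$ we get $p(g) \ne 0$ while $\chi_0(g^{\alpha}) = \chi_1(g) = 0$; since $p|_{F_n}$ factors through the abelianization and is proportional to $\chi_0$, this forces $p(g^{\alpha}) = 0 \ne p(g)$, a contradiction. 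The symmetric argument rules out $[\chi_1]$. Hence no such action exists.

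The main obstacle is the bookkeeping in the reduction step: one must check carefully that passing to the index-$2$ subgroup $F_n$ keeps the action quasi-parabolic (not merely non-elementary) with the same fixed point, and that the induced character on $\Z^n$ is well-defined up to the equivalence used in \cref{thm:global}; the rest is a short dynamical/algebraic contradiction using that $\alpha$ swaps $\chi_0$ and $\chi_1$ (equivalently, noting that $H^1(F_n;\R)^{\alpha}$ contains neither $\chi_0$ nor $\chi_1$, by \cref{lem:eigenspaces}).
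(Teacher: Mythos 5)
Your proposal is correct and follows essentially the same route as the paper: restrict the action to the index-$2$ subgroup $F_n$, invoke \cref{thm:global} to force the Busemann character to be equivalent to $\chi_0$ or $\chi_1$, and contradict this with the $\alpha$-invariance of $p|_{F_n}$ (which the paper phrases via the eigenspace decomposition of \cref{lem:eigenspaces}, and you phrase by evaluating $p(g^\alpha)=p(g)$ at an element with $\chi_0(g)\ne 0$, $\chi_1(g)=0$). The extra care you take in checking that the restricted action is quasi-parabolic and that $p$ does not vanish on $F_n$ is sound but not a substantive departure.
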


\begin{proof}
Suppose by contradiction that $F_n^\pm \curvearrowright X$ is a cobounded quasi-parabolic action. Let $p \colon F_n^\pm \to \R$ be the Busemann pseudocharacter, which is a character by \cref{lem:reversing:agreeable}, and is non-zero by assumption. Since $F_n$ has index $2$ in $F_n^{\pm}$, the restricted action is also quasi-parabolic, which implies that $p|_{F_n}$ is a non-trivial character of $F_n$ that appears as the pseudocharacter of a quasi-parabolic action of $F_n$. \cref{intro:thm:global} implies that $p|_{F_n}$ is equivalent to either $\chi_0$ or to $\chi_1$, but \cref{lem:eigenspaces} implies that $p|_{F_n} \in \langle \chi_0 + \chi_1, \psi_i - \psi_{(n-2)-i} \mid i = 0, \ldots, \lfloor \frac{n}{2} \rfloor-1 \rangle$. These two properties are incompatible.
\end{proof}

On the other hand, we can describe the lineal actions thanks to \cref{lem:eigenspaces}:

\begin{proposition}
\label{prop:reversing:lineal}

The poset $\Hyp_\ell(F_n^\pm)$ is an antichain of the following form.
\begin{enumerate}
\item If $n = 2$, then $\Hyp_\ell(F_2^\pm)$ has size $2$, with one oriented and one non-oriented structure.
\item If $n = 3$, then $\Hyp_\ell(F_3^\pm)$ is uncountable, with uncountably many oriented structures and one non-oriented structure.
\item If $n > 3$, then $\Hyp_\ell(F_n^\pm)$ is uncountable, with uncountably many oriented and non-oriented structures.
\end{enumerate}
\end{proposition}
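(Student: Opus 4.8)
The plan is to cut $\Hyp(F_n^\pm)$ down to its lineal part and then count the oriented and non-oriented lineal structures separately, using (twisted) first cohomology, with \cref{lem:eigenspaces} supplying the numerology. Since $F_n$ has index $2$ in $F_n^\pm$ and contains no nonabelian free subgroup (\cref{cit:nofree}), neither does $F_n^\pm$, so $\Hyp_{gt}(F_n^\pm)=\emptyset$; and $\Hyp_{qp}(F_n^\pm)=\emptyset$ by \cref{prop:reversing:qp}. Hence $\Hyp(F_n^\pm)=\Hyp_e(F_n^\pm)\sqcup\Hyp_\ell(F_n^\pm)$, and $\Hyp_\ell(F_n^\pm)$ is an antichain by \cite[Theorem 4.22]{ABO}. (Alternatively, once the explicit descriptions below are available, incomparability of two lineal structures can be read off by restricting both to the index-$2$ subgroup $F_n$, where \cref{cor:nonoriented:f} and the antichain property of $\Hyp_\ell^+(F_n)$ apply.) So it remains to count $\Hyp_\ell^+(F_n^\pm)$ and its complement.

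\textbf{Oriented lineal structures.} As $F_n^\pm$ is agreeable (\cref{lem:reversing:agreeable}), the classification of oriented lineal structures by Busemann pseudocharacters \cite[Section 2.3]{ABO} identifies $\Hyp_\ell^+(F_n^\pm)$ with the projectivization $\mathbb{P}(H^1(F_n^\pm;\R))$ (two non-trivial characters give the same structure exactly when they are real multiples of one another). The restriction map identifies $H^1(F_n^\pm;\R)$ with the $\alpha$-invariant subspace $H^1(F_n;\R)^\alpha_+$, of dimension $\lceil n/2\rceil$ by \cref{lem:eigenspaces}. Thus $\Hyp_\ell^+(F_n^\pm)$ has one element if $n=2$ and is uncountable if $n\geq 3$.

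\textbf{Non-oriented lineal structures (the crux).} Let $F_n^\pm\curvearrowright X$ be a cobounded non-oriented lineal action, and let $\varepsilon\colon F_n^\pm\to\{\pm1\}$ and $p\colon\ker(\varepsilon)\to\R$ be as in \cref{cit:quasicocycle}. The key claim is that $\varepsilon$ is the orientation homomorphism $\sigma$, so that $\ker(\varepsilon)=F_n$: restricting the action to $F_n$ gives a cobounded lineal action of $F_n$, which is oriented by \cref{cor:nonoriented:f}, so $F_n$ fixes both points of $\partial X$; since the ambient group swaps them, the swap is realized outside $F_n$, forcing $\ker(\varepsilon)=F_n$. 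Now $p\colon F_n\to\R$ is a character (agreeability), and by \cref{cit:quasicocycle} it extends to a $\sigma$-cocycle $\varphi$ on $F_n^\pm$; a short computation shows that a character $p$ of $F_n$ extends to a $\sigma$-cocycle precisely when it is $\alpha$-anti-invariant, i.e.\ when $p\in H^1(F_n;\R)^\alpha_-$, and conversely that every non-zero such $p$ arises from a cobounded non-oriented lineal action on $\R$ (the orbit of $0$ contains the non-trivial subgroup $p(F_n)\leq\R$, so is coarsely dense). By \cref{prop:nonoriented:classification} the structure is determined by $(\sigma,p)$, with $p$ well-defined only up to positive scaling and sign (conjugating the $\R$-action by $x\mapsto -x$ sends $p$ to $-p$). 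Hence $\Hyp_\ell(F_n^\pm)\setminus\Hyp_\ell^+(F_n^\pm)$ is in bijection with $\mathbb{P}(H^1(F_n;\R)^\alpha_-)$, of dimension $\lfloor n/2\rfloor$ by \cref{lem:eigenspaces}: one element if $n\in\{2,3\}$, uncountable if $n\geq 4$.

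\textbf{Assembly and the main difficulty.} Combining the two counts yields the three cases of the statement ($n=2$: $1+1=2$; $n=3$: uncountable $+1$; $n>3$: uncountable $+$ uncountable). The step I expect to require the most care is the non-oriented case: first forcing $\varepsilon=\sigma$, which is exactly where \cref{cor:nonoriented:f} and the orientation-reversing nature of $F_n^\pm$ enter, and then verifying that the resulting assignment to $\mathbb{P}(H^1(F_n;\R)^\alpha_-)$ is an honest bijection — in particular correctly handling the sign ambiguity of $p$ so as not to double-count — via the precise equivalence in \cref{prop:nonoriented:classification}. Everything else is cohomological bookkeeping with \cref{lem:eigenspaces}.
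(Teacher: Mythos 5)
Your proposal is correct and follows essentially the same route as the paper: agreeability plus the \cite{ABO} classification and the isomorphism $H^1(F_n^\pm;\R)\cong H^1(F_n;\R)^\alpha_+$ for the oriented count, and for the non-oriented count forcing $\ker(\varepsilon)=F_n$ via \cref{cor:nonoriented:f}, computing that $p$ must lie in $H^1(F_n;\R)^\alpha_-$, realizing every such $p$ by an explicit cocycle, and invoking \cref{prop:nonoriented:classification} together with \cref{lem:eigenspaces} for the count. The only cosmetic differences are that you build the realizing action on $\R$ directly rather than citing \cite[Lemma 2.14]{NL}, and you package the count as a bijection with a projectivization rather than splitting into the cases $n\le 3$ and $n>3$.
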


\begin{proof}
Since $F_n^{\pm}$ is agreeable by \cref{lem:reversing:agreeable}, it follows from \cite[Theorem 4.22]{ABO} that $\Hyp_\ell^+(F_n^\pm)$ has size $1$ if $H^1(F_n^\pm; \R)$ has dimension $1$, and is uncountable if $H^1(F_n^\pm; \R)$ has dimension greater than $1$. The restriction to $F_n$ induces an isomorphism $H^1(F_n^\pm; \R) \to H^1(F_n; \R)^{\alpha}_+$, so \cref{lem:eigenspaces} gives the size of $\Hyp_\ell^+(F_n^\pm)$ as in the statement.

Now consider a non-oriented lineal action of $F_n^{\pm}$ on a hyperbolic space $X$. Let $\varepsilon(g)$ be the sign of the permutation that $g$ induces on the limit points. Because $F_n$ has no non-oriented lineal action (\cref{cor:nonoriented:f}), it is contained in the kernel of $\varepsilon$. Since $\varepsilon$ is non-trivial by assumption, $\varepsilon(\alpha) = -1$ necessarily. Let $p \colon F_n \to \R$ be the Busemann pseudocharacter for the action of $F_n$ on $X$, which is a character by \cref{cit:F_agreeable}. Use \cref{cit:quasicocycle} to extend it to an $\varepsilon$-cocycle $\varphi \colon F_n^{\pm} \to \R$. Using the cocycle relation we see that for any $g \in F_n$
\begin{align*}
    p(g^\alpha) &= \varphi(\alpha g \alpha) = \varphi(\alpha) + \varepsilon(\alpha) (\varphi(g) + \varepsilon(g) \varphi(\alpha)) \\
    &= \varphi(\alpha) - (\varphi(g) + \varphi(\alpha)) = -\varphi(g) = -p(g).
\end{align*}
Therefore $p \in H^1(F_n; \R)^\alpha_-$, and this space is described in \cref{lem:eigenspaces}. Conversely, given $p \in H^1(F_n; \R)^\alpha_-$, we can construct a $\varepsilon$-cocycle $\varphi$ given by $\varphi(g \alpha^i) = (-1)^i p(g)$ for $g \in F_n$ and $i \in \{0, 1\}$, and this defines a non-oriented lineal action \cite[Lemma 2.14]{NL}. When $n > 3$, so the dimension of $H^1(F_n; \R)^\alpha_-$ is greater than $1$, this shows that there exist uncountably many non-oriented lineal actions of $F_n^{\pm}$, whose restrictions to $F_n$ are pairwise non-equivalent (because their Busemann pseudocharacters are not scalar multiples of one another); and since $F_n$ has index $2$ this implies that the actions were non-equivalent to start with. When $n = 2,3$, so the dimension of $H^1(F_n; \R)^\alpha_-$ equals $1$, there is a unique choice for $\varepsilon$ and for $p$ (up to sign) so \cref{prop:nonoriented:classification} shows that there exists a unique non-oriented lineal action.
\end{proof}

\begin{remark}
To the best of our knowledge, $F_n^\pm$ for $n\ge 4$ is the first known example of a group whose sets of oriented and non-oriented lineal actions are both uncountable. We should remark that an analogous proof shows that the same holds for $\Z^4 \rtimes (\Z/2\Z)$, with the action for example swapping the first and fourth, and second and third, basis vectors.
\end{remark}

We conclude with the following general statement:

\begin{theorem}[\cref{intro:thm:reverse}]\label{thm:reverse}

The poset $\Hyp(F_n^{\pm})$ has the following structure.
\begin{enumerate}
   \item $\Hyp_{gt}(F_n^{\pm}) = \emptyset$.
   \item $\Hyp_{qp}(F_n^{\pm}) = \emptyset$.
   \item $\Hyp_{\ell}(F_n^{\pm})$ is an antichain. The subposet $\Hyp_{\ell}^+(F_n^{\pm})$ has size $1$ for $n = 2$, and is uncountable for $n > 2$. Its complement has size $1$ for $n = 2,3$, and is uncountable for $n > 3$.
   \item $\Hyp_e(F_n^{\pm})$ consists of the smallest element.
\end{enumerate}
\end{theorem}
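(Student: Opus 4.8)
The plan is to assemble the results already established in this section, together with one elementary observation about free subgroups; the theorem is essentially a repackaging of \cref{prop:reversing:qp} and \cref{prop:reversing:lineal} along the disjoint-union decomposition $\Hyp(G) = \Hyp_e(G) \sqcup \Hyp_\ell(G) \sqcup \Hyp_{qp}(G) \sqcup \Hyp_{gt}(G)$ from \cref{ssec:poset}. Item (iv) is automatic: for any group $G$ one has $\Hyp_e(G) = \{[G]\}$, which is the smallest element of $\Hyp(G)$, dominated in particular by every lineal structure.

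For item (i), I would first observe that $F_n^{\pm}$ contains no non-abelian free subgroup. Indeed, suppose $F \leq F_n^{\pm}$ were free of rank at least $2$. Since $F_n$ has index $2$ in $F_n^{\pm}$, the intersection $F \cap F_n$ has index at most $2$ in $F$; by the Nielsen--Schreier index formula a finite-index subgroup of a free group of rank at least $2$ again has rank at least $2$, so $F \cap F_n$ would be a non-abelian free subgroup of $F_n$, contradicting \cref{cit:nofree}. Since any general type action of a group on a hyperbolic space yields a non-abelian free subgroup via the Ping--Pong Lemma, it follows that $F_n^{\pm}$ admits no general type action, i.e., $\Hyp_{gt}(F_n^{\pm}) = \emptyset$.

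Item (ii) is then exactly \cref{prop:reversing:qp}, and item (iii) is exactly \cref{prop:reversing:lineal}, which identifies $\Hyp_\ell(F_n^{\pm})$ as an antichain, computes the size of $\Hyp_\ell^+(F_n^{\pm})$ from the dimension of $H^1(F_n^{\pm}; \R) \cong H^1(F_n; \R)^\alpha_+$, and computes the size of its complement from the dimension of $H^1(F_n; \R)^\alpha_-$, via \cref{lem:eigenspaces}.

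I do not anticipate any serious obstacle here: this theorem is a corollary of the section's main propositions. The only point deserving a moment's care is the passage of the ``no non-abelian free subgroups'' property from $F_n$ to its index-$2$ overgroup $F_n^{\pm}$, which the rank computation above settles; everything else is bookkeeping, namely matching the four cases of the statement with the four pieces of the disjoint-union decomposition of $\Hyp(F_n^{\pm})$.
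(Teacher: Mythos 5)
Your proposal is correct and follows exactly the paper's argument: item (iv) is automatic, item (i) reduces to the absence of non-abelian free subgroups via \cref{cit:nofree}, and items (ii) and (iii) are Propositions \ref{prop:reversing:qp} and \ref{prop:reversing:lineal}. The only difference is that you spell out, via Nielsen--Schreier, why the free-subgroup property passes from $F_n$ to its index-$2$ overgroup $F_n^{\pm}$, a step the paper states without detail.
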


\begin{proof}
The last item is always true. As in \cref{intro:thm:global}, (i) follows from the fact that $F_n$, and thus $F_n^{\pm}$, has no free subgroups (\cref{cit:nofree}). Statements (ii) and (iii) follow from Propositions \ref{prop:reversing:qp} and \ref{prop:reversing:lineal} respectively.
\end{proof}

\section{Lamplighters with infinite lamps}\label{sec:lamplighters}

We now make a small detour from the world of Thompson groups into the world of lamplighters, which will feature prominently in a more detailed description of $\Hyp(F_n)$. By \emph{lamplighters} we mean groups of the form
\[
\Gamma \wr \Z \coloneqq \left( \bigoplus_{\Z} \Gamma \right) \rtimes \Z \text{,}
\]
where $\Z$ conjugates the direct sum by shifting the coordinates. Confining subsets of certain lamplighters, with $\Gamma$ a subgroup of $F_n$, will play an important role in our description of the poset $\X_0$ (and thus also $\X_1$). However, the structure of confining subsets of lamplighters has not been previously studied when the lamp groups $\Gamma$ are infinite. In this section, we introduce this setting and prove some general properties.

For a lamplighter $\Gamma \wr \Z$, write $L(\Gamma) \coloneqq \bigoplus_{\Z} \Gamma$, and consider the corresponding splitting $\Gamma \wr \Z = L(\Gamma) \rtimes \Z$. Write $z$ for the generator of $\Z$, so that conjugation by $z$ on $L(\Gamma)$ is just the familiar \emph{right shift}
\[\sigma \colon L(\Gamma) \to L(\Gamma) : \sigma(g)_i = g_{i-1}.\]
The character we will use is just the identity $\id\colon \Z\to\Z$, although in order to be more notationally suggestive we will write $\sigma$ instead of $\id$, as the next definition explains:

\begin{definition}[$\sigma$-confining]
Call an $\id$-confining subset \emph{$\sigma$-confining}, with the adverb ``strictly'' included as appropriate. Write $\Conf_\sigma(\Gamma \wr \Z)$ for the poset of equivalence classes of $\sigma$-confining subsets, and $\Conf^\ast_\sigma(\Gamma \wr \Z)$ for the subposet of strictly $\sigma$-confining subsets. Further related notation, for example $Z_\sigma$ and $\Hyp_{qp}^\sigma(\Gamma\wr\Z)$, will also be used similarly.
\end{definition}

We will also make use of the following notation.

\begin{notation}\label{not:projection}
We denote by $L_{>0}(\Gamma)$ the subgroup of $L(\Gamma)$ naturally isomorphic to $\bigoplus_{\Z_{>0}} \Gamma$, and we let $\pi_{>0} \colon L(\Gamma) \to L_{>0}(\Gamma)$ denote the retraction. We analogously denote $L_{\geq 0}(\Gamma)$, $\pi_{\geq 0}$, and so on.
\end{notation}

The notion of confining subset is particularly easy to understand in this case: 

\begin{remark}\label{rem:shift}
A subset $Q \subseteq L(\Gamma)$ represents an element of $\Conf_\sigma(\Gamma \wr \Z)$ if and only if it satisfies the following:
\begin{enumerate}
\item $\sigma(Q) \subseteq Q$;
\item For all $g \in L(\Gamma)$ there exists $k \geq 0$ such that $\sigma^k(g) \in Q$;
\item There exists $k \geq 0$ such that $\sigma^k(Q \cdot Q) \subseteq Q$.
\end{enumerate}
\end{remark}

We will mostly use the characterization from \cref{rem:shift} for the rest of the section. As a remark, we will often equivocate between a confining subset and its equivalence class, so for example we may refer to a confining subset as an element of $\Conf_\sigma(\Gamma \wr \Z)$, and no confusion should arise.

\subsection{Right-heavy confining subsets}

We will be interested in a special family of confining subsets:

\begin{definition}[Right-heavy]
\label{def:rh}

A $\sigma$-confining subset $Q \subseteq L(\Gamma)$ is \emph{right-heavy} if it contains all of $L_{\geq 0}(\Gamma)$. We denote by $\Conf_\sigma^{RH}(\Gamma \wr \Z)$ the subposet of all (equivalence classes of) right-heavy confining subsets; note that this includes $[L(\Gamma)]$.
\end{definition}

Recall that a non-elliptic hyperbolic structure on $\Gamma \wr \Z$ is called \emph{substandard} if it is dominated by the standard action of $\Gamma \wr \Z$ on the Bass--Serre tree associated with the ascending HNN-extension with base group $L_{\geq 0}(\Gamma)$ and stable letter $z$. This definition includes the standard lineal structure factoring through the quotient $\Gamma \wr \Z \to \Z$; all other substandard structures are quasi-parabolic. We denote by $\Hyp(\Gamma \wr \Z)^{subst}$ the subposet of substandard actions. The following is a consequence of the definitions:

\begin{proposition}[Substandard actions]
\label{prop:substandard}
The isomorphism from \cref{cit:confining_to_action}: 
\[\Conf_\sigma(\Gamma \wr \Z) \to \Hyp_{qp}^\sigma(\Gamma \wr \Z) \cup [\pm \sigma] \quad \colon \quad [Q] \to [Q \cup Z_\sigma]\]
induces an isomorphism $\Conf_\sigma^{RH}(\Gamma \wr \Z) \to \Hyp(\Gamma \wr \Z)^{subst}$. Under this isomorphism, the smallest element $[L(\Gamma)]$ is mapped to the standard lineal structure of $\Gamma \wr \Z$; and the largest element $[L_{\geq 0}(\Gamma)]$ is mapped to the standard quasi-parabolic structure of $\Gamma \wr \Z$. \qed
\end{proposition}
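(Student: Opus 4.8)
The plan is to unpack the statement \cref{prop:substandard} into the two claims it bundles together: first, that the isomorphism $\Conf_\sigma(\Gamma\wr\Z)\to\Hyp_{qp}^\sigma(\Gamma\wr\Z)\cup[\pm\sigma]$ from \cref{cit:confining_to_action} (as packaged for agreeable, sharply $\Z^n$-split groups in \cref{prop:qp} together with \cref{prop:linvsqp}) restricts to a poset isomorphism between the subposet $\Conf_\sigma^{RH}(\Gamma\wr\Z)$ of right-heavy confining subsets and the subposet $\Hyp(\Gamma\wr\Z)^{subst}$ of substandard structures; and second, that this restricted isomorphism sends the smallest element $[L(\Gamma)]$ to the standard lineal structure and the largest element $[L_{\geq 0}(\Gamma)]$ to the standard quasi-parabolic structure. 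I would begin by noting that $\Gamma\wr\Z=L(\Gamma)\rtimes\Z$ is agreeable when $\Gamma$ is (amenable lamps give amenable, hence agreeable, groups by \cref{amenable:agreeable}, but in fact we only need the identity character $\sigma$, so no agreeability hypothesis is strictly required here beyond what makes \cref{cit:confining_to_action} applicable), and this is a sharp $\Z$-splitting, so the machinery of \cref{sec:confining} applies with $n=1$ and $\rho=\id=\sigma$. Since $\ker(\sigma)=L(\Gamma)$ and $Z_\sigma=\{z^{-1},1,z\}$ up to equivalence (choosing $C_\sigma=1$), the lineal structure $[\pm\sigma]$ is represented by $[L(\Gamma)\cup Z_\sigma]$, which is exactly the standard lineal structure factoring through $\Gamma\wr\Z\to\Z$.

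Next I would identify the largest element of $\Conf_\sigma^{RH}(\Gamma\wr\Z)$. Among confining subsets, the partial order $\preccurlyeq_\sigma$ is characterized by \cref{cor:conjugate}: $Q_1\preceq_\sigma Q_2$ iff $\sigma^k(Q_2)\subseteq Q_1$ for some $k\ge 0$. Now $L_{\geq 0}(\Gamma)$ is visibly $\sigma$-confining — it is a subgroup (so symmetric and \ProdConf\ is automatic), it is shift-invariant since $\sigma(L_{\geq 0}(\Gamma))=L_{\geq 1}(\Gamma)\subseteq L_{\geq 0}(\Gamma)$ (giving \StayInConf), and any $g\in L(\Gamma)$ has support bounded on the left, so $\sigma^k(g)\in L_{\geq 0}(\Gamma)$ for $k$ large (giving \GetInConf). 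It is right-heavy by definition. For any right-heavy $Q$ we have $L_{\geq 0}(\Gamma)\subseteq Q$, hence $Q\preceq_\sigma L_{\geq 0}(\Gamma)$ directly (no conjugation needed), so $[L_{\geq 0}(\Gamma)]$ is the largest element of $\Conf_\sigma^{RH}$. By \cref{prop:subgroups_to_trees} (applied with the discrete character $\sigma$ and the confining subgroup $L_{\geq 0}(\Gamma)$), the hyperbolic structure $[L_{\geq 0}(\Gamma)\cup Z_\sigma]$ is represented by the action on the Bass--Serre tree of the ascending HNN-extension with base $L_{\geq 0}(\Gamma)\ker(\sigma\!\restriction_\Z)=L_{\geq 0}(\Gamma)$ and stable letter $z$ — which is precisely the standard quasi-parabolic structure. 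For the smallest element: among right-heavy confining subsets, $L(\Gamma)$ is the \emph{largest} as a set, hence smallest in $\preccurlyeq_\sigma$, and $L(\Gamma)\in\Conf_\sigma^{RH}$ trivially; it maps to $[L(\Gamma)\cup Z_\sigma]=[\pm\sigma]$, the standard lineal structure.

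It remains to match $\Conf_\sigma^{RH}$ with $\Hyp(\Gamma\wr\Z)^{subst}$ as sets (the order-isomorphism then being inherited from the already-established isomorphism on all of $\Conf_\sigma$). By definition, a non-elliptic hyperbolic structure is substandard iff it is dominated by the standard quasi-parabolic structure $[L_{\geq 0}(\Gamma)\cup Z_\sigma]$. On the confining side, under the isomorphism $[Q]\mapsto[Q\cup Z_\sigma]$, domination $[Q\cup Z_\sigma]\preccurlyeq[L_{\geq 0}(\Gamma)\cup Z_\sigma]$ corresponds to $[Q]\preccurlyeq_\sigma[L_{\geq 0}(\Gamma)]$, i.e.\ (by \cref{cor:conjugate}) to $\sigma^k(L_{\geq 0}(\Gamma))\subseteq Q$ for some $k\ge 0$; but $\sigma^k(L_{\geq 0}(\Gamma))=L_{\geq k}(\Gamma)$, and being $\sim_\sigma$-equivalent to a right-heavy set via the shift is the same as being right-heavy up to equivalence — so after replacing $Q$ by $Q\cup L_{\geq 0}(\Gamma)$ (which one checks is still confining, equivalent to $Q$, and right-heavy), we see $[Q]$ lies in $\Conf_\sigma^{RH}$. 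Conversely, every $[Q]\in\Conf_\sigma^{RH}$ satisfies $[Q]\preccurlyeq_\sigma[L_{\geq 0}(\Gamma)]$ as shown above, so $[Q\cup Z_\sigma]$ is substandard. This also covers the lineal element: $[L(\Gamma)\cup Z_\sigma]=[\pm\sigma]$ is included in $\Hyp(\Gamma\wr\Z)^{subst}$ by fiat and corresponds to $[L(\Gamma)]\in\Conf_\sigma^{RH}$. The one genuinely fiddly point — the place I expect to spend the most care — is verifying that for an arbitrary (possibly non-subgroup) confining $Q$ with $\sigma^k(L_{\geq 0}(\Gamma))\subseteq Q$, the enlarged set $Q\cup L_{\geq 0}(\Gamma)$ is again $\sigma$-confining and $\sim_\sigma Q$: \StayInConf\ and \GetInConf\ are straightforward from shift-invariance of $L_{\geq 0}(\Gamma)$, but \ProdConf\ for the union requires combining the $\sigma$-contraction constant for $Q\cdot Q$ with the fact that $\sigma^k(L_{\geq 0}(\Gamma))\subseteq Q$ to absorb the cross terms $Q\cdot L_{\geq 0}(\Gamma)$ and $L_{\geq 0}(\Gamma)\cdot Q$ after enough shifting — a short but not entirely automatic computation. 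Everything else is bookkeeping on top of \cref{cor:conjugate}, \cref{prop:qp}, \cref{prop:linvsqp}, and \cref{prop:subgroups_to_trees}.
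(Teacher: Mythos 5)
Your proposal is correct and is essentially the paper's (non-)proof: the paper states the proposition with a \qed\ as ``a consequence of the definitions,'' and your unpacking — verifying $[L_{\geq 0}(\Gamma)]$ is the largest right-heavy class, matching it with the Bass--Serre tree action via \cref{prop:subgroups_to_trees}, and using \cref{cor:conjugate} to translate ``dominated by the standard structure'' into ``contains $L_{\geq k}(\Gamma)$ for some $k$, hence equivalent to the right-heavy set $Q\cup L_{\geq 0}(\Gamma)$'' — is exactly the intended argument, including the \ProdConf\ check for the union. The only slip, which is harmless because the proposition already presupposes the isomorphism onto $\Hyp_{qp}^\sigma(\Gamma\wr\Z)\cup[\pm\sigma]$, is the claim that $L(\Gamma)\rtimes\Z$ is a \emph{sharp} splitting: it is not whenever $\Ab(\Gamma)$ has positive free rank, since then $L(\Gamma)$ strictly contains the kernel of the torsion-free abelianization... rather, fails to be that kernel.
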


In case $\Gamma$ is finite cyclic, all $\sigma$-confining subsets are right-heavy \cite{sahana:wreath}. However, this is not the case when $\Gamma$ is infinite, as we will now see. This construction also provides, to our knowledge, the first explicit example of a confining subset that is not equivalent to any confining subgroup (and we will see more examples later). We should mention that it turns out the confining subset of the Baumslag--Solitar group $BS(1,n)$ denoted $Q^-$ in \cite{AR:BS}, which corresponds to an action of $BS(1,n)$ on the hyperbolic plane $\mathbb{H}^2$ via an embedding $BS(1,n)\to \PSL_2(\R)$, is also not equivalent to any confining subgroup; this is clear from the description of $Q^-$ in \cite[Lemma~3.1]{AR:BS} together with our result \cref{prop:conj_is_all}.

\begin{example}
\label{ex:balls}

Let $\Gamma$ be a finitely generated infinite group. Fix a finite symmetric generating set, let $B_i$ be the ball of radius $2^i$ in $\Gamma$ with respect to this generating set for $i \geq 0$, and set $B_i = \{ 1 \}$ for all $i < 0$.
Let $Q \coloneqq \bigoplus_{i \in \Z} B_i \subseteq L(\Gamma)$. We claim that $Q$ is $\sigma$-confining. It is symmetric because the $B_i$ are. It satisfies \StayInConf\ because the $B_i$ are nested, and it satisfies \ProdConf\ because moreover $B_i \cdot B_i \subseteq B_{i+1}$. As for \GetInConf, let $g \in L(\Gamma)$, say $g = (g_i)_{i \in \Z}$, and let $k \geq 0$ be large enough that $g_i = 1$ for all $i \leq -k$. Let $m \geq 0$ be such that $g_i \in B_m$ for all $i \in \Z$. Then $\sigma^{m+k}(g) \in Q$. To see that $Q$ is not equivalent to any $\sigma$-confining subgroup, note that $\langle Q \rangle = L_{\geq 0}(\Gamma)$, and no conjugate of this by a power of $z$ is contained in $Q$, so we are done by \cref{prop:equiv_to_subgroup}.
\end{example}

Of course, this construction can be modified in many different ways, which shows that $\Conf_\sigma(\Gamma \wr \Z)$ can be quite complicated for an infinite group $\Gamma$. However, for the rest of the paper, only right-heavy confining subsets will play a role. This is an aspect in which Thompson groups are more well-behaved than lamplighters.

\subsection{Variety of confining subsets}

Despite being more restrictive, right-heavy confining subsets still exhibit complex behavior. First, we note the following general construction of right-heavy confining subsets.

\begin{cit}[{\cite[Theorem 1.4]{sahana:wreath}}]
\label{cit:subgroups}

For a subgroup $H \leq \Gamma$, define
\[
Q_H \coloneqq \left(\bigoplus_{i < 0} H\right) \times L_{\geq 0}(\Gamma) \leq L(\Gamma).
\]
Then $Q_H$ is a right-heavy $\sigma$-confining subset, and the map $H \to Q_H$ defines an order-reversing embedding of the poset $\Sub(\Gamma)$ of subgroups of $\Gamma$ into $\Conf_\sigma^{RH}(\Gamma \wr \Z)$, sending $\{ 1 \}$ to the largest element $[L_{\geq 0}(\Gamma)]$ and $\Gamma$ to the smallest element $[L(\Gamma)]$. 
\end{cit}

It turns out that when $\Gamma$ is finite cyclic, all confining subsets are of this form \cite{sahana:wreath}. This already shows that the poset of right-heavy confining subsets can be very large.

\begin{corollary}
\label{cor:P(N):subgroups}

Let $\Gamma$ be a group that contains either an infinite free product $\ast_{i \in I} H_i$ or an infinite direct sum $\oplus_{i \in I} H_i$. Then $\Conf_\sigma^{RH}(\Gamma \wr \Z)$ contains a copy of $\mathcal{P}(\N)$.
\end{corollary}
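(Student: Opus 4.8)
The plan is to reduce everything to \cref{cit:subgroups}, which supplies an order-reversing embedding $\Sub(\Gamma) \to \Conf_\sigma^{RH}(\Gamma \wr \Z)$, $H \mapsto Q_H$. Since the poset $\mathcal{P}(\N)$ is isomorphic to its own opposite (via $S \mapsto \N \setminus S$), an order-reversing embedding of $\mathcal{P}(\N)$ into the target is just as good as an order-preserving one. So it suffices to exhibit an order-embedding $\mathcal{P}(\N) \hookrightarrow \Sub(\Gamma)$, and then compose.

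First I would pass to a countable subfamily. By hypothesis $\Gamma$ contains either $\ast_{i \in I} H_i$ or $\bigoplus_{i \in I} H_i$ with $I$ infinite; discarding the trivial factors (there are still infinitely many nontrivial ones, else the free product or direct sum would not be infinite), we may index a countable subfamily by $\N$ and obtain either a sub-free-product $\ast_{n \in \N} H_n \leq \Gamma$ or a sub-direct-sum $\bigoplus_{n \in \N} H_n \leq \Gamma$ with every $H_n$ nontrivial. For each $S \subseteq \N$ let $K_S \leq \Gamma$ be the subgroup generated by $\bigcup_{n \in S} H_n$; in the direct sum case $K_S = \bigoplus_{n \in S} H_n$, and in the free product case $K_S = \ast_{n \in S} H_n$, using the standard fact that the subgroup of a free product generated by a subfamily of the free factors is their free product. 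The map $S \mapsto K_S$ is visibly order-preserving, and it is order-reflecting (hence injective): if $n \in S \setminus S'$, choose $g \in H_n \setminus \{1\}$; then $g \in K_S$ but $g \notin K_{S'}$, because its $n$-th coordinate is nontrivial in the direct sum case, and by the normal form theorem for free products in the other case. Thus $S \mapsto K_S$ is an order-embedding $\mathcal{P}(\N) \hookrightarrow \Sub(\Gamma)$.

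Finally, composing with $H \mapsto Q_H$ from \cref{cit:subgroups} gives an order-reversing embedding $\mathcal{P}(\N) \to \Conf_\sigma^{RH}(\Gamma \wr \Z)$, $S \mapsto Q_{K_S}$; precomposing with $S \mapsto \N \setminus S$ converts it into an honest order-embedding, producing the desired copy of $\mathcal{P}(\N)$. There is essentially no serious obstacle here: the only points requiring a moment's care are ensuring the retained factors are nontrivial (so the map on subgroups is injective and order-reflecting) and invoking the free-product normal form to separate $K_S$ from $K_{S'}$; everything else is formal bookkeeping with the embedding already provided by \cref{cit:subgroups}.
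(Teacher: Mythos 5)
Your proof is correct and follows essentially the same route as the paper's: both reduce to \cref{cit:subgroups} and embed $\mathcal{P}(\N)$ into $\Sub(\Gamma)$ via the subgroups generated by subfamilies of the factors, using self-duality of $\mathcal{P}(\N)$ to handle the order reversal. The extra details you supply (discarding trivial factors, checking order-reflection via normal forms) are correct refinements of the same argument.
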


\begin{proof}
This follows from \cref{cit:subgroups}, together with the fact that, under these assumptions $\Sub(\Gamma)$ contains a copy of $\mathcal{P}(\N)$. Indeed, in the case of the direct sum we may extract a countable subset of $I$ and find an infinite direct sum $H = \oplus_{i \in \N} H_i$. We then define, for $X \in \mathcal{P}(\N)$, the subgroup $H_X = \langle H_i \mid i \in X \rangle \leq H$. The case of free products is analogous (and already essentially contained in \cite{sahana:wreath}).
\end{proof}

\begin{example}
\label{ex:P(N):subgroups}
\cref{cor:P(N):subgroups} shows that $\Hyp(\mathbb{F} \wr \Z)$, where $\mathbb{F}$ denotes a non-abelian free group, contains a copy of $\mathcal{P}(\N)$ consisting of quasi-parabolic structures. This was established in \cite{sahana:wreath}, and was the first such example. The statement of \cref{cor:P(N):subgroups} concerning direct sums shows how to obtain this also without free subgroups, and even among amenable groups. For example, it holds for $F_n^c \wr \Z$, and for the finitely generated $3$-step solvable group $(\Z \wr \Z) \wr \Z$.
\end{example}

Next, we show that the existence of confining subsets that are not equivalent to confining subgroups persists among the right-heavy ones. The following construction is due to Simon Machado, whom we thank for this example:

\begin{example}
\label{ex:machado}

Let $\Gamma$ be a group containing an infinite cyclic group $Z = \langle z \rangle$ (i.e., a non-torsion group). We now construct a right-heavy confining subset in $\Gamma \wr \Z$ that is not equivalent to a confining subgroup. Let $\alpha \colon Z \to \R/\Z$ be a homomorphism such that the image of $z$ is irrational. It follows that the image $\alpha(Z)$ is dense in $\R/\Z$. For $i < 0$, we define $A_i \coloneqq \{ x \in Z \mid \alpha(x) \in (-2^i, 2^i) \}$. We set
\[
Q_\alpha \coloneqq \left(\bigoplus_{i < 0} A_i\right) \times L_{\geq 0}(\Gamma).
\]
Now \StayInConf\ follows from the fact that the $A_i$ are nested, and \GetInConf\ is immediate because $Q_\alpha$ is right-heavy. \ProdConf\ follows from the fact that $A_i \cdot A_i \subseteq A_{i+1}$. To see that $Q_\alpha$ is not equivalent to any $\sigma$-confining subgroup, note that every element of $A_i$ has a power that does not belong to $A_i$. This shows that $Q$ does not contain a subgroup, and in particular it cannot contain a conjugate of $\langle Q \rangle$, so we are done by \cref{prop:equiv_to_subgroup}.
\end{example}

Finally, let us remark that all of the examples that we have seen so far split as a direct sum over $\Z$. This need not be the case, as the next result shows:

\begin{example}
\label{ex:nonsplit}

Let $\Gamma$ be a group that contains an infinite direct sum $H = \oplus_{i \in \N} H_i$ (as in \cref{cor:P(N):subgroups}, the same construction works for infinite free products). We now construct a right-heavy confining subgroup for $\Gamma \wr \Z$ that  is not equivalent to any confining subset that splits as a direct sum over $\Z$. For $n \geq 2$, define $\pi_n \colon H \to H_{\geq n}$ to be the retraction onto $\langle H_i \mid i \geq n \rangle$ that kills the $H_i$ coordinate for $i = 1, \ldots, (n-1)$. We set
\[Q \coloneqq \{ (g_i)_{i \in \Z} \in L(\Gamma) \mid g_{-1} \in H\text{; }g_{-n} = \pi_n(g_{-1}) \text{ for all } n\in\N\}.\]
It follows from the definition, and the fact that $\pi_n$ is a homomorphism, that  $Q$ is right-heavy -- which implies \GetInConf\ -- and a subgroup -- which implies \ProdConf. Finally, \StayInConf\ follows from the following computation:
\[
\sigma(g)_{-n} = g_{-n-1} = \pi_{n+1}(g_{-1}) = \pi_{n+1} \pi_2(g_{-1}) = \pi_{n+1}(g_{-2}) = \pi_n(\sigma(g)_{-1}).
\]
Now suppose that there exists a confining subset $R$ that splits as a direct sum $\oplus_{i \in \Z} R_i$ and such that $Q$ is equivalent to $R$. In particular, by \cref{cor:conjugate}, there exist $p, q \in \Z$ such that $\sigma^p(R) \subseteq Q \subseteq \sigma^q(R)$. Up to replacing $R$ by a conjugate, we may assume that $q = 0$, i.e., $\sigma^p(R) \subseteq Q \subseteq R$. Since $Q \subseteq R$, it follows that $R$ must contain $R' \coloneqq \bigoplus_{i < 0} H_{\geq |i|} \times L_{\geq 0} (\Gamma)$. But then $\sigma^p(R)$ contains the element $(g_i)_{i \in \Z}$ where $g_{-1} \in H_{p+1}$ and $g_i = 1$ otherwise, and this is impossible since this element does not belong to $Q$.
\end{example}

This example crucially exploits infinite direct sums (or infinite free products), which are abundant in $F_n$. However, such examples can also arise in much simpler situations, even when $\Gamma$ is as small as $\Z /2 \Z \times \Z / 2\Z$ \cite[Example 4.16]{sahana:wreath}.

\subsection{Split representatives}

 Despite \cref{ex:nonsplit}, right-heavy confining subsets are equivalent to ones that split into direct sums, at least at the macroscopic level.

\begin{definition}[Split]
\label{def:splitreps:lamplighters}
A right-heavy confining subset $Q$ is \emph{split} if it splits as a product
\[Q = \pi_{<0}(Q) \cdot L_{\geq 0}(\Gamma).\]
\end{definition}

Here is a useful equivalent characterization:

\begin{lemma}
\label{lem:splitreps:char:lamplighters}
Let $Q \in \Conf_{\sigma}(\Gamma \wr \Z)$. Then $Q$ is split if and only if $Q = Q \cdot L_{\geq 0}(\Gamma)$.
\end{lemma}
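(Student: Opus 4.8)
The plan is to prove both implications by manipulating the internal direct product decomposition $L(\Gamma) = L_{<0}(\Gamma) \times L_{\geq 0}(\Gamma)$, whose two factors commute elementwise; under this decomposition every $g \in L(\Gamma)$ factors uniquely as $g = \pi_{<0}(g)\,\pi_{\geq 0}(g)$, with $\pi_{<0}$, $\pi_{\geq 0}$ the retractions onto the two factors. I will also use that $L_{\geq 0}(\Gamma)$ is a subgroup, so $L_{\geq 0}(\Gamma) \cdot L_{\geq 0}(\Gamma) = L_{\geq 0}(\Gamma)$, and that $1 \in Q$, which holds for every confining subset by \ProdConf\ together with symmetry.

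For the forward direction, I would assume $Q$ is split, i.e. $Q = \pi_{<0}(Q)\cdot L_{\geq 0}(\Gamma)$, and then simply compute
\[
Q \cdot L_{\geq 0}(\Gamma) = \pi_{<0}(Q)\cdot L_{\geq 0}(\Gamma)\cdot L_{\geq 0}(\Gamma) = \pi_{<0}(Q)\cdot L_{\geq 0}(\Gamma) = Q.
\]

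For the converse, I would assume $Q = Q\cdot L_{\geq 0}(\Gamma)$; first note that since $1 \in Q$ this already forces $L_{\geq 0}(\Gamma) \subseteq Q$, so $Q$ is right-heavy and the notion of being split is applicable to it. Next I would show $\pi_{<0}(Q) \subseteq Q$: for $g \in Q$ one has $\pi_{<0}(g) = g\,\pi_{\geq 0}(g)^{-1} \in Q\cdot L_{\geq 0}(\Gamma) = Q$. Hence $\pi_{<0}(Q)\cdot L_{\geq 0}(\Gamma) \subseteq Q\cdot L_{\geq 0}(\Gamma) = Q$, and conversely any $g \in Q$ satisfies $g = \pi_{<0}(g)\,\pi_{\geq 0}(g) \in \pi_{<0}(Q)\cdot L_{\geq 0}(\Gamma)$, so $Q = \pi_{<0}(Q)\cdot L_{\geq 0}(\Gamma)$, i.e. $Q$ is split.

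The argument is entirely formal and I do not anticipate a genuine obstacle; the only point that deserves a moment's care is observing that the condition $Q = Q\cdot L_{\geq 0}(\Gamma)$ by itself entails right-heaviness, so that both sides of the equivalence are being compared on the same class of subsets and the statement is not vacuous on either side.
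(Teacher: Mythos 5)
Your proof is correct and follows essentially the same route as the paper's: both directions reduce to the unique factorization $g = \pi_{<0}(g)\,\pi_{\geq 0}(g)$, with the observation that $1 \in Q$ forces right-heaviness so that "split" is meaningful on both sides of the equivalence. The only cosmetic difference is that you isolate the inclusion $\pi_{<0}(Q) \subseteq Q$ as an intermediate step, whereas the paper rewrites a general element $g\cdot h$ of $\pi_{<0}(Q)\cdot L_{\geq 0}(\Gamma)$ directly as $q\,\pi_{\geq 0}(q)^{-1}h$; the content is identical.
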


\begin{proof}
If $Q$ is split, then the property in the statement holds. Conversely, if $Q$ satisfies this property, we must show that $Q$ is split. First, $Q$ contains $1 \cdot L_{\geq 0}(\Gamma)$, so it is right-heavy. Clearly $Q \subseteq \pi_{<0}(Q) \cdot L_{\geq 0}(\Gamma)$. For the opposite inclusion, if $g \cdot h \in \pi_{<0}(Q) \cdot L_{\geq 0}(\Gamma)$, let $q \in Q$ be such that $\pi_{<0}(q) = g$; then $g \cdot h = q \cdot \pi_{\geq 0}(q)^{-1} h \in Q \cdot L_{\geq 0}(\Gamma)$.
\end{proof}

Note how all right-heavy confining subsets we have looked at so far are split. This is not a coincidence.

\begin{lemma}
\label{lem:splitreps:lamplighters}

A right-heavy confining subset $Q \in \Conf_\sigma^{RH}(\Gamma \wr \Z)$ is equivalent to the split confining subset $Q_{split} \coloneqq Q \cdot L_{\geq 0}(\Gamma)$.
\end{lemma}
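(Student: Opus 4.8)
The plan is to verify directly that $Q_{split} = Q \cdot L_{\geq 0}(\Gamma)$ is a $\sigma$-confining subset and that it is equivalent to $Q$. First I would check that $Q_{split}$ is $\sigma$-confining by verifying the three conditions of \cref{rem:shift}. For \StayInConf, I use that $\sigma(Q) \subseteq Q$ and that $\sigma(L_{\geq 0}(\Gamma)) = L_{\geq 1}(\Gamma) \subseteq L_{\geq 0}(\Gamma)$, so $\sigma(Q_{split}) = \sigma(Q) \cdot \sigma(L_{\geq 0}(\Gamma)) \subseteq Q \cdot L_{\geq 0}(\Gamma) = Q_{split}$. For \GetInConf, since $Q_{split} \supseteq Q$ (as $1 \in L_{\geq 0}(\Gamma)$), this follows immediately from the corresponding property of $Q$. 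For \ProdConf: writing $q_1 h_1 q_2 h_2$ with $q_i \in Q$, $h_i \in L_{\geq 0}(\Gamma)$, I would first commute $h_1$ past $q_2$ using the abelian structure only on the $\geq 0$ coordinates — but more cleanly, $Q_{split} \cdot Q_{split} = Q \cdot L_{\geq 0}(\Gamma) \cdot Q \cdot L_{\geq 0}(\Gamma)$, and since $L_{\geq 0}(\Gamma)$ is normalized by nothing obvious, it is cleaner to observe $Q_{split} \cdot Q_{split} \subseteq (Q \cdot Q) \cdot L_{\geq 0}(\Gamma) \cdot L_{\geq 0}(\Gamma)$ once one knows $L_{\geq 0}(\Gamma) \cdot Q \subseteq Q \cdot L_{\geq 0}(\Gamma)$; this last containment holds because $Q$ is right-heavy, so for $h \in L_{\geq 0}(\Gamma)$ and $q \in Q$, write $hq = q' \pi_{\geq 0}(q)^{-1} h$ where $q' = \pi_{<0}(q) \cdot (\text{appropriate } \geq 0 \text{ part})$... actually the quickest route is: since $Q$ is right-heavy, $Q = Q \cdot L_{\geq 0}(\Gamma)$ is exactly the content of \cref{lem:splitreps:char:lamplighters} applied after we know $Q_{split}$ is confining, so I will instead note $hQ \subseteq Q$ would require $Q$ left-invariant under $L_{\geq 0}(\Gamma)$, which need not hold — so the correct bookkeeping is to directly manipulate the finitely-supported tuples coordinatewise. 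Then $\sigma^k(Q_{split} \cdot Q_{split}) \subseteq \sigma^k(Q \cdot Q) \cdot L_{\geq 0}(\Gamma) \subseteq Q \cdot L_{\geq 0}(\Gamma) = Q_{split}$ for $k$ large, using \ProdConf\ for $Q$ and that $\sigma^k(L_{\geq 0}(\Gamma)) \subseteq L_{\geq 0}(\Gamma)$.

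Next I would establish that $Q_{split}$ is split, which is immediate: $\pi_{<0}(Q_{split}) = \pi_{<0}(Q)$ since $L_{\geq 0}(\Gamma)$ contributes nothing to negative coordinates, hence $Q_{split} = Q \cdot L_{\geq 0}(\Gamma) \supseteq \pi_{<0}(Q) \cdot L_{\geq 0}(\Gamma)$, and conversely given $q \in Q$ we have $q = \pi_{<0}(q) \cdot \pi_{\geq 0}(q) \in \pi_{<0}(Q) \cdot L_{\geq 0}(\Gamma)$, so $Q \cdot L_{\geq 0}(\Gamma) \subseteq \pi_{<0}(Q) \cdot L_{\geq 0}(\Gamma)$; thus $Q_{split} = \pi_{<0}(Q_{split}) \cdot L_{\geq 0}(\Gamma)$ as required by \cref{def:splitreps:lamplighters}. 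Alternatively I can just invoke \cref{lem:splitreps:char:lamplighters}: $Q_{split} \cdot L_{\geq 0}(\Gamma) = Q \cdot L_{\geq 0}(\Gamma) \cdot L_{\geq 0}(\Gamma) = Q \cdot L_{\geq 0}(\Gamma) = Q_{split}$.

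Finally, for the equivalence $Q \sim_\sigma Q_{split}$: by \cref{cor:conjugate} it suffices to find $k, k' \geq 0$ with $\sigma^{k}(Q_{split}) \subseteq Q$ and $\sigma^{k'}(Q) \subseteq Q_{split}$. The second is trivial since $Q \subseteq Q_{split}$, so take $k' = 0$. For the first, write an element of $Q_{split}$ as $q \cdot h$ with $q \in Q$, $h \in L_{\geq 0}(\Gamma)$. Since $Q$ is right-heavy, $h \in L_{\geq 0}(\Gamma) \subseteq Q$, so $q \cdot h \in Q \cdot Q$; by \ProdConf\ there is $k \geq 0$ with $\sigma^k(Q \cdot Q) \subseteq Q$, hence $\sigma^k(Q_{split}) \subseteq \sigma^k(Q \cdot Q) \subseteq Q$. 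This completes the proof.

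The main obstacle I anticipate is the verification of \ProdConf\ for $Q_{split}$ and, relatedly, the "pull $L_{\geq 0}(\Gamma)$ past $Q$" type manipulations — one must be careful that $L_{\geq 0}(\Gamma)$ is \emph{not} normal in $L(\Gamma)$, so $Q \cdot L_{\geq 0}(\Gamma)$ need not equal $L_{\geq 0}(\Gamma) \cdot Q$ and $Q_{split}$ need not be a subgroup even when $Q$ is a subgroup. The clean way around this is never to commute these factors past each other, but rather to work with the coordinatewise description: an element of $Q_{split}$ is precisely a tuple whose negative-coordinate part lies in $\pi_{<0}(Q)$ and whose nonnegative part is arbitrary, and all three confining conditions are then checked by inspecting coordinates, using that $\sigma$ shifts the "window" $<0$ rightward and that \ProdConf\ for $Q$ controls products of the negative parts. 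With that framing everything reduces to the analogous statements for $Q$ itself.
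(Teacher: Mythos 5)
Your proof is correct and follows essentially the same route as the paper's: verify the three confining conditions for $Q_{split}$ directly, note that it is split, and then obtain the equivalence from \cref{cor:conjugate} together with right-heaviness ($L_{\geq 0}(\Gamma) \subseteq Q$, so $Q_{split} \subseteq Q \cdot Q$ and \ProdConf\ for $Q$ finishes). One correction to the commentary surrounding your \ProdConf\ step: $L_{\geq 0}(\Gamma)$ \emph{is} normal in $L(\Gamma)$ --- it is a direct factor of $\bigoplus_{\Z}\Gamma$, and conjugation acts coordinatewise, hence preserves the set of coordinates where an element is nontrivial. So $h_1 q_2 = q_2 h_1^{q_2}$ with $h_1^{q_2} \in L_{\geq 0}(\Gamma)$, which is exactly how the paper moves $L_{\geq 0}(\Gamma)$ past $Q$; your ``coordinatewise bookkeeping'' fallback amounts to a proof of this normality, so the argument you land on is sound, but the assertion that $Q \cdot L_{\geq 0}(\Gamma)$ need not equal $L_{\geq 0}(\Gamma) \cdot Q$ is false.
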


We will refer to the split confining subset in \cref{lem:splitreps:lamplighters} as the \emph{split representative} of $Q$.

\begin{proof}
We first verify that $Q_{split}$ is a confining subset. It is symmetric by construction. For \StayInConf, we have:
\[\sigma(Q_{split}) = \sigma(Q) \cdot \sigma(L_{\geq 0}(\Gamma)) \subseteq Q \cdot L_{\geq 0}(\Gamma).\]
\GetInConf\ is clear since $Q_{split}$ contains $Q$. As for \ProdConf, let $k \geq 0$ be such that $\sigma^k(Q \cdot Q) \subseteq Q$. Then for $q_1 h_1, q_2 h_2 \in Q_{split}$ we observe that setting $h_1' \coloneqq h_1^{q_2} \in L_{\geq 0}(\Gamma)$ we have:
\[\sigma^k(q_1 h_1 q_2 h_2) = \sigma^k(q_1 q_2 h_1' h_2) \in \sigma^k(Q \cdot Q) \cdot \sigma^k(L_{\geq 0}(\Gamma) \cdot L_{\geq 0}(\Gamma)) \subseteq Q\cdot L_{\geq 0}(\Gamma);\]
and so $\sigma^k(Q_{split} \cdot Q_{split}) \subseteq Q_{split}$.

Finally, we check that $Q$ and $Q_{split}$ are equivalent. Clearly $Q \subseteq Q_{split}$, so by \cref{cor:conjugate} it suffices to show that there exists $k \in \Z$ such that $\sigma^k(Q_{split}) \subseteq Q$. We choose $k$ such that $\sigma^k(Q \cdot Q) \subseteq Q$, and verify:
\[\sigma^k(Q_{split}) = \sigma^k(Q \cdot L_{\geq 0}(\Gamma)) \subseteq \sigma^k(Q \cdot Q) \subseteq Q;\]
where we used that $L_{\geq 0}(\Gamma)\subseteq Q$, i.e., that $Q$ is right-heavy.
\end{proof}

\section{The largest hyperbolic structure}\label{sec:largest}

From this section onwards, we study the local structure of $\Hyp(F_n)$. \cref{intro:thm:global} reduces this to understanding the poset $\X_0$ of quasi-parabolic structures whose Busemann pseudocharacter is equivalent to $\chi_0$. Recall that $\chi_0$ restricts to $\Z^n = \langle a_0, \ldots, a_{n-1} \rangle$ as the map $\Z^n \to \Z : \prod a_i^{k_i} \mapsto k_0$, in particular $\chi_0(a_0) = 1$. The element $a_0$ has support equal to the interval $(0, r_0)$, and its slope at $0$ is exactly $n$.
Recall moreover that $\X_0$ is identified with the poset $\Conf^\ast_{\chi_0}(F_n)$ of strictly $\chi_0$-confining subsets of $F_n'$.

\begin{notation}
From now on, we denote $\X \coloneqq \X_0$, $\chi \coloneqq \chi_0$, $a \coloneqq a_0$, $r \coloneqq r_0$.
We still use the notation $a_1, \ldots, a_{n-1}$ for the other generators of $\Z^n$, which are supported on $(\ell_i, r_i)$, with in particular $r < \ell_1$. 
\end{notation}

The main goal of this section is to identify the largest element of $\X$, from part (i) of \cref{intro:thm:local}.

\begin{proposition}
\label{prop:contains}
Let $Q$ be a $\chi$-confining subset and let $t \in (0,r)$. Then there exists $k \geq 0$ such that $F_n'[t.a^k,1) \subseteq Q$.
\end{proposition}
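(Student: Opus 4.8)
The plan is to bootstrap from the rigid stabilizer $F_n'[r,1)$ --- which, as we will see, is automatically contained in any $\chi$-confining subset --- up to the larger rigid stabilizers $F_n'[t.a^k,1)$. For the base case, note that $\chi(a)=1\ne 0$, so \cref{prop:empty_conf} gives $C_{F_n'}(a)\subseteq Q$; since $a$ has support $(0,r)$, every element of $F_n'[r,1)$ has support disjoint from that of $a$ and therefore commutes with $a$, whence $F_n'[r,1)\subseteq C_{F_n'}(a)\subseteq Q$.

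The next step is to produce a single element of $Q$ that drags a point of $(0,r)$ past $r$. Recall that $a$ is a one-bump function with support $(0,r)$ and $a'(0)=n>1$, so $x.a>x$ for all $x\in(0,r)$; in particular $a$ maps $(0,r)$ into itself and $t.a^k$ increases monotonically to $r$. Using the high transitivity of $F_n'$ on a dense subset of $(0,1)\cap\Z[1/n]$ (\cref{cit:proximal}), choose $g\in F_n'$ and $x_0\in(0,r)$ with $x_0.g>r$; then \cref{lem:conf:cyclic} lets us pick $j\ge 0$ with $q\coloneqq g^{a^j}\in Q$, and since $a^j$ fixes $[r,1]$, the point $x^\ast\coloneqq x_0.a^j\in(0,r)$ satisfies $x^\ast.q=x_0.g>r$. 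Now fix $k_1\ge 0$ with $t'\coloneqq t.a^{k_1}\in(x^\ast,r)$, which is possible since $t.a^{k_1}\to r>x^\ast$. For any $f\in F_n'[t',1)$, every point of $\overline{\Supp(f)}\subseteq[t',1)$ is $\ge t'>x^\ast$, so its image under the orientation-preserving $q$ exceeds $x^\ast.q>r$; together with the fact that $f$ and $q$ fix neighborhoods of $1$, this shows that $\overline{\Supp(f^q)}=\overline{\Supp(f)}.q$ is a compact subset of $(r,1)$. Hence $f^q\in F_n'[r,1)\subseteq Q$, and therefore $f=q(f^q)q^{-1}\in Q^3$. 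Thus $F_n'[t',1)\subseteq Q^3$.

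It remains to absorb the cube. By the second part of \cref{lem:conf:cyclic} there is $k_2\ge 0$ with $(Q\cdot Q)^{a^{k_2}}\subseteq Q$; since $\chi(a^{k_2})=k_2\ge 0$, \cref{lem:products} (applied with this $a^{k_2}$ in the role of $z_0$) gives $(Q^3)^{a^{2k_2}}\subseteq Q$. Conjugating the inclusion $F_n'[t',1)\subseteq Q^3$ by $a^{2k_2}$ and using $F_n'[t',1)^{a^{2k_2}}=F_n'[t'.a^{2k_2},1)$, we conclude that $F_n'[t.a^k,1)\subseteq Q$ with $k\coloneqq k_1+2k_2$.

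The one step I expect to require genuine care is the middle one: it is crucial that a \emph{single} element $q\in Q$ pushes the \emph{entire} support of \emph{every} $f\in F_n'[t',1)$ past $r$ at once, and this works only because we choose $t'$ to lie beyond the particular point $x^\ast$ that $q$ is known to send past $r$, so that orientation-preservation of $q$ handles all $f$ uniformly. Everything else --- the base case, the extraction of $q$, and the final passage from $Q^3$ back to $Q$ --- is a routine application of the general lemmas on confining subsets.
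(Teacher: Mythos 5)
Your proof is correct and follows essentially the same route as the paper's: establish $F_n'[r,1)\subseteq Q$ via the centralizer of $a$, produce a single $q\in Q$ pushing a point of $(0,r)$ past $r$ by conjugating a suitable element of $F_n'$ by a power of $a$, conjugate rigid stabilizers into $F_n'[r,1)$ to land in $Q^3$, and absorb the cube using \cref{lem:products}. The only (immaterial) difference is bookkeeping: you shrink to $F_n'[t.a^{k_1},1)$ before absorbing the cube, whereas the paper absorbs first and then compares $x.a^j$ with $t.a^k$ at the end.
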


\begin{proof}
By \cref{prop:empty_conf}, the centralizer of $a$ in $F_n'$ is contained in $Q$. In particular $F_n'[r, 1) \subseteq Q$. We claim that there exists $q \in Q$ such that for some $x \in (0,r)$ we have $x.q > r$. First let $q_* \in F_n'$ be any element satisfying $x_*.q_*>r$, for some $x_* \in (0,r)$, that is moreover supported on $(0,\ell_1)$. By \cref{lem:conf:cyclic}, there exists $i \geq 0$ such that $q \coloneqq q_*^{a^i} \in Q$. Let $x \coloneqq x_*.a^i$. Then $x \in (0,r)$ and
\[ x.q = (x_*.a^i).q_*^{a^i} = (x_*.q_*).a^i = x_*.q_* > r \]
since $a^i$ fixes $[r,1]$ pointwise.

By now we know that $Q$ contains $F_n'[r,1)$ and contains $q$. Since $x.q>r$ implies $F_n'[x,1)^q \subseteq F_n'[r,1)$, we have
\[ F_n'[x,1) \subseteq q F_n'[r,1)q^{-1} \subseteq Q^3\text{.} \]
By \cref{lem:products}, there exists $j \geq 0$ such that $(Q^3)^{a^j} \subseteq Q$. Therefore
\[
F_n'[x.a^j,1) = F_n'[x, 1)^{a^j} \subseteq (Q^3)^{a^j} \subseteq Q.
\]
Since $x.a^j < r$, we can choose $k \ge 0$ such that $x.a^j<t.a^k$, so $F_n'[t.a^k,1)\subseteq Q$ as desired.
\end{proof}

Of course each $F_n'[t,1)$ is easily seen to itself be a $\chi$-confining subgroup. For any $t,t'\in (0,r)$, we can choose $k,\ell\in\Z$ such that $t<t'.a^k$ and $t'<t.a^\ell$, so by \cref{cor:conjugate} the $\chi$-confining subgroups $F_n'[t,1)$ and $F_n'[t',1)$ are equivalent. Let us denote this equivalence class by $[F_n'[*,1)]$. Now \cref{prop:contains} implies:

\begin{corollary}
\label{cor:largest}

The element $[F_n'[*,1)]$ of the poset $\Conf^\ast_{\chi_0}(F_n)$ is the largest element.\qed
\end{corollary}

This largest element is the most familiar non-elementary hyperbolic action of $F_n$. This leads to the following more precise version of \cref{intro:thm:largest}:

\begin{theorem}[\cref{intro:thm:largest}]\label{thm:largest:geom}
The poset $\Hyp(F_n)$ has exactly two distinct non-elementary maximal hyperbolic structures. The first one is represented by the action on the Bass--Serre tree of the ascending HNN-extension expression of $F_n$ with base $F_n[t, 1]$ and stable letter $a$, for some $t \in (0, r)$. The second one is represented by the same action precomposed by conjugation by the homeomorphism $x \mapsto (1 - x)$.
\end{theorem}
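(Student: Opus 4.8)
The plan is to combine the results already established in this section and in \cref{sec:global} with the correspondence between confining subgroups and actions on Bass--Serre trees from \cref{sec:confining}. First I would recall that by \cref{thm:global}, $\Hyp_{gt}(F_n)$ is empty and $\Hyp_{qp}(F_n) = \X_0 \sqcup \X_1$, so the non-elementary part of $\Hyp(F_n)$ is exactly $\X_0 \sqcup \X_1$. Since these two subposets are incomparable, a non-elementary maximal hyperbolic structure is precisely a maximal element of $\X_0$ or of $\X_1$. By \cref{cor:largest}, $\X_0 \cong \Conf^\ast_{\chi_0}(F_n)$ has a \emph{largest} element $[F_n'[*,1)]$, hence a unique maximal element; the automorphism of $F_n$ given by conjugation by $x \mapsto (1-x)$ (used in the proof of \cref{thm:global} to identify $\X_0 \cong \X_1$) carries this to the unique maximal element of $\X_1$. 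So $\Hyp(F_n)$ has exactly two non-elementary maximal hyperbolic structures, and it remains only to identify the first one geometrically.

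For the geometric identification, fix $t \in (0,r)$ and consider the $\chi_0$-confining subgroup $Q \coloneqq F_n'[t,1)$, which represents $[F_n'[*,1)]$ by the discussion preceding \cref{cor:largest}. The character $\chi_0$ restricted to our fixed $\Z^n = \langle a_0,\dots,a_{n-1}\rangle$ is discrete: it sends $\prod a_i^{k_i}$ to $k_0$, so its image is $\Z$ and $a = a_0$ realizes the minimal positive value $\chi_0(a) = 1$. Thus \cref{prop:subgroups_to_trees} applies: the hyperbolic structure $[Q \cup Z_{\chi_0}]$ is represented by the action on the Bass--Serre tree of the ascending HNN-extension of $F_n$ with base group $Q \ker(\chi_0)$ and stable letter $a$. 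Here $\ker(\chi_0)$ is generated by $a_1, \dots, a_{n-1}$ together with $F_n^c$, all supported on $[\ell_1, 1] \subseteq [t,1]$ or inside $(0,1)$; one checks directly that $F_n'[t,1)\cdot\ker(\chi_0) = F_n[t,1]$ (every element of $F_n$ with support in $[t,1]$ is the product of an element of $F_n'[t,1)$ and a suitable word in the $a_i$'s correcting the abelianization, all of which are supported in $[t,1]$ since $t < r < \ell_1$). This identifies the base group as $F_n[t,1]$, matching the classical ascending HNN-extension description of $F_n$ recalled in the introduction after \cref{intro:thm:global}. The second structure is then obtained by applying the automorphism $x \mapsto (1-x)$, which conjugates this action to the analogous one with interval $[0,1-t]$ and slope-$n$ element at $1$, and which belongs to $\X_1$.

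The main obstacle, such as it is, is the bookkeeping needed to verify that the base group $Q\ker(\chi_0)$ produced abstractly by \cref{prop:subgroups_to_trees} really coincides with the dynamically natural subgroup $F_n[t,1]$, and that $a=a_0$ is a valid stable letter with the correct properties (slope $n$ at $0$, and $t.a^{-k} \to 0$). Both points are genuinely routine given \cref{prop:modelsplitting}: $a_0$ is a one-bump function with support $(0,r_0)$ and slope $n$ at $0$, so $t.a_0^{-k}$ decreases monotonically to $0$ for $t \in (0,r)$; and the containment $F_n[t,1] \supseteq F_n'[t,1)\ker(\chi_0)$ is clear while the reverse containment follows because $\chi_1$ already vanishes on $F_n[t,1]$ (support in $[t,1]$ forces slope $1$ at both ends once we are in $F_n^c$, and otherwise $\chi_0$ measures the only remaining slope obstruction), so the quotient $F_n[t,1]/(F_n'[t,1)\ker(\chi_0))$ is trivial. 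Assembling these observations gives the stated theorem.
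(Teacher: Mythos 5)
Your proposal is correct and follows essentially the same route as the paper: reduce via \cref{thm:global} to the largest elements of $\X_0$ and $\X_1$, identify the largest element of $\X_0$ as $[F_n'[*,1)]$ by \cref{cor:largest}, and apply \cref{prop:subgroups_to_trees} (noting $\chi_0|_{\Z^n}$ is discrete and $F_n'[t,1)$ is a subgroup) to realize it as the stated Bass--Serre tree action, transporting to $\X_1$ by $x\mapsto 1-x$. One small imprecision: in \cref{prop:subgroups_to_trees} the base group is $Q\ker(\rho)$ with $\rho\colon\Z^n\to\R$, so the relevant kernel is $\langle a_1,\dots,a_{n-1}\rangle\le\Z^n$, not $\ker(\chi_0)\le F_n$ as you describe it (the latter contains $F_n^c$, and $F_n'[t,1)\cdot\ker(\chi_0\colon F_n\to\Z)$ would strictly contain $F_n[t,1]$); your parenthetical verification that $F_n'[t,1)\cdot\langle a_1,\dots,a_{n-1}\rangle=F_n[t,1]$ is the correct computation, and is in fact more detail than the paper itself supplies.
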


\begin{proof}
By \cref{intro:thm:global}, the non-elementary maximal hyperbolic structures correspond to the largest elements of the posets $\X_0$ and $\X_1$. By \cref{cor:largest}, the largest element of $\X_0$ is $[F_n'[*,1)]$. Since $\chi_0$ is discrete and $F_n'[t,1)$ is a subgroup, by \cref{prop:subgroups_to_trees} we know that this largest element of $\X_0$ is represented by an action on a tree, and the second part of the statement of \cref{prop:subgroups_to_trees} implies that it is the claimed action. The analogous statement holds for the largest element of $\X_1$.
\end{proof}

\medskip

\cref{prop:contains} shows that every confining subset $Q$ contains $F_n'(t, 1)$ for some $t \in (0, r)$. The minimal $t$ for which this holds has a special meaning, in case $Q$ is a confining sub\emph{group}:

\begin{proposition}
\label{prop:contains:subgroup}

Let $Q$ be a $\chi$-confining subgroup. Then the following numbers coincide:
\begin{itemize}
\item The infimum of $t \in [0, 1)$ such that $Q$ contains $F_n'(t, 1)$;
\item The supremum of $t \in [0, 1)$ such that every element of $Q$ fixes $t$.
\end{itemize}
In particular, $Q = F_n'$ if and only if it has no global fixed point in $(0, 1)$, so every strictly $\chi$-confining subgroup has a global fixed point $t\in (0,r)$.
\end{proposition}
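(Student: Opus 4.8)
The plan is to show the two quantities are equal by a two-sided inequality, and then to derive the ``In particular'' clause from the first equality together with \cref{prop:contains}. Write $s$ for the infimum in the first bullet and $s'$ for the supremum in the second. First I would observe that $s' \le s$: if $Q$ contains $F_n'(t,1)$ for some $t$, then $Q$ certainly contains elements moving points arbitrarily close to (but above) $t$, and in fact contains elements of $F_n'(t,1)$ with support meeting any subinterval of $(t,1)$; such an element cannot fix any point in $(t,1)$, so no point of $(t,1)$ is fixed by all of $Q$, giving $s' \le t$, hence $s' \le s$ after taking the infimum over such $t$.

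For the reverse inequality $s \le s'$, suppose $t < s'$, so every element of $Q$ fixes $t$; I want to show $Q \supseteq F_n'(t',1)$ for some $t'$ slightly bigger than $t$, which forces $s \le t'$ and then $s \le s'$ upon letting $t \to s'$. Since $Q$ is a subgroup all of whose elements fix $t$, we have $Q \subseteq \Fix_{F_n}(t)$, but more usefully I would argue: by \cref{prop:contains} applied with some point of $(0,r)$, $Q$ contains $F_n'(u,1)$ for some $u \in (0,r)$; if $u \le t$ we are already done with $t' = t$ (in fact then $s' \le u \le t$, contradiction unless we're in the degenerate range), so the real content is when $Q$'s guaranteed ``right tail'' sits to the right of $t$. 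Here is where I'd use that $Q$ is confining together with the fixed point: since every $q\in Q$ fixes $t$, and $Q$ satisfies \GetInConf\ (every element of $F_n'$ is conjugate into $Q$ by a power of $a$, by \cref{lem:conf:cyclic}), I claim $Q$ must actually contain $F_n'(t',1)$ for every $t'$ slightly above $t$. Indeed, take any $h \in F_n'(t',1)$; conjugating by $a^k$ for suitable $k\ge 0$ does not move its support past $t$ (since $a$ fixes $[r,1]$ and $t' > t$ can be chosen with $t' > $ the relevant points), so $h^{a^k} = h$ for large $k$ if $h$ is supported away from $(0,r)$; more carefully, one combines this with \cref{prop:contains} which already gives $F_n'(v,1)\subseteq Q$ for $v$ as small as we like *relative to the dynamics of $a$*, i.e. $v$ can be pushed toward $0$, so in particular $v \le t'$ whenever $t' > 0$, and then $F_n'(t',1)\subseteq F_n'(v,1)\subseteq Q$. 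This shows $s \le t'$ for all $t' > 0$ below... — the clean statement is simply that $s=0$ forced by \cref{prop:contains} would contradict $s'>0$, so I should instead phrase the whole lemma as: \cref{prop:contains} gives $s < r$ always, and the equality $s = s'$ follows because both sides measure the ``largest global fixed point'', with $Q\supseteq F_n'(t,1)$ for all $t > s'$ (the argument just sketched) and $Q$ fixing every $t < s$ pointwise (clear from $Q\subseteq F_n'[0,t]\times F_n'[t,1]$-type reasoning via \cref{lem:fix:nary}/\ref{lem:fix:irrational}/\ref{lem:fix:rational} — since $Q\subseteq F_n'$ consists of elements whose slopes at such $t$ must be trivial).

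For the final sentence: if $Q$ has no global fixed point in $(0,1)$ then $s' = 0$, hence $s = 0$, hence $Q \supseteq F_n'(t,1)$ for all $t\in(0,1)$, so $Q \supseteq \bigcup_{t} F_n'(t,1) = F_n'(0,1) = F_n^c \supseteq F_n'$, giving $Q = F_n'$. Conversely $F_n'$ has no global fixed point in $(0,1)$ by \cref{cit:proximal} (it acts proximally, hence with dense orbits, so fixes no interior point). Combining: a *strictly* $\chi$-confining subgroup is not equal to $F_n'$, hence has a global fixed point $t\in(0,1)$; and $t < r$ because by \cref{prop:empty_conf} $Q \supseteq \Fix_{F_n'}(I_0) = F_n'[r,1)$, whose elements include ones not fixing any point of $(r,1)$, wait — rather, $Q\supseteq F_n'[r,1)$ forces the global fixed point to be $\le r$, and it cannot equal $r$ for the same proximality-type reason applied to $F_n'[r,1)\cong F_n^c$ inside $[r,1]$, so $t \in (0,r)$ as claimed.

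\textbf{Main obstacle.} The delicate point is the direction $s \le s'$, i.e. upgrading ``$Q$ fixes every point below $s'$'' into ``$Q$ contains the full rigid stabilizer $F_n'(t,1)$ for $t$ just above $s'$''. The honest way to handle this is: \cref{prop:contains} already guarantees $F_n'(v,1)\subseteq Q$ for *some* $v \in (0,r)$, and then one uses \GetInConf/\cref{cor:conjugate} plus the assumption that $Q$ is a subgroup to slide this tail leftward — but it can only slide left as far as the global fixed point allows, because any element of $Q$ supported on an interval crossing $s'$ would violate fixing $s'$. Making this ``slide'' rigorous, rather than hand-wavy, is where the real work lies, and it is essentially the observation that for a confining *subgroup*, $Q = \Fix_{F_n'}\!\big([0,s']\big)$-flavored statement holds, which one reads off from the point-stabilizer descriptions in Lemmas \ref{lem:fix:nary}, \ref{lem:fix:irrational}, and \ref{lem:fix:rational} together with \cref{prop:rstab:open}.
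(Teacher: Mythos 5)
There is a genuine gap, and it sits exactly where you flag the ``main obstacle'': the direction $s \le s'$. Neither of the mechanisms you propose for sliding the tail leftward works. First, \cref{prop:contains} does \emph{not} give $F_n'(v,1)\subseteq Q$ for $v$ arbitrarily close to $0$: it produces $F_n'[t.a^k,1)\subseteq Q$ with $k\ge 0$, and since $a$ has slope $n>1$ at $0$ and is a one-bump function, positive powers of $a$ push points of $(0,r)$ \emph{toward} $r$, not toward $0$; conjugating the other way is not available, as \StayInConf\ only gives $Q^{a^k}\subseteq Q$ for $k\ge 0$. If your reading were correct, every $\chi$-confining subgroup would contain $\bigcup_{v>0}F_n'(v,1)=F_n^c\supseteq F_n'$, and there would be no strictly confining subgroups at all. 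Second, the fallback claims --- that $Q$ fixes every $t<s$ pointwise, or that $Q$ is of the form $\Fix_{F_n'}([0,s'])$ --- are false: the subgroup $F_n'[t_\bullet]$ of elements fixing the backward orbit $\{t.a^k\}_{k\le 0}$ is a strictly confining subgroup whose set of global fixed points is discrete, and it is strictly smaller than $F_n'[t,1)=\Fix_{F_n'}([0,t])$.

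The missing idea, which is the heart of the paper's proof, is to conjugate the tail by elements of $Q$ \emph{itself} rather than by powers of $a$. Since $Q$ is a subgroup, for every $q\in Q$ one has $F_n'(t,1)^q=F_n'(t.q,1)\subseteq Q^q=Q$, so $Q$ contains $F_n'(u,1)$ for every $u$ in the orbit $t.Q$, hence $F_n'(t_{inf},1)\subseteq Q$ where $t_{inf}\coloneqq\inf(t.Q)$. The infimum of an orbit of a group of orientation-preserving homeomorphisms is a global fixed point, so $s\le t_{inf}\le s'$; combined with your (correct) inequality $s'\le s$, this closes the argument. Your derivation of the ``in particular'' clause is essentially fine once the main equality is in place.
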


\begin{proof}
Let $t_{sup}$ be the supremum of all global fixed points of $Q$ in $(0, 1)$. By \cref{prop:contains}, there exists $t \in (0, r)$ such that $Q$ contains $F_n'(t, 1)$; it follows that $t_{sup} \leq t$, and up to choosing a larger $t$ we may assume that $t_{sup} < t$. Being an accumulation point of global fixed points, $t_{sup}$ is itself a global fixed point.

Now let $t_{inf}$ be the infimum of the orbit $t.Q$. Then for every $q \in Q$ we have that $F_n'(t.q, 1) = F_n'(t,1)^q \subseteq Q^3 = Q$, where we are using that $Q$ is a group. Choosing a sequence $q_i \in Q$ such that $t.q_i$ converges to $t_{inf}$ we see that $F_n'(t_{inf}, 1) \subseteq Q$, which shows that $t_{inf}$ is the quantity in the first bullet point. Moreover, $t_{inf}$ is fixed, because $t.Q$ is an orbit and $Q$ is a group. Thus by definition $t_{inf} \leq t_{sup}$, and the other inequality follows from the argument in the first paragraph above.
\end{proof}

Motivated by this proposition, can now define the posets $\LL_t$ from \cref{intro:thm:local}. We will define $\LL_t$ for all $t \in (0, r)$, but we will later see that $\LL_t = \LL_{t.a^{-1}}$ (\cref{lem:LLt:equality}).

\begin{definition}[Lamplike confining subsets]
\label{def:lamplike:moving}

We define $\LL_t$ to be the set of confining subsets that admit $t$ as a global fixed point, where $t \in (0, r)$. By abuse of notation, we also denote by $\LL_t$ the subposet of $\Conf^\ast_\chi(F_n)$ consisting of equivalence classes of confining subsets that admit a representative in $\LL_t$.
We say that a confining subset $Q$ is \emph{lamplike} if $[Q] \in \LL_t$ for some $t$.
\end{definition}

Note that the property of being lamplike is invariant under the equivalence relation on confining subsets, thanks to \cref{cor:conjugate}. We will see a strengthening of this in \cref{lem:LLt:upper}.

\medskip

We can now prove \cref{intro:thm:trees} from the introduction:

\begin{theorem}[\cref{intro:thm:trees}]
    Every cobounded quasi-parabolic action of $F_n$ on a simplicial tree corresponds to a lamplike hyperbolic structure, and is represented by the action on the Bass--Serre tree of a strictly ascending HNN-extension expression of $F_n$.
\end{theorem}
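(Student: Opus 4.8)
The plan is to combine the general machinery relating quasi-parabolic actions on trees to confining subgroups (Propositions~\ref{prop:trees_to_subgroups} and~\ref{prop:subgroups_to_trees}) with the structural result on $\chi$-confining subgroups just established (\cref{prop:contains:subgroup}). First I would reduce to the setting of $\X_0$: by \cref{thm:global}, a cobounded quasi-parabolic action of $F_n$ on a simplicial tree has Busemann pseudocharacter equivalent to $\chi_0$ or to $\chi_1$, and by the symmetry $x \mapsto (1-x)$ it suffices to treat the $\chi = \chi_0$ case. So fix such an action $F_n \curvearrowright X$ on a simplicial tree with Busemann pseudocharacter inducing a character equivalent to $\chi$ on $\Z^n$. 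Since $F_n$ is sharply $\Z^n$-split (\cref{cor:Fsplit}), the Busemann pseudocharacter sends $F_n' = H$ to $0$, so \cref{prop:trees_to_subgroups} applies (and by \cref{rem:simplicial} the simpliciality hypothesis is automatic for quasi-parabolic actions): the action corresponds, under \cref{cit:action_to_confining}, to a $\chi$-confining subset that can be taken to be a subgroup $Q \leq F_n'$.

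Next I would identify the hyperbolic structure $[Q \cup Z_\chi]$ as an element of $\X_0 = \Conf^\ast_\chi(F_n)$, and invoke \cref{prop:contains:subgroup}: since the action is quasi-parabolic, $Q$ is strictly $\chi$-confining, hence $Q \neq F_n'$, hence $Q$ has a global fixed point $t \in (0, r)$. By \cref{def:lamplike:moving} this says exactly that $Q \in \LL_t$, i.e., $[Q]$ is a lamplike hyperbolic structure, giving the first assertion. For the second assertion, I would apply \cref{prop:subgroups_to_trees}: $\chi = \chi_0$ is discrete (its image on $\Z^n$ is $\Z$), and $Q$ is a $\chi$-confining subgroup, so the hyperbolic structure $[Q \cup Z_\chi]$ is represented by the action on the Bass--Serre tree of an ascending HNN-extension expression of $F_n$, with base group $Q\ker(\chi)$ and stable letter $a$ (an element with $\chi(a)$ positive and minimal, e.g.\ $a = a_0$). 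This HNN-extension is \emph{strictly} ascending precisely because $Q$ is strictly confining, so that $Q^a \subsetneq Q$ and the base group does not equal $G$; equivalently, the action is quasi-parabolic rather than lineal. Thus $X$ is equivalent to the Bass--Serre tree of a strictly ascending HNN-extension expression of $F_n$, as claimed.

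The main point requiring care is the passage between "the action on $X$" and "the action on the Bass--Serre tree": a priori \cref{prop:subgroups_to_trees} produces a tree action equivalent (in $\Hyp(F_n)$) to $[Q \cup Z_\chi]$, and I must argue that this equivalence is realized by an actual $F_n$-equivariant quasi-isometry with $X$ itself, not just an abstract equality of hyperbolic structures. This follows because \cref{cit:action_to_confining} already gives that $X$ is $G$-equivariantly quasi-isometric to $\Cay(G, Q \cup Z_\chi)$, and \cref{prop:subgroups_to_trees} gives that the latter represents the Bass--Serre tree action; composing the two quasi-isometries handles it. A secondary subtlety is to confirm that $t$ can be chosen in $(0, r)$ (not merely in $(0, 1)$), but this is exactly the "in particular" clause of \cref{prop:contains:subgroup}, so no extra work is needed. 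Everything else is bookkeeping, so I expect the proof to be short.
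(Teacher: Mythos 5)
Your proposal is correct and follows essentially the same route as the paper's proof: reduce to $\X_0$, apply \cref{prop:trees_to_subgroups} to obtain a strictly $\chi$-confining subgroup, invoke \cref{prop:contains:subgroup} for the global fixed point (hence lamplike), and then \cref{prop:subgroups_to_trees} for the Bass--Serre tree representation. Your extra remarks on the equivariant quasi-isometry with $X$ itself and on why the HNN-extension is \emph{strictly} ascending are correct points of care that the paper leaves implicit.
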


\begin{proof}
    Consider a cobounded quasi-parabolic action of $F_n$ on a tree, which is automatically simplicial by \cref{rem:simplicial}. By \cref{intro:thm:global}, the corresponding Busemann pseudocharacter is $\chi$ (equal to $\chi_0$ or $\chi_1$), and so this action represents an element of $\X$. By \cref{prop:trees_to_subgroups}, the confining subset may be chosen to be a subgroup, which must be strictly confining since the action is quasi-parabolic. Then \cref{prop:contains:subgroup} gives a global fixed point; the structure is therefore lamplike, by definition. \cref{prop:subgroups_to_trees} shows that, in turn, this confining subgroup corresponds to the action on the Bass--Serre tree of an ascending HNN-extension expression of $F_n$.
\end{proof}

\section{Lamplike confining subsets at $t$}\label{sec:lamplike}

The goal of this section is to study the subposet $\LL_t$ for a fixed $t \in (0, r)$. Recall that by definition, $Q \in \LL_t$ if $Q$ is a $\chi$-confining subset that fixes $t$.

\begin{notation}
For the rest of this section, we fix an element $t \in (0, r)$, and denote by $t_k \coloneqq t.a^k$ for $k\in\Z$. In particular $t = t_0$. Note that the points $t_k \in (0, r)$ are linearly ordered, and accumulate to $0$ as $k \to -\infty$ and to $r$ as $k \to \infty$.
\end{notation}

The following easy fact is the starting point of the connection to lamplighters:

\begin{lemma}
\label{lem:LLt:orbit}
If $Q \in \LL_t$, then $t_k$ is a global fixed point of $Q$ for every $k \leq 0$. Moreover, if every element of $Q$ fixes a neighborhood of $t$, then every element of $Q$ fixes a neighborhood of $t_k$ for every $k \leq 0$.
\end{lemma}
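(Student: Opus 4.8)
The plan is to exploit \StayInConf\ for the character $\chi = \chi_0$, together with the hypothesis that $t$ is a global fixed point, by transporting fixed points with powers of $a$. The key observation is that, since $\chi(a) = \chi_0(a_0) = 1$, for $k \leq 0$ the element $a^{-k}$ satisfies $\chi(a^{-k}) = -k \geq 0$, so conjugation by $a^{-k}$ preserves $Q$; this is precisely what lets us push the fixed point $t$ forward to $t_k$ for $k \le 0$ (and also explains why the statement is restricted to $k \leq 0$).

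Concretely, I would fix $q \in Q$ and $k \leq 0$. As noted, \StayInConf\ gives $q^{a^{-k}} \in Q$, and since $t$ is a global fixed point of $Q$ by hypothesis, $t.q^{a^{-k}} = t$. Unwinding the conjugation with our right-action conventions, $t.q^{a^{-k}} = t.(a^k q a^{-k}) = \big((t.a^k).q\big).a^{-k} = (t_k.q).a^{-k}$, so $(t_k.q).a^{-k} = t$; applying $a^k$ on the right gives $t_k.q = t.a^k = t_k$. As $q$ was arbitrary, $t_k$ is a global fixed point of $Q$, which proves the first assertion.

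For the second assertion I would run the same computation while tracking neighborhoods. Assuming every element of $Q$ fixes a neighborhood of $t$, and taking $q \in Q$ and $k \leq 0$ as before, the element $q^{a^{-k}} = a^k q a^{-k} \in Q$ fixes pointwise some open neighborhood $U$ of $t$. Then for every $x \in U$ we have $x.(a^k q a^{-k}) = x$, and applying $a^k$ on the right yields $(x.a^k).q = x.a^k$; hence $q$ fixes pointwise the open set $U.a^k$, which is a neighborhood of $t.a^k = t_k$ since $a^k$ is a homeomorphism carrying $t$ into it.

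This is a one-line verification once the conjugation bookkeeping is set up, so I do not anticipate a genuine obstacle; the only thing to be careful about is the sign bookkeeping — it is $a^{-k}$, not $a^k$, that has nonnegative $\chi$-value when $k \leq 0$, and getting this right is exactly what makes the argument work and forces the restriction to non-positive $k$.
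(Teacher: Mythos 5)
Your proof is correct and takes essentially the same approach as the paper: both arguments use \StayInConf\ to transport the global fixed point (and fixed neighborhood) along powers of $a$, the only cosmetic difference being that the paper does the single step $x \mapsto x.a^{-1}$ and iterates, while you conjugate by $a^{-k}$ in one shot. The sign bookkeeping you highlight is handled correctly.
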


\begin{proof}
In general, if $x \in (0, r)$ is a global fixed point of $Q$, then so is $x.a^{-1}$. Indeed,
\[x.a^{-1}q = (x.q^a).a^{-1} = x.a^{-1}\]
for any $q\in Q$, where we used that $q^a \in Q$ by \StayInConf. The same holds for neighborhoods.
\end{proof}

Our main goal is \cref{intro:thm:lamplike}, which describes $\LL_t$ in terms of posets of right-heavy confining subsets of a certain lamplighters. The key property of confining subsets of $F_n$ is the fact that they contain $F_n'[t, 1)$ for some $t \in (0, r)$ (\cref{prop:contains}); this is what will ensure that the resulting confining subsets are all right-heavy.

\subsection{Split representatives}

The first step in the proof of \cref{intro:thm:lamplike} is to find ``split'' representatives, analogous to the situation for lamplighters in \cref{lem:splitreps:lamplighters}.

\begin{definition}
\label{def:splitreps:thompson}
A confining subset $Q \in \LL_t$ is \emph{$t$-split} if
\[Q = \{ g \in F_n' \mid g\text{ fixes } t \text{ and there exists } q \in Q \text{ such that } g|_{[0, t]} = q|_{[0, t]} \}.\]
\end{definition}

In particular, a $t$-split confining subset must contain $F_n'[t, 1)$. The definition is analogous to \cref{def:splitreps:lamplighters}; however we need to make the statement internal because the stabilizer $\Fix_{F_n}(t)$ does not necessarily split cleanly as a product (this fails precisely when $t \in \Q \setminus \Z[1/n]$, by \cref{lem:fix:rational}). We now prove the analogs of Lemmas \ref{lem:splitreps:char:lamplighters} and \ref{lem:splitreps:lamplighters}.

\begin{lemma}
\label{lem:splitreps:char:thompson}
Let $Q \in \LL_t$. Then $Q$ is $t$-split if and only if $Q = Q \cdot F_n'[t, 1)$.
\end{lemma}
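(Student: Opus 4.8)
The plan is to follow the proof of \cref{lem:splitreps:char:lamplighters} for lamplighters, carrying the two inclusions over to the internal formulation of \cref{def:splitreps:thompson}. Let $\widehat{Q}$ denote the set on the right-hand side of that definition, so that ``$Q$ is $t$-split'' means exactly $Q = \widehat{Q}$; the claim is then that $Q = \widehat{Q}$ if and only if $Q = Q\cdot F_n'[t,1)$. Throughout I would use that every element of $Q$ fixes $t$ (as $Q\in\LL_t$), that $1\in Q$ (\cref{rem:conf:consequences}), and the elementary fact that an element of $F_n'$ lies in the rigid stabilizer $F_n'[t,1)$ precisely when it fixes $[0,t]$ pointwise.

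For the forward implication, assume $Q = \widehat{Q}$. The inclusion $Q \subseteq Q\cdot F_n'[t,1)$ is clear since $1 \in F_n'[t,1)$. For the reverse inclusion, take $q \in Q$ and $h \in F_n'[t,1)$: since $q$ fixes $t$ and preserves orientation it maps $[0,t]$ onto itself, so composing with $h$ (which fixes $[0,t]$ pointwise) gives $(qh)|_{[0,t]} = q|_{[0,t]}$, while $qh$ fixes $t$; hence $qh \in \widehat{Q} = Q$, proving $Q\cdot F_n'[t,1) \subseteq Q$.

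For the converse, assume $Q = Q\cdot F_n'[t,1)$. The inclusion $Q \subseteq \widehat{Q}$ is immediate, using $g$ itself as the required witness for $g \in Q$. For $\widehat{Q}\subseteq Q$, let $g \in \widehat{Q}$ with witness $q \in Q$ such that $g|_{[0,t]} = q|_{[0,t]}$, and set $h \coloneqq q^{-1}g \in F_n'$. Using that $q^{-1}$ preserves $[0,t]$ and that $g$ and $q$ agree there, one checks $x.h = x$ for every $x \in [0,t]$, so $h \in F_n'[t,1)$, and therefore $g = qh \in Q\cdot F_n'[t,1) = Q$.

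There is no real obstacle here, since the statement is the Thompson-group counterpart of \cref{lem:splitreps:char:lamplighters}; the only thing to watch is the bookkeeping of the right-action composition convention, together with the identification of $F_n'[t,1)$ with the elements of $F_n'$ fixing $[0,t]$ pointwise — this is what makes the step ``$q^{-1}g$ fixes $[0,t]$, hence $q^{-1}g \in F_n'[t,1)$'' valid, and it is precisely the point where the ``internal'' phrasing of \cref{def:splitreps:thompson} (needed because $\Fix_{F_n}(t)$ fails to split as a clean product when $t \in \Q\setminus\Z[1/n]$, by \cref{lem:fix:rational}) causes no difficulty.
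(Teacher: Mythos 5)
Your proof is correct and follows essentially the same route as the paper's: the forward direction is the easy inclusion check (which the paper dismisses as ``clearly''), and the converse is exactly the paper's argument that $h \coloneqq q^{-1}g$ fixes $[0,t]$ pointwise and hence lies in $F_n'[t,1)$, so $g = qh \in Q \cdot F_n'[t,1) = Q$. Your extra care with the right-action convention and with identifying $F_n'[t,1)$ as the elements of $F_n'$ fixing $[0,t]$ pointwise is accurate and harmless.
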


\begin{proof}
Clearly if $Q$ is $t$-split, then the property in the statement holds. Conversely, suppose that $Q$ satisfies this property, and let $g \in F_n'$ such that there exists $q \in Q$ with $g|_{[0, t]} = q|_{[0, t]}$. Then $h \coloneqq q^{-1}g$ acts trivially on $[0, t]$ and belongs to $F_n'$, so $h \in F_n'[t, 1)$. Thus $g = q h \in Q \cdot F_n'[t, 1) = Q$.
\end{proof}

\begin{lemma}
\label{lem:splitreps:thompson}

Every $Q \in \LL_t$ is equivalent to the $t$-split confining subset \[Q_{split} \coloneqq Q \cdot F_n'[t, 1).\]
\end{lemma}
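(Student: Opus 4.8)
The plan is to follow the template of \cref{lem:splitreps:lamplighters}, with the rigid stabilizer $F_n'[t,1)$ playing the role that $L_{\geq 0}(\Gamma)$ played there, and conjugation by $a = a_0$ playing the role of the shift $\sigma$; note that \cref{prop:contains} is precisely the analog of right-heaviness here, guaranteeing that $Q$ contains $F_n'[t_k,1)$ for some $k \ge 0$ even though it need not contain all of $F_n'[t,1)$. There are two things to establish: (1) $Q_{split}$ is a $t$-split confining subset, and (2) $Q \sim_\chi Q_{split}$.

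For (1) I would first isolate two structural facts. The first is that $F_n'[t,1)$ is a subgroup normalized by $Q$: since every $q \in Q$ fixes $t$ and is orientation preserving it preserves $[t,1]$, so conjugation by $q$ preserves $F_n'[t,1)$; consequently $Q \cdot F_n'[t,1) = F_n'[t,1) \cdot Q$, which in turn shows that $Q_{split}$ is symmetric (as $Q$ is) and that $Q_{split}\cdot Q_{split} = Q^2 \cdot F_n'[t,1)$. The second is that for $z \in \Z^n$ with $\chi(z) \ge 0$ one has $t.z = t_{\chi(z)} \ge t$: indeed $a_0$ pushes every point of $(0,r)$ rightward and the remaining generators $a_i$ fix $[0,r]\ni t$, so $F_n'[t,1)^z = F_n'[t.z,1) \subseteq F_n'[t,1)$. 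Granting these, \StayInConf\ reduces to $Q_{split}^z = Q^z \cdot F_n'[t,1)^z \subseteq Q_{split}$, \GetInConf\ is immediate since $Q \subseteq Q_{split}$, and \ProdConf\ follows by conjugating $Q_{split}\cdot Q_{split} = Q^2 \cdot F_n'[t,1)$ by a power $a^k$ supplied by \cref{lem:conf:cyclic}, using $t_k \ge t$ to keep the second factor inside $F_n'[t,1)$. That $Q_{split}$ is $t$-split then drops out of \cref{lem:splitreps:char:thompson}, since $Q_{split} \cdot F_n'[t,1) = Q_{split}$.

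For (2): the inclusion $Q \subseteq Q_{split}$ gives $[Q_{split}] \preccurlyeq_\chi [Q]$ for free, and by \cref{cor:conjugate} the reverse inequality $[Q] \preccurlyeq_\chi [Q_{split}]$ amounts to producing some $m \ge 0$ with $Q_{split}^{a^m} \subseteq Q$. Here I would invoke \cref{prop:contains} to get $m_0 \ge 0$ with $F_n'[t_{m_0},1) \subseteq Q$, so that for $m \ge m_0$ the identity $Q_{split}^{a^m} = Q^{a^m} \cdot F_n'[t_m,1)$ together with \StayInConf\ yields $Q_{split}^{a^m} \subseteq Q \cdot Q$; one further conjugation by a power of $a$ absorbing $Q\cdot Q$ into $Q$ (again \cref{lem:conf:cyclic}) then finishes the argument.

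I expect the only genuine subtlety to be the routine-but-fiddly verification of the two structural facts in step (1) — that $Q \cdot F_n'[t,1)$ is symmetric and that conjugation by $\Z^n$ (and by $a$) behaves correctly on the second factor; everything afterwards is bookkeeping parallel to the lamplighter case. One should also note, to make sense of the internal \cref{def:splitreps:thompson}, that $Q_{split}$ does land inside $\Fix_{F_n'}(t)$, which is clear since both factors do.
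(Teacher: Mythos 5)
Your proof is correct and follows essentially the same route as the paper: verify the three confining axioms for $Q\cdot F_n'[t,1)$ directly, then use \cref{prop:contains} together with \cref{cor:conjugate} to get the equivalence. The one (minor, and arguably cleaner) difference is that you exploit the fact that $Q$ normalizes $F_n'[t,1)$ (since $Q$ fixes $t$) to get $Q_{split}\cdot Q_{split}=Q^2\cdot F_n'[t,1)$ and handle \ProdConf\ via $F_n'[t_k,1)\subseteq F_n'[t,1)$, whereas the paper instead absorbs the conjugated second factor into $Q$ using $F_n'[s,1)\subseteq Q$; your explicit normalization observation also justifies symmetry of $Q_{split}$, which the paper leaves implicit.
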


We will refer to the split confining subset in \cref{lem:splitreps:thompson} as a \emph{$t$-split representative} of $Q$. The proof will be similar to \cref{lem:splitreps:lamplighters} thanks to \cref{prop:contains}.

\begin{proof}
Fix $s \in (0, r)$ such that $Q$ contains $F_n'[s, 1)$, which is possible by \cref{prop:contains}.
We first verify that $Q_{split}$ is a confining subset. It is symmetric by construction. For \StayInConf, given $z \in \Z^n$ such that $\chi(z) \geq 0$, we compute:
\[Q_{split}^z = Q^z \cdot F_n'[t, 1)^z \subseteq Q \cdot F_n'[t, 1),\]
since both $Q$ and $F_n'[t, 1)$ are $\chi$-confining.
\GetInConf\ is clear since $Q_{split}$ contains $Q$.
As for \ProdConf, let $k \geq 0$ be such that $(Q^4)^{a^k} \subseteq Q$ and $t.a^k > s$; such a $k$ exists thanks to \cref{lem:products}.
Now take $q_1 h_1, q_2 h_2 \in Q_{split}$. Then we have
\[
(q_1 h_1 q_2 h_2)^{a^{2k}} \in \bigg(\big(Q^{a^k} \cdot F_n'[t.a^k, 1)\big)^2\bigg)^{a^k} \subseteq \bigg(\big(Q \cdot F_n'[s, 1)\big)^2\bigg)^{a^k} \subseteq (Q^4)^{a^k} \subseteq Q \text{.}
\]

Finally, we check that $Q$ and $Q_{split}$ are equivalent. Clearly $Q \subseteq Q_{split}$, so by \cref{cor:conjugate} it suffices to show that there exists $z \in \Z^n$ such that $Q_{split}^z \subseteq Q$. We choose $z = a^{2k}$ where $k$ is such that such that $(Q \cdot Q)^{a^k} \subseteq Q$ and $t.a^k > s$, and verify:
\[
Q_{split}^{a^{2k}} = \big(Q^{a^k} \cdot F_n'[t.a^k, 1)\big)^{a^k} \subseteq \big(Q \cdot F_n'[s, 1)\big)^{a^k} \subseteq (Q \cdot Q)^{a^k} \subseteq Q. \qedhere
\]
\end{proof}

\subsection{Isolated lamps}

Before we prove \cref{intro:thm:lamplike}, which will involve some subtleties depending on the arithmetic properties of $t$, we start by describing a smaller subposet $\LL_{(t)}$, which is easier to describe and will exhibit less dependency on $t$.

\begin{definition}
\label{def:Fnt:isolated}

We define
\[F_n(t_\bullet) \coloneqq \{ g \in F_n \mid g \text{ fixes a neighborhood of } t_k \text{ for all } k \leq 0 \} \leq F_n\]
and
\[
F_n'(t_\bullet) \coloneqq F_n(t_\bullet) \cap F_n'.
\]
It is easy to see that $F_n'(t_\bullet)$ is a $\chi$-confining subgroup. We define $\LL_{(t)}$ to consist of those $Q \in \LL_t$ such that $Q \subseteq F_n'(t_\bullet)$. As with $\LL_t$, by abuse of notation we also use $\LL_{(t)}$ to denote the corresponding subposet of $\LL_t$. Note that $Q \in \LL_{(t)}$ if and only if it fixes a neighborhood of $t$: this is a consequence of \cref{lem:LLt:orbit} and the fact that $Q \subseteq F_n'$.
\end{definition}

\begin{remark}
\label{rem:fixzero}
    If $g \in F_n(t_\bullet)$, then $g$ fixes elements arbitrarily close to $0$, so its slope at $0$ must be equal to $1$. Being piecewise linear, it follows that $g$ must fix pointwise a neighborhood of $0$. Note that this does not require that a neighborhood of any $t_k$ is fixed. In particular, it also holds for the group $F_n[t_\bullet]$ we will work with later (\cref{def:Fnt}).
\end{remark}

In order to deal with isolated lamps, we will use the following natural analogs of $t$-split representatives.

\begin{definition}
\label{def:splitreps:isolated}

A confining subset $Q \in \LL_{(t)}$ is called \emph{$(t)$-split} if
\begin{align*}
Q = \{ g \in F_n' \mid & \,\, g \text{ fixes a neighborhood of } t \text{ and}\\
& \,\, \text{there exists } q \in Q \text{ such that } g|_{[0, t]} = q|_{[0, t]} \}.
\end{align*}
\end{definition}

We have the following analog of \cref{lem:splitreps:thompson}.

\begin{lemma}
\label{lem:splitreps:isolated}

Every $Q \in \LL_{(t)}$ is equivalent to the $(t)$-split confining subset $Q\cdot F_n'(t, 1)$.
\end{lemma}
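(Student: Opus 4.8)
The plan is to mirror the proof of \cref{lem:splitreps:thompson} almost verbatim, since the only change is that we now insist elements fix a \emph{neighborhood} of $t$ rather than just the point $t$, and the confining subgroup being multiplied in is $F_n'(t,1)$ (rigid stabilizer of the open interval) rather than $F_n'[t,1)$. First I would observe that $Q \cdot F_n'(t,1)$ is exactly the $(t)$-split set described in \cref{def:splitreps:isolated}: one inclusion is clear, and for the other, given $g\in F_n'$ fixing a neighborhood of $t$ with $g|_{[0,t]}=q|_{[0,t]}$ for some $q\in Q$, the element $h\coloneqq q^{-1}g$ lies in $F_n'$, acts trivially on $[0,t]$, and (since both $g$ and $q\in Q\subseteq F_n'(t_\bullet)$ fix neighborhoods of $t$) fixes a neighborhood of $t$, hence $\overline{\Supp(h)}\subseteq (t,1)$, so $h\in F_n'(t,1)$ and $g=qh\in Q\cdot F_n'(t,1)$. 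This also shows $Q_{split}\coloneqq Q\cdot F_n'(t,1)$ still consists of elements fixing a neighborhood of $t$ (as $Q\subseteq F_n'(t_\bullet)$ and $F_n'(t,1)$ is supported off a neighborhood of $t$), so $Q_{split}\in\LL_{(t)}$ once we know it is confining.

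Next I would verify the three confining axioms for $Q_{split}$, exactly as in \cref{lem:splitreps:thompson}. Symmetry is immediate. For \StayInConf, given $z\in\Z^n$ with $\chi(z)\ge 0$, write $Q_{split}^z = Q^z\cdot F_n'(t,1)^z \subseteq Q\cdot F_n'(t,1)$, using that $Q$ is $\chi$-confining and that $F_n'(t,1)$ is a $\chi$-confining subgroup (noted in \cref{def:Fnt:isolated}, and in any case $F_n'(t,1)^z\subseteq F_n'(t,1)$ since $z$ fixes a neighborhood of $t$ for $\chi(z)\ge 0$ as $a$ moves points of $(0,r)$ away from... — more carefully, for $\chi(z)\ge 0$ one has $F_n'[t,1)^z\subseteq F_n'[t,1)$, and intersecting with the condition of fixing a neighborhood of $t$, which is $z$-invariant, gives $F_n'(t,1)^z\subseteq F_n'(t,1)$). \GetInConf\ is clear since $Q\subseteq Q_{split}$. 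For \ProdConf: fix $s\in(0,r)$ with $F_n'[s,1)\subseteq Q$ (possible by \cref{prop:contains}), choose by \cref{lem:products} a $k\ge 0$ with $(Q^4)^{a^k}\subseteq Q$ and $t.a^k>s$, and for $q_1h_1,q_2h_2\in Q_{split}$ compute
\[
(q_1h_1q_2h_2)^{a^{2k}} \in \bigl(\bigl(Q^{a^k}\cdot F_n'(t.a^k,1)\bigr)^2\bigr)^{a^k} \subseteq \bigl(\bigl(Q\cdot F_n'[s,1)\bigr)^2\bigr)^{a^k} \subseteq (Q^4)^{a^k}\subseteq Q,
\]
using $F_n'(t.a^k,1)\subseteq F_n'[s,1)\subseteq Q$.

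Finally I would check $Q\sim_\chi Q_{split}$. Since $Q\subseteq Q_{split}$, by \cref{cor:conjugate} it suffices to exhibit $z\in\Z^n$ with $Q_{split}^z\subseteq Q$. Taking $z=a^{2k}$ with $k$ chosen so that $(Q\cdot Q)^{a^k}\subseteq Q$ and $t.a^k>s$, we get
\[
Q_{split}^{a^{2k}} = \bigl(Q^{a^k}\cdot F_n'(t.a^k,1)\bigr)^{a^k} \subseteq \bigl(Q\cdot F_n'[s,1)\bigr)^{a^k} \subseteq (Q\cdot Q)^{a^k}\subseteq Q,
\]
again using $F_n'[s,1)\subseteq Q$. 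This completes the argument. I do not anticipate a genuine obstacle here: the proof is a routine adaptation of \cref{lem:splitreps:thompson}, and the only point requiring mild care is confirming that the extra ``fixes a neighborhood of $t$'' condition is preserved under products and conjugation by elements with $\chi\ge 0$ — which holds because $Q\subseteq F_n'(t_\bullet)$ by definition of $\LL_{(t)}$, so all the relevant elements already fix such a neighborhood, and $F_n'(t,1)$ is supported away from $t$.
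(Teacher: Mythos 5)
Your proof is correct, but it is not the route the paper takes. You rerun the entire argument of \cref{lem:splitreps:thompson} with $F_n'(t,1)$ in place of $F_n'[t,1)$: verifying symmetry, \StayInConf, \GetInConf, \ProdConf, and the two-sided domination by hand, with the only genuinely new ingredient being the (correct) observation that $F_n'(t,1)^z \subseteq F_n'(t,1)$ for $\chi(z)\ge 0$ and that $F_n'(t.a^k,1)\subseteq F_n'[s,1)\subseteq Q$ once $t.a^k>s$. The paper instead gets the result in three lines by citing what has already been established: it takes the $t$-split representative $Q_{split}=Q\cdot F_n'[t,1)$ from \cref{lem:splitreps:thompson}, intersects with the confining subgroup $F_n'(t_\bullet)$ using \cref{prop:join} (so the intersection is automatically confining), sandwiches $Q \subseteq Q_{split}\cap F_n'(t_\bullet) \subseteq Q_{split} \sim_\chi Q$ to get equivalence for free, and then identifies $Q\cdot F_n'[t,1)\cap F_n'(t_\bullet) = Q\cdot F_n'(t,1)$. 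Your approach buys self-containedness at the cost of repeating a page of computation; the paper's buys brevity by exploiting the join-semilattice structure of $\Conf^\ast_\chi(F_n)$, which is arguably the conceptually cleaner move since it explains \emph{why} the isolated-lamps representative exists (it is the join of $[Q]$ with $[F_n'(t_\bullet)]$). One presentational remark: your justification of \StayInConf\ for the factor $F_n'(t,1)$ is garbled mid-sentence before you correct course; the clean statement is simply that for $\chi(z)\ge 0$ one has $(t,1).z=(t.z,1)\subseteq(t,1)$, so conjugation by $z$ maps $F_n'(t,1)$ into itself.
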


\begin{proof}
Let $Q_{split}=Q\cdot F_n'[t,1)$ be the $t$-split confining subset equivalent to $Q$ that was given by \cref{lem:splitreps:thompson}. Then \cref{prop:join} gives a confining subset $Q_{split} \cap F_n'(t_\bullet)$ and $Q \succeq_\chi Q_{split} \cap F_n'(t_\bullet) \succeq_\chi Q_{split} \sim_\chi Q$. Therefore $Q$ is equivalent to $Q_{split} \cap F_n'(t_\bullet)$. Moreover
\[Q_{split} \cap F_n'(t_\bullet) = Q \cdot F_n'[t, 1) \cap F_n'(t_\bullet) = Q \cdot F_n'(t, 1). \qedhere\]
\end{proof}

Next we relate $F_n'(t_\bullet)$ to a direct sum, which will correspond to $L_{< 0}$ in a lamplighter.

\begin{lemma}
\label{lem:Fnt:isolated:iso}

The restrictions $F_n(t_\bullet)|_{[t, 1]}$ and $F_n(t_\bullet)|_{[t_{k}, t_{k+1}]}$ ($k<0$) yield an isomorphism
\[\zeta^t \colon F_n(t_\bullet) \to \left( \bigoplus_{k < 0} F_n(t_{k}, t_{k+1}) \right) \times F_n(t, 1]\]
and an injective homomorphism
\[\zeta^t \colon F_n'(t_\bullet) \to \left( \bigoplus_{k < 0} F_n(t_{k}, t_{k+1}) \right) \times F_n(t, 1)\]
that surjects onto each factor.
\end{lemma}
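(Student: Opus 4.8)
The plan is to realize $\zeta^t$ concretely as the map that restricts an element to each closed interval of the partition $\{0\}\cup\bigcup_{k<0}[t_k,t_{k+1}]\cup[t,1]=[0,1]$ and re-extends that restriction by the identity, prove it is an isomorphism on $F_n(t_\bullet)$, and then deduce the statement for $F_n'(t_\bullet)$. For the first part, note that if $g\in F_n(t_\bullet)$ then $g$ has only finitely many breakpoints (being in $F_n$), it fixes a neighborhood of $0$ by \cref{rem:fixzero}, and only finitely many of the intervals $(t_k,t_{k+1})$, $k<0$, meet $\Supp(g)$; moreover $g$ is orientation preserving and fixes each $t_k$ and $t$, so it maps each $[t_k,t_{k+1}]$ and $[t,1]$ onto itself. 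Hence $g|_{[t_k,t_{k+1}]}$ and $g|_{[t,1]}$, re-extended by the identity, are again elements of $F_n$ (no breakpoint appears at a $t_k$, since $g$ is the identity near it), the $k$-th one lies in $F_n(t_k,t_{k+1})$, only finitely many are nontrivial, and the last lies in $F_n(t,1]$; this defines $\zeta^t(g)$. Each coordinate is a homomorphism because $g$ preserves the corresponding interval, so $\zeta^t$ is a homomorphism, and it is injective because $\{0\}$ together with the pieces covers $[0,1]$.

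For surjectivity of $\zeta^t$ onto the product I would argue by gluing: given a tuple with finitely many nontrivial coordinates, the pieces all agree with the identity near the shared endpoints $t_k$ and $t$, and all but finitely many are trivial, so the glued map is a genuine element of $F_n$ that fixes a neighborhood of every $t_k$ ($k\le 0$) and of $0$, hence lies in $F_n(t_\bullet)$ and maps to the given tuple. This establishes the first isomorphism. Now restrict $\zeta^t$ to $F_n'(t_\bullet)=F_n(t_\bullet)\cap F_n'$: this is automatically an injective homomorphism, and if $g\in F_n'$ then $\chi_1(g)=\log_n g'(1)=0$, so $g$ is the identity near $1$ and the last coordinate of $\zeta^t(g)$ lies in $F_n(t,1)$ rather than merely in $F_n(t,1]$; this accounts for the shape of the target in the second statement.

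It remains to see that $\zeta^t|_{F_n'(t_\bullet)}$ composed with the projection to any single factor is surjective, and this is where the only subtlety lies: a target coordinate $g_k\in F_n(t_k,t_{k+1})$, extended by the identity, is an element of $F_n^c$ but for $n>2$ need not lie in $F_n'$, so the naive one-piece element is not in $F_n'(t_\bullet)$. I would fix this with a \emph{correction lemma}: for every open interval $J$ with $\overline J\subseteq(0,1)$, the composite $F_n(J)\hookrightarrow F_n^c\twoheadrightarrow F_n^c/F_n'\cong\Z^{n-2}$ is surjective. Granting it, for a target coordinate $g_k$ with identity-extension $\tilde g_k$ pick $J=(t_{k-1},t_k)$ (disjoint from $\overline{\Supp(\tilde g_k)}$, with $\overline J\subseteq(0,1)$) and choose $c\in F_n(J)$ mapping to $-[\tilde g_k]$ in $F_n^c/F_n'$; then $\tilde g_k c\in F_n'$, it is the identity near every $t_j$ ($j\le 0$) and near $0$, hence lies in $F_n'(t_\bullet)$, and it still restricts to $g_k$ on $[t_k,t_{k+1}]$ (since $c$ is supported to the left of $t_k$), so $\zeta^t(\tilde g_k c)$ has $k$-th coordinate $g_k$. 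The identical argument with $J=(t_{-1},t)$ realizes any prescribed element of $F_n(t,1)$ as the last coordinate. When $n=2$ the correction lemma is vacuous, since $F_2^c=F_2'$, and the one-piece element already works.

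Finally, to prove the correction lemma I would use the elements $a_0,\dots,a_{n-1}$ of \cref{prop:modelsplitting}: they form a basis of $\Ab(F_n)\cong\Z^n$, and $a_1,\dots,a_{n-2}$ are supported on intervals whose closures lie in $(0,1)$, so $\chi_0$ and $\chi_1$ vanish on them and the classes $[a_1],\dots,[a_{n-2}]$ form a basis of $F_n^c/F_n'\cong\Z^{n-2}$. By proximality of $F_n$ (\cref{cit:proximal}), each $a_i$ can be conjugated by an element of $F_n$ so that its compact support is pushed inside the prescribed interval $J$; conjugation does not change the class in $\Ab(F_n)$, so $F_n(J)$ then contains representatives of a basis of $F_n^c/F_n'$ and hence surjects onto it. I expect this correction lemma — together with the mild care needed to choose which auxiliary interval to ``fix up'' a given coordinate in without disturbing the others — to be the only genuinely non-formal ingredient; everything else is routine bookkeeping with restrictions of piecewise linear homeomorphisms.
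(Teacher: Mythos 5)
Your construction of $\zeta^t$, the injectivity argument, the surjectivity of the first map by gluing, and the observation that $\chi_1=0$ forces the last coordinate into $F_n(t,1)$ all coincide with the paper's proof. The only genuine divergence is the final step, surjectivity onto the factors. The paper takes the identity-extension $g$ of the prescribed coordinate, uses proximality to find $c$ with $g^c$ supported compactly in the adjacent interval, and observes that $[c,g]=(g^c)^{-1}g$ is automatically in $F_n'$, lies in $F_n'(t_\bullet)$, and has the same relevant coordinate as $g$. You instead prove an auxiliary lemma that $F_n(J)\twoheadrightarrow F_n^c/F_n'\cong\Z^{n-2}$ for every $J$ compactly contained in $(0,1)$ (correct, via conjugates of $a_1,\dots,a_{n-2}$) and multiply by a correcting element of prescribed abelianization class supported in a disjoint interval. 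Both are valid and really amount to the same idea; the commutator trick is slightly more economical because membership in $F_n'$ comes for free, with no need to identify $F_n^c/F_n'$. One small caveat: ``surjects onto each factor'' should be read as each of the \emph{two} displayed factors, and the application in \cref{prop:isolatedlamps} genuinely needs surjectivity onto the entire direct sum $\bigoplus_{k<0}F_n(t_k,t_{k+1})$, not merely onto each summand separately. Your argument as written corrects one summand at a time using $J=(t_{k-1},t_k)$, which would disturb the $(k-1)$-st coordinate if several coordinates are prescribed at once. The fix is immediate --- glue all prescribed coordinates first and place the single correcting interval $J$ inside $(t,1)$, away from every $[t_k,t_{k+1}]$ --- but it should be said.
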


\begin{proof}
Every element of $F_n(t_\bullet)$ fixes every $t_k$ for $k \leq 0$, so the restrictions to $[t_{k}, t_{k+1}]$ ($k<0$) and to $[t, 1]$ define a map
\[\zeta^t \colon F_n(t_\bullet) \to \left( \prod_{k < 0} F_n[t_{k}, t_{k+1}] \right) \times F_n[t, 1].\]
Since every element fixes a neighborhood of $t$ and each $t_k$, the restrictions actually belong to $F_n(t_{k}, t_{k+1})$ and $F_n(t, 1]$. Moreover, by \cref{rem:fixzero}, if $g \in F_n'(t_\bullet)$ then $g$ fixes a neighborhood~$I$ of $0$, which implies that $g|_{[t_{k}, t_{k+1}]}$ is the identity for $|k|$ large enough. All in all we have proved that the homomorphism $\zeta^t$ can be redefined with the desired target:
\[\zeta^t \colon F_n(t_\bullet) \to \left( \bigoplus_{k < 0} F_n(t_{k}, t_{k+1}) \right) \times F_n(t, 1].\]
Since $F_n(t_\bullet)$ acts faithfully on $[0, 1]$, and the action fixes $t_k$ for all $k\le 0$, $\zeta^t$ is injective. Moreover, given an element $h$ in the direct sum, we can extend its non-trivial coordinates to elements of $F_n$, and multiply them together to obtain an element $g \in F_n$ such that $\zeta^t(g) = h$. The fact that the restrictions of $h$ to $[t, 1)$ and to $[t_{k}, t_{k+1}]$ for each $k<0$ fix neighborhoods of the respective endpoints ensures that $g \in F_n(t_\bullet)$. Thus, $\zeta^t$ is an isomorphism.

If $g \in F_n'(t_\bullet)$ then $\chi_1(g) = 0$ and so the restriction of $g$ to $[t, 1]$ defines an element of $F_n(t, 1)$. Thus we obtain the injective homomorphism
\[\zeta^t \colon F_n'(t_\bullet) \to \left( \bigoplus_{k < 0} F_n(t_{k}, t_{k+1}) \right) \times F_n(t, 1).\]
We are left to prove that the image surjects onto both factors. We prove this for the factor $F_n(t, 1)$, and the other proof is similar. Let $g \in F_n(t_\bullet) \cap F_n[0, 1)$ be an element with prescribed image in $F_n(t, 1)$. Then $g \in F_n^c$, so using proximality (\cref{cit:proximal}) we find an element $c \in F_n$ such that $g^c$ is supported on a compact subset of $(t_{-1}, t)$. Then $(g^c)^{-1} g = [c, g] \in F_n'(t_\bullet)$ and has the same image as $g$ in $F_n(t, 1)$.
\end{proof}

We are now ready to prove a version of \cref{intro:thm:lamplike} for the poset $\LL_{(t)}$, which will make it easier to prove the actual theorem for $\LL_t$. Recall the poset $\Hyp(F_n^c \wr \Z)^{subst}$ of substandard actions of $F_n^c \wr \Z$, that is, (non-elliptic) actions dominated by the standard action on the Bass--Serre tree.

\begin{proposition}
\label{prop:isolatedlamps}

Let $t \in (0, r)$. Then $\LL_{(t)}$ is isomorphic to $\Hyp(F_n^c \wr \Z)^{subst}$.
\end{proposition}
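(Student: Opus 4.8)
The plan is to identify $\LL_{(t)}$ with the poset $\Conf_\sigma^{RH}(F_n^c\wr\Z)$ of right-heavy $\sigma$-confining subsets, which is isomorphic to $\Hyp(F_n^c\wr\Z)^{subst}$ by \cref{prop:substandard}. By \cref{lem:splitreps:isolated} every element of $\LL_{(t)}$ has a $(t)$-split representative, and by \cref{lem:splitreps:lamplighters} every right-heavy $\sigma$-confining subset has a split representative; so it suffices to construct an isomorphism of \emph{preordered sets} between the $(t)$-split confining subsets lying in $\LL_{(t)}$ (ordered by $\preceq_\chi$) and the split right-heavy $\sigma$-confining subsets of $F_n^c\wr\Z$ (ordered by $\preceq_\sigma$), then pass to equivalence classes.

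The dictionary is as follows. A $(t)$-split $Q\in\LL_{(t)}$ is completely determined by its \emph{negative part} $N(Q)\coloneqq\pi_{<0}(\zeta^t(Q))\subseteq\bigoplus_{k<0}F_n(t_k,t_{k+1})$, where $\zeta^t$ is the isomorphism of \cref{lem:Fnt:isolated:iso}: indeed $g\in F_n'(t_\bullet)$ lies in $Q$ iff $\pi_{<0}(\zeta^t(g))\in N(Q)$, since $g|_{[0,t]}$ is determined by $(g|_{[t_k,t_{k+1}]})_{k<0}$, and by \cref{lem:Fnt:isolated:iso} every tuple in $\bigoplus_{k<0}F_n(t_k,t_{k+1})$ is realized. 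Dually a split right-heavy $R$ is determined by $\pi_{<0}(R)\subseteq L_{<0}(F_n^c)$. Using \cref{prop:rstab:open} I would fix an isomorphism $F_n(t_{-1},t_0)\xrightarrow{\sim}F_n^c$ and transport it by conjugation by powers of $a$ to isomorphisms $\phi_k\colon F_n(t_k,t_{k+1})\xrightarrow{\sim}F_n^c$ ($k<0$), chosen so that the induced isomorphism $\bigoplus_{k<0}F_n(t_k,t_{k+1})\xrightarrow{\sim}L_{<0}(F_n^c)$ carries the operation $\pi_{<0}(\zeta^t(g))\mapsto\pi_{<0}(\zeta^t(g^a))$ to the shift $\sigma$ (the coordinate $-1$ ``spilling over'' into the $F_n(t_0,1)$-factor on the Thompson side, and into $L_{\ge 0}(F_n^c)$ on the lamplighter side). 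Then define $\Theta(Q)\coloneqq\widetilde{N(Q)}\cdot L_{\ge 0}(F_n^c)$, where $\widetilde{N(Q)}$ is the image of $N(Q)$.

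Then I would check three things. (1) $\Theta(Q)$ is a split right-heavy $\sigma$-confining subset iff $Q$ is a $\chi$-confining subset contained in $F_n'(t_\bullet)$: symmetry and \GetInConf\ are automatic on both sides (the latter because $1\in N(Q)$ always and $\Theta(Q)$ is right-heavy, using also that any $h\in F_n'$ can be conjugated by a high power of $a$ into $F_n'(t_\bullet)$ with trivial negative part), while \StayInConf\ and \ProdConf\ for $Q$ translate—using $\chi(a_i)=0$ for $i\ge 1$ (so conjugation by those $a_i$ fixes $[0,t]$ and is harmless) and the identity $Q^{a}\subseteq Q\Leftrightarrow\sigma(N(Q))\subseteq N(Q)$—precisely into $\sigma(\Theta(Q))\subseteq\Theta(Q)$ and $\sigma^m(\Theta(Q)\cdot\Theta(Q))\subseteq\Theta(Q)$, using $L(F_n^c)=L_{<0}(F_n^c)\times L_{\ge 0}(F_n^c)$ and $\sigma(L_{\ge 0})\subseteq L_{\ge 0}$. (2) $\Theta$ is a bijection onto the split right-heavy $\sigma$-confining subsets, the inverse sending $R$ to $\{g\in F_n'(t_\bullet)\mid \pi_{<0}(\zeta^t(g))$ lies in the image of $\pi_{<0}(R)\}$ (surjectivity of $\pi_{<0}\circ\zeta^t$ from \cref{lem:Fnt:isolated:iso} makes this work, and also identifies the bottom elements $[F_n'(t_\bullet)]\leftrightarrow[L(F_n^c)]$). (3) $\Theta$ is monotone in both directions: by \cref{cor:conjugate} applied with $a$ (resp.\ $z$, which has $\mathrm{id}(z)=1>0$), $Q_1\preceq_\chi Q_2$ iff $Q_2^{a^k}\subseteq Q_1$ for some $k\ge 0$ iff, reading off negative parts, $\sigma^k(\Theta(Q_2))\subseteq\Theta(Q_1)$ for some $k\ge 0$ iff $\Theta(Q_1)\preceq_\sigma\Theta(Q_2)$. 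Composing with \cref{prop:substandard} yields $\LL_{(t)}\cong\Hyp(F_n^c\wr\Z)^{subst}$.

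The main obstacle is conceptual rather than computational: conjugation by $a$ does \emph{not} make $F_n'(t_\bullet)$ into a clean bilateral shift system—it contracts the positive factor $F_n(t_0,1)$ into a proper subgroup of itself and $\bigcap_j F_n'(t_{\bullet,\le j})$ is nontrivial—so there is no isomorphism of ascending systems $(F_n'(t_\bullet),\mathrm{conj}_a)\cong(L_{\ge 0}(F_n^c),\sigma)$. What saves the argument is that, by \cref{prop:contains}, every confining subset contains all of $F_n'(t_0,1)$, hence is determined by its negative part alone; on the lamplighter side this is exactly right-heaviness $R\supseteq L_{\ge 0}(F_n^c)$, which is precisely why only \emph{substandard} structures appear. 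The remaining work—verifying the three confining axioms translate under the dictionary, being careful about which powers of $a$ are needed and about the fact that $\bigoplus_{k<0}F_n(t_k,t_{k+1})$ is genuinely the full projection of $F_n'(t_\bullet)$—is routine but must be carried out with care.
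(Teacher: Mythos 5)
Your proposal is correct and follows essentially the same route as the paper: pass to $(t)$-split and split representatives, use \cref{prop:rstab:open} and conjugation by powers of $a$ to build coordinate isomorphisms $F_n(t_k,t_{k+1})\to F_n^c$ intertwining conjugation by $a$ with the shift $\sigma$, define the correspondence on negative parts (your $\Theta(Q)=\widetilde{N(Q)}\cdot L_{\ge 0}(F_n^c)$ is the paper's $\Phi(Q)$, the $\pi_{<0}$-preimage of $\pi_{<0}(\theta^t(Q))$), verify the confining axioms on both sides, and get order-preservation in both directions from \cref{cor:conjugate}. Your closing observation that \cref{prop:contains} is what forces right-heaviness, and hence why only substandard structures appear, is exactly the mechanism at work in the paper.
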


\begin{proof}
By \cref{prop:rstab:open}, there exists an isomorphism $\eta^t_{-1} \colon F_n(t_{-1}, t) \to F_n^c$. For all $k < 0$ define the isomorphism $\eta^t_{k-1} \colon F_n(t_{k-1}, t_{k}) \to F_n^c$ via $\eta^t_{k-1}(g) = \eta^t_{-1}(g^{a^{|k|}})$. This defines an isomorphism
\[
\eta^t \colon \left( \bigoplus_{k < 0} F_n(t_{k}, t_{k+1}) \right) \times F_n(t, 1) \to \left( \bigoplus_{k < 0} F_n^c \right) \times F_n(t, 1),
\]
acting as $\eta_k^t$ on the $F_n(t_k,t_{k+1})$ factor and the identity on the $F_n(t,1)$ factor. Recall the map $\zeta^t$ from \cref{lem:Fnt:isolated:iso}, and set $\theta^t\coloneqq \eta^t\zeta^t$, so $\theta^t$ is an injective homomorphism
\[
\theta^t \colon F_n'(t_\bullet) \to \left( \bigoplus_{k < 0} F_n^c \right) \times F_n(t, 1).
\]
We adapt the definition of the shift $\sigma$ to consider it as a (non-injective, surjective) endomorphism
\[
\sigma \colon \bigoplus_{k < 0} F_n^c \to \bigoplus_{k < 0} F_n^c \quad\text{ defined via }\quad \sigma(g)_i = g_{i-1}.
\]
Now, modulo the right factor of $F_n(t, 1)$, the map $\theta^t$ intertwines conjugation by $a$ with the shift $\sigma$:
\begin{equation}
\label{intertwine}
\pi_{<0} (\theta^t(g^a)) = \pi_{<0}((\sigma\times\id)\theta^t (g)) = \sigma(\pi_{<0}(\theta^t(g)));
\end{equation}
this is a direct consequence of the way we defined $\theta^t$.

We are now ready to define the desired isomorphism $\LL_{(t)}\to \Hyp(F_n^c \wr \Z)^{subst}$, which we will denote by $\Phi$. We identify $\Hyp(F_n^c \wr \Z)^{subst}$ with $\Conf_\sigma^{RH}(F_n^c \wr \Z)$, the poset of (equivalence classes of) right-heavy confining subsets, including $[L(F_n^c)]$ (\cref{prop:substandard}). By \cref{lem:splitreps:isolated}, we are allowed to work with $(t)$-split confining subsets, in the sense of \cref{def:splitreps:isolated}. For a $(t)$-split confining subset $Q \in \LL_{(t)}$, we define $\Phi(Q) \subseteq L(F_n^c)$ to be
\[\Phi(Q) \coloneqq \{ g \in L(F_n^c) \mid \pi_{<0}(g) \in \pi_{<0}(\theta^t(Q)) \}.\]
In words, $\Phi(Q)$ is the set of elements in $L(F_n^c)$ whose projection to the negative coordinates matches that of the image of some element of $Q$ under the map $\theta^t$. Equivalently, $\Phi(Q)$ is the largest subset of $L(F_n^c)$ that satisfies
\[\pi_{<0}(\Phi(Q)) = \pi_{<0}(\theta^t(Q)).\]

We start by noticing that $\Phi(Q)$ is right-heavy, and moreover it is split, in the sense of \cref{def:splitreps:lamplighters}. It is clearly symmetric, and the fact that it is a confining subset is a consequence of the key property of the map $\Phi$:
\begin{equation}
\label{intertwine:Q}
\sigma(\pi_{<0}(\Phi(Q))) = \pi_{<0}(\sigma(\Phi(Q))) = \pi_{<0}(\theta^t(Q^a)),
\end{equation}
which is just a rephrasing of \cref{intertwine}. We check the three properties, as they are stated in \cref{rem:shift}, appealing to \cref{intertwine:Q} without further mention. For \StayInConf:
\[\sigma(\pi_{<0}(\Phi(Q))) = \pi_{<0}(\theta^t(Q^a)) \subseteq \pi_{<0}(\theta^t(Q)).\]
\GetInConf\ is clear since $\Phi(Q)$ is right-heavy. As for \ProdConf, let $k \geq 0$ be such that $(Q \cdot Q)^{a^k} \subseteq Q$, which exists by \cref{lem:conf:cyclic}. Then
\[\sigma^k(\pi_{<0}(\Phi(Q) \cdot \Phi(Q))) = \pi_{<0}(\theta^t((Q \cdot Q)^{a^k})) \subseteq \pi_{<0}(\theta^t(Q))= \pi_{<0}( \Phi(Q)).\]

Two $(t)$-split confining subsets in $F_n$ can be distinguished by their restrictions to $[0, t]$. Similarly, two split confining subsets of $L(F_n^c)$ can be distinguished by their images under the projection $\pi_{<0}$. This implies that $Q_1 \subseteq Q_2$ if and only if $\Phi(Q_1) \subseteq \Phi(Q_2)$, for $(t)$-split confining subsets $Q_1$ and $Q_2$. Thanks to \cref{cor:conjugate}, we can deduce that the preorders $\preceq_\chi, \preceq_\sigma$ are also well-behaved, as we now explain. First, $Q_1 \preceq_\chi Q_2$ if and only if $Q_2^{a^k} \subseteq Q_1$ for some $k\ge 0$, which is equivalent to $Q_2^{a^k} \cdot F_n'(t, 1) \subseteq Q_1$ for some $k\ge 0$. This happens if and only if $\Phi(Q_2^{a^k} \cdot F_n'(t, 1)) \subseteq \Phi(Q_1)$, or equivalently $\sigma^k(\Phi(Q_2))\cdot L_{\geq 0}(F_n^c) \subseteq \Phi(Q_1)$. This in turn is equivalent to saying $\sigma^k(\Phi(Q_2)) \subseteq \Phi(Q_1)$ for some $k\ge 0$, which is the same as $\Phi(Q_1) \preceq_\sigma \Phi(Q_2)$. In the second, respectively third-to-last, equivalence, note that the reason we moved to a $(t)$-split representative (using \cref{lem:splitreps:isolated}), respectively from a split representative (using \cref{lem:splitreps:lamplighters}), is because $\Phi$ is only defined on $(t)$-split representatives, and moreover always produces a split representative. The fact that $Q_1\preceq_\chi Q_2$ is equivalent to $\Phi(Q_1)\preceq_\sigma \Phi(Q_2)$ tells us that $\Phi$ is well defined on equivalence classes $[Q]$, and is injective.

By now, $\Phi$ is a well defined injective poset map from $\LL_{(t)}$ to $\Conf_\sigma^{RH}(F_n^c \wr \Z)$. To see that $\Phi$ is an isomorphism, we can now simply construct the inverse $\Phi^{-1}$, defined on a split representative $R \in \Conf_\sigma^{RH}(F_n^c \wr \Z)$ (which exists thanks to \cref{lem:splitreps:lamplighters}) via
\[\Phi^{-1}(R) \coloneqq \{ g \in F_n'(t_\bullet) \mid \pi_{<0}(\theta^t (g)) \in \pi_{<0}(R) \},\]
and apply analogs of all the above arguments to this. The fact that there exist elements of $F_n'(t_\bullet)$ that achieve a given image under $\pi_{<0}\theta^t$ is part of \cref{lem:Fnt:isolated:iso}.
\end{proof}

\begin{remark}
\label{rem:lamplike:subgroup}
    Under the isomorphism in \cref{prop:isolatedlamps}, the property of being represented by a confining subgroup is preserved. Indeed, consider an element of $\LL_{(t)}$ represented by a confining subgroup $Q$. By \cref{lem:splitreps:isolated}, we may assume that $Q$ is $(t)$-split, and still a subgroup. Then the corresponding element of $\Conf_\sigma^{RH}(F_n^c \wr \Z)$ is $\Phi(Q) = \pi_{<0}^{-1}(\pi_{<0} \theta^t(Q))$, which is a group since all maps involved are homomorphisms. Conversely, let $R \in \Conf_\sigma^{RH}(F_n^c \wr \Z)$ be a right-heavy confining subgroup. By \cref{lem:splitreps:char:lamplighters}, we may assume that $R$ is split, and still a subgroup. Then the corresponding element of $\LL_{(t)}$ is represented by $\Phi^{-1}(R) = (\pi_{<0} \theta^t)^{-1}(\pi_{<0}(R))$, which again is a group since all maps involved are homomorphisms.

    The same correspondence will hold for all of the isomorphisms in \cref{intro:thm:lamplike}, with the same proof. As a consequence, \cref{ex:machado} can be translated into an example of a $\chi$-confining subset of $F_n$ that is not equivalent to any $\chi$-confining subgroup.
\end{remark}

Note that the non-strictly confining subset $L(F_n^c)$ in $\Conf_\sigma^{RH}(F_n^c \wr \Z)$ is mapped under $\Phi^{-1}$ to the strictly confining subset $F_n'(t_\bullet)$ in $\LL_{(t)}$; in particular, this isomorphism is just an isomorphism of abstract posets, and does not always respect the notions of lineal and quasi-parabolic.

This allows us to obtain part of the largeness statement in \cref{intro:thm:local}:

\begin{corollary}
\label{cor:P(N):LLt}

For each $t \in (0, r)$ the poset $\LL_t$ contains a copy of $\mathcal{P(\N)}$.
\end{corollary}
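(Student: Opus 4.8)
\textbf{Proof plan for Corollary \ref{cor:P(N):LLt}.}
The plan is to deduce this directly from the isomorphism $\LL_{(t)} \cong \Hyp(F_n^c \wr \Z)^{subst}$ established in \cref{prop:isolatedlamps}, combined with the largeness results for lamplighters proved in \cref{sec:lamplighters}. First I would observe that $\LL_{(t)}$ is a subposet of $\LL_t$, so it suffices to find a copy of $\mathcal{P}(\N)$ inside $\LL_{(t)}$, and hence inside $\Conf_\sigma^{RH}(F_n^c \wr \Z) \cong \Hyp(F_n^c \wr \Z)^{subst}$. Then I would invoke \cref{cor:P(N):subgroups}, which says precisely that $\Conf_\sigma^{RH}(\Gamma \wr \Z)$ contains a copy of $\mathcal{P}(\N)$ whenever $\Gamma$ contains an infinite direct sum of nontrivial subgroups (or an infinite free product). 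So the only thing to verify is that $\Gamma = F_n^c$ satisfies this hypothesis.

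The key step — and really the only nontrivial point — is checking that $F_n^c$ contains an infinite direct sum of nontrivial subgroups. This is straightforward: pick a sequence of pairwise disjoint open subintervals $J_1, J_2, \ldots \subseteq (0,1)$ with $n$-ary rational endpoints, and for each $i$ let $H_i \coloneqq F_n(J_i)$, the rigid stabilizer of $J_i$, which is a nontrivial subgroup of $F_n^c$ (indeed isomorphic to $F_n^c$ by \cref{prop:rstab:open}). Elements supported on disjoint intervals commute, and an element of $\langle H_1, H_2, \ldots\rangle$ determines and is determined by its restrictions to the $J_i$, all but finitely many of which are trivial; hence $\langle H_i \mid i \in \N\rangle \cong \bigoplus_{i \in \N} H_i$. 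Thus $F_n^c$ contains an infinite direct sum, and \cref{cor:P(N):subgroups} applies.

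Assembling these pieces: $\Conf_\sigma^{RH}(F_n^c \wr \Z)$ contains a copy of $\mathcal{P}(\N)$ by \cref{cor:P(N):subgroups}; this is isomorphic to $\Hyp(F_n^c \wr \Z)^{subst}$ by \cref{prop:substandard}; which is isomorphic to $\LL_{(t)}$ by \cref{prop:isolatedlamps}; which is a subposet of $\LL_t$ by \cref{def:Fnt:isolated}. Composing these embeddings produces the desired copy of $\mathcal{P}(\N)$ inside $\LL_t$. I do not anticipate any real obstacle here — the corollary is essentially a bookkeeping consequence of the preceding machinery — the only thing requiring a sentence of justification is the presence of the infinite direct sum in $F_n^c$, which is a standard feature of Thompson-like groups acting on the interval and is exactly the kind of fact flagged in \cref{ex:P(N):subgroups}.
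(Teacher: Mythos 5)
Your proposal is correct and follows exactly the paper's route: the paper also deduces the corollary by taking the copy of $\mathcal{P}(\N)$ in $\Hyp(F_n^c \wr \Z)^{subst}$ furnished by \cref{cor:P(N):subgroups} (via \cref{ex:P(N):subgroups}) and transporting it into $\LL_{(t)} \subseteq \LL_t$ through the isomorphism of \cref{prop:isolatedlamps}. Your explicit verification that $F_n^c$ contains an infinite direct sum of rigid stabilizers of disjoint intervals is precisely the fact the paper leaves implicit in \cref{ex:P(N):subgroups}.
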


\begin{proof}
\cref{ex:P(N):subgroups} gives a copy of $\mathcal{P}(\N)$ inside $\Hyp(F_n^c \wr \Z)^{subst}$. \cref{prop:isolatedlamps} translates this into a copy of $\mathcal{P}(\N)$ in $\LL_{(t)}$ and hence in $\LL_t$. 
\end{proof}

We are now ready to prove \cref{intro:thm:lamplike}. The fact that elements can act non-trivially near $t$ will require us to treat values of $t$ with different arithmetic properties separately, so we split these proofs up into subsections.

\subsection{Irrational lamps}

The irrational case is easiest. Let $t \in (0, r) \setminus \Q$. Then $t_k = t.a^k$ is also irrational for all $k \leq 0$, because $a^k$ is a rational function. Now \cref{lem:fix:irrational} implies that every element of $g \in F_n'$ that fixes $t_k$ also fixes a neighborhood of $t_k$. It follows that $\LL_t = \LL_{(t)}$ in this case, and so \cref{prop:isolatedlamps} implies:

\begin{proposition}[\cref{intro:thm:lamplike} part (i)]
\label{prop:irrationallamps}

If $t$ is irrational, then $\LL_t$ is isomorphic to $\Hyp(F_n^c \wr \Z)^{subst}$.\qed
\end{proposition}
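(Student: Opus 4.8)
The statement I am asked to prove is \cref{intro:thm:lamplike} part (i): if $t \in (0,r)$ is irrational, then $\LL_t \cong \Hyp(F_n^c \wr \Z)^{subst}$. The plan is to reduce this immediately to \cref{prop:isolatedlamps}, which already gives $\LL_{(t)} \cong \Hyp(F_n^c \wr \Z)^{subst}$, by showing that in the irrational case the subposet $\LL_{(t)}$ coincides with the full $\LL_t$. Recall that by \cref{def:Fnt:isolated}, $\LL_{(t)}$ consists of those $Q \in \LL_t$ all of whose elements fix a \emph{neighborhood} of $t$ (equivalently, $Q \subseteq F_n'(t_\bullet)$), whereas $\LL_t$ only requires that $t$ itself be a global fixed point. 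So the whole content is that, for irrational $t$, fixing $t$ forces fixing a neighborhood of $t$.

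\textbf{Key steps.} First I would note that since $t$ is irrational and each $a^k$ is a rational (piecewise-linear with rational breakpoints) function, each point $t_k = t.a^k$ is again irrational — this is because a rational affine map $x \mapsto n^j x + y$ with $y \in \Z[1/n]$ sends irrationals to irrationals, so in particular $t_k \notin \Q$ for all $k \in \Z$, and in particular for all $k \le 0$. Second, I would invoke \cref{lem:fix:irrational}: any $g \in F_n$ fixing an irrational point $t_k$ must fix a whole neighborhood of $t_k$. Applying this with $g$ ranging over a $\chi$-confining subset $Q \in \LL_t$ and using \cref{lem:LLt:orbit} (which guarantees that $t_k$ is a global fixed point of $Q$ for every $k \le 0$), every element of $Q$ fixes a neighborhood of every $t_k$, $k \le 0$; that is, $Q \subseteq F_n'(t_\bullet)$, so $[Q] \in \LL_{(t)}$. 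Hence $\LL_t \subseteq \LL_{(t)}$ as subposets of $\Conf^\ast_\chi(F_n)$, and the reverse inclusion is trivial by definition, so $\LL_t = \LL_{(t)}$. Finally, \cref{prop:isolatedlamps} identifies $\LL_{(t)}$ with $\Hyp(F_n^c \wr \Z)^{subst}$, completing the proof.

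\textbf{Main obstacle.} Honestly, for part (i) there is essentially no obstacle — everything has been set up so that the irrational case collapses to an already-proved proposition via the clean dichotomy in \cref{lem:fix:irrational}. The one subtlety worth being careful about is that $\LL_t$ is defined on the level of equivalence classes, so I must make sure the argument respects the equivalence relation: one should check that if $[Q] \in \LL_t$ (meaning $Q$ has a representative fixing $t$), then one may run the above argument on such a representative, concluding $[Q] \in \LL_{(t)}$; this is fine because the argument above shows any representative fixing $t$ automatically lies in $F_n'(t_\bullet)$, and the remark after \cref{def:lamplike:moving} together with \cref{cor:conjugate} already records that these properties are equivalence-invariant. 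The genuinely hard work is all deferred to the rational and $n$-ary cases (parts (ii) and (iii)), where elements fixing $t$ can still act nontrivially arbitrarily close to $t$ and one cannot simply reduce to $\LL_{(t)}$.
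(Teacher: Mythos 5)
Your proposal is correct and follows exactly the same route as the paper: observe that each $t_k = t.a^k$ is irrational since $a^k$ is a rational function, apply \cref{lem:fix:irrational} so that fixing $t_k$ forces fixing a neighborhood of $t_k$, conclude $\LL_t = \LL_{(t)}$, and invoke \cref{prop:isolatedlamps}. Your extra care about equivalence classes is a fine (if unneeded) bit of bookkeeping; nothing is missing.
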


\subsection{$n$-ary lamps}

The $n$-ary case is also not that difficult, in fact we will see it is quite analogous to the irrational case. Let $t \in (0, r) \cap \Z[1/n]$. Then $t_k \in \Z[1/n]$ as well for all $k \leq 0$. In this case, elements of $F_n'$ can fix $t_k$ without fixing any neighborhood thereof, so we need to reframe the arguments for \cref{prop:isolatedlamps} taking this into account. This requires only minor changes, so we give the precise statements, but only a brief indication for the proofs.

We start by giving the following definition in general, since it will also be used in the rational case.

\begin{definition}
\label{def:Fnt}

Let $t \in (0, r)$. We define
\[F_n[t_\bullet] \coloneqq \{ g \in F_n \mid g \text{ fixes } t_k \text{ for all } k \leq 0 \} \leq F_n\]
and
\[F_n'[t_\bullet] \coloneqq F_n[t_\bullet] \cap F_n'.\]
\end{definition}

Recall from \cref{lem:LLt:orbit} that $Q \in \LL_t$ (that is, $Q$ fixes $t$) if and only if $Q \subseteq F_n[t_\bullet]$ (that is, $Q$ fixes $t_k$ for all $k \leq 0$).

\medskip

This next result is proved just as in \cref{lem:Fnt:isolated:iso}.

\begin{lemma}
\label{lem:Fnt:iso}

Suppose that $t \in (0, r) \cap \Z[1/n]$. The restrictions $F_n[t_\bullet]|_{[t, 1]}$ and $F_n[t_\bullet]|_{[t_{k}, t_{k+1}]}$ yield an isomorphism
\[\zeta^t \colon F_n[t_\bullet] \to \left( \bigoplus_{k < 0} F_n[t_{k}, t_{k+1}] \right) \times F_n[t, 1]\]
and an injective homomorphism
\[\zeta^t \colon F_n'[t_\bullet] \to \left( \bigoplus_{k < 0} F_n[t_{k}, t_{k+1}] \right) \times F_n[t, 1)\]
that surjects onto each factor. \qed
\end{lemma}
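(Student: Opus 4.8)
The plan is to follow the proof of \cref{lem:Fnt:isolated:iso} essentially verbatim, adjusting only for the fact that elements of $F_n[t_\bullet]$ need not fix \emph{neighborhoods} of the points $t_k$, merely the points themselves; this is exactly why the target now involves the rigid stabilizers $F_n[t_k,t_{k+1}]$ of the \emph{closed} intervals rather than of the open ones. First I would note that each $t_k\in\Z[1/n]$ (since $t\in\Z[1/n]$ and $a^k\in F_n$ preserves $\Z[1/n]\cap[0,1]$), so any $g\in F_n[t_\bullet]$ restricts on $[t_k,t_{k+1}]$ ($k<0$) and on $[t,1]$ to an $F_n$-like homeomorphism of that subinterval, i.e.\ to a well-defined element of $F_n[t_k,t_{k+1}]$ (resp.\ $F_n[t,1]$) under the standard rescaling identification. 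This gives a homomorphism $\zeta^t\colon F_n[t_\bullet]\to\left(\prod_{k<0}F_n[t_k,t_{k+1}]\right)\times F_n[t,1]$. By \cref{rem:fixzero} — which, as remarked there, applies to $F_n[t_\bullet]$ — every $g\in F_n[t_\bullet]$ fixes a neighborhood of $0$, so $g|_{[t_k,t_{k+1}]}$ is trivial for all but finitely many $k$, and hence $\zeta^t$ actually lands in the direct sum $\bigoplus_{k<0}F_n[t_k,t_{k+1}]$.

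Injectivity is immediate, since $F_n[t_\bullet]\le F_n$ acts faithfully on $[0,1]$ and fixes every $t_k$ with $k\le0$, so $g$ is recovered from its restrictions to the $[t_k,t_{k+1}]$ and to $[t,1]$. For surjectivity of $\zeta^t$ onto the direct sum, given $h$ with only finitely many nontrivial coordinates, extend each nontrivial coordinate $h_k\in F_n[t_k,t_{k+1}]$, together with the $F_n[t,1]$-coordinate, to elements of $F_n$ supported in the respective closed intervals; as these have pairwise disjoint supports they commute, and their product $g\in F_n$ satisfies $\zeta^t(g)=h$ and lies in $F_n[t_\bullet]$ because every $t_j$, $j\le0$, is an endpoint of, or lies below, each of these intervals. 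Thus $\zeta^t$ is an isomorphism. For the $F_n'$ version: if $g\in F_n'[t_\bullet]$ then $\chi_1(g)=0$, so $g$ fixes a neighborhood of $1$, whence $g|_{[t,1]}\in F_n[t,1)$, and we obtain the asserted injective homomorphism $\zeta^t\colon F_n'[t_\bullet]\to\left(\bigoplus_{k<0}F_n[t_k,t_{k+1}]\right)\times F_n[t,1)$.

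Finally, to see that this image surjects onto each factor I would reuse the commutator trick from the last paragraph of the proof of \cref{lem:Fnt:isolated:iso}. To hit a prescribed element of $F_n[t,1)$, pick $g_0\in F_n$ realizing it and supported in $[t,1)$; then $g_0\in F_n^c\cap F_n[t_\bullet]$, and by proximality (\cref{cit:proximal}) there is $c\in F_n$ with $g_0^{\,c}$ supported in a compact subset of $(t_{-1},t)$. Then $[c,g_0]=(g_0^{\,c})^{-1}g_0\in F_n'$ has support in $(t_{-1},1)$, fixes every $t_j$ with $j\le0$ (both $g_0$ and $g_0^{\,c}$ fix $t$, and the support misses $t_j$ for $j\le-1$ up to $t_{-1}$ which is fixed since $g_0^{\,c}$ is supported in the \emph{open} interval), so $[c,g_0]\in F_n'[t_\bullet]$, and its restriction to $[t,1]$ equals $g_0|_{[t,1]}$, the prescribed element. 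The argument for each factor $F_n[t_k,t_{k+1}]$ is identical, conjugating $g_0$ (now supported in $(t_k,t_{k+1})$) so that $g_0^{\,c}$ lands in an adjacent interval $(t_{k-1},t_k)$ or $(t_{k+1},t_{k+2})$; for $n=2$ one can skip the commutator trick altogether since $F_2^c=F_2'$. I expect the only mild subtleties to be confirming that the restriction maps really are valued in the closed-interval rigid stabilizers and that the correction terms $g_0^{\,c}$ can be placed with support disjoint from the targeted interval while keeping all $t_j$ ($j\le0$) fixed; both are routine given the $n$-arity of the $t_k$ and \cref{cit:proximal}, and no ideas beyond those in the proof of \cref{lem:Fnt:isolated:iso} are required.
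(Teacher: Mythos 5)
Your proposal is correct and follows essentially the same route as the paper, which explicitly proves this lemma "just as in" \cref{lem:Fnt:isolated:iso}: the only changes are replacing open-interval rigid stabilizers by closed ones (justified by the $n$-arity of the $t_k$ via \cref{lem:fix:nary}), and your handling of \cref{rem:fixzero}, injectivity, extension for surjectivity, and the proximality/commutator trick for surjectivity onto each factor all match the intended adaptation. The only cosmetic quibble is that no "rescaling identification" is needed, since $F_n[t_k,t_{k+1}]$ is by definition already a subgroup of $F_n$.
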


This leads to the following:

\begin{proposition}[\cref{intro:thm:lamplike} part (ii)]
\label{prop:narylamps}

Let $t \in (0, r) \cap \Z[1/n]$. Then $\LL_t$ is isomorphic to $\Hyp(F_n \wr \Z)^{subst}$. 
\end{proposition}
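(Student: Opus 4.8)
The plan is to adapt the argument of \cref{prop:isolatedlamps} almost verbatim, replacing the group $F_n^c$ by $F_n$ and the isomorphisms $\eta^t_{k-1} \colon F_n(t_{k-1},t_k) \to F_n^c$ coming from \cref{prop:rstab:open} by isomorphisms $F_n[t_{k-1},t_k] \to F_n$ coming from \cref{cit:bieristrebel}. The starting point is \cref{lem:Fnt:iso}, which gives the injective homomorphism $\zeta^t \colon F_n'[t_\bullet] \to \big(\bigoplus_{k<0} F_n[t_k,t_{k+1}]\big) \times F_n[t,1)$ that surjects onto each factor. Since each $t_k$ lies in $(0,1)\cap\Z[1/n]$ (because $t$ does and $a$ is an $n$-ary homeomorphism), \cref{cit:bieristrebel} supplies an $F_n$-like homeomorphism $\Phi_{t_{-1},t}\colon (t_{-1},t)\to(0,1)$ conjugating $F_n[t_{-1},t]$ to $F_n$; call the induced isomorphism $\eta^t_{-1}\colon F_n[t_{-1},t]\to F_n$, and for $k<0$ define $\eta^t_{k-1}(g)\coloneqq \eta^t_{-1}(g^{a^{|k|}})$ exactly as before. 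Composing, we obtain an injective homomorphism $\theta^t\coloneqq\eta^t\zeta^t\colon F_n'[t_\bullet]\to \big(\bigoplus_{k<0}F_n\big)\times F_n[t,1)$ which, modulo the right-hand factor, intertwines conjugation by $a$ with the shift $\sigma$ on $\bigoplus_{k<0}F_n = L_{<0}(F_n)$, the analog of \cref{intertwine}.

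The next step is the split-representative reduction. Here we use $t$-split confining subsets in the sense of \cref{def:splitreps:thompson} (not the $(t)$-split ones from \cref{def:splitreps:isolated}), since for $n$-ary $t$ elements of $Q$ can act nontrivially arbitrarily close to $t$; \cref{lem:splitreps:thompson} guarantees every $Q\in\LL_t$ is equivalent to its $t$-split representative $Q\cdot F_n'[t,1)$. We then define, for a $t$-split $Q\in\LL_t$,
\[\Phi(Q)\coloneqq \{g\in L(F_n)\mid \pi_{<0}(g)\in\pi_{<0}(\theta^t(Q))\},\]
the largest subset of $L(F_n)$ with $\pi_{<0}(\Phi(Q))=\pi_{<0}(\theta^t(Q))$. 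Exactly as in the proof of \cref{prop:isolatedlamps}, $\Phi(Q)$ is right-heavy and split; the intertwining relation $\sigma(\pi_{<0}(\Phi(Q)))=\pi_{<0}(\theta^t(Q^a))$ lets one verify \StayInConf, \GetInConf\ (immediate from right-heaviness), and \ProdConf\ (using \cref{lem:conf:cyclic} to get $k\ge0$ with $(Q\cdot Q)^{a^k}\subseteq Q$). So $\Phi$ lands in $\Conf_\sigma^{RH}(F_n\wr\Z)$, which by \cref{prop:substandard} is identified with $\Hyp(F_n\wr\Z)^{subst}$.

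It remains to check that $\Phi$ is a poset isomorphism. Two $t$-split confining subsets are distinguished by their restrictions to $[0,t]$, equivalently by $\pi_{<0}\circ\theta^t$, and two split confining subsets of $L(F_n)$ by $\pi_{<0}$; hence $Q_1\subseteq Q_2 \iff \Phi(Q_1)\subseteq\Phi(Q_2)$ for $t$-split representatives, and then \cref{cor:conjugate} upgrades this to $Q_1\preceq_\chi Q_2 \iff \Phi(Q_1)\preceq_\sigma\Phi(Q_2)$ by the same chain of equivalences as in \cref{prop:isolatedlamps} (moving between split representatives via \cref{lem:splitreps:thompson} and \cref{lem:splitreps:lamplighters}). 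So $\Phi$ is a well-defined injective poset map on equivalence classes, and one constructs the inverse on split representatives $R\in\Conf_\sigma^{RH}(F_n\wr\Z)$ by $\Phi^{-1}(R)\coloneqq\{g\in F_n'[t_\bullet]\mid \pi_{<0}(\theta^t(g))\in\pi_{<0}(R)\}$, with the surjectivity statement in \cref{lem:Fnt:iso} ensuring this is nonempty on each fiber; this finishes the isomorphism $\LL_t\cong\Hyp(F_n\wr\Z)^{subst}$. The only genuinely new ingredient compared to \cref{prop:isolatedlamps} is the replacement of \cref{prop:rstab:open} by \cref{cit:bieristrebel}, which is exactly where the $n$-ary hypothesis on $t$ is used (so that $t_{-1}$ and $t$ are both $n$-ary and the conjugating homeomorphism exists); everything else is a routine transcription, so I do not expect a real obstacle, just some care in confirming the $\preceq_\chi \leftrightarrow \preceq_\sigma$ equivalence goes through with $F_n$ in place of $F_n^c$.
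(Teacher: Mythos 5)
Your proposal is correct and follows exactly the route the paper takes: the paper's own proof of this proposition simply says it is "exactly analogous to \cref{prop:isolatedlamps}," with the only change being that $\eta^t$ is defined via \cref{cit:bieristrebel} (instead of \cref{prop:rstab:open}) and has range $\left(\bigoplus_{k<0} F_n\right) \times F_n[t,1)$, which is precisely the substitution you carry out, together with the correct switch from $(t)$-split to $t$-split representatives via \cref{lem:splitreps:thompson} and the use of \cref{lem:Fnt:iso} in place of \cref{lem:Fnt:isolated:iso}.
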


\begin{proof} The proof is exactly analogous to \cref{prop:isolatedlamps}. The only difference is that the map $\eta^t$ is defined via \cref{cit:bieristrebel} and has a different range:
\[\eta^t \colon \left( \bigoplus_{k < 0} F_n[t_{k}, t_{k+1}] \right) \times F_n[t, 1) \to \left( \bigoplus_{k < 0} F_n \right) \times F_n[t, 1),\]
which hence also changes the definition of $\theta^t$.
\end{proof}

\subsection{Rational lamps}

Finally, we deal with the non-$n$-ary rational case, which is the most difficult. Let $t \in (0, r) \cap \Q \setminus \Z[1/n]$. In this case, there are elements of $g \in F_n'$ that fix $t$ but do not fix a neighborhood of $t$, like in the $n$-ary case, but we also have that the slope is not allowed to change at $t$, like in the irrational case. This makes it so that we do not have the splitting behavior that we had in the other cases, and so $\LL_t$ is bound to behave differently than the two cases we have seen so far. The key is to separate the analysis into two steps, according to the short exact sequence
\[1 \to F_n[0, t) \times F_n(t, 0] \to \Fix_{F_n}(t) \to \Z \to 1\]
from \cref{lem:fix:rational}. We will see that the kernel of the map $\Fix_{F_n}(t) \to \Z $ corresponds to the case of isolated lamps that we have already treated. The quotient is studied in the following lemma:

\begin{lemma}
\label{lem:rational:slopes}

The slope homomorphisms of $F_n[t_\bullet]$ at $t_k : k \leq 0$ yield a homomorphism
\[\xi^t \colon F_n[t_\bullet] \to \bigoplus_{k \leq 0} \Z\] 
whose kernel is precisely $F_n(t_\bullet)$, and which admits a homomorphic section
\[\xi_{sec}^t \colon \bigoplus_{k \leq 0} \Z \to F_n'[t_\bullet].\]
Moreover, $\xi^t$ intertwines conjugation by $a$ with the shift:
\[\xi^t(g^a) = \sigma(\xi^t(g)); \qquad \xi^t_{sec}(\sigma(h)) = \xi^t_{sec}(h)^a.\]
\end{lemma}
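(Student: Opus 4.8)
The plan is to build the homomorphism $\xi^t$ coordinatewise from the slope homomorphisms supplied by \cref{lem:fix:rational}, then produce the section by explicitly picking model elements near each $t_k$, and finally check the intertwining relations by a direct computation. Since $t \in (0,r) \cap \Q \setminus \Z[1/n]$, each $t_k = t.a^k$ is again a rational, non-$n$-ary point (because $a^k$ is a piecewise-rational function with $n$-ary data, and conjugating $\Fix_{F_n}(t)$ by $a^k$ carries it to $\Fix_{F_n}(t_k)$ while preserving the property of the fixed point being non-$n$-ary — this is exactly the orbit-invariance statement in the last paragraph of the proof of \cref{lem:fix:rational}). Thus for each $k \leq 0$ the map $g \mapsto \log_n g'(t_k)$ is a well-defined homomorphism $\Fix_{F_n}(t_k) \to \Z$, and restricting to $F_n[t_\bullet] \leq \bigcap_{k\le 0}\Fix_{F_n}(t_k)$ we get $\xi^t = (\xi^t_k)_{k \le 0}$. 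To see the image lands in the \emph{direct sum} rather than the product, note that any $g \in F_n[t_\bullet]$ is piecewise linear with finitely many breakpoints, hence has slope $1$ at all but finitely many of the $t_k$; in fact, as $t_k \to 0$ and $g$ has some fixed slope $n^{\chi_0(g)}$ in a neighborhood of $0$, and since $g$ fixes the $t_k$ which accumulate at $0$, that slope must be $1$ and $g$ fixes a neighborhood of $0$ (cf. \cref{rem:fixzero}), so $\xi^t_k(g) = 0$ for $k$ sufficiently negative. The identification $\ker(\xi^t) = F_n(t_\bullet)$ is then the statement that $g \in F_n[t_\bullet]$ has slope $1$ at every $t_k$ if and only if it fixes a neighborhood of every $t_k$: one direction is trivial, and the other is because a piecewise linear homeomorphism fixing $t_k$ with $g'_+(t_k) = g'_-(t_k) = 1$ and $t_k$ not a breakpoint (as $t_k \notin \Z[1/n]$) must be the identity on a neighborhood of $t_k$.

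For the section, I would proceed one coordinate at a time and then assemble. The key input is the last sentence of \cref{lem:fix:rational}: for the fixed point $t = t_0$, there is an element $s_0 \in F_n'$ supported on an arbitrarily small open neighborhood of $t_0$ with $\log_n s_0'(t_0) = 1$ (the generator of the image). Choosing the neighborhood small enough that it is disjoint from all the other $t_k$ ($k < 0$), we get $\xi^t(s_0) = e_0$, the $0$-th basis vector of $\bigoplus_{k\le 0}\Z$. Now set $s_k \coloneqq s_0^{a^k}$ for $k < 0$; since conjugation by $a$ shifts the points $t_j \mapsto t_{j+1}$, the element $s_k$ is supported on a small neighborhood of $t_k$ disjoint from all other $t_j$, lies in $F_n'[t_\bullet]$, and satisfies $\xi^t(s_k) = e_k$. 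Because the supports of the $s_k$ are pairwise disjoint, they commute, so the assignment $e_k \mapsto s_k$ extends to a homomorphism $\xi^t_{sec} \colon \bigoplus_{k\le 0}\Z \to F_n'[t_\bullet]$ (a finitely supported tuple $(m_k)$ maps to the finite product $\prod_k s_k^{m_k}$, well-defined by commutativity and finiteness of support). That $\xi^t \circ \xi^t_{sec} = \id$ is immediate since $\xi^t(s_k) = e_k$; in particular $\xi^t$ is surjective, which also re-confirms that the image of $\xi^t$ is all of $\bigoplus_{k\le 0}\Z$.

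For the intertwining relations, I would argue as follows. Conjugation by $a$ maps $F_n[t_\bullet]$ to itself: if $g$ fixes every $t_k$ then $g^a = a^{-1}ga$ fixes $t_k.a^{-1} \cdot \text{(stuff)}$... more precisely $t_k.g^a = t_k.a^{-1}.g.a$; but I want to track slopes, so better: $(g^a)'(t_k) = g'(t_k.a^{-1}) \cdot (\text{chain rule factors that cancel})$, and since $t_k.a^{-1} = t_{k-1}$, we get $\log_n (g^a)'(t_k) = \log_n g'(t_{k-1}) = \xi^t_{k-1}(g)$, which is exactly $\sigma(\xi^t(g))_k$ with $\sigma$ the shift defined earlier. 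Hence $\xi^t(g^a) = \sigma(\xi^t(g))$. Wait — one must be careful at the right end $k = 0$: $t_0.a^{-1} = t_{-1}$, fine, so no boundary issue since we only shift among $\{t_k : k \le 0\}$ and $\sigma$ as used here is the non-injective endomorphism of $\bigoplus_{k\le 0}\Z$ sending $e_k \mapsto e_{k+1}$ (with $e_0 \mapsto 0$); this matches, since $\xi^t_0(g^a) = \xi^t_{-1}(g)$ is coordinate $0$ of $\sigma(\xi^t(g))$ only if $\sigma$ shifts toward $0$, i.e. $\sigma(h)_k = h_{k-1}$, consistent with the lamplighter shift convention in \cref{sec:lamplighters}. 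The dual relation $\xi^t_{sec}(\sigma(h)) = \xi^t_{sec}(h)^a$ then follows by checking on basis vectors: $\xi^t_{sec}(\sigma(e_k)) = \xi^t_{sec}(e_{k+1}) = s_{k+1} = s_0^{a^{k+1}} = (s_0^{a^k})^a = s_k^a = \xi^t_{sec}(e_k)^a$ for $k < 0$, and for $k = 0$, $\sigma(e_0) = 0$ so both sides are the identity — consistent since there is no $s_1$ inside $F_n'[t_\bullet]$. I expect the main obstacle to be purely bookkeeping: being careful that all the $t_k$ are genuinely non-$n$-ary rationals so the slope homomorphisms are defined (handled by orbit-invariance in \cref{lem:fix:rational}), that the direct-sum rather than product is the right target (handled by piecewise-linearity plus the behavior near $0$), and that $\xi^t_{sec}$ is well-defined as a homomorphism (handled by disjoint supports, hence commutation). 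None of these is deep, but the sign/shift conventions at the $k=0$ boundary need to be stated precisely to match the earlier definition of $\sigma$ on $\bigoplus_{k<0}$ versus $\bigoplus_{k\le 0}$.
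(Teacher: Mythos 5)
Your proposal is correct and follows essentially the same route as the paper: assemble the coordinatewise slope homomorphisms from \cref{lem:fix:rational} (using orbit-invariance to know each $t_k$ is rational non-$n$-ary), land in the direct sum via \cref{rem:fixzero}, identify the kernel with $F_n(t_\bullet)$ via ``slope $1$ at a non-breakpoint fixed point implies locally trivial,'' build the section from disjointly supported conjugates $s_k=s_0^{a^k}$ of one small-support generator, and verify the intertwining by the chain rule.

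Two small imprecisions, neither fatal. First, you cannot generally arrange $\log_n s_0'(t_0)=1$: by \cref{lem:fix:rational} the image of the slope map is $o\Z$ for some $o\ge 1$ (the order of $n$ modulo the prime-to-$n$ part of the denominator of $t$), so one must take $\log_n s_0'(t_0)=o$ and identify $o\Z$ with $\Z$, as the paper does explicitly; otherwise $\xi^t$ is not surjective and admits no section. Second, at the $k=0$ boundary your claim that ``both sides are the identity'' is false: $\xi^t_{sec}(\sigma(e_0))=\xi^t_{sec}(0)=1$, but $\xi^t_{sec}(e_0)^a=s_0^a$ is a nontrivial element supported near $t_1$ (it does lie in $F_n'[t_\bullet]$, indeed in $F_n'(t,1)$). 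This wrinkle is really in the statement of the lemma itself — the paper's own proof glosses over it — and it is harmless where the relation is used in \cref{prop:rationallamps}, since there the identity is only needed modulo $F_n'[t,1)$, which absorbs $s_0^a$; but you should state the relation as holding only up to that factor (or only for $h$ with trivial $0$-coordinate) rather than assert an equality of group elements that fails.
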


\begin{proof}
Every element of $F_n[t_\bullet]$ fixes every $t_k : k \leq 0$, and the slope of an element cannot change at any $t_k$, so for each $k\le 0$ the map $F_n[t_\bullet] \to \Z$ given by $g\mapsto \log_n g'(t_k)$ is a well defined homomorphism. This map is defined on all of $\Fix_{F_n}(t_k)$, and \cref{lem:fix:rational} states that the image is non-trivial and generated by some positive integer $o$ independent of $k$. Moreover, a preimage of the generator can be chosen to be in $F_n'$ and supported on an arbitrarily small neighborhood of $t_k$. Thus, by identifying the image $o \Z$ with $\Z$, we obtain a surjective homomorphism $\xi^t_k \colon F_n[t_\bullet] \to \Z$, whose kernel coincides with the subgroup of elements fixing a neighborhood of $t_k$. Putting all of these together defines the map $\xi^t$ from the statement; the image lies in the direct sum by \cref{rem:fixzero}. The kernel is the subgroup of elements that fix pointwise a neighborhood of $t_k$ for all $k \leq 0$, i.e., $F_n(t_\bullet)$.

Now fix an element $g_0 \in F_n'$ such that $\log_n g_0'(t) = o$, and such that the support of $g_0$ is contained in a neighborhood $I$ of $t$ such that the translates $I.a^k$ are pairwise disjoint for all $k\le 0$. We define $g_k \coloneqq g_0^{a^k} \in F_n'$ for all $k < 0$, so $\log_n g_k'(t_k) = o$ and the support of $g_k$ is contained in $I.a^k$. The $g_k$ pairwise commute, and naturally generate a copy of $\oplus_{k \leq 0} \Z$, which defines the section $\xi^t_{sec}$. 

Finally, $\xi^t(g^a) = \sigma(\xi^t(g))$ just follows from the fact that the slope of $g^a$ at $t_{k+1} = t_k.a$ coincides with the slope of $g$ at $t_k$, and $\xi^t_{sec}(\sigma(h)) = \xi^t_{sec}(h)^a$ follows directly from the way we chose the $g_k$. 
\end{proof}

\begin{proposition}[\cref{intro:thm:lamplike} part (iii)]
\label{prop:rationallamps}

Let $t \in (0, r) \cap \Q \setminus \Z[1/n]$. Then there exist order-preserving retractions $\Xi \colon \LL_t \to \Hyp(\Z \wr \Z)^{subst}$ and $\Cap \colon \LL_t \to \LL_{(t)}$.
\end{proposition}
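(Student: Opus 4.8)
The plan is to build the two retractions separately, exploiting the short exact sequence $1 \to F_n(t_\bullet) \to F_n[t_\bullet] \xrightarrow{\xi^t} \bigoplus_{k\le 0}\Z \to 1$ from \cref{lem:rational:slopes}, together with the section $\xi^t_{sec}$. The retraction $\Cap$ should be ``intersect with $F_n(t_\bullet)$'': given $Q \in \LL_t$, set $\Cap(Q) \coloneqq Q \cap F_n'(t_\bullet)$. Since $F_n'(t_\bullet)$ is itself a $\chi$-confining subgroup (\cref{def:Fnt:isolated}), \cref{prop:join} shows $Q \cap F_n'(t_\bullet)$ is $\chi$-confining, and it obviously fixes a neighborhood of $t$, so it lies in $\LL_{(t)}$. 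To see this is a retraction, note that if $Q \in \LL_{(t)}$ already, then $Q \subseteq F_n'(t_\bullet)$, so $\Cap(Q) = Q$; and \cref{prop:join} shows $\Cap$ is order-preserving (in fact it is the poset map induced by taking joins with the fixed element $[F_n'(t_\bullet)]$, restricted appropriately). One must check $\Cap$ is well-defined on equivalence classes: if $Q_1 \preceq_\chi Q_2$, i.e.\ $Q_2^{a^k} \subseteq Q_1$ for some $k\ge 0$ by \cref{cor:conjugate}, then $(Q_2\cap F_n'(t_\bullet))^{a^k} = Q_2^{a^k}\cap F_n'(t_\bullet)^{a^k} \subseteq Q_1 \cap F_n'(t_\bullet)$ since $a$ normalizes $F_n'(t_\bullet)$ (as $\chi(a)>0$ and $F_n'(t_\bullet)$ is a $\chi$-confining subgroup, \StayInConf\ plus conjugating back gives normalization).

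For the retraction $\Xi \colon \LL_t \to \Hyp(\Z\wr\Z)^{subst}$, the idea is to push a confining subset $Q \in \LL_t$ forward along the slope homomorphism $\xi^t \colon F_n[t_\bullet] \to \bigoplus_{k\le 0}\Z$. Concretely, one identifies $\Hyp(\Z\wr\Z)^{subst}$ with $\Conf_\sigma^{RH}(\Z\wr\Z)$ via \cref{prop:substandard}, where the relevant group is $\Z\wr\Z = L(\Z)\rtimes\Z$ with $L(\Z) = \bigoplus_{k\in\Z}\Z$; then one defines $\Xi(Q)$ to be the right-heavy subset of $L(\Z)$ whose projection onto the non-positive coordinates $\bigoplus_{k\le 0}\Z$ equals $\xi^t(Q\cap F_n'[t_\bullet]) = \xi^t(Q)$ (note $Q \subseteq F_n' \subseteq F_n$ and $Q$ fixes all $t_k$ by \cref{lem:LLt:orbit}, so $Q \subseteq F_n'[t_\bullet]$ and $\xi^t$ is defined on $Q$). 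That is, $\Xi(Q) \coloneqq \{ g \in L(\Z) \mid \pi_{\le 0}(g) \in \xi^t(Q) \}$, reindexing so that the coordinates $\le 0$ of $L(\Z)$ match the index set of $\bigoplus_{k\le 0}\Z$ and coordinates $>0$ are the ``right-heavy'' part. Because $\xi^t$ is a homomorphism intertwining conjugation by $a$ with the shift $\sigma$ (\cref{lem:rational:slopes}), the verification that $\Xi(Q)$ satisfies \StayInConf, \GetInConf, \ProdConf\ as in \cref{rem:shift} is essentially identical to the computations in the proof of \cref{prop:isolatedlamps}: \StayInConf\ follows from $\sigma(\xi^t(Q)) = \xi^t(Q^a) \subseteq \xi^t(Q)$, \GetInConf\ from right-heaviness, and \ProdConf\ from choosing $k$ with $(Q\cdot Q)^{a^k}\subseteq Q$ (which exists by \cref{lem:conf:cyclic}) and applying $\xi^t$. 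Well-definedness on equivalence classes and order-preservation follow from \cref{cor:conjugate}: $Q_1 \preceq_\chi Q_2$ gives $Q_2^{a^k}\subseteq Q_1$, hence $\sigma^k(\xi^t(Q_2)) = \xi^t(Q_2^{a^k}) \subseteq \xi^t(Q_1)$, hence $\Xi(Q_1)\preceq_\sigma\Xi(Q_2)$; here one should pass to split representatives as in \cref{prop:isolatedlamps} so that inclusion of projections is equivalent to the preorder.

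That $\Xi$ is a \emph{retraction} requires exhibiting a section $\Hyp(\Z\wr\Z)^{subst} \hookrightarrow \LL_t$ whose composite with $\Xi$ is the identity. The natural candidate uses $\xi^t_{sec} \colon \bigoplus_{k\le 0}\Z \to F_n'[t_\bullet]$: given a split right-heavy $R \in \Conf_\sigma^{RH}(\Z\wr\Z)$ with projection $\pi_{\le 0}(R) \subseteq \bigoplus_{k\le 0}\Z$, form the subgroup $\widetilde R \coloneqq \xi^t_{sec}(\pi_{\le 0}(R)) \cdot F_n'[t,1) \subseteq F_n'$. One checks $\widetilde R$ is $\chi$-confining (it is a subgroup containing $F_n'[t,1)$, right-heavy in the relevant sense, and \StayInConf\ follows from $\xi^t_{sec}$ intertwining $\sigma$ with conjugation by $a$), that it fixes $t$ hence lies in $\LL_t$, and that $\xi^t(\widetilde R) = \pi_{\le 0}(R)$ since $\xi^t \circ \xi^t_{sec} = \id$ and $\xi^t(F_n'[t,1)) = 0$; therefore $\Xi(\widetilde R) = R$ up to equivalence. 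Order-preservation of this section again comes from \cref{cor:conjugate} and the intertwining relation. The main obstacle I anticipate is bookkeeping around the non-$n$-ary rational point: ensuring that the slope homomorphism $\xi^t$ really has the stated image (independent of $k$, generated by the order $o$ of $n$ mod $q'$) and that $\xi^t_{sec}$ can be built with pairwise-disjoint supports $I.a^k$ that stay away from $0$ and accumulate only at $t$ — this is where one must lean carefully on \cref{lem:fix:rational}, \cref{rem:fixzero}, and proximality (\cref{cit:proximal}), and where one must be careful that $\Xi$ and $\Cap$ are genuinely \emph{retractions} (idempotent onto their images) rather than merely surjective order-preserving maps, which forces the sections above to be chosen compatibly with the split-representative normal forms.
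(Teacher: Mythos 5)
Your proposal is correct and follows essentially the same route as the paper: the same $\Cap(Q) = Q \cap F_n'(t_\bullet)$ justified via \cref{prop:join}, and the same $\Xi$ defined on $t$-split representatives by $\Xi(Q) = \{ g \in L(\Z) \mid \pi_{\leq 0}(g) \in \xi^t(Q) \}$ with section $\xi^t_{sec}(\pi_{\leq 0}(R)) \cdot F_n'[t,1)$, all of whose verifications reduce to the intertwining relations of \cref{lem:rational:slopes} and \cref{cor:conjugate} exactly as you indicate. The bookkeeping concerns you raise at the end are already discharged by \cref{lem:rational:slopes} (which packages \cref{lem:fix:rational} and the disjoint-support construction of the section), and the paper's notion of poset retraction only requires an order-preserving section, which your constructions provide.
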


The third part of \cref{intro:thm:lamplike} is a combination of \cref{prop:rationallamps} and \cref{prop:isolatedlamps}. Recall that a retraction of posets is an order-preserving surjection that admits an order-preserving section.

\begin{proof}
First we construct the retraction $\Xi$. Recall that $\Hyp(\Z \wr \Z)^{subst}$ is identified with $\Conf_\sigma^{RH}(\Z \wr \Z)$: right-heavy confining subsets, including the trivial one $[L(\Z)]$. The definition of right-heavy requires that these contain $L_{\geq 0}(\Gamma)$, however in order to make the notations match up, let us define for the rest of this proof a confining subset of $\Z \wr \Z$ as right-heavy if it contains $L_{> 0}(\Gamma)$, and define $\Conf_\sigma^{RH}(\Z \wr \Z)$ accordingly. This of course makes no difference, up to applying one shift.

As in \cref{prop:isolatedlamps}, we define the map $\Xi \colon \LL_t \to \Conf_\sigma^{RH}(\Z \wr \Z)$ at the level of $t$-split representatives, which is allowed by \cref{lem:splitreps:thompson}. For a $t$-split confining subset $Q \in \LL_t$, we define $\Xi(Q) \subseteq L(\Z)$ as
\[\Xi(Q) \coloneqq \{ g \in L(\Z) \mid \pi_{\leq 0}(g) \in \xi^t(Q) \},\]
where $\xi^t$ is given by \cref{lem:rational:slopes}. Equivalently, $\Xi(Q)$ is the largest subset of $L(\Z)$ that satisfies
\[\pi_{\leq 0}(\Xi(Q)) = \xi^t(Q).\]
As in \cref{prop:isolatedlamps}, we notice that $\Xi(Q)$ is right-heavy, split, and the fact that it is a confining subset follows using the same arguments, replacing \cref{intertwine:Q} with the simpler:
\[\sigma(\pi_{\leq 0}(\Xi(Q)) = \xi^t(Q^a),\]
which is ensured by \cref{lem:rational:slopes}. Next we claim that $\Xi$ is order-preserving. Suppose that $Q_1\preceq_\chi Q_2$, so by \cref{cor:conjugate} we can choose $k\ge 0$ such that $Q_2^{a^k}\subseteq Q_1$. Then $\xi^t(Q_2^{a^k})\subseteq \xi^t(Q_1)$, so $\sigma^k(\xi^t(Q_2))\subseteq \xi^t(Q_1)$. This shows that $\sigma^k(\Xi(Q_2))\subseteq \Xi(Q_1)$, and so $\Xi(Q_1)\preceq_\sigma \Xi(Q_2)$.

We are left to prove that it is a retraction. For a right-heavy, split confining subset $R \subseteq L(\Z)$ we define $\Xi_{sec}(R) \subseteq F_n'$ as
\[\Xi_{sec}(R) = \xi_{sec}(\pi_{\leq 0}(R)) F_n'[t, 1),\]
where $\xi_{sec}^t$ is given by \cref{lem:rational:slopes}. Then $\Xi_{sec}(R)$ is right heavy, $t$-split by \cref{lem:splitreps:thompson}, and the fact that it is a confining subset is similarly a consequence of:
\[\Xi_{sec}(\sigma(R)L_{>0}(\Z)) = \xi_{sec}(R)^a F_n'[t, 1).\]
That $\Xi_{sec}$ is a section of $\Xi$ is a consequence of the fact that $\xi^t_{sec}$ is a section of $\xi^t$, from \cref{lem:rational:slopes}.

\medskip

Now we construct the retraction $\Cap$. For $Q \in \LL_t$, we define
\[
\Cap(Q) \coloneqq Q \cap F_n'(t_\bullet).
\]
\cref{prop:join}  shows that $\Cap(Q)$ is a confining subset, and by \cref{def:Fnt:isolated}, $\Cap(Q) \in \LL_{(t)}$. Note moreover that $\Cap$ restricts to the identity on $\LL_{(t)}$, and therefore it is a retraction; it remains to show that it is order-preserving.

Suppose that $Q_1 \preceq_\chi Q_2$, and let $k \geq 0$ be such that $Q_2^{a^k} \subseteq Q_1$, given by \cref{cor:conjugate}. Then
\[(Q_2 \cap F_n'(t_\bullet))^{a^k} \subseteq Q_2^{a^k} \cap F_n'(t_\bullet)^{a^k} \subseteq Q_1 \cap F_n'(t_\bullet)\]
which proves that $\Cap(Q_1) \preceq_\chi \Cap Q_2$.
\end{proof}

\begin{remark}
\label{rem:injectivity:rational}

\cref{prop:rationallamps} gives two retractions that exhibit the largeness of the poset $\LL_t$ and give two approaches to study it. These two retractions seem to be independent, and to describe the confining subsets completely, so one might hope that the product $\Xi \times \Cap \colon \LL_t \to \Hyp(\Z \wr \Z)^{subs} \times \LL_{(t)}$, where the right-hand side is equipped with the product order, is an isomorphism of posets. Unfortunately, this is not true.

Some compelling evidence making it seem like $\Xi\times\Cap$ could be an isomorphism is given by the following fact. Suppose that $Q \in \LL_t$ is such that $\Xi(Q) = L_{\geq 0}(\Z)$. Then every element of $Q$ has slope $1$ at $t_k$ (and so it fixes a neighborhood of $t_k$) for all $k < 0$ small enough. Thus, up to conjugation, $Q$ belongs to $\LL_{(t)}$. This shows that the fiber under $\Xi$ of the largest hyperbolic structure in $\Hyp(\Z \wr \Z)^{subs}$ is precisely $\LL_{(t)}$, which is a retract via $\Cap$.

However, this behavior does not persist if we swap the factors. Namely, if $Q \in \LL_t$ is such that $\Cap(Q) = F_n'(t, 1) \in \LL_{(t)}$, then this is only telling us that for every element $g \in Q$ there exists $k < 0$ such that $g$ does not fix any neighborhood of $t_k$. For example, we can choose two distinct elements $g_1, g_2 \in F_n'$ supported in a small neighborhood of $t_0$ with the same slope at $t_0$, and define $Q_i \coloneqq \langle g_i^{a^k} \mid k \leq 0 \rangle$ for each $i=1,2$. This way $\Cap(Q_1) = \Cap(Q_2)$ and $\Xi(Q_1) = \Xi(Q_2)$, but $Q_1$ and $Q_2$ are not equivalent. Thus in fact we see that $\Xi \times \Cap$ is not even injective.
\end{remark}

\section{Comparing lamplike confining subsets}\label{sec:lamplike:plural}

In this section, we study how the posets $\LL_t$ interact with each other for different $t$. The goal is to prove all the statements of \cref{intro:thm:local} regarding lamplike confining subsets. Part (i) has already been shown in \cref{cor:largest}. In the next three subsections we show parts (ii), (iii) and (iv). We retain all the previous notation for the element $a$, the point $r$, the character $\chi$, and so forth.

\subsection{First properties}

We start by clarifying how to parametrize the $\LL_t$ taking $t$ from a real interval as claimed in part (ii) of \cref{intro:thm:local}:

\begin{lemma}
\label{lem:LLt:equality}

For every $t \in (0, r)$ we have an equality of posets $\LL_t = \LL_{t.a^k}$ for all $k \in \Z$.
\end{lemma}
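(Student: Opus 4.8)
The plan is to establish that $\LL_t$ and $\LL_{t.a^k}$ have the same underlying set; since each is a subposet of $\Conf^\ast_\chi(F_n)$ carrying the induced order, this yields the claimed equality of posets. First I would note that $a$ is a one-bump function with support $(0,r)$, hence restricts to a homeomorphism of $(0,r)$ onto itself, so $t.a^k \in (0,r)$ for every $k \in \Z$ and all the posets $\LL_{t.a^k}$ are defined. By \cref{def:lamplike:moving}, $[Q] \in \LL_s$ exactly when $Q$ is equivalent to some $\chi$-confining subset having $s$ as a global fixed point, so the task reduces to transporting such a representative from $t$ to $t.a^k$ and back.

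The key algebraic input I would isolate is that conjugation by an element of $\Z^n$ preserves the $\chi$-equivalence class of a $\chi$-confining subset: given a $\chi$-confining $R \subseteq F_n'$ and $b \in \Z^n$, for $k \geq 0$ large enough one has both $\chi(ba^k) \geq 0$ and $\chi(a^k b^{-1}) \geq 0$, so \StayInConf\ gives $R^{ba^k} \subseteq R$ and $R^{a^k b^{-1}} \subseteq R$; by \cref{cor:conjugate} these say $R \preceq_\chi R^b$ and $R^b \preceq_\chi R$, whence $R^b \sim_\chi R$. Moreover $R^b \subseteq F_n'$ since $F_n'$ is normal, and $R^b$ is again strictly $\chi$-confining because $R^b = F_n'$ forces $R = F_n'$. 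Alongside this I would record the elementary computation, using the right-conjugation convention of \cref{rem:left_right}, that $y.(b^{-1}qb) = ((y.b^{-1}).q).b$; this shows that $R^b$ has $x$ as a global fixed point if and only if $R$ has $x.b^{-1}$ as one. Specializing to $b = a^k$ and $x = t.a^k$, a subset $R$ fixes $t$ if and only if $R^{a^k}$ fixes $t.a^k$.

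Putting these together finishes the argument: if $[Q] \in \LL_t$, pick a representative $Q' \sim_\chi Q$ with $t$ a global fixed point; then ${Q'}^{a^k}$ is strictly $\chi$-confining, equivalent to $Q'$ (hence to $Q$), and has $t.a^k$ as a global fixed point, so $[Q] \in \LL_{t.a^k}$; the reverse inclusion is the same argument with $a^{-k}$ in place of $a^k$. I do not anticipate a genuine obstacle here — the only subtle point is that one must pass to the conjugated representative rather than argue with the literal collection of confining subsets fixing $t$, which is merely contained in (not equal to) the collection fixing $t.a^{-1}$, as recorded in the one-directional \cref{lem:LLt:orbit}.
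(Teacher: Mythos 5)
Your proposal is correct and follows essentially the same route as the paper's (much terser) proof: conjugate a representative fixing $t$ by $a^k$, observe that the conjugate fixes $t.a^k$, and invoke \cref{cor:conjugate} to see that the conjugate is equivalent to the original. You simply spell out the details (the equivalence $R \sim_\chi R^b$ for $b \in \Z^n$ and the transport of fixed points under right conjugation) that the paper leaves implicit.
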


\begin{proof}
For a $\chi$-confining subset $Q$ we have $[Q] \in \LL_t$ if and only if $[Q^{a^k}] \in \LL_{t.a^k}$. Moreover, $Q$ and $Q^{a^k}$ are equivalent by \cref{cor:conjugate}.
\end{proof}

In particular, the $\LL_t$ are indexed by a real interval, namely, we can use $t\in [t_0,t_0.a)$ for any $t_0\in (0,r)$.

The next important property of $\LL_t$ is that it is upper. Recall that a subset of a poset is \emph{upper} if it is closed under taking upper bounds.

\begin{lemma}
\label{lem:LLt:upper}

The subposet $\LL_t$ is upper, both as a subset of $\X$ and as a subset of $\Hyp(F_n)$.
\end{lemma}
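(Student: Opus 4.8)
The goal is to show that if $Q$ is a $\chi$-confining subset with $[Q] \in \LL_t$ (i.e.\ admitting $t$ as a global fixed point), and $R$ is any $\chi$-confining subset with $[Q] \preccurlyeq_\chi [R]$, then $[R] \in \LL_t$ as well. By \cref{cor:conjugate}, the relation $[Q] \preccurlyeq_\chi [R]$ means that $R^{a^k} \subseteq Q$ for some $k \geq 0$. The plan is to pull back the fixed point $t$ along this conjugation: since $Q$ fixes $t$ and $R^{a^k} \subseteq Q$, every element of $R^{a^k}$ fixes $t$, and hence every element of $R$ fixes $t.a^{-k}$. Now $t.a^{-k}$ lies in $(0,r)$ because $a$ restricts to a homeomorphism of $(0,r)$ (in fact $t_{-k} \in (0,r)$ by the standing notation, since the $t_j = t.a^j$ are all in $(0,r)$), so $R$ admits a global fixed point in $(0,r)$, i.e.\ $R \in \LL_{t.a^{-k}}$. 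Finally, by \cref{lem:LLt:equality}, $\LL_{t.a^{-k}} = \LL_t$, so $[R] \in \LL_t$. This shows $\LL_t$ is upper as a subset of $\X$.

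For the statement about $\Hyp(F_n)$: an element of $\Hyp(F_n)$ dominating an element of $\LL_t \subseteq \X = \Hyp_{qp}^{\chi_0}(F_n)$ must itself be quasi-parabolic (lineal and elliptic structures are dominated by, not dominating, quasi-parabolic ones — more precisely, by \cref{prop:linvsqp} the lineal structure $[\pm\chi_0]$ sits \emph{under} $\X_0$, and elliptic under that, while by \cref{thm:global} the two components $\X_0, \X_1$ are incomparable so nothing in $\X_1$ dominates anything in $\X_0$). Hence any element of $\Hyp(F_n)$ dominating an element of $\LL_t$ already lies in $\X_0 = \X$, and we are reduced to the previous paragraph. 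One should double-check the direction of the order convention: "$[Q] \preccurlyeq_\chi [R]$" corresponds to "$[R]$ dominates $[Q]$" in $\Hyp$, and upper means closed under passing to dominating elements, which is exactly what the $a^{-k}$-conjugation argument delivers.

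I do not anticipate a serious obstacle here; the only point requiring mild care is bookkeeping the order conventions (the inclusion-reversing nature of $\preceq$ on generating sets versus $\preceq_\rho$ on confining subsets, and the fact, recorded in \cref{cor:conjugate}, that $Q_1 \preceq_\rho Q_2$ corresponds to $Q_2^{a^k} \subseteq Q_1$) and confirming that $a$ does indeed preserve $(0,r)$ so that the pulled-back fixed point is still an admissible parameter. Everything else is a direct application of \cref{cor:conjugate} and \cref{lem:LLt:equality}.
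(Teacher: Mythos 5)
Your proof is correct and follows the same route as the paper: apply \cref{cor:conjugate} to get $R^{a^k}\subseteq Q\subseteq\Fix_{F_n}(t)$, pull the fixed point back to $t.a^{-k}$, and invoke \cref{lem:LLt:equality}, with the $\Hyp(F_n)$ statement reduced to the $\X$ statement via the global structure theorem. Your bookkeeping of the order conventions matches the paper's, so there is nothing to add.
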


\begin{proof}
By \cref{intro:thm:global}, it suffices to show that $\LL_t$ is upper as a subset of $\X$. We need to show that if $Q_1, Q_2 \in \Conf^\ast_\chi(F_n)$ are such that $[Q_1] \preccurlyeq_\chi [Q_2]$ and $[Q_1] \in \LL_t$, then $[Q_2] \in \LL_t$. By \cref{cor:conjugate} there exists $k \geq 0$ such that $Q_2^{a^k} \subseteq Q_1$. Then $Q_2^{a^k} \subseteq Q_1 \subseteq \Fix_{F_n}(t)$, by definition, which means that $Q_2 \subseteq \Fix_{F_n}(t.a^{-k})$ and thus $[Q_2] \in \LL_{t.a^{-k}} = \LL_t$ by \cref{lem:LLt:equality}.
\end{proof}

Finally, we identify the largest and smallest elements of $\LL_t$:

\begin{lemma}
\label{lem:LLt:largestsmallest}

The poset $\LL_t$ admits a largest element $[F_n'[t, 1)]$, and a smallest element $[F_n'[t_\bullet]]$.
\end{lemma}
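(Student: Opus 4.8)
The plan is to identify the two extreme elements of $\LL_t$ by combining the largest-element result \cref{cor:largest} with the split-representative machinery developed in the previous section.

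\textbf{Largest element.} I would first observe that $F_n'[t,1)$ is a $\chi$-confining subgroup that fixes $t$, so $[F_n'[t,1)] \in \LL_t$. To see it is the largest element of $\LL_t$, take any $Q \in \LL_t$. By \cref{prop:contains}, applied with the point $t \in (0,r)$, there exists $k \geq 0$ such that $F_n'[t.a^k, 1) \subseteq Q$. Since $Q$ fixes $t$, by \cref{lem:LLt:orbit} it also fixes $t_k = t.a^k$ for every $k \leq 0$, so in fact what we want is an inclusion going the other way. The cleaner route: since $Q$ fixes $t$, every element of $Q$ fixes $[t,1]$ pointwise only if $Q \subseteq F_n'[t,1)$, which need not hold; rather, $Q \preceq_\chi F_n'[t,1)$ means $F_n'[t,1)^{a^k} \subseteq Q$ for some $k \geq 0$, i.e., $F_n'[t.a^k, 1) \subseteq Q$, which is exactly what \cref{prop:contains} gives (noting $t.a^k$ can be taken of the form required there, or arguing via \cref{cor:conjugate} that $F_n'[t.a^k,1) \sim_\chi F_n'[t,1)$). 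Hence $[F_n'[t,1)] \preccurlyeq_\chi [Q]$ for all $Q \in \LL_t$, so it is the largest element. (Alternatively, and even more directly: $[F_n'[*,1)]$ is the largest element of all of $\X$ by \cref{cor:largest}, it lies in $\LL_t$ since $F_n'[t,1)$ is a representative fixing $t$, and $\LL_t$ is upper by \cref{lem:LLt:upper} — though being upper is automatic here since it contains the global maximum.)

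\textbf{Smallest element.} For the smallest element I would use \cref{lem:LLt:orbit}: if $Q \in \LL_t$, then $Q$ fixes $t_k$ for all $k \leq 0$, which by \cref{def:Fnt} is exactly the statement $Q \subseteq F_n'[t_\bullet]$. Thus $F_n'[t_\bullet]$ contains every confining subset in $\LL_t$, so $Q \subseteq F_n'[t_\bullet]$ gives $[F_n'[t_\bullet]] \preccurlyeq_\chi [Q]$ for every $Q \in \LL_t$ (using that $Q_1 \subseteq Q_2$ implies $Q_2 \preceq_\chi Q_1$). It remains to check that $F_n'[t_\bullet]$ is itself a $\chi$-confining subset lying in $\LL_t$: it is a subgroup, hence symmetric and satisfies \ProdConf; \StayInConf\ holds because conjugating by $a$ shifts the orbit $\{t_k : k \leq 0\}$ into itself (if $g$ fixes $t_k$ for all $k \leq 0$ then $g^a$ fixes $t_{k+1}$ for all $k \leq 0$, hence all $t_k$ for $k \leq 1$, in particular for $k \leq 0$); and \GetInConf\ follows since given $g \in F_n'$, large enough positive powers of $a$ push the support of $g^{a^{-m}}$ — wait, one conjugates to move breakpoints past $t$, using that $F_n'[t,1) \subseteq F_n'[t_\bullet]$ and \cref{prop:contains}. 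In fact the cleanest argument for \GetInConf\ is: $F_n'[t,1) \subseteq F_n'[t_\bullet]$ is $\chi$-confining (being $F_n'[*,1)$-representative), and any $\chi$-confining subset satisfies \GetInConf\ relative to any character with the right sign, so since $F_n'[t,1) \subseteq F_n'[t_\bullet]$, property \GetInConf\ is inherited. And $F_n'[t_\bullet]$ clearly fixes $t = t_0$, so $[F_n'[t_\bullet]] \in \LL_t$. Finally, it is \emph{strictly} $\chi$-confining because it is a proper subgroup of $F_n'$ (it fixes $t$), so $F_n'[t_\bullet]^{a^k} \subsetneq F_n'[t_\bullet]$ is witnessed by the fact that these are distinct subgroups — or one invokes \cref{prop:contains:subgroup}.

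\textbf{Main obstacle.} The only genuinely delicate point is confirming the confining axioms for $F_n'[t_\bullet]$ cleanly, particularly \GetInConf; but this is really handled by the observation that $F_n'[t_\bullet]$ contains the known confining subset $F_n'[t,1)$ and that \GetInConf\ is monotone under inclusion of confining subsets (a larger set satisfies \GetInConf\ as soon as a subset does). With that remark the whole proof is short: largest element from \cref{prop:contains}/\cref{cor:largest}, smallest element from \cref{lem:LLt:orbit} identifying membership in $\LL_t$ with containment in $F_n'[t_\bullet]$, and the confining axioms for $F_n'[t_\bullet]$ from its being a subgroup containing a confining subset.

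\begin{proof}
That $[F_n'[t,1)]\in\LL_t$ is clear, since $F_n'[t,1)$ is a $\chi$-confining subgroup fixing $t$. Given any $Q\in\LL_t$, \cref{prop:contains} provides $k\ge 0$ with $F_n'[t.a^k,1)\subseteq Q$. Since $F_n'[t.a^k,1)=F_n'[t,1)^{a^k}$ and $\chi(a^k)\ge 0$, \cref{cor:conjugate} gives $[F_n'[t,1)]\preccurlyeq_\chi[Q]$. Hence $[F_n'[t,1)]$ is the largest element of $\LL_t$.

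Next, $F_n'[t_\bullet]$ is a subgroup of $F_n'$, so it is symmetric and satisfies \ProdConf. For \StayInConf, let $g\in F_n'[t_\bullet]$ and $z\in\Z^n$ with $\chi(z)\ge 0$; writing $z=a^m z'$ with $\chi(z')=0$ (so $z'$ fixes each $t_k$, being supported away from $(0,r)$), we see $g^z$ fixes $t_{k-m}$ for all $k\le 0$, hence fixes all $t_j$ for $j\le -m\le 0$ whenever $m\ge 0$; in general since $g^z$ fixes $t_j$ for all $j\le m$ and the orbit $\{t_k:k\le 0\}$ is contained in $\{t_j:j\le m\}$ when $m\ge 0$, we conclude $g^z\in F_n'[t_\bullet]$. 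Since $F_n'[t,1)\subseteq F_n'[t_\bullet]$ and $F_n'[t,1)$ satisfies \GetInConf, so does $F_n'[t_\bullet]$. Thus $F_n'[t_\bullet]$ is a $\chi$-confining subgroup; it fixes $t=t_0$, so $[F_n'[t_\bullet]]\in\LL_t$. It is strictly confining since it is a proper subgroup of $F_n'$, by \cref{prop:contains:subgroup}.

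Finally, by \cref{lem:LLt:orbit}, any $Q\in\LL_t$ fixes $t_k$ for all $k\le 0$, i.e.\ $Q\subseteq F_n'[t_\bullet]$ by \cref{def:Fnt}. Therefore $[F_n'[t_\bullet]]\preccurlyeq_\chi[Q]$ for all $Q\in\LL_t$, so $[F_n'[t_\bullet]]$ is the smallest element of $\LL_t$.
\end{proof}
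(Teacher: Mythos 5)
Your proof is correct and follows essentially the same route as the paper: the largest element comes from \cref{cor:largest} (equivalently, directly from \cref{prop:contains} plus \cref{cor:conjugate}), and the smallest element from the observation via \cref{lem:LLt:orbit} that every $Q \in \LL_t$ is contained in $F_n'[t_\bullet]$. The only difference is that you explicitly verify that $F_n'[t_\bullet]$ is a strictly $\chi$-confining subgroup (a point the paper leaves implicit), and that verification — \StayInConf\ by shifting the fixed orbit, \GetInConf\ inherited from the contained subset $F_n'[t,1)$, strictness from \cref{prop:contains:subgroup} — is sound.
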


\begin{proof}
The equivalence class of $F_n'[t, 1)$ is the largest element in $\X$ by \cref{cor:largest}, so a posteriori also in $\LL_t$. We saw in \cref{def:Fnt} that every element in $\LL_t$ is represented by a confining subset that is contained in $F_n[t_\bullet]$, and so a posteriori in $F_n'[t_\bullet]$; this shows that $[F_n'[t_\bullet]]$ is the smallest element.
\end{proof}

We now compare the smallest elements of $\LL_t$ for two different values of $t$:

\begin{lemma}
\label{lem:LLt:smallest:compare}

The smallest elements of $\LL_t$ and $\LL_s$ are comparable if and only if $t$ and $s$ are in the same $\langle a\rangle$-orbit, in which case these smallest elements coincide.
\end{lemma}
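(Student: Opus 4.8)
The plan is to prove the two directions separately, the \textbf{if} direction being immediate. If $t$ and $s$ lie in the same $\langle a\rangle$-orbit, say $s = t.a^k$, then $\LL_s = \LL_{t.a^k} = \LL_t$ by \cref{lem:LLt:equality}; in particular the smallest elements of $\LL_t$ and $\LL_s$ — which by \cref{lem:LLt:largestsmallest} are $[F_n'[t_\bullet]]$ and $[F_n'[s_\bullet]]$ — literally coincide, and are therefore comparable. So the substance is the \textbf{only if} direction.

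Assume the smallest elements are comparable, say $[F_n'[t_\bullet]] \preccurlyeq_\chi [F_n'[s_\bullet]]$ (otherwise swap $t$ and $s$; the desired conclusion is symmetric). Since $\chi(a) = 1 > 0$, \cref{cor:conjugate} yields an integer $k \geq 0$ with $F_n'[s_\bullet]^{a^k} \subseteq F_n'[t_\bullet]$. Writing $s_l \coloneqq s.a^l$ and using that $F_n'$ is normal in $F_n$, a short computation identifies the left-hand side as the set of all $g \in F_n'$ that fix $s_l$ for every $l \leq k$. Thus the inclusion translates into the purely dynamical assertion: \emph{every element of $F_n'$ that fixes all the points $\{\, s_l : l \leq k \,\}$ also fixes $t$}. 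I would then argue that this forces $t \in \{\, s_l : l \leq k \,\}$, whence $t$ and $s$ share an $\langle a\rangle$-orbit, and conclude by \cref{lem:LLt:equality} that $\LL_t = \LL_s$, so that the smallest elements in fact coincide.

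The key step — and the one I expect to be the main obstacle — is to show that if $t \notin \{\, s_l : l \leq k \,\}$ then some element of $F_n'$ that fixes all the $s_l$ ($l\leq k$) nonetheless moves $t$. Here I would use that $a$ is a one-bump function with support $(0,r)$ and $a'(0) = n > 1$, so $x.a > x$ on $(0,r)$ and the $s_l$ form a strictly increasing sequence accumulating only at $0$; hence $\{\, s_l : l \leq k \,\} \cup \{0\}$ is closed, its complement in $(0,1)$ is a disjoint union of open intervals, and $t$ lies in one of them, call it $J$. It then suffices to produce $g \in F_n'$ supported inside $J$ with $t.g \neq t$. I would do this by choosing $n$-ary points $p' < q'$ in $J$ with $t \in (p',q')$ and $q' - p'$ a power of $n$, identifying $F_n$ with the rigid stabilizer $F_n[p',q']$ via the obvious affine rescaling of $[0,1]$ onto $[p',q']$, and observing that under this identification $[F_n,F_n] = F_n'$ maps into $F_n'$ (a commutator of elements of the subgroup $F_n[p',q']\leq F_n$ lies in $F_n'$), is supported in $[p',q'] \subseteq J$, and acts on $(p',q')$ just as $F_n'$ acts on $(0,1)$, hence minimally since $F_n'$ is proximal — and so minimal — on $(0,1)$ by \cref{cit:proximal}. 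A minimal action moves every interior point, so some such $g$ moves $t$, contradicting the dynamical assertion unless $t$ was one of the $s_l$. The remaining points are routine: checking that the affine rescaling preserves membership in $F_n$ (breakpoints stay in $\Z[1/n]$, slopes stay powers of $n$), and the bookkeeping that an element supported in $[p',q'] \subseteq J$ automatically fixes all of $\{\, s_l : l \leq k \,\}$.
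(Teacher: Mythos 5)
Your proof is correct and follows essentially the same route as the paper's: reduce via \cref{cor:conjugate} to the inclusion $F_n'[s_\bullet]^{a^k}\subseteq F_n'[t_\bullet]$ and observe that the global fixed points of the left-hand side are exactly the points $s.a^l$ for $l\le k$. The only difference is that the paper asserts this last fact without proof, whereas you supply the (correct) justification via rescaled copies of $F_n'$ supported in the complementary intervals acting minimally.
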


\begin{proof}
Suppose that $F_n'[t_\bullet] \preceq_\chi F_n'[s_\bullet]$ for some $t, s \in (0, r)$. By \cref{cor:conjugate} there exists $k \geq 0$ such that $F_n'[s_\bullet]^{a^k} \subseteq F_n'[t_\bullet]$. But the global fixed points of $F_n'[s_\bullet]^{a^k}$ are precisely $(s.a^i)_{i \leq k}$, and therefore $t = s.a^i$ for some $i \leq k$. This shows that $t$ and $s$ are in the same orbit, and then \cref{lem:LLt:equality} applies. The reverse implication also follows from \cref{lem:LLt:equality}. 
\end{proof}

\subsection{Smallest elements}

In this subsection we prove a more delicate result, which completes part (iii) of \cref{intro:thm:local}. It should be understood as formalizing the idea that each $\LL_t$ is ``spread out'' through the whole poset $\X$.

\begin{proposition}
\label{prop:LLt:minimal}

For each $t\in (0,r)$, the smallest element of $\LL_t$ is minimal in $\X$.
\end{proposition}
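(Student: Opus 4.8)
By \cref{lem:LLt:largestsmallest} the smallest element of $\LL_t$ is $[F_n'[t_\bullet]]$, which lies in $\X=\Conf^\ast_\chi(F_n)$. So we must show: if $Q$ is a strictly $\chi$-confining subset with $[Q]\preccurlyeq_\chi[F_n'[t_\bullet]]$, then $[Q]=[F_n'[t_\bullet]]$. By \cref{cor:conjugate}, the hypothesis $[Q]\preccurlyeq_\chi[F_n'[t_\bullet]]$ is equivalent to $F_n'[t_\bullet]^{a^k}\subseteq Q$ for some $k\ge 0$, and what we want to prove, $[F_n'[t_\bullet]]\preccurlyeq_\chi[Q]$, is equivalent to $Q^{a^m}\subseteq F_n'[t_\bullet]$ for some $m\ge 0$; unwinding the definitions, this last condition says precisely that every element of $Q$ fixes $t_i$ for all $i\le -m$. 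Note also that $F_n'[t_\bullet]^{a^k}\subseteq Q$ gives us a great deal of explicit information about $Q$: it contains the rigid stabilizers $F_n(t_j,t_{j+1})$ for every $j<k$, as well as $F_n'[t_k,1)$, and the set of global fixed points of $F_n'[t_\bullet]^{a^k}$ in $(0,1)$ is exactly $\{t_j : j\le k\}$.

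The first step is to reduce to an extreme case by passing to the generated subgroup. Set $R\coloneqq\langle Q\rangle$; as noted after \cref{prop:equiv_to_subgroup}, $R$ is a $\chi$-confining subgroup, it contains $F_n'[t_\bullet]^{a^k}$, and $[R]\preccurlyeq_\chi[Q]$ (take $k=0$ in \cref{cor:conjugate}). Since $R\supseteq F_n'[t_\bullet]^{a^k}$, the set $\Fix_R(0,1)\coloneqq\bigcap_{g\in R}\Fix(g)\cap(0,1)$ is contained in $\{t_j:j\le k\}$, and by the argument in \cref{lem:LLt:orbit} it is closed under $x\mapsto x.a^{-1}$; hence $\Fix_R(0,1)=\{t_j : j\le J\}$ for some $J\le k$, or is empty. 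If $J$ is finite, then $R$ has global fixed point $t_J\in(0,r)$, so $[R]\in\LL_{t_J}=\LL_t$ by \cref{lem:LLt:equality}, and therefore $[F_n'[t_\bullet]]=\min\LL_t\preccurlyeq_\chi[R]\preccurlyeq_\chi[Q]\preccurlyeq_\chi[F_n'[t_\bullet]]$, forcing $[Q]=[F_n'[t_\bullet]]$ as desired. If $\Fix_R(0,1)=\emptyset$, then $R=F_n'$ by \cref{prop:contains:subgroup}; since $\Fix_R(0,1)=\bigcap_{q\in Q}\Fix(q)\cap(0,1)$, this means that \emph{every} $t_j$ is moved by some element of $Q$. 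In this remaining case we will show $Q=F_n'$, which contradicts that $Q$ is strictly $\chi$-confining, completing the proof.

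It remains to treat the case where $Q$ contains, for each $j\in\Z$, an element $q_j$ with $t_j.q_j\ne t_j$, together with the data above ($F_n'[t_k,1)\subseteq Q$ and $F_n(t_j,t_{j+1})\subseteq Q$ for $j<k$). The plan is to propagate the ``wall'' $F_n'[t_k,1)\subseteq Q$ to the left: for each $j<k$ one builds an element of a bounded power of $Q$ that carries $t_{j+1}$ to the left of $t_j$, by using $q_j$ (or its inverse) to cross the point $t_j$ leftward and the rigid stabilizer $F_n(t_j,t_{j+1})\subseteq Q$ to position points correctly within the interval, and then conjugating $F_n'[t_{j+1},1)$ by this element and invoking \cref{lem:products} to absorb bounded powers back into $Q$; iterating yields $F_n'[t_j,1)\subseteq Q$ for all $j\le k$, and since $t_j\to 0$ and $F_n'=\bigcup_{s\in(0,r)}F_n'[s,1)$ (every element of $F_n'$ is compactly supported in $(0,1)$), we conclude $F_n'\subseteq Q$, hence $Q=F_n'$. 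The main obstacle is exactly this last step: one must organize the wall-propagation so that each element of $F_n'$ is reached by a word of \emph{uniformly bounded} length in $Q$ (so that \cref{lem:products} truly gives $Q=F_n'$, rather than merely re-deriving $\langle Q\rangle=F_n'$), which requires producing elements of $Q$ whose supports reach from near $0$ to the right of $t_k$ and careful bookkeeping of displacement sizes, exploiting that conjugation by powers of $a$ preserves $Q$ (by \StayInConf) and expands near $0$ since $a$ has slope $n$ there.
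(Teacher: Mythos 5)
Your overall architecture matches the paper's proof: reduce to the case where no $t_j$ is a global fixed point of $Q$, then use ``crossing'' elements together with the wall $F_n'(t,1)\subseteq Q$ to conclude $Q\sim F_n'$. Your first reduction (via $R=\langle Q\rangle$, \cref{prop:contains:subgroup}, and the orbit argument of \cref{lem:LLt:orbit}) is correct and is an equivalent repackaging of the paper's opening step, which instead argues directly that if some $t_k$ were fixed by all of $Q$ then $[Q]\in\LL_{t_k}=\LL_t$ and hence $[F_n'[t_\bullet]]\preccurlyeq[Q]$. The crossing elements are also essentially the paper's: combining $h_j\in Q$ moving $t_j$ leftward with an element of the rigid stabilizer of $(t_{j-1},t_j)$ and with $h_{j-1}$ gives $q_j\in Q^3$ with $t_j.q_j\le t_{j-1}$. (Minor slip: you should use $F_n'(t_j,t_{j+1})$, not $F_n(t_j,t_{j+1})$, since $Q\subseteq F_n'$ and for $n>2$ the rigid stabilizer in $F_n$ is not contained in $F_n'$; the version inside $F_n'$ still suffices to position points.)

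However, the step you yourself flag as ``the main obstacle'' is the heart of the proof, and your sketch does not resolve it. The iteration you describe --- conjugating $F_n'[t_{j+1},1)\subseteq Q^{N}$ by $q_{j+1}\in Q^3$ to deduce $F_n'[t_j,1)\subseteq Q^{N+6}$ --- produces exponents growing linearly in $|j|$, and applying \cref{lem:products} afterwards to $F_n'[t_j,1)\subseteq Q^{N_j}$ only yields $F_n'[t_j,1)^{a^{M_j}}=F_n'[t_{j+M_j},1)\subseteq Q$ with $M_j$ growing as well, i.e., a wall that has been pushed back to the right; as you note, this merely re-derives $\langle Q\rangle=F_n'$. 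The missing idea is to perform the composition \emph{before} arriving at $t$ and only then conjugate back: for fixed $k<0$, choose $K$ with $(Q^{3|k|})^{a^K}\subseteq Q$ (\cref{lem:products}), build the $|k|$ crossing elements at the shifted indices $-K,-1-K,\dots,k+1-K$, multiply them to obtain $g\in Q^{3|k|}$ with $t_{-K}.g\le t_{k-K}$, and set $g_k\coloneqq g^{a^K}\in Q$. Then $t.g_k\le t_k$, so $F_n'(t_k,1)\subseteq F_n'(t.g_k,1)=F_n'(t,1)^{g_k}\subseteq Q^3$, with the exponent $3$ uniform in $k$; letting $k\to-\infty$ gives $F_n'\subseteq Q^3$ and hence $Q=F_n'$. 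Without this pre-conjugation mechanism (or an equivalent one), the uniform bound you correctly identify as necessary is not obtained, so as written the argument has a genuine gap at its crucial step.
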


\begin{proof}
Let $Q$ be a $\chi$-confining subset such that $Q\preceq_\chi F_n'[t_\bullet]$. We need to show that $Q$ is either equivalent to $F_n'[t_\bullet]$ or to $F_n'$. By \cref{cor:conjugate}, we may assume without loss of generality that $F_n'[t_\bullet] \subseteq Q$. Suppose that $Q$ is not equivalent to $F_n'[t_\bullet]$. We claim that for all $k < 0$ there exists $g_k \in Q$ such that $t.g_k \leq t_k$, where recall that $t_k=t.a^k$. This will imply that
\[
F_n'(t_k, 1) \subseteq F_n'(t.g_k, 1) = F_n'(t, 1)^{g_k} \subseteq Q^3.
\]
Since this is independent of $k$, this shows that $Q$ is equivalent to $F_n'$, as desired.

We are left to prove the claim. We will build our elements $g_k$ in several steps. First, we claim that for each $k \leq 0$ there exists $h_k \in Q$ such that $t_k.h_k \neq t_k$. Indeed, otherwise we would have $[Q] \in \LL_{t_k} = \LL_t$ by \cref{lem:LLt:equality}, and so $F_n'[t_\bullet] \preceq_\chi Q$ by \cref{lem:LLt:largestsmallest}, but this is impossible since we have assumed that $Q$ is not equivalent to $F_n'[t_\bullet]$. Up to replacing $h_k$ by its inverse, we may assume that $t_k.h_k < t_k$. 

Next, we claim that for all $k \leq 0$, there exists $q_k \in Q^3$ such that $t_k.q_k \leq t_{k-1}$. For each $k\le 0$, let $h_k \in Q$ be such that $t_k.h_k < t_k$, given by the previous step. We assume $t_k.h_k \in (t_{k-1}, t_k)$, since otherwise there is nothing to prove. Now choose some $f_k \in F_n'(t_{k-1}, t_k)$ such that $(t_k.h_k).f_k\le t_{k-1}.h_{k-1}^{-1}$, so we have $t_k.(h_k f_k h_{k-1}) \leq t_{k-1}$. The element $q_k \coloneqq h_k f_k h_{k-1} \in Q \cdot F_n'(t_{k-1}, t_k) Q \subseteq Q^3$, is now as desired. 

Finally, fix $k < 0$, and we will construct the desired $g_k$. Let $K \geq 0$ be such that $(Q^{3|k|})^{a^K} \subseteq Q$, which exists by \cref{lem:products}. By the previous step, there exist elements $q_0', q_{-1}', \ldots, q_{k+1}' \in Q^3$ such that $t_{i-K}.q_i' \leq t_{(i-1)-K}$ for each $0\ge i \ge k+1$. Then:
\begin{align*}
t. (q_0' q_{-1}' \cdots q_{k-1}')^{a^K} &= t_{-K}. (q_0' q_{-1}' \cdots q_{k-1}' a^K) \le t_{-1-K}. (q_{-1}' \cdots q_{k-1}' a^K) \\
&\le \cdots \le t_{k+1 - K}.(q_{k-1}' a^K) = t_{k-K}.a^K = t_{k}.
\end{align*}
Since $(q_0' q_{-1}' \cdots q_{k-1}')^{a^K} \in (Q^{3|k|})^{a^K} \subseteq Q$, we conclude by setting $g_k$ equal to $(q_0' q_{-1}' \cdots q_{k-1}')^{a^K}$.
\end{proof}

\begin{remark}
\label{rem:meet}

\cref{prop:LLt:minimal} shows that $\X = \Conf^\ast_{\chi}(F_n)$ is not a meet semilattice, since it has uncountably many incomparable minimal elements.
It is not clear to us whether $\Conf_\chi(F_n)$ is a meet semilattice, see \cref{q:meet}.
\end{remark}

\subsection{Largeness}

We have already seen that each $\LL_t$ contains a copy of $\mathcal{P}(\N)$ for all $t \in (0, r)$ (\cref{cor:P(N):LLt}). In this subsection, we give a more precise largeness statement, which shows that intersecting several ``lamplike'' subposets and complements of lamplike subposets together still yields something large. This is a more precise version of part (iv) of \cref{intro:thm:local}.

\begin{proposition}[\cref{intro:thm:local} part (iv)]
\label{prop:intersections}

Let $t \in (0, r)$. Then, for all $x \in [t.a^{-1}, t)$ there exists a copy of $\mathcal{P}(\N)$ contained in $\displaystyle \bigcap_{x < s \leq t} \LL_s \setminus \bigcup_{t.a^{-1} < s < x} \LL_s$. In particular, there exists a copy of $\mathcal{P}(\N)$ contained in $\displaystyle \bigcap_{t \in (0, r)} \LL_t$.
\end{proposition}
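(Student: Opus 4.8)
The plan is to prove the more precise statement by constructing, for each fixed $x\in[t.a^{-1},t)$, an order-reversing injection $\mathcal{P}(\N)\hookrightarrow\mathcal{S}$, where $\mathcal{S}\coloneqq\bigcap_{x<s\leq t}\LL_s\setminus\bigcup_{t.a^{-1}<s<x}\LL_s$; since $\mathcal{P}(\N)$ is anti-isomorphic to itself via complementation, this yields the desired copy of $\mathcal{P}(\N)$ inside $\mathcal{S}$. The final ``in particular'' assertion is the special case $x=t.a^{-1}$: there the interval $(t.a^{-1},x)$ is empty, and $\bigcap_{t.a^{-1}<s\leq t}\LL_s=\bigcap_{s\in(0,r)}\LL_s$ by \cref{lem:LLt:equality}, since $(t.a^{-1},t]$ is a fundamental domain for $\langle a\rangle$ acting on $(0,r)$.

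The first step is to translate membership in $\mathcal{S}$ into a statement about supports. Writing $t_k\coloneqq t.a^k$ and $x_k\coloneqq x.a^k$, one has (using \cref{lem:LLt:orbit}, \cref{cor:conjugate}, and the fact that \StayInConf\ forces $\Supp(Q)$ to be forward-$a$-invariant) that for a $\chi$-confining subset $Q$, $[Q]\in\LL_s$ if and only if the $\langle a\rangle$-orbit of $s$ is not contained in $\Supp(Q)$. Consequently $[Q]\in\mathcal{S}$ is guaranteed once (a) $\Supp(Q)\supseteq\bigcup_{k\in\Z}(t_{k-1},x_k)$, and (b) $\Supp(Q)$ misses the interval $[x_{-1},t_{-1}]$, since then $s.a^{-1}\in[x_{-1},t_{-1}]\subseteq\Fix(Q)$ for every $s\in(x,t]$. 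As a ``bulk'' confining subgroup one can take $Q^{\flat}\coloneqq\Fix_{F_n'}\!\big(\bigcup_{k\leq -1}[x_k,t_k]\big)$ (or $\bigcup_{k\le 0}$, depending on the precise bookkeeping): being a fixator it satisfies \ProdConf\ trivially; \StayInConf\ holds because $\Fix(g^a)=\Fix(g).a$ so the defining fixed set grows under $a$; \GetInConf\ holds because a high power of $a$ pushes the support of any element of $F_n'$ into a neighbourhood of $r$, disjoint from $\bigcup_{k\le 0}[x_k,t_k]$; and one checks directly that $[Q^{\flat}]\in\mathcal{S}$.

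The core of the argument is then to enlarge $Q^{\flat}$ by a $\mathcal{P}(\N)$'s worth of ``lamp data'' without leaving $\mathcal{S}$. Here I would mimic the chain \cref{cit:subgroups}$\to$\cref{cor:P(N):subgroups}$\to$\cref{prop:isolatedlamps}: using $\bigoplus_{i\in\N}F_n\hookrightarrow F_n^c\cong F_n'(y_{-1},y)$ (for a suitable $y\in(x_0,t_0)$, via \cref{prop:rstab:open}), fix disjointly supported subgroups $H_i$ of $F_n'$ inside $(y_{-1},y)$, and for $X\subseteq\N$ place coherent copies of $H_X\coloneqq\langle H_i:i\in X\rangle$ in all the intervals $(y_k,y_{k+1})$ with $k\leq -1$ (related by conjugation by $a$, exactly as the shift relates the summands of a lamplighter); set $Q_X\coloneqq\langle Q^{\flat},\ (\text{these copies})\rangle$. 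The subgroups $H_i$ should be chosen so that their lifted supports avoid a fixed critical subinterval containing $(t_{-1},y)$, so that every $Q_X$ still fixes $[x_{-1},t_{-1}]$ (giving (b)) while $\Supp(Q_X)\supseteq\Supp(Q^{\flat})$ automatically gives (a); one then checks $Q_X$ is strictly $\chi$-confining and that $X\subseteq Y$ implies $Q_X\subseteq Q_Y$, so $X\mapsto[Q_X]$ is order-reversing.

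The step I expect to be the main obstacle is the injectivity (order-reflection): showing that $[Q_X]\preccurlyeq_\chi[Q_Y]$ forces $Y\subseteq X$. By \cref{cor:conjugate} this amounts to: $Q_Y^{a^m}\subseteq Q_X$ for some $m\ge 0$ implies $Y\subseteq X$. The difficulty is precisely that the supports of the lamp data and of $Q^{\flat}$ overlap (and the forward $a$-orbit of any compactly supported element eventually enters the $Q^{\flat}$-saturated region near $r$), so one cannot separate $Q_X$ from $Q_Y$ by support or by a single slot; the point of the coherent placement is that in each ``free'' interval slot $(y_k,y_{k+1})$, $k\leq -1$, one recovers exactly $H_X$ (and hence $X$, since the $H_i$ are disjointly supported), and this data is $a$-equivariant, so it is visible through any number of forward conjugations — this is the Thompson-side analogue of recovering $H$ from $Q_H=\bigoplus_{i<0}H\times L_{\ge 0}(\Gamma)$ in \cref{cit:subgroups}, and the bookkeeping of which intervals are ``free'' versus ``saturated'' is where the care is needed. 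Granting this, $\{[Q_X]:X\subseteq\N\}$ is the desired copy of $\mathcal{P}(\N)$ in $\mathcal{S}$, and the proposition follows.
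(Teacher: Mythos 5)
Your reduction of membership in $\mathcal{S}\coloneqq\bigcap_{x<s\le t}\LL_s\setminus\bigcup_{t.a^{-1}<s<x}\LL_s$ to conditions on supports and orbits is correct, as is the derivation of the ``in particular'' statement from the case $x=t.a^{-1}$, and $Q^{\flat}=\Fix_{F_n'}\big(\bigcup_{k\le -1}[x_k,t_k]\big)$ is indeed a strictly confining subgroup lying in $\mathcal{S}$. But there is a fatal gap in the core step: your condition (b) cannot coexist with nontrivial lamp data. By \StayInConf\ the support of a confining subset is forward-invariant under $a$, so any $Q$ whose support misses $[x_{-1},t_{-1}]$ must miss all of $\bigcup_{k\le-1}[x_k,t_k]$, i.e.\ must be contained in $Q^{\flat}$. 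Hence $Q^{\flat}$ is the unique maximal confining subset satisfying (b), and there is no room to ``enlarge'' it: any generator moving a point of $\bigcup_{k\le-1}[x_k,t_k]$ destroys (b), while any generator fixing that set pointwise already lies in $Q^{\flat}$, so your $Q_X$ would equal $Q^{\flat}$ for every $X$. Concretely, your $H_i$ are supported in $(y_{-1},y)$ and are required to avoid an interval containing $(t_{-1},y)$, hence are supported in $(y_{-1},t_{-1})\subseteq[x_{-1},t_{-1}]$ --- directly contradicting your own claim that $Q_X$ fixes $[x_{-1},t_{-1}]$. So the real obstacle is not the order-reflection step you flag at the end, but the existence of a nonconstant family inside $\bigcap_{x<s\le t}\LL_s$ in the first place.

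The missing idea is that (b) is far stronger than necessary: to lie in $\LL_s$ one only needs \emph{some} point of the $\langle a\rangle$-orbit of $s$ to be a global fixed point, and that point is allowed to depend on $s$. The paper's construction places the lamps \emph{inside} the intervals $(x_k,t_k)$, using the rigid stabilizers of the pieces of a subdivision $x<p_1<p_2<\cdots\to t$, but in slot $k$ it only admits the generators of index $i>|k|$; thus the slot-$k$ support retreats towards $t_k$ as $k\to-\infty$, and any $s$ with $s<p_m$ is fixed by the data in every slot with $|k|\ge m$, giving $[Q_X]\in\LL_{s.a^k}=\LL_s$. This truncation is also the reason the paper embeds $\mathcal{P}(\N)/\mathrm{Fin}$ rather than $\mathcal{P}(\N)$ (finite differences between $X$ and $Y$ are washed out by conjugation) and then invokes \cref{lem:PtildeN} to recover a copy of $\mathcal{P}(\N)$; the direct order-embedding of $\mathcal{P}(\N)$ you propose is not what such a construction can yield. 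To repair your argument you would have to replace (b) by this weaker, $s$-dependent condition and build the shrinking of supports into the definition of $Q_X$.
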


The ``in particular'' statement is just the special case $x=t.a^{-1}$, thanks to \cref{lem:LLt:equality}.

Note that the copy of $\mathcal{P}(\N)$ inside $\LL_t$ that we obtained in \cref{cor:P(N):LLt} via \cref{ex:P(N):subgroups} cannot be contained in $\displaystyle \bigcap_{t \in (0, r)} \LL_t$, so we need a different construction. For the proof, it will be more natural to embed not $\mathcal{P}(\N)$, but another, related poset:

\begin{definition}
For $X, Y \in \mathcal{P}(\N)$, define $X \preceq Y$ if $X \setminus Y$ is finite. This is a preorder, which induces an equivalence relation $\sim$, given by $X \sim Y$ if and only if the symmetric difference $X \Delta Y$ is finite. Denote by $\mathcal{P}(\N)/\mathrm{Fin}$ the corresponding poset. In words, elements of $\mathcal{P}(\N)/\mathrm{Fin}$ are equivalence classes $[X]_{\mathrm{fin}}$ of subsets of $\N$, considered up to finite sets, and $[X]_{\mathrm{fin}} \preccurlyeq [Y]_{\mathrm{fin}}$ if and only if $X \setminus Y$ is finite.
\end{definition}

It is enough to consider this poset because it is still large:

\begin{lemma}
\label{lem:PtildeN}

The poset $\mathcal{P}(\N)/\mathrm{Fin}$ contains a copy of $\mathcal{P}(\N)$.
\end{lemma}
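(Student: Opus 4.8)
The statement to prove is \cref{lem:PtildeN}: the poset $\mathcal{P}(\N)/\mathrm{Fin}$ contains a copy of $\mathcal{P}(\N)$. The strategy is the classical one: produce an ``almost disjoint family'' of infinite subsets of $\N$ indexed by $\N$ itself, say $\{A_i\}_{i \in \N}$, with each $A_i$ infinite and $A_i \cap A_j$ finite for $i \neq j$, and then map $X \in \mathcal{P}(\N)$ to $[\,\bigcup_{i \in X} A_i\,]_{\mathrm{fin}}$. The point is that almost disjointness makes this assignment both well defined on the nose (it is a genuine function $\mathcal{P}(\N) \to \mathcal{P}(\N)/\mathrm{Fin}$) and a poset embedding, because unions of almost disjoint pieces ``remember'' exactly which pieces were used, up to finite error.

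\textbf{Key steps.} First I would fix a bijection $\N \cong \N \times \N$ (or, more concretely, partition $\N$ into infinitely many infinite blocks $\{B_i\}_{i\in\N}$) and set $A_i \coloneqq B_i$; these are literally disjoint, which is more than enough, and one could equally well use the usual binary-expansion or prime-power construction if one wanted genuine almost disjointness, but for the embedding we only need $A_i \cap A_j$ finite, so disjoint blocks suffice. Define $\Phi \colon \mathcal{P}(\N) \to \mathcal{P}(\N)/\mathrm{Fin}$ by $\Phi(X) \coloneqq \big[\bigcup_{i \in X} A_i\big]_{\mathrm{fin}}$. Next, verify monotonicity: if $X \subseteq Y$ then $\bigcup_{i\in X} A_i \subseteq \bigcup_{i \in Y} A_i$, so in particular $\bigcup_{i\in X}A_i \setminus \bigcup_{i\in Y}A_i = \emptyset$ is finite, giving $\Phi(X) \preccurlyeq \Phi(Y)$. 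Then verify the reverse implication, which is where almost disjointness is used: suppose $X \not\subseteq Y$, pick $i_0 \in X \setminus Y$; then $A_{i_0} \subseteq \bigcup_{i\in X}A_i$, while $A_{i_0} \cap \bigcup_{i \in Y} A_i = \bigcup_{i\in Y}(A_{i_0}\cap A_i)$ is a union over $i \in Y$ (with $i \neq i_0$) of finite sets — but with disjoint blocks this is just $\emptyset$, hence $A_{i_0} \setminus \bigcup_{i\in Y}A_i = A_{i_0}$ is infinite, so $\bigcup_{i\in X}A_i \setminus \bigcup_{i\in Y}A_i$ is infinite and $\Phi(X) \not\preccurlyeq \Phi(Y)$. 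Combining the two directions, $X \subseteq Y \iff \Phi(X) \preccurlyeq \Phi(Y)$, so $\Phi$ is an order embedding (and in particular injective), exhibiting the desired copy of $\mathcal{P}(\N)$ inside $\mathcal{P}(\N)/\mathrm{Fin}$.

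\textbf{Main obstacle.} Honestly there is no serious obstacle here; the only subtlety worth stating carefully is that if one uses a genuinely almost disjoint (rather than disjoint) family, the argument in the reverse direction needs ``an infinite set minus finitely many finite sets, finitely many of which could overlap it'' — and one should be slightly careful that $Y$ can be infinite, so one is subtracting infinitely many sets $A_i \cap A_{i_0}$. This is exactly why the cleanest route is to take the $A_i$ pairwise \emph{disjoint}: then $A_{i_0} \cap \bigcup_{i \in Y} A_i = \emptyset$ outright and there is nothing to worry about. So I would simply use a partition of $\N$ into infinitely many infinite pieces, which trivializes the estimate.
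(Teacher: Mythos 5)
Your proof is correct and is essentially the same as the paper's: the paper's map $X \mapsto \{p_i^k \mid i \in X,\, k \geq 1\}$ is exactly your construction with the disjoint infinite blocks $B_i = \{p_i^k \mid k \geq 1\}$, followed by the same two-directional check. Your observation that genuinely disjoint blocks trivialize the reverse implication matches what the paper does implicitly.
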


\begin{proof}
A similar proof can be found in the proof of \cite[Proposition~6.1]{ABO}. Enumerate the prime numbers $ p_1 < p_2 < \cdots$. We define a map $\mathbb{P} \colon \mathcal{P}(\N) \to \mathcal{P}(\N)$ by $\mathbb{P}(X) = \{ p_i^k \mid i \in X, k \geq 1 \}$. In other words, $\mathbb{P}(X)$ is made up of all powers of primes whose index matches an element of $X$. We consider the composition of $\mathbb{P}$ with the quotient map $\mathcal{P}(\N) \to \mathcal{P}(\N)/\mathrm{Fin}$, and claim that this is an order embedding.

We need to show that $X \subseteq Y$ if and only if $\mathbb{P}(X) \setminus \mathbb{P}(Y)$ is finite. The forward direction is true in a stronger sense: if $X \subseteq Y$ then $\mathbb{P}(X) \subseteq \mathbb{P}(Y)$. Conversely, suppose that $X$ is not contained in $Y$, so let $i \in X \setminus Y$. Then $\mathbb{P}(X)$ contains all powers of $p_i$, while $\mathbb{P}(Y)$ does not contain any of them. Therefore $\mathbb{P}(X) \setminus \mathbb{P}(Y)$ is infinite.
\end{proof}

Note that $\mathcal{P}(\N)$ with $\subseteq$ is poset isomorphic to $\mathcal{P}(\N)$ with $\supseteq$, via the map $X\mapsto \N\setminus X$, and the analogous statement is also true for $\mathcal{P}(\N)/\mathrm{Fin}$, so when embedding these posets we may use either order.

We can now proceed with our construction:

\begin{proof}[Proof of \cref{prop:intersections}]
Let $x\in [t.a^{-1},t)$. We will work inside the group $F_n'(t_\bullet)$ of elements of $F_n'$ fixing neighborhoods of each $t_k$ for $k\le 0$ (\cref{def:Fnt:isolated}). Using \cref{lem:PtildeN}, it suffices to show that there exists a copy of $\mathcal{P}(\N)/\mathrm{Fin}$, with the reverse ordering, contained in $\displaystyle \bigcap_{x < s \leq t} \LL_s \setminus \bigcup_{t.a^{-1}<s<x}\LL_s$. Choose a strictly increasing sequence $p_i \in (x, t)$ for $i \in \N$ such that $p_i \to t$ as $i \to \infty$. Let $P_{0,1} \coloneqq F_n'(x,p_1)$ and $P_{0,i} \coloneqq F_n'(p_{i-1},p_i)$ for all $i>1$, so the $P_{0,i}$ are all supported on $(x,t)$. Since the sequence $p_i$ is strictly increasing, the $P_{0,i}$ are disjointly supported and hence commute. For a subset $X \in \mathcal{P}(\N)$, denote by $P_{0,X} \coloneqq \langle P_{0,i} \mid i\in X \rangle \cong  \oplus_{i \in X} P_{0,i}$. For all $k\le 0$, set $P_{k,i} \coloneqq (P_{0,i})^{a^k} \le F_n(x_k,t_k)$ and
\[
P_{k,X} \coloneqq \langle P_{k,i} \mid i \in X\text{, } i > |k| \rangle,
\]
where $x_k$ denotes the translate of $x$ under $a^k$. We define
\[
Q_X \coloneqq \{ g \in F_n'(t_\bullet) \mid g|_{(x_k,t_k)} \in P_{k,X} \text{ for all } k \le 0 \}.
\]
Now we claim that $Q_X$ is a $t$-split confining subgroup in $\LL_t$. Indeed, \StayInConf\ follows from the fact that $P_{k-1,X} \subseteq P_{k,X}$, \GetInConf\ follows from the fact that $Q_X$ contains $F_n'[t, 1)$, and \ProdConf\ follows from the fact that $Q_X$ is a subgroup. Moreover, $Q_X$ is in $\LL_t$ by construction and is clearly $t$-split. Now we claim that moreover $Q_X$ is in $\LL_s$ for all $s\in (x,t)$.
Indeed, let $s\in (x,t)$, and choose $k \leq 0$ such that $p_{|k|-1} \leq s < p_{|k|}$. Then $P_{0, i}$ fixes $s$ for all $i > |k|$ and so $P_{k, X}$ fixes $s.a^k$. It then follows from the definition of $Q_X$ that every element of $Q_X$ fixes $s.a^k$, and so $[Q_X] \in \LL_{s.a^k} = \LL_s$ by \cref{lem:LLt:equality}. 

Next we show that $X \to Q_X$ induces an order embedding of $\mathcal{P}(\N)/\mathrm{Fin}$ with the reverse ordering, into $\Conf^\ast_\chi(F_n)$ with the usual ordering. We need to show that for two subsets $X, Y \in \mathcal{P}(\N)$, the difference $X \setminus Y$ is finite if and only if $Q_Y \preceq_\chi Q_X$. The latter is equivalent to the existence of $j \geq 0$ such that $Q_X^{a^j} \subseteq Q_Y$, by \cref{cor:conjugate}. By construction, this containment holds if and only if $P_{k-j,X} \leq P_{k,Y}$, i.e., $\{ i \in X \mid i > |k-j| \} \subseteq \{ i \in Y \mid i > |k| \}$, for all $k\le 0$. Thus we see that $Q_Y \preceq_\chi Q_X$ if and only if there exists $j\ge 0$ such that for all $k\le 0$ we have $X \setminus \{1, \ldots, |k-j| \} \subseteq Y \setminus \{1, \ldots, |k| \}$. Now we can prove the desired equivalence. First suppose $X\setminus Y$ is finite, and set $j$ equal to the largest element of $X\setminus Y$. Then for any $k\le 0$ we have $|k-j|\ge j$ and $|k-j|\ge |k|$, and so if $x\in X \setminus \{1, \ldots, |k-j| \}$ then $x\in Y\setminus \{1, \ldots, |k| \}$. Conversely, if there exists $j\ge 0$ such that for all $k\le 0$ we have $X \setminus \{1, \ldots, |k-j| \} \subseteq Y \setminus \{1, \ldots, |k| \}$, then the $k=0$ case reveals that $X\setminus Y$ is contained in $\{1,\dots,j\}$ and hence is finite.

To complete the proof, it remains to prove that $Q_X$ is not equivalent to any confining subset in $\LL_s$ for $s\in (t.a^{-1},x)$. Since the definition of $Q_X$ did not involve any conditions regarding the restriction of elements to any $(t_{k-1},x_k)$, it is easy to see that no $s.a^k$, for any $k\le 0$, is a global fixed point of $Q_X$. Thus, $Q_X$ cannot be equivalent to any confining subset in any such $\LL_s$.
\end{proof}

\section{Non-lamplike confining subsets}\label{sec:moving}

In \cref{sec:lamplike} and \cref{sec:lamplike:plural}, we gathered a thorough understanding of the posets $\LL_t$ for $t \in (0, r)$. Moreover, in \cref{lem:LLt:upper} we saw that there are no confining subsets outside $\LL_t$ that dominate all of $\LL_t$, and in \cref{prop:LLt:minimal} we saw that there are no confining subsets outside $\LL_t$ that are dominated by all of $\LL_t$. This, together with the fact that all strictly confining \emph{subgroups} are lamplike (\cref{prop:contains:subgroup}), all seems to suggests that lamplike confining subsets tell the whole story.

However, in this section, we prove that this intuition is wrong. Namely we prove \cref{intro:thm:non_lamp} that there is a copy of $\mathcal{P}(\N)$ inside $\X$ that consists entirely of non-lamplike confining subsets. Note that none of these can be equivalent to a confining subgroup, by \cref{prop:contains:subgroup}. We limit ourselves to one specific construction, so we will make a number of specific choices whose aim is to facilitate the proof of \cref{intro:thm:non_lamp}. Our goal is not to give general statements, which seem difficult to come by in the setting of non-lamplike structures.

\medskip

We start by constructing a special sequence $\tau \subseteq (0, r)$, which is discrete and accumulates at $0$. Let $\tau_1$ be an arbitrary element of $(0, r)$, and set $\tau_{2^i} = \tau_1.a^{-i}$ for all $i \geq 0$. Next, choose $\tau_3$ to be an arbitrary element in $(\tau_4, \tau_2)$ and set $\tau_{2^i 3} = \tau_3.a^{-i}$. Inductively, at step $k$, we have defined $\tau_j$ for all $j$ of the form a power of $2$ times an odd number less than $(2k+1)$. We then choose $\tau_{2k+1}$ to be an arbitrary element in $(\tau_{2k+2}, \tau_{2k})$, and set $\tau_{2^i (2k+1)} = \tau_{2k+1}.a^{-i}$.
In the end we have a sequence
\[0 < \cdots < \tau_j < \tau_{j-1} < \cdots < \tau_2 < \tau_1 <  r,\]
with the property that $\tau_j.a^{-i} = \tau_{2^i j}$ for all $i, j \geq 1$.

Our non-lamplike confining subsets will be built by imposing restrictions on how far certain elements from the sequence $\tau$ are allowed to move. In order to obtain an actual confining subset, we will need to impose some technical conditions on the sets involved.

\begin{definition}
\label{def:Stilde}
    Let $S \subseteq \N$ be a subset of odd numbers . We define $\widetilde{S}$ to be the smallest subset of $\N$ that contains $S$ and such that:
    \begin{enumerate}
        \item If $s \in \widetilde{S}$ then $2s \in \widetilde{S}$;
        \item If $s \in \widetilde{S}$ then $2^5 s + 1 \in \widetilde{S}$ and $2^5 s - 1 \in \widetilde{S}$.
    \end{enumerate}
\end{definition}

It will become apparent in the course of this section that $2^5$ could just as well be replaced with any larger power of $2$.

\begin{definition}
\label{def:nonlamplike}
    Let $S \subseteq \N$ be a subset of odd numbers. Define
    \[Q_S \coloneqq \{ g \in F_n' \mid \tau_s.g \in (\tau_{s+1}, \tau_{s-1})\text{, } \tau_s.g^{-1} \in (\tau_{s+1}, \tau_{s-1}) \text{ for all } s \in \widetilde{S} \}.\]
\end{definition}

Note that, unlike our previous constructions of confining subsets, here we need to impose conditions on both $g$ and $g^{-1}$ to ensure that $Q_S$ is symmetric.

\begin{proposition}
\label{prop:nonlamplike}
    For every non-empty subset $S \subseteq \N$ of odd numbers, $Q_S$ is a $\chi$-confining subset, which does not belong to $\LL_t$ for any $t \in (0, r)$.
\end{proposition}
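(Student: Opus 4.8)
The proof breaks into two independent tasks: (1) verify that $Q_S$ satisfies the three axioms \StayInConf, \GetInConf, \ProdConf\ of a $\chi$-confining subset, and (2) show that $Q_S$ is not lamplike, i.e.\ has no global fixed point in $(0,r)$. For task (1), symmetry is immediate from the construction (the conditions on $g$ and $g^{-1}$ are deliberately symmetric). The key structural fact is the recursion in \cref{def:Stilde}: conjugating by $a$ shifts indices according to $\tau_j.a^{-1} = \tau_{2j}$, so the operation $s \mapsto 2s$ built into $\widetilde{S}$ is exactly what makes $\widetilde{S}$ invariant under the index shift induced by $a$. Concretely, \StayInConf\ follows because if $\chi(z) \geq 0$ then conjugating by $z$ moves each $\tau_s$ to some $\tau_{s'}$ with $s' \geq s$ in the appropriate sense (using that $a$ translates the $\tau_s$ downward toward $0$), and the closure property $s \in \widetilde{S} \Rightarrow 2s \in \widetilde{S}$ guarantees the moved constraint is still one of the imposed constraints; one checks that $Q_S^z \subseteq Q_S$. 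For \GetInConf, given $g \in F_n'$, its support is contained in $(\varepsilon,\delta) \subseteq (0,1)$; conjugating by a high power $a^k$ pushes the support into an interval $(\tau_N, 1)$ for $N$ as large as we like (since $\tau_s \to 0$), and for $s \leq N$ small the points $\tau_s$ lie below the support of $g^{a^k}$ and are fixed, hence the constraints are trivially satisfied — so $g^{a^k} \in Q_S$.

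\textbf{The product axiom.} The condition \ProdConf\ is the technical heart of task (1), and this is where the specific exponent $2^5$ in \cref{def:Stilde} is used. The issue is that if $g, h \in Q_S$, then $\tau_s.(gh)$ can move by up to two ``steps'' relative to $\tau_s$, landing in $(\tau_{s+2},\tau_{s-2})$ rather than $(\tau_{s+1},\tau_{s-1})$; this is not directly a $Q_S$-condition. The remedy is to conjugate by a power of $a$: conjugating $gh$ by $a^k$ replaces the relevant index $s$ by $2^k s$, and after one step of the recursion the constraint at $2^k s$ controls the constraint at $2^5 \cdot 2^k s \pm 1$, which for $k$ large is ``finer'' than two steps at the unconjugated scale. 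Spelling this out: I would fix $k$ (say $k = 5$ or a bit larger depending on constants) so that for every $s \in \widetilde{S}$, the interval $(\tau_{2^k s + 1}, \tau_{2^k s - 1})$ is contained in $(\tau_{2^k(s+2)}, \tau_{2^k(s-2)})$ — this is possible precisely because the recursion forces $2^k s \pm 1 \in \widetilde{S}$ whenever $s \in \widetilde{S}$, so these finer points exist and are constrained — and then verify $(Q_S \cdot Q_S)^{a^k} \subseteq Q_S$. The bookkeeping here (matching which index in $\widetilde{S}$ controls which) is the main obstacle, and I expect the authors to have chosen $2^5$ with precisely this two-step-vs-one-step slack in mind; the remark after \cref{def:Stilde} that any larger power also works confirms the flexibility.

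\textbf{Non-lamplikeness.} For task (2), I would argue by contradiction: suppose $Q_S$ fixes some $t \in (0,r)$ globally. Since $S$ is non-empty, pick $s_0 \in S \subseteq \widetilde{S}$. The constraint imposed at index $s_0$ only says $\tau_{s_0}.g \in (\tau_{s_0+1}, \tau_{s_0-1})$ — it does \emph{not} force $g$ to fix $\tau_{s_0}$, and crucially it imposes \emph{no} condition on the behavior of $g$ at a generic point. I would exhibit an explicit element $g \in Q_S$ that moves $t$: choose $g \in F_n'$ supported on a small neighborhood $U$ of $t$ disjoint from all the constrained points $\tau_s$ for $s \in \widetilde{S}$ that lie near $t$ (possible because $\widetilde{S}$, while infinite, gives a discrete set of points $\{\tau_s\}$ and one can fit $U$ between consecutive ones, or use that only finitely many $\tau_s$ with $s$ bounded lie above any fixed level and the small-scale ones near $0$ are irrelevant), with $t.g \neq t$; such $g$ automatically lies in $Q_S$ since all the $\tau_s$-constraints are vacuous for it. This contradicts the assumption that $t$ is a global fixed point. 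The one subtlety is ensuring $U$ can be chosen to avoid \emph{all} of the (infinitely many) constrained points simultaneously while still containing $t$ in its interior — but since the points $\tau_s$ accumulate only at $0$ and $t > 0$ is fixed, only finitely many $\tau_s$ lie in any interval $[t/2, r)$, so a sufficiently small $U \ni t$ meets none of them. Combining (1) and (2) gives the proposition.
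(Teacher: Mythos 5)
Your overall strategy coincides with the paper's: verify the three axioms directly, with \StayInConf\ driven by the closure $s \mapsto 2s$ and \ProdConf\ by the closure $s \mapsto 2^5 s \pm 1$, then show non-lamplikeness by exhibiting elements of $Q_S$ supported near any given $t$. Two bookkeeping points in the axiom verification need care. For \StayInConf\ one should first reduce to $z = a$, using that $a_1, \ldots, a_{n-1}$ are supported on $(r,1)$ and hence fix the sequence $\tau$ pointwise. For \ProdConf\ your exponents are composed in the wrong order: what is needed is that after conjugating by $a^5$ the relevant index becomes $2^5 s$, whose \emph{neighbors} $2^5 s \pm 1$ lie in $\widetilde{S}$ by rule (ii) of \cref{def:Stilde}; this is what controls the second factor $h$ on the interval $(\tau_{2^5s+1}, \tau_{2^5s-1})$ containing $\tau_{2^5 s}.g$, giving $\tau_{2^5s}.gh \in (\tau_{2^5s+2},\tau_{2^5s-2}) \subseteq (\tau_{2^5(s+1)},\tau_{2^5(s-1)})$. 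In particular the conjugation exponent must equal the $5$ in the recursion (your claim that the recursion forces $2^k s \pm 1 \in \widetilde{S}$ is false for $k \neq 5$), and your unconjugated assertion that $\tau_s.(gh)$ lands in $(\tau_{s+2},\tau_{s-2})$ is not justified, since $s \pm 1$ need not belong to $\widetilde{S}$. With $k = 5$ the computation goes through exactly as in the paper.

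The genuine gap is in the non-lamplikeness argument. You require a neighborhood $U$ of $t$ \emph{disjoint} from all constrained points $\tau_s$, $s \in \widetilde{S}$, and assert that a sufficiently small $U \ni t$ meets none of them. This fails precisely when $t$ is itself a constrained point, i.e.\ $t = \tau_{s_0}$ for some $s_0 \in \widetilde{S}$: every neighborhood of $t$ then contains the constrained point $t$. Since $\widetilde{S}$ is infinite this happens for infinitely many $t$, and the proposition must exclude membership in $\LL_t$ for \emph{every} $t \in (0,r)$. The fix is the one the paper uses: since $\tau$ is discrete, choose $U$ meeting $\tau$ in at most the single point $\tau_{s_0}$, so that $U$ contains no two consecutive terms of $\tau$; then any $g$ supported on $U$ moves $\tau_{s_0}$ only within $U \subseteq (\tau_{s_0+1},\tau_{s_0-1})$, hence still lies in $Q_S$, and can be chosen to move $t$. (Separately, your indexing in the \GetInConf\ step is reversed: conjugating by large positive powers of $a$ pushes supports up into $(\tau_1,1)$, above \emph{all} constrained points, which is what makes the constraints vacuous; but the underlying idea there is correct.)
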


\begin{proof}
    The set $Q_S$ is symmetric by definition. Because $Q_S$ is only defined in terms of conditions on the sequence $\tau \subseteq (0, r)$, and $a_i$ is supported on $(r, 1)$ for $i = 1, \ldots, (n-1)$, we see that $Q_S^z = Q_S$ for all $z \in \langle a_1, \ldots, a_{n-1} \rangle$. To prove \StayInConf, it therefore suffices to check that $Q_S^a \subseteq Q_S$. For $s \in \widetilde{S}$, using that $2s \in \widetilde{S}$ we have
    \[\tau_s.a^{-1}g = \tau_{2s}.g \in (\tau_{2s+1}, \tau_{2s-1}) \subseteq (\tau_{2(s+1)}, \tau_{2(s-1)}) = (\tau_{s+1}, \tau_{s-1}).a^{-1};\]
    and so $\tau_s.g^a \in (\tau_{s+1}, \tau_{s-1})$. Similarly, $\tau_s.(g^{-1})^a \in (\tau_{s+1}, \tau_{s-1})$.

    \GetInConf follows immediately since $F_n'(\tau_1, 1) \subseteq Q_S$. Finally, for \ProdConf, we will show that $(Q_S.Q_S)^{a^{5}} \subseteq Q_S$. Let $g, h \in Q_S$. For $s \in \widetilde{S}$, using that $2^5s, 2^5s-1$ and $2^5s+1$ belong to $\widetilde{S}$, we have
    \begin{align*}
        \tau_s.a^{-{5}}gh &= \tau_{2^5s}.gh \in (\tau_{2^5s+1}, \tau_{2^5s-1}).h \subseteq (\tau_{2^5s+2}, \tau_{2^5s-2}) \\
        &\subseteq (\tau_{2^5(s+1)}, \tau_{2^5(s-1)}) = (\tau_{s+1}, \tau_{s-1}).a^{-{5}};
    \end{align*}
    and so $\tau_s.(gh)^{a^{5}} \in (\tau_{s+1}, \tau_{s-1})$. Similarly, $\tau_s.((gh)^{-1})^{a^{5}} \in (\tau_{s+1}, \tau_{s-1})$.

    Finally, let $t \in (0, r)$. Because $\tau$ is discrete, there exists a neighborhood $(x, y)$ of $t$ that intersects at most one element of $\tau$; more precisely we can ensure that $(x, y) \cap \tau$ is empty, unless $t \in \tau$ in which case we can ensure that $(x, y) \cap \tau = \{ t \}$. In either case, $(x, y)$ will not contain two consecutive elements of $\tau$, and so any element supported on $(x, y)$ will satisfy the definition of $Q_S$. This shows that $Q_S$ contains $F_n'(x, y)$, and so $t$ cannot be a fixed point of every element of $Q_S$. Because $t$ was arbitrary, we conclude that $Q_S$ is not lamplike, by definition.
\end{proof}

Our next goal is to show how to compare $Q_S$ and $Q_R$ for different sets $S$ and $R$ of well-chosen odd numbers. This will require a couple of technical lemmas.

\begin{lemma}
\label{lem:Stilde:estimate}
    Every element of $\widetilde{S}$ is of the form $2^p s + \varepsilon$ for some $p \geq 0$, where $s \in S$ and $|\varepsilon| < 2^{p-4}$.
\end{lemma}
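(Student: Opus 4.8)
The plan is to prove this by induction on the way in which an element of $\widetilde{S}$ is built up from $S$ via the two closure operations in \cref{def:Stilde}. First I would set up the right inductive statement: say that an element $m\in\widetilde S$ has \emph{rank} at most $N$ if it can be obtained from an element of $S$ by applying at most $N$ of the operations ``$s\mapsto 2s$'' and ``$s\mapsto 2^5s\pm1$''. I will prove by induction on $N$ that every element of rank at most $N$ can be written as $2^p s+\varepsilon$ with $s\in S$, $p\ge 0$, and $|\varepsilon|<2^{p-4}$ (where, as usual, $2^{p-4}<1$ when $p<4$ forces $\varepsilon=0$, consistent with the base case). The base case $N=0$ is $m=s\in S$, which is $2^0 s+0$ with $p=0$ and $\varepsilon=0$, and $0<2^{-4}$.

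For the inductive step, suppose $m=2^p s+\varepsilon$ with $|\varepsilon|<2^{p-4}$, and apply one more operation. Applying $s\mapsto 2s$ gives $2m=2^{p+1}s+2\varepsilon$, and $|2\varepsilon|<2^{p-3}=2^{(p+1)-4}$, so the new exponent is $p+1$ and the bound holds. Applying $s\mapsto 2^5 m\pm1$ gives $2^5 m\pm1=2^{p+5}s+(2^5\varepsilon\pm1)$; here the new exponent is $p+5$ and the new error term is $\varepsilon'=2^5\varepsilon\pm1$, for which $|\varepsilon'|\le 2^5|\varepsilon|+1<2^{p+1}+1\le 2^{p+1}+2^{p+1}=2^{p+2}<2^{(p+5)-4}$ provided $p\ge 0$ — so I need to check the small cases $p=0,1,2,3$ by hand (when $\varepsilon=0$ this reads $|\pm1|=1<2^{p+1}=2^{(p+5)-4}$, which holds since $p+1\ge1$). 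That is the entire argument; the only mildly delicate point is getting the inequalities to close with room to spare, i.e., verifying $2^5|\varepsilon|+1<2^{p+1}$ rather than just $\le$, which is where the factor $2^5$ (as opposed to, say, $2^4$) is being used, matching the paper's remark that any larger power of $2$ would also work.

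I don't expect any real obstacle here — this is a routine bookkeeping induction. The one thing to be careful about is the convention for $2^{p-4}$ when $p<4$: since all the quantities involved are integers, the constraint $|\varepsilon|<2^{p-4}$ with $p\le 3$ simply means $\varepsilon=0$, and one should note this explicitly so that the base case and the first few inductive steps are not vacuously or incorrectly handled. Everything else is a direct computation with powers of two.

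\begin{proof}
For $m\in\widetilde S$ define the \emph{rank} of $m$ to be the minimal number of applications of the operations $s\mapsto 2s$ and $s\mapsto 2^5s\pm1$ needed to produce $m$ starting from an element of $S$; this is well defined since $\widetilde S$ is generated from $S$ by these operations. We prove by induction on $N$ that every $m\in\widetilde S$ of rank at most $N$ can be written as $m=2^p s+\varepsilon$ with $s\in S$, $p\ge 0$, and $|\varepsilon|<2^{p-4}$. Note that since all quantities are integers, the condition $|\varepsilon|<2^{p-4}$ forces $\varepsilon=0$ whenever $p\le 4$.

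For $N=0$ we have $m=s\in S$, and $m=2^0 s+0$ with $p=0$, $\varepsilon=0$, and $0<2^{-4}$, as required.

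Now suppose the claim holds for rank at most $N$, and let $m\in\widetilde S$ have rank $N+1$. Then $m$ is obtained by applying one operation to some $m'\in\widetilde S$ of rank at most $N$, so by the inductive hypothesis $m'=2^p s+\varepsilon$ with $s\in S$, $p\ge 0$, $|\varepsilon|<2^{p-4}$.

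If $m=2m'$, then $m=2^{p+1}s+2\varepsilon$, and $|2\varepsilon|<2^{p-3}=2^{(p+1)-4}$, so the claim holds with exponent $p+1$.

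If $m=2^5 m'\pm 1$, then $m=2^{p+5}s+\varepsilon'$ where $\varepsilon'=2^5\varepsilon\pm1$. If $p\le 4$ then $\varepsilon=0$, so $|\varepsilon'|=1<2^{p+1}=2^{(p+5)-4}$. If $p\ge 5$, then
\[
|\varepsilon'|\le 2^5|\varepsilon|+1<2^5\cdot 2^{p-4}+1=2^{p+1}+1\le 2^{p+1}+2^{p+1}=2^{p+2}<2^{p+1}\cdot 2=2^{(p+5)-4}.
\]
In every case $|\varepsilon'|<2^{(p+5)-4}$, so the claim holds with exponent $p+5$. This completes the induction, and since every element of $\widetilde S$ has some finite rank, the lemma follows.
\end{proof}
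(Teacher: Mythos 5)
Your overall strategy (induction on the number of closure operations used to build an element of $\widetilde S$) is sound and is essentially an unrolled version of the paper's argument, but the key inequality in the second case of your inductive step is wrong as written. You need $|\varepsilon'| < 2^{(p+5)-4} = 2^{p+1}$, and your chain ends with ``$2^{p+2} < 2^{p+1}\cdot 2 = 2^{(p+5)-4}$'', which asserts $2^{p+2} < 2^{p+2}$ and identifies $2^{p+1}\cdot 2$ with $2^{p+1}$; both are false. More substantively, the bound you actually derive from the inductive hypothesis $|\varepsilon| < 2^{p-4}$ is only $|\varepsilon'| \le 2^5|\varepsilon| + 1 < 2^{p+1} + 1$, which yields $|\varepsilon'| \le 2^{p+1}$ and not the strict inequality $|\varepsilon'| < 2^{p+1}$ that the induction requires. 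So with the invariant exactly as you stated it, the induction does not close in this case.

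The fix is the integrality observation you already invoke for small $p$, applied where it is actually needed: since $\varepsilon \in \Z$ and $p \ge 5$, the hypothesis $|\varepsilon| < 2^{p-4}$ is equivalent to $|\varepsilon| \le 2^{p-4} - 1$, whence $|\varepsilon'| \le 2^5(2^{p-4}-1) + 1 = 2^{p+1} - 31 < 2^{p+1}$. Equivalently, run the induction with the sharper invariant $|\varepsilon| \le 2^{p-4} - 1$ (with $\varepsilon = 0$ forced for $p \le 4$), which is preserved by both operations. This is precisely the bound the paper obtains by a slightly different route: it first writes every element of $\widetilde S$ in the closed form $2^p s + \sum_{i} e_i 2^{k_i}$ with exponents decreasing in steps of at least $5$, and then bounds the error term by the geometric sum $\sum_{j=0}^{p-5} 2^j = 2^{p-4} - 1$. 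Your induction is the step-by-step version of that computation; once the inequality is repaired the two proofs are essentially the same.
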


\begin{proof}
    Applying the two rules from \cref{def:Stilde} iteratively, we can describe $\widetilde{S}$ explicitly as the set of all natural numbers that can be written as
    \[2^p s + \sum\limits_{i = 1}^n e_i 2^{k_i}  \quad \text{where} \quad p - k_1 \geq 5\text{, } k_i - k_{i+1} \geq 5\text{, } p, k_i \geq 0\text{, } e_i \in \{ \pm 1 \}\text{, and } s \in S.\]
    We estimate
    \[\left| \sum\limits_{i = 1}^n e_i 2^{k_i} \right| \leq \sum\limits_{j = 0}^{p-5} 2^j = 2^{p-4} - 1 < 2^{p-4}.\qedhere\]
\end{proof}

Now we choose a suitable infinite set of odd numbers $O$, that is very sparse. We will then show that, for distinct subsets $S, R \subseteq O$, the corresponding sets $\widetilde{S}, \widetilde{R}$ are easy to distinguish.

\begin{lemma}
\label{lem:goododdnumbers}
    There exists an infinite set $O$ consisting of odd numbers, with the following property. If $x, y \in O$ satisfy $|x - 2^p y| \leq 2^{p-3}$ for some $p \in \Z$, then $p = 0$ and $x = y$.
\end{lemma}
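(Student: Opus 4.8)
The statement is a purely number-theoretic existence result: I want an infinite set $O$ of odd numbers that is ``dissociated'' enough that no element is close to a power-of-two dilate of another, except in the trivial way. I will construct $O = \{o_1 < o_2 < \cdots\}$ greedily, choosing each new element to be an odd number so large and so badly aligned with the previous ones that the inequality $|x - 2^p y| \le 2^{p-3}$ can only hold with $p=0$, $x=y$. The key numerical point is that if $x, y \in O$ are distinct with $x < y$, say, and $|x - 2^p y| \le 2^{p-3}$, then $2^p y$ is within a small relative error of $x$: dividing by $2^p$ gives $|2^{-p}x - y| \le 1/8$, so $y$ is the nearest integer to $2^{-p}x$ and in particular $2^{-p}x$ is nearly an integer. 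If $p > 0$ this forces $x$ to be very close to a multiple of $2^p$, which contradicts $x$ being odd once $2^p \ge 4$; the cases $2^p \in \{1, 2\}$, i.e. $p \in \{-1,0,1\}$ (and negative $p$, which by symmetry swaps the roles), must be checked by hand and ruled out using that both $x,y$ are odd and distinct. So the only real content is handling $p=0$: then $|x-y|\le 1/8 < 1$, forcing $x=y$.

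\textbf{Key steps, in order.} First, I will record the elementary reduction: given $x,y \in \N$ odd and $p \in \Z$ with $|x - 2^p y| \le 2^{p-3}$, I may assume $p \ge 0$ (if $p < 0$, rewrite as $|2^{-p}x - y| \le 2^{-p-3} \cdot 2^{-p}$... more carefully, multiply through: $|x - 2^p y| \le 2^{p-3}$ with $p<0$ means $|2^{|p|}x - y| \le 2^{-3} < 1$, forcing $y = 2^{|p|}x$, impossible since $y$ is odd and $|p| \ge 1$ unless both sides vanish, which they cannot). So $p \ge 0$. Second, for $p \ge 3$: from $|x - 2^p y| \le 2^{p-3} < 2^p$ we get that $x \equiv x - 2^p y \pmod{2^p}$ has absolute residue $\le 2^{p-3}$, but also $x$ odd means $x \not\equiv 0$; this alone is not a contradiction, so instead I use the construction. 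Third — and this is where the greedy construction does the work — I will build $O$ so that the ratios between its elements are wildly incompatible with powers of $2$: having chosen $o_1, \dots, o_{k}$, pick $o_{k+1}$ to be an odd number with $o_{k+1} > 2^{o_k} \cdot o_k$ (say), which guarantees $o_{k+1} / o_j \notin [2^{p} - 2^{p-3}o_j^{-1} \cdot \text{something}, \dots]$ for all relevant small $p$ and all $j \le k$; more precisely I only need: for each pair $j \le k$ and each $p$ with $2^p$ in the ``window'' where $2^p o_j$ could land within distance $2^{p-3}$ of $o_{k+1}$, there is no such $p$ because the gaps between consecutive candidate values $2^p o_j$ grow, while the allowed error $2^{p-3}$ is small relative to $o_j$ unless... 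Actually the cleanest approach: choose $o_{k+1}$ odd and strictly between two consecutive powers of $2$, specifically in $(2^{m}, 2^{m} + 2^{m-3})^c$ — i.e. avoid the intervals $(2^{p-3}\text{-neighborhoods of } 2^p o_j)$ for all $j \le k$ and all $p \le $ some bound; since each such neighborhood has relative width $2^{-3}$ and there are finitely many, and odd numbers are dense enough in any long-enough interval, such $o_{k+1}$ exists.

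\textbf{Main obstacle.} The genuine difficulty is organizing the greedy step cleanly: I must ensure that for every future pair $(x,y) = (o_i, o_j)$ with $i \ne j$, and every integer $p$, the inequality fails unless $p=0, x=y$. Since $O$ is infinite this is infinitely many constraints, but the point is that the constraint relating $o_i$ and $o_j$ with $i > j$ only needs to be arranged when $o_i$ is chosen (it is automatic once $o_i$ is far enough past $o_j$ for the relevant range of $p$, because for $|x - 2^p y| \le 2^{p-3}$ with $y = o_j$ fixed and $x = o_i$ large we need $2^p \approx o_i / o_j$, i.e. $p \approx \log_2(o_i/o_j)$, a single value of $p$ up to $O(1)$ choices; and then $x$ must be within $2^{p-3}$ of $2^p o_j$, an interval of relative width $\le 2^{-3}$ around $2^p o_j$ — but also near $2^{p\pm 1} o_j$ etc. — finitely many forbidden intervals, each of width $\lesssim 2^{-3} o_i$, so avoidable by an odd number since $2^{-3} o_i < o_i$ leaves room). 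The verification that these finitely many forbidden dyadic neighborhoods do not cover a long enough interval to block every odd number is the one spot needing a careful count: the union of the neighborhoods of $2^p o_j$ over $p$ in a bounded range and $j \le k$ has measure at most (number of terms) $\times \frac{1}{4} \times (\text{largest }2^p o_j)$, and by taking the search interval for $o_{k+1}$ to be $[N, 2N]$ with $N$ huge, relative measure of the forbidden set tends to $0$, so an odd number survives. I would write this up by first proving the arithmetic reduction (the ``$p \ge 3 \Rightarrow$ contradiction with oddness given the construction'' and ``small $p$ by hand'' parts), then stating the greedy construction with the measure/counting estimate, and finally assembling the two.
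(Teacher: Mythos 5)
Your proposal is correct and takes essentially the same route as the paper: an elementary reduction to the case of the larger element versus a positive dyadic dilate of the smaller one, followed by a greedy construction in which each new odd element avoids finitely many intervals of radius $2^{p-3}$ around the points $2^p o_j$, justified by a counting/density estimate. Two small imprecisions to repair in the write-up: for $p \ge 3$ oddness does \emph{not} rule out $|x - 2^p y| \le 2^{p-3}$ (the case $x < y$, $p > 0$ instead fails for size reasons, since $2^p y - x \ge (2^p - 1)y > 2^{p-3}$), and the relative measure of the forbidden set in $[N, 2N]$ does not tend to $0$ as $N \to \infty$ but is only bounded by a constant less than $1$ once $\sum_j 1/o_j$ is forced to be small by rapid growth of the $o_j$ --- which suffices, and is exactly the estimate the paper carries out via densities.
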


For the proof, let us recall that the \emph{density} of a set $A \subseteq \N$ is defined as 
\[
d(A) \coloneqq \limsup\limits_{n \to \infty} \frac{1}{n} \bigg|A \cap \{1, \ldots, n \}\bigg|.
\]
Note that $d$ is subadditive, and that finite sets have density $0$.

\begin{proof}
    Let us start with some observations. First, for all $x \in \N$ and all $p \in \Z$ we have $|x - 2^px| = x|2^p - 1| \geq |2^p - 1|$. When $p \neq 0$, this is always greater than $2^{p-3}$. Thus it suffices to consider the case of $x \neq y$. Secondly, notice that if $p \leq 0$, then $2^{|p|}|x - 2^p y| = |2^{|p|}x - y|$. Because $x, y$ are distinct odd integers, $|2^{|p|}x - y| \geq 1 > 2^{-3}$. Thus it suffices to consider the case of $p > 0$. Finally, if $x < y$, then $|x - 2^py| = 2^py - x > (2^p - 1)y \geq (2^p-1)$, which again is greater than $2^{p-3}$ since $p \neq 0$. Combining these, it suffices to find an infinite set of odd numbers $O$ such that for all $x > y \in O$ and all $p > 0$ we have $|x - 2^py| > 2^{p-3}$.
    
    We construct the set $O=\{x_1,x_2,\dots\}$ by induction. We start by choosing $x_1 \coloneqq 3$. Now suppose that $x_1, \ldots, x_k$ have been chosen so that for $i < j \in \{ 1, \ldots, k \}$
    \begin{itemize}
        \item $x_i$ and $x_j$ are odd, $x_i > x_j$;
        \item $|x_j - 2^p x_i| > 2^{p-3}$ for all $p > 0$.
    \end{itemize}
    We need to choose $x = x_{k+1}$ to be an odd number such that for all $i \in \{1, \ldots, k\}$ and all $p > 0$, the following hold:
    \begin{itemize}
        \item $x$ is odd, $x > x_i$, and $x > 2^{k+1}$;
        \item $x$ does not belong to the set $A_i^p \coloneqq \{ x \in \N \mid |x - 2^p x_i| \leq 2^{p-3} \}$.
    \end{itemize}
    
    The set of elements satisfying the first condition has density $1/2$, as does its complement. Thus it suffices to show that the union of the $A_i^p$, for $i \in \{1, \ldots, k\}$ and $p > 0$, has density strictly smaller than $1/2$. It will then follow that there exist (infinitely many) elements satisfying both conditions, which will conclude the induction step and thus the construction of the set $O$. We estimate:
    \begin{align*}
        d\left(\bigcup_{p > 0} A_i^p\right) &= d\left( \bigcup_{p > 0} \N \cap [2^p x_i - 2^{p-3}, 2^p x_i + 2^{p-3}] \right) \\
        &\leq \limsup\limits_{n \to \infty} \frac{1}{2^n x_i + 2^{n-3}} \sum\limits_{p = 1}^n (2 \cdot 2^{p-3} + 1) \\
        &\leq \limsup\limits_{n \to \infty} \frac{1}{2^n x_i + 2^{n-3}} \left( n + 2^{-2} \sum\limits_{p = 1}^n 2^p \right) \\
        &= \limsup\limits_{n \to \infty} \frac{n + 2^{-2}(2^{n+1} - 2)}{2^n x_i + 2^{n-3}} \\
        &= \limsup\limits_{n \to \infty} \frac{2^{n-1} + n + 2^{-1}}{2^n x_i + 2^{n-3}} \leq \frac{1}{2x_i} < \frac{1}{2^{i+1}}.
    \end{align*}
We conclude:
    \[d\left(\bigcup\limits_{i = 1}^k \bigcup\limits_{p > 0} A_i^p \right) < \sum\limits_{i = 1}^k 2^{-i - 1} < \frac{1}{2} \sum\limits_{i \geq 1} 2^{-i} = \frac{1}{2}.\qedhere\]
\end{proof}

Now we are ready to show how to compare different confining subsets of the form $Q_S$, for subsets $S \subseteq O$.

\begin{proposition}
\label{prop:nonlamplike:compare}
    Let $O$ be as in \cref{lem:goododdnumbers} and let $S, R \subseteq O$. Then $Q_S \preceq_\chi Q_R$ if and only if $S \subseteq R$.
\end{proposition}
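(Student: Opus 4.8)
The plan is to prove both implications of the equivalence $Q_S \preceq_\chi Q_R \iff S \subseteq R$ using \cref{cor:conjugate}, which tells us that $Q_S \preceq_\chi Q_R$ holds if and only if $Q_R^{a^k} \subseteq Q_S$ for some $k \geq 0$. The easy direction is $S \subseteq R$: in that case $\widetilde{S} \subseteq \widetilde{R}$ (since $\widetilde{R}$ satisfies the closure properties defining $\widetilde{S}$), so the defining conditions for $Q_R$ are a superset of those for $Q_S$, giving $Q_R \subseteq Q_S$ directly, hence $Q_S \preceq_\chi Q_R$ with $k=0$.

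For the converse, suppose $Q_R^{a^k} \subseteq Q_S$ for some fixed $k \geq 0$; I want to derive $S \subseteq R$. Fix $s \in S$; the goal is to exhibit an element of $Q_R^{a^k}$ that violates the $Q_S$-condition at index $s$ unless $s \in R$. The idea: if $s \notin R$, then using \cref{lem:Stilde:estimate} and the sparseness of $O$ from \cref{lem:goododdnumbers}, I can find an element $g \in Q_R$ supported on a small neighborhood of $\tau_{2^k s}$ (so that $\tau_{2^k s}.g$ is moved past $\tau_{2^k s - 1}$ or $\tau_{2^k s + 1}$) such that $g$ still satisfies all the $Q_R$-constraints, because no element of $\widetilde{R}$ lies in the relevant small window around $2^k s$. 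Then $g^{a^k} \in Q_R^{a^k} \subseteq Q_S$, but $\tau_s.g^{a^k} = \tau_{2^k s}.g \cdot a^k \notin (\tau_{s+1}, \tau_{s-1})$, contradicting the definition of $Q_S$ since $s \in S \subseteq \widetilde{S}$. The crux is the combinatorial estimate: I need that for $s \in S$, $s \notin R$, no element of $\widetilde{R}$ equals $2^k s$, and more strongly that $\widetilde{R}$ avoids an interval $[2^k s - c, 2^k s + c]$ for a constant $c$ (like $c = 2$ or so) large enough to accommodate moving $\tau_{2^k s}$ to an adjacent index. By \cref{lem:Stilde:estimate}, any element of $\widetilde{R}$ near $2^k s$ has the form $2^p r + \varepsilon$ with $r \in R$, $|\varepsilon| < 2^{p-4}$; if this is within distance $c$ of $2^k s$ then $|2^k s - 2^p r| < 2^{p-4} + c$, and a short computation (dividing through by $2^{\min(k,p)}$, using $s, r$ odd) combined with \cref{lem:goododdnumbers} (with its $2^{p-3}$ slack) forces $p = k$ and $s = r \in R$, contradiction.

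The element $g$ itself should be built as a product of conjugates of the $a_i$ (or small $F_n'$ elements) supported in a neighborhood of $\tau_{2^k s}$ that is narrow enough to avoid $\tau_{2^k s \pm 2}$ and the finitely many translated constraint points of $\widetilde{R}$ that fall nearby — but by the avoidance estimate just described, there are no $\widetilde{R}$-constraint points in the window at all, so $g$ supported on $(\tau_{2^k s + 1}, \tau_{2^k s - 1})$ moving $\tau_{2^k s}$ outside $(\tau_{2^k s + 1}, \tau_{2^k s - 1})$ — wait, that's impossible since $g$ supported there fixes the endpoints. So instead I take $g$ supported on a slightly larger interval $(\tau_{2^k s + 2}, \tau_{2^k s - 2})$ with $\tau_{2^k s}.g \notin (\tau_{2^k s+1}, \tau_{2^k s -1})$; this $g$ must avoid moving $\tau_{2^k s + 1}$ and $\tau_{2^k s - 1}$ too far, but since neither of those indices is in $\widetilde{R}$ (by the same avoidance estimate with $c=1$), and $\widetilde{R}$ has no points in $[2^k s - 2, 2^k s + 2]$, the element $g$ lies in $Q_R$. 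I expect the main obstacle to be getting the constants right — making sure the window size needed to move a point to an adjacent index is genuinely smaller than the gap that \cref{lem:goododdnumbers} (via the $2^5$ in \cref{def:Stilde} and the $2^{p-3}$ slack) guarantees around $2^k s$ — and handling the $k \leq 0$ versus $k > 0$ book-keeping cleanly, though the lemmas seem tailored precisely to make this work with room to spare (as the remark after \cref{def:Stilde} suggests).
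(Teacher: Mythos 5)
Your forward direction ($S \subseteq R$ implies $\widetilde S \subseteq \widetilde R$ implies $Q_R \subseteq Q_S$) matches the paper. For the converse your overall strategy — exhibit $g \in Q_R$ whose conjugate violates the $Q_S$-constraint at some index of $\widetilde S$ — is also the right one, but there is a genuine gap in where you apply it. First, a translation error: $\tau_s.g^{a^k} \notin (\tau_{s+1},\tau_{s-1})$ is equivalent to $\tau_{2^k s}.g \notin (\tau_{s+1},\tau_{s-1}).a^{-k} = (\tau_{2^k(s+1)},\tau_{2^k(s-1)})$, so you must move $\tau_{2^k s}$ past $2^k$ indices of the sequence $\tau$, not ``to an adjacent index''; your element $g$ with $\tau_{2^k s}.g \notin (\tau_{2^k s+1},\tau_{2^k s-1})$ does not violate the $Q_S$-condition at $s$ once $k \geq 1$. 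Consequently the interval you need $\widetilde R$ to avoid is $(2^k(s-1),2^k(s+1))$, of half-width $2^k$ in index space (any $m \in \widetilde R$ strictly between $2^k(s-1)$ and $2^k s$ forces $\tau_{2^k s}.g < \tau_{m-1} \leq \tau_{2^k(s-1)}$ by orientation-preservation, blocking the escape). But \cref{lem:Stilde:estimate} together with \cref{lem:goododdnumbers} only give, for $s \notin R$ and $m = 2^p r + \varepsilon \in \widetilde R$, that $|m - 2^k s| > 2^{\max(p,k)-3} - 2^{p-4} \geq 2^{k-4}$: an avoidance radius of $2^{k-4}$, a factor of $16$ short of the required $2^k$. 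Nothing in the stated lemmas rules out an element of $\widetilde R$ at distance $2^{k-2}$ from $2^k s$, which would block your construction. Enlarging $k$ does not help, since both the required radius and the guaranteed radius scale as $2^k$.

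The paper's proof repairs exactly this by probing not at $s = t$ but at $s = 2^{\ell-k}t \in \widetilde S$ for all large $\ell$ (this is why \cref{def:Stilde} closes $\widetilde S$ under doubling — a property your converse argument never uses). The resulting forbidden window is $(2^\ell t - 2^k,\, 2^\ell t + 2^k)$: its width $2^{k+1}$ is fixed while its center $2^\ell t$ grows, and the avoidance radius around $2^\ell t$ guaranteed by \cref{lem:goododdnumbers} grows like $2^{\ell-4}$. For $\ell$ large this radius exceeds $2^k$, so the window can only meet $\widetilde R$ via an element $2^{p}q + \varepsilon$ with $p = \ell$ and $q = t$, forcing $t \in R$. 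If you reroute your argument through these indices, the rest of your construction of the violating element (supported between the two nearest constraint points of $\widetilde R$ flanking $2^\ell t$) goes through.
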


\begin{proof}
    If $S \subseteq R$, then $\widetilde{S} \subseteq \widetilde{R}$, so $Q_R \subseteq Q_S$ and thus $Q_S \preceq_\chi Q_R$. Conversely, suppose that $Q_S \preceq_\chi Q_R$. By \cref{cor:conjugate}, there exists $k \geq 1$ such that $Q_R^{a^k} \subseteq Q_S$. Thus for every $s \in \widetilde{S}$, and every $g \in Q_R$, we have $\tau_s.g^{a^k} \in (\tau_{s-1}, \tau_{s+1})$. This is equivalent to:
    \[\tau_{2^ks}.g \in (\tau_{2^k(s+1)}, \tau_{2^k(s-1)}).\]
    Because elements of $Q_R$ are only restricted in terms of $\widetilde{R}$, it follows that $\widetilde{R}$ must intersect the interval $(2^k(s-1), 2^k(s+1))$ in $\N$, and this must hold for every $s \in \widetilde{S}$. Choosing $s = 2^{\ell-k} t$, for $\ell > k$ and $t \in S$, we see that the interval $(2^\ell t - 2^k, 2^\ell t + 2^k)$ contains an element $r_\ell \in \widetilde{R}$, for all $\ell > k$. By \cref{lem:Stilde:estimate}, we can write $r_\ell = 2^{p_\ell} q_\ell + \varepsilon_\ell$, where $|\varepsilon_\ell| < 2^{p_\ell - 4}$ and $q_\ell \in R$. Then
    \[|2^{p_\ell} q_\ell - 2^\ell t| \leq |\varepsilon_\ell| + 2^k < 2^{p_\ell - 4} + 2^k.\]
    Suppose first that the sequence $(p_\ell)_{\ell>k}$ is uniformly bounded. Up to taking a subsequence we may assume that $p_\ell = p$ is constant. Then
    \[|2^p q_\ell - 2^\ell t| < 2^{p - 4} + 2^k \quad\Rightarrow\quad |q_\ell - 2^{\ell - p} t| < 2^{-4} + 2^{k-p}.\]
    Choose $\ell$ large enough that the right hand side of the last inequality is bounded above by $2^{\ell - p - 3}$, and then by \cref{lem:goododdnumbers} we deduce that $t = q_\ell \in R$. Suppose otherwise that $p_\ell$ is unbounded. Then
    \[|2^\ell t - 2^{p_\ell} q_\ell| < 2^{p_\ell - 4} + 2^k \quad\Rightarrow\quad |t - 2^{p_\ell - \ell} q_\ell| < 2^{p_\ell - \ell - 4} + 2^{k-\ell}.\]
    For $p_\ell \geq k + 4$, the right hand side of the last inequality is bounded above by $2^{p_\ell - \ell - 3}$, and we conclude again by \cref{lem:goododdnumbers} that $t = q_\ell \in R$.
    
    In both cases we deduced that $t \in R$, and since $t$ was an arbitrary element of $S$, we conclude that $S \subseteq R$.
\end{proof}

\begin{theorem}[\cref{intro:thm:non_lamp}]\label{thm:non_lamp}
There exists a copy of $\mathcal{P}(\N)$ in $\X$ consisting entirely of non-lamplike confining subsets.
\end{theorem}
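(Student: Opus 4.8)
The plan is to assemble the pieces we have already proved about the family $\{Q_S\}$ and feed them through the abstract largeness result \cref{lem:PtildeN}. Concretely, I would fix once and for all an infinite set $O$ of odd numbers as produced by \cref{lem:goododdnumbers}, and then consider the assignment $S \mapsto [Q_S]$ defined on subsets $S \subseteq O$. By \cref{prop:nonlamplike}, for each non-empty $S \subseteq O$ the set $Q_S$ is a genuine $\chi$-confining subset, and it lies in no $\LL_t$; since $\X = \Conf^\ast_\chi(F_n)$, each such $[Q_S]$ is an honest element of $\X$, and it is non-lamplike by \cref{def:lamplike:moving}. (A harmless remark is needed for $S = \emptyset$: either restrict to non-empty $S$, or note that $Q_\emptyset = F_n'$, which we simply exclude from the copy.)

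Next I would invoke \cref{prop:nonlamplike:compare}: for $S, R \subseteq O$ we have $Q_S \preceq_\chi Q_R$ if and only if $S \subseteq R$. This says precisely that $S \mapsto [Q_S]$ is an order embedding of the poset $(\mathcal{P}(O) \setminus \{\emptyset\}, \subseteq)$ into $\X$ (order-preserving in both directions, hence injective: if $[Q_S] = [Q_R]$ then $S \subseteq R$ and $R \subseteq S$). Since $O$ is infinite, $\mathcal{P}(O)$ is isomorphic as a poset to $\mathcal{P}(\N)$ via any bijection $O \to \N$; removing the single bottom element $\emptyset$ does not affect the conclusion, because $\mathcal{P}(\N)$ already contains a copy of $\mathcal{P}(\N)$ avoiding $\emptyset$ — for instance the sublattice of subsets containing a fixed element, or one may simply observe that $\mathcal{P}(\N) \setminus \{\emptyset\}$ itself contains a copy of $\mathcal{P}(\N)$ (embed $X \mapsto X \cup \{0\}$ using a shifted index set). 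Either way, composing, we obtain an order embedding $\mathcal{P}(\N) \hookrightarrow \X$ whose image consists entirely of non-lamplike confining subsets, which is exactly the assertion of \cref{thm:non_lamp}.

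There is essentially no remaining obstacle: all the analytic and combinatorial work has been done in \cref{prop:nonlamplike}, \cref{lem:Stilde:estimate}, \cref{lem:goododdnumbers}, and \cref{prop:nonlamplike:compare}. The only point requiring a sentence of care is the bookkeeping around the empty set, and the observation that "contains a copy of $\mathcal{P}(\N)$" is insensitive to removing one comparable element. So the proof is short:

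\begin{proof}
Let $O$ be an infinite set of odd numbers as in \cref{lem:goododdnumbers}. By \cref{prop:nonlamplike}, for every non-empty $S \subseteq O$ the set $Q_S$ of \cref{def:nonlamplike} is a $\chi$-confining subset of $F_n$, and it does not belong to $\LL_t$ for any $t \in (0,r)$; in particular $[Q_S]$ is an element of $\X = \Conf^\ast_\chi(F_n)$ that is not lamplike. By \cref{prop:nonlamplike:compare}, for non-empty $S, R \subseteq O$ we have $[Q_S] \preccurlyeq_\chi [Q_R]$ if and only if $S \subseteq R$; in particular $[Q_S] = [Q_R]$ forces $S = R$. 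Hence $S \mapsto [Q_S]$ is an order embedding of the poset $(\mathcal{P}(O) \setminus \{\emptyset\}, \subseteq)$ into $\X$, with image consisting of non-lamplike confining subsets.

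It remains to note that $\mathcal{P}(O)\setminus\{\emptyset\}$ contains a copy of $\mathcal{P}(\N)$. Fix an element $o_0 \in O$ and a bijection $\N \to O \setminus \{o_0\}$; the induced map $\mathcal{P}(\N) \to \mathcal{P}(O)$ given by $X \mapsto \{o_0\} \cup (\text{image of } X)$ is an order embedding whose image lies in $\mathcal{P}(O) \setminus \{\emptyset\}$. Composing with $S \mapsto [Q_S]$ yields a copy of $\mathcal{P}(\N)$ inside $\X$ consisting entirely of non-lamplike confining subsets, as desired.
\end{proof}
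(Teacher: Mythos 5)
Your proof is correct and takes essentially the same route as the paper: it combines \cref{prop:nonlamplike} and \cref{prop:nonlamplike:compare} to get an order embedding of non-empty subsets of $O$, and handles the empty set exactly as the paper does, by restricting to subsets containing a fixed element of $O$. No gaps.
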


\begin{proof}
    Let $O$ be the subset of odd numbers from \cref{lem:goododdnumbers}. By \cref{prop:nonlamplike} and \cref{prop:nonlamplike:compare}, for every non-empty set $S \subseteq O$ there is a non-lamplike confining subset $Q_S \in \X$ such that $S \subseteq R$ if and only if $Q_S \preceq_\chi Q_R$. Fix an element $x \in O$ and let $\mathcal{P}'(O)$ denote the set of subsets of $O$ that contain $x$ (so that the empty set does not belong to $\mathcal{P}'(O))$. Then $S \to Q_S$ induces an embedding $\mathcal{P}'(O) \to \X$ whose image consists entirely of non-lamplike confining subsets, and of course $\mathcal{P}'(O) \cong \mathcal{P}(\N)$.
\end{proof}

\section{Questions}\label{sec:questions}

We conclude by collecting some natural questions that arise. We start with a general question on the structure of confining subsets:

\begin{question}
\label{q:meet}
    We saw in \cref{prop:join} that $\Conf^\ast_\rho(G)$, and thus also $\Conf_\rho(G)$, is a join semilattice, for a general $G = H \rtimes \Z^n$ and $\rho \colon \Z^n \to \R$. We also saw in \cref{rem:meet} that $\Conf^\ast_{\chi_0}(F_n)$ is not a meet semilattice. This is due to the existence of uncountably many minimal elements; however, these all have a meet once we move into $\Conf_{\chi_0}(F_n)$, namely the smallest element. Thus we ask, is $\Conf_\rho(G)$ a meet semilattice in general, and therefore a lattice?
\end{question}

A few questions about $\Hyp(F_n)$ remain open.

\begin{question}
Following \cref{rem:injectivity:rational}, is it possible to give a more explicit description of the poset $\LL_t$ for rational, non-$n$-ary $t$? The retractions $\Xi$ and $\Cap$ do not piece together directly into an injection, but a more nuanced understanding of their interaction could lead to a more complete picture.
\end{question}

\begin{question}
By \cref{intro:thm:non_lamp}, the poset of non-lamplike confining subsets of $F_n$ contains a copy of $\mathcal{P}(\N)$, so in particular is uncountable. Can we say more about its structure? For example, does it decompose similarly to the lamplike case into a family of subposets indexed by some interval of real numbers? Do there exist non-lamplike confining subsets that are substantially different than those in \cref{prop:nonlamplike}?
\end{question}

\begin{question}
We have seen in Propositions \ref{prop:subgroups_to_trees} and \ref{prop:trees_to_subgroups} that quasi-parabolic structures representable by actions on trees correspond to discrete characters and confining subgroups (see also \cref{rem:simplicial}). It follows that many of the hyperbolic structures constructed in this paper are not representable by actions on trees. These include some lamplike (\cref{rem:lamplike:subgroup}) and all non-lamplike structures (\cref{intro:thm:trees}). What spaces arise for those? In particular, are they all representable by actions on quasi-trees? Are any of them representable by actions on (a space quasi-isometric to) the hyperbolic plane $\mathbb{H}^2$?
\end{question}

Finally, there are several close relatives of the groups $F_n$ whose hyperbolic actions one could study. Let us list three examples here, and note that the corresponding $T$-like groups are all known to have trivial poset of hyperbolic structures \cite{NL}. 

\begin{example}[Irrational slope]\label{ex:irrat}
Let $\tau=\frac{\sqrt{5}-1}{2}$ be the small golden ratio. The \emph{Cleary group} $F_\tau$, introduced by Cleary in \cite{cleary00}, is the group of all piecewise linear orientation preserving homeomorphisms of $[0,1]$ with all slopes powers of $\tau$ and breakpoints in $\Z[\tau]$. This group has been studied from several viewpoints, for instance its metric properties \cite{burillo21}, BNSR-invariants \cite{MNSR}, non-embeddability into $F$ \cite{hydemoore}, and values of stable commutator length and simplicial volume \cite{commutingconjugates}. With intriguing similarities and differences with $F$, it would be interesting to understand its poset of hyperbolic actions. One could also inspect other ``irrational slope'' Thompson-like groups among the Bieri--Strebel groups from \cite{bieristrebel}. Note that not all such groups admit tree pair diagrams \cite{winstone}, however in this paper we almost exclusively used the dynamical point of view, so this may not be a substantial obstruction.
\end{example}

\begin{example}[Stein group]
The \emph{Stein group} $F_{2,3}$ is the group of all piecewise linear orientation preserving homeomorphisms of $[0,1]$ with all slopes of the form $2^a 3^b$ for $a,b\in\Z$ and breakpoints in $\Z[\frac{1}{6}]$. This group, along with the generalizations $F_{n_1,\ldots,n_k}$ defined in the analogous way, first appeared in work of Stein \cite{stein92}, and is surprisingly different from $F_n$ in several ways. For example, unlike $F_n$, every discrete character of $F_{2,3}$ lies in $\Sigma^\infty(F_{2,3})$ \cite{spahn21}, but there exist non-discrete characters not lying in $\Sigma^1(F_{2,3})$ \cite{bieri87}. Also, $F_{2,3}$ does not embed into any $F_n$ \cite{lodha20}. Understanding the poset of hyperbolic actions of $F_{2,3}$ seems more daunting than for $F_n$, partly thanks to the non-discreteness of the characters in the complement of $\Sigma^1(F_{2,3})$. Another complication is that the abelianization map $F_{2,3}\to\Z^4$ does not split, although certain maps onto $\Z^2$ do split.
\end{example}

\begin{example}[Lodha--Moore]
The \emph{Lodha--Moore group}, introduced by Lodha and Moore in \cite{lodha16}, was the first known example of a non-amenable group of type $\F_\infty$ with no non-abelian free subgroups \cite{lodha16,lodhaFinfty}. Without getting into too much detail, the group can be viewed as the result of taking Thompson's group $F$, pictured as a group of homeomorphisms of the Cantor set $\{0,1\}^\N$, and introducing a new element $\lambda$, deferred to each cone of the Cantor set, where $\lambda$ is recursively defined via
\[
\lambda \colon \left\{\begin{array}{ll}
    00\cdot\omega &\mapsto 0\cdot \lambda(\omega)\\
    01\cdot\omega &\mapsto 10\cdot \lambda^{-1}(\omega)\\
    1\cdot\omega &\mapsto 11\cdot \lambda(\omega)\text{.}
\end{array}\right.
\]
(Technically this yields the ``big'' Lodha--Moore group; in the ``small'' Lodha--Moore group, we only defer $\lambda$ to cones not containing $\overline{0}$ or $\overline{1}$.) The BNSR-invariants of the Lodha--Moore group are known \cite{lodha23}, along with some decompositions into ascending HNN-extensions \cite{zaremsky16}, and it would be interesting to try to understand its poset of hyperbolic actions.
\end{example}

The next two examples have infinite locally finite normal subgroups, which makes them quite different from the Thompson-like groups we have considered so far. Nevertheless, they have been studied with Thompson-like techniques in the past, and their hyperbolic structures might be approachable in a similar way. 

\begin{example}[Houghton groups]
The $n$th \emph{Houghton group} $H_n$, introduced by Houghton in \cite{houghton78}, is the group of permutations $\sigma$ of $\N\times\{1,\dots,n\}$ that are eventually translations, meaning there exist $m_1,\dots,m_n\in\Z$ such that for all $i\in\{1,\dots,n\}$ and all sufficiently large $x\in\N$, $\sigma$ sends $(x,i)$ to $(x+m_i,i)$. (An equivalent, somewhat more natural-seeming description is as the group of all permutations $\sigma$ of $\N$ such that for large enough $x\in\N$, we have $\sigma(x+n)=\sigma(x)+n$ and $\sigma(x)$ is congruent to $x$ mod $n$.) Sending $\sigma$ to the tuple $(m_1,\dots,m_n)$ defines a map to $\Z^n$, whose image is isomorphic to $\Z^{n-1}$ (thanks to the relation that $m_1+\cdots+m_n=0$ since $\sigma$ is a bijection). This abelianization map $H_n\to\Z^{n-1}$ does not split, but there is a split surjection $H_n \to \Z^{\lfloor n/2 \rfloor}$. The BNSR-invariants of $H_n$ are known \cite{zaremsky17F_n,zaremsky20}, and have some resemblance to those of $F_n$. The commutator subgroup of $H_n$ is the group of all finitely supported permutations of $\N\times\{1,\dots,n\}$, so $H_n$ is (locally finite)-by-abelian, hence amenable. All of this is to say, the general setup from \cref{sec:confining} could likely be used to analyze part of the poset of hyperbolic actions of $H_n$. The fact that the abelianization map does not split, however, means that we might not necessarily get a full picture of the poset just using these tools. It would be interesting to see whether a full computation of $\Hyp(H_n)$ is possible; in particular, it is not clear to us whether $H_n$ admits any quasi-parabolic actions.
\end{example}

Since the first version of this paper appeared, the posets of hyperbolic structures of the Houghton groups have been classified \cite{houghton_hyp_structures}.

\begin{example}[Quasi-automorphisms]
Let $\mathcal{T}_n$ be the infinite rooted $n$-ary tree, so the vertex set of $\mathcal{T}$ is the set $\{1,\dots,n\}^*$ of finite $n$-ary words. Order the children of a given vertex $w$ by $w\cdot 1<w\cdot 2<\cdots<w\cdot n$. Let $QV_n$ be the group of bijections of the vertex set of $\mathcal{T}_n$ that preserve adjacency and order with only finitely many exceptions, as in \cite{lehnert08}. We get a quotient map $QV_n \to V_n$ onto the Higman--Thompson group $V_n$, with kernel the group of all finitely supported permutations of the vertex set. In this way, $QV_n$ can be viewed as a sort of Houghton-like version of $V_n$. Let $QF_n$ be the preimage under this quotient map of $F_n$, so $QF_n$ is a sort of Houghton-like version of $F_n$. The BNSR-invariants of $QF=QF_2$ are known \cite{nucinkis18} and resemble those of $F$, and presumably those for $QF_n$ behave similarly. It would be interesting to study the poset of hyperbolic actions of $QF_n$, and in particular to see whether every cobounded hyperbolic action comes from the quotient $QF_n\to F_n$. Note that, on the other hand, every cobounded hyperbolic action of $QV_n$ is elliptic \cite[Theorem 1.6.7]{NL}.
\end{example}

\bibliographystyle{amsalpha}
\bibliography{F_hyp_actions}

\end{document}